\theoremstyle{plain}
\newtheorem{theorem}{Theorem}[section]
\newtheorem{lemma}[theorem]{Lemma}
\newtheorem{proposition}[theorem]{Proposition}
\newtheorem{definition}[theorem]{Definition}
\newtheorem{corollary}[theorem]{Corollary}
\theoremstyle{remark}
\newtheorem{remark}[theorem]{Remark}
\newtheorem{example}[theorem]{Example}
\numberwithin{equation}{section}
\theoremstyle{definition}
\newcommand{\row}{\operatorname{row}}
\newcommand{\col}{\operatorname{col}}
\newcommand{\cyr}{\fontencoding{OT2}\fontfamily{wncyr}\selectfont}
\newcommand{\ba}{{\mathbf a}}
\newcommand{\cB}{{\mathcal B}}
\newcommand{\cD}{{\mathcal D}}
\newcommand{\cE}{{\mathcal E}}
\newcommand{\cF}{{\mathcal F}}
\newcommand{\cH}{{\mathcal H}}
\newcommand{\cK}{{\mathcal K}}
\newcommand{\cL}{{\mathcal L}}
\newcommand{\cM}{{\mathcal M}}
\newcommand{\cR}{{\mathcal R}}
\newcommand{\cS}{{\mathcal S}}
\newcommand{\cT}{{\mathcal T}}
\newcommand{\cU}{{\mathcal U}}
\newcommand{\cV}{{\mathcal V}}
\newcommand{\cW}{{\mathcal W}}
\newcommand{\cX}{{\mathcal X}}
\newcommand{\cY}{{\mathcal Y}}
\newcommand{\bbC}{{\mathbb C}}
\newcommand{\bbT}{{\mathbb T}}
\newcommand{\bbD}{{\mathbb D}}
\newcommand{\bbZ}{{\mathbb Z}}
\newcommand{\be}{{\mathbf e}}
\newcommand{\sbm}[1]{\left[\begin{smallmatrix} #1
        \end{smallmatrix}\right]}
\newcommand{\pos}{\ge}
\newcommand{\Sz}{\text{\rm Sz}}
\newcommand{\diag}{\text{\rm diag}}
\newcommand{\slim}{\operatorname{strong\ limit}}
\begin{document}
\title[Scattering and formal reproducing kernel Hilbert spaces]{Scattering systems with several 
evolutions and formal reproducing kernel Hilbert spaces}
\author[J.A.~Ball]{Joseph A. Ball}
\address{Department of Mathematics \\ Virginia Tech \\
Blacksburg, Virginia 24061} \email{ball@math.vt.edu}
\author[D.S.~Kaliuzhnyi-Verbovetskyi]{Dmitry S.
Kaliuzhnyi-Verbovetskyi}
\address{Department of Mathematics \\
Drexel University\\
3141 Chestnut St.\\
  Philadelphia, PA, 19104}
\email{dmitryk@math.drexel.edu}
\author[C.~Sadosky]{\fbox{Cora Sadosky}}
\author[V.~Vinnikov]{Victor Vinnikov}
\address{Department of  Mathematics  \\ Ben Gurion
University of the Negev \\
 84105 Beer-Sheva, Israel}
\email{vinnikov@math.bgu.ac.il}

\thanks{JB, CS, and VV were partially supported by US--Israel  BSF
grant 2002414. JB, DK-V, and VV were partially supported by
US--Israel  BSF grant 2010432. DK-V was partially supported by NSF
grant DMS-0901628. }

\subjclass{Primary: 47A13; Secondary: 47A48, 47A56, 46E22, 93C55}
\keywords{de Branges--Rovnyak spaces, formal reproducing kernel
Hilbert space, Lax--Phillips multievolution scattering system,
conservative multidimensional linear system, unitary colligation,
overlapping space}

\begin{abstract}
A Schur-class function in $d$ variables is defined to be an
analytic contractive-operator valued function on the unit
polydisk.  Such a function is said to be in the Schur--Agler class
if it is contractive when evaluated on any commutative $d$-tuple
of strict contractions on a Hilbert space. It is known that the
Schur--Agler class is a strictly proper subclass of the Schur
class if the number of variables $d$ is more than two. The
Schur--Agler class is also characterized as those functions
arising as the transfer function of a certain type
(Givone--Roesser) of conservative multidimensional linear system.
Previous work of the authors identified the Schur--Agler class as
those Schur-class functions which arise as the scattering matrix
for a certain type of (not necessarily minimal) Lax--Phillips
multievolution scattering system having some additional geometric
structure. The present paper links this additional geometric
scattering structure directly with a known reproducing-kernel
characterization of the Schur--Agler class.
We use extensively the technique of formal reproducing kernel Hilbert spaces
that was previously introduced by the authors and that allows us to manipulate 
formal power series
in several commuting variables and their inverses (e.g., Fourier series of 
elements of $L^2$ on a torus)
in the same way as one manipulates analytic functions in the usual setting 
of reproducing kernel Hilbert spaces.
\end{abstract}

\maketitle

\noindent Cora Sadosky passed away in December 2010. We lost a
collaborator of many years, and a wonderful friend. Like many
others, we miss her.

\hfill{Joseph A. Ball, Dmitry S. Kaliuzhnyi-Verbovetskyi, and
Victor Vinnikov}

\section{Introduction} \label{S:intro}

Let $\cE$ and $\cE_{*}$ be two Hilbert spaces and let $\cS(\cE,
\cE_{*})$ denote the \emph{Schur class} of holomorphic functions
on the unit disk $\bbD$ in the complex plane with values equal to
contraction operators between $\cE$ and $\cE_{*}$.  The Schur
class was originally introduced by Schur for the scalar case $\cE
= \cE_{*} = \bbC$ and is the natural class of functions for which
one formulates interpolation problems of Nevanlinna--Pick and
Carath\'eodory--Fej\'er type.  Recently there has been a
resurgence of interest in matrix- and operator-valued versions of
this class due to connections with manifold applications in
operator theory, mathematical physics and linear system theory.
Specifically, Schur-class functions arise (1) as the {\em
characteristic function} of a Hilbert-space contraction operator
(see \cite{NF} as well as \cite{dBR1, dBR2, LivMR8:588d,
LivMR20:7221}), (2) as the  scattering matrix of a (discrete-time)
Lax--Phillips scattering system (see \cite{LP} as well as
\cite{LivMR19:221d}), and (3) as the transfer function of
discrete-time conservative linear system (see \cite{AV, Staffans}
as well as \cite{LivMR22:11818, LivMR38:1922}).  Over the last few
decades the connections between these various theories have come
be better understood (see e.g.~\cite{AA, Helton-scat, BC}).  Let
us only mention the key structure theorem for Schur-class
functions.

\begin{theorem}  \label{T:1DSchur}
    Let $S \colon \bbD \to \cL(\cE, \cE_{*})$ be an operator-valued
    function defined on the unit disk $\bbD$.  Then the following
    conditions are equivalent:
    \begin{enumerate}
    \item $S$ is in the Schur class $\cS(\cE, \cE_{*})$.

    \item The kernel $K_{S}(z,w) = (I - S(z) S(w)^{*})/(1 - z
    \overline{w})$ is a positive kernel, i.e. has a factorization
   $$
   \frac{ I - S(z) S(w)^{*}}{1 - z \overline{w}} = H(z) H(w)^{*}
   $$
   for some function $H \colon \bbD \to \cL(\cH', \cE_{*})$ for some
   Hilbert space $\cH'$.

   \item There exist a Hilbert space $\cH$ and a unitary colligation
   $$
     U = \begin{bmatrix} A & B \\ C & D \end{bmatrix} \colon
     \begin{bmatrix} \cH \\ \cE \end{bmatrix} \to \begin{bmatrix}
     \cH \\ \cE_{*} \end{bmatrix}
   $$
   so that $S(z)$ can be realized in the form
   $$
     S(z) = D + z C(I - zA)^{-1} B.
   $$
   \end{enumerate}
   \end{theorem}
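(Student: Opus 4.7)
The plan is to establish the cycle $(3) \Rightarrow (2) \Rightarrow (3)$ together with $(1) \Rightarrow (2)$, noting that $(3) \Rightarrow (1)$ falls out for free from the computation used in $(3) \Rightarrow (2)$. The central tool linking $(2)$ and $(3)$ is the classical \emph{lurking isometry} device, and $(1) \Rightarrow (2)$ rests on the standard identification of the Schur class with the contractive multipliers on the vector-valued Hardy space $H^2$.

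For $(3) \Rightarrow (2)$, I would start from a unitary colligation $U = \sbm{A & B \\ C & D}$, substitute the realization $S(z) = D + zC(I - zA)^{-1}B$ into $I - S(z)S(w)^*$, and use the four block identities coming from $UU^* = I$. After some algebra one arrives at
\begin{equation*}
I - S(z)S(w)^* = (1 - z\overline{w})\, C(I - zA)^{-1}(I - \overline{w} A^*)^{-1} C^*,
\end{equation*}
so $H(z) := C(I - zA)^{-1}$ provides the required factorization of the de~Branges--Rovnyak kernel. The implication $(3) \Rightarrow (1)$ is then immediate: the identity forces $\|S(z)\| \le 1$, and $S$ is analytic directly from the realization formula.

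For $(2) \Rightarrow (3)$, rearrange the factorization $(1 - z\overline{w}) H(z) H(w)^* = I - S(z)S(w)^*$ into the symmetric form
\begin{equation*}
H(z) H(w)^* + S(z) S(w)^* = z \overline{w}\, H(z) H(w)^* + I.
\end{equation*}
This identity says precisely that the densely defined map
\begin{equation*}
V \colon \bbm{\overline{w} H(w)^* \xi \\ \xi} \longmapsto \bbm{H(w)^* \xi \\ S(w)^* \xi}, \qquad w \in \bbD,\ \xi \in \cE_*,
\end{equation*}
extended by linearity to the span of its domain, is isometric. I would then extend $V$ to a unitary $U^* \colon \cH \oplus \cE_* \to \cH \oplus \cE$ (enlarging $\cH$ if necessary by adjoining a wandering subspace), read off the block entries of $U$, and trace back through the action of $V$ to produce the transfer-function realization $S(z) = D + zC(I - zA)^{-1} B$.

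For $(1) \Rightarrow (2)$, observe that $S \in \cS(\cE, \cE_*)$ if and only if the multiplication operator $M_S \colon H^2(\cE) \to H^2(\cE_*)$ is a contraction. Then $I - M_S M_S^* \ge 0$, and pairing this positive operator against Szeg\H{o} kernels $k_w(z) = (1 - z\overline{w})^{-1}$ tensored with vectors in $\cE_*$ recovers exactly the positivity of $K_S(z,w)$. The main obstacle, in my view, is the lurking-isometry step in $(2) \Rightarrow (3)$: one has to verify that $V$ is well-defined on the span (so its value on a sum $\sum c_j \sbm{\overline{w_j} H(w_j)^* \xi_j \\ \xi_j}$ depends only on the sum and not on the chosen decomposition), which is exactly the content of the isometry identity, and then execute the unitary extension in such a way that the resulting block matrix actually gives $S$ on the nose via the Givone--Roesser style formula.
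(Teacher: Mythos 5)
Your proposal is correct and takes essentially the route the paper itself relies on: Theorem \ref{T:1DSchur} is stated there as classical background, and the two key steps you use --- the unitary-relations computation yielding $I-S(z)S(w)^{*}=(1-z\overline{w})\,C(I-zA)^{-1}(I-\overline{w}A^{*})^{-1}C^{*}$ for (3)$\Rightarrow$(2), and the lurking-isometry extension for (2)$\Rightarrow$(3) --- are exactly the techniques the paper sketches right after the multivariable Theorem \ref{T:Agler}. There is no gap: well-definedness of $V$ follows from the inner-product identity as you say, and the unitary extension only requires enlarging the state space to equalize the defect dimensions, which is the standard device.
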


There have recently appeared several types of generalizations of
the Schur-class functions to several-variable contexts (see
\cite{Ball, IJCsummary} for surveys).  We focus here on the
generalizations where the unit disk $\bbD$ is replaced by the unit
polydisk
$$
  \bbD^{d} = \{ z = (z_{1}, \dots, z_{d}) \in \bbC^{d} \colon
  |z_{k}| < 1 \text{ for } k = 1, \dots, d \}
 $$
 in
$d$-dimensional complex space $\bbC^{d}$.  One can define the
$d$-variable Schur class $\cS_{d}(\cE, \cE_{*})$ to consist of
 functions $S \colon \bbD^{d} \to \cL(\cE, \cE_{*})$ of $d$ complex
 variables which are analytic on $\bbD^{d}$
 and with values equal to contraction operators from $\cE$ to
 $\cE_{*}$.  However, if $d>2$ it is known that the analogue of
 Theorem \ref{T:1DSchur} fails.  Instead, we define the {\em
 Schur--Agler class} $\mathcal{SA}_{d}(\cE, \cE_{*})$ (usually
 shortened to $\mathcal{SA}(\cE, \cE_{*})$ as the number of variables
 $d$ will be fixed throughout) as the space of functions $S(z) =
 \sum_{n \in \bbZ^{d}_{+}} S_{n} z^{n}$ (with the standard
 multivariable notation $z^{n} = z_{1}^{n_{1}} \cdots z_{d}^{n_{d}})$
 holomorphic on $\bbD^{d}$ with values equal to operators from
 $\cE$ to  $\cE_{*}$ such that
 $$
  S(T): = \sum_{n \in \bbZ^{d}_{+}} S_{n} \otimes T^{n} \in \cL(\cE
  \otimes \cK, \cE_{*} \otimes \cK)
 $$
 is a contraction ($\|S(T)\| \le 1$) whenever $T = (T_{1}, \dots,
 T_{d})$ is a commuting $d$-tuple of strict contractions on some
 Hilbert space $\cK$.  (Here $T^{n} = T_{1}^{n_{1}} \cdots
 T_{d}^{n_{d}}$ if $n = (n_{1}, \dots, n_{d}) \in \bbZ^{d}_{+}$.)
 Note that any Schur-Agler-class function is also Schur-class since
 one can take in particular $\cK = \bbC$ and $T = z = (z_{1}, \dots,
 z_{d})$ a $d$-tuple of
 scalar operators.   The following analogue of Theorem \ref{T:1DSchur}
 appears in \cite{Ag90, AgMcC} (see \cite{BT} for the additional
 condition (2$^{\prime}$)).

 \begin{theorem}  \label{T:Agler}
     Let $S \colon \bbD^{d} \to \cL(\cE, \cE_{*})$ be an
operator-valued
     function defined on the unit polydisk $\bbD^{d}$.  Then the
     following are equivalent:
     \begin{enumerate}
     \item[(1)] The function $S$ is in the Schur--Agler class
     $\mathcal{SA}(\cE, \cE_{*})$.

     \item[(2)] There exist auxiliary Hilbert spaces $\cH'_{1}$, \dots,
     $\cH'_{d}$ and operator-valued functions $H_{k} \colon \bbD^{d} \to
     \cL(\cH'_{k}, \cE_{*})$ for $k = 1, \dots, d$ so that $S$ has the
    so-called Agler decomposition
   \begin{equation}  \label{Aglerdecom}
     I - S(z) S(w)^{*} = \sum_{k=1}^{d} (1 - z_{k} \overline{w_{k}})
     H_{k}(z) H_{k}(w)^{*}.
   \end{equation}

   \item[(2$^{\prime}$)] There exist auxiliary Hilbert spaces $
   \cH'_{1}, \dots, \cH'_{d}$ and operator-valued functions $H_{k} =
   \sbm{H^{1}_{k} \\ H^{2}_{k}} \colon {\mathbb D}^{d} \to
   \cL(\cH'_{k}, \sbm{\cE_{*} \\ \cE})$  for $k = 1, \dots, d$ so that
   $S$ has the {\em augmented Agler decomposition}
   \begin{align}
     &  \begin{bmatrix}  I - S(z) S(w)^{*} & S(z) - S(\overline{w}) \\
     S(\overline{z})^{*} -
       S(w)^{*} & I - S(\overline{z})^{*}S(\overline{w})  \end{bmatrix}
\notag \\
       & \qquad =
     \sum_{k=1}^{d}  \begin{bmatrix}  H^{1}_{k}(z) \\ H^{2}_{k}(z)
     \end{bmatrix} \begin{bmatrix} H^{1}_{k}(w)^{*} & H^{2}_{k}(w)^{*}
 \end{bmatrix}  \circ \begin{bmatrix} 1 - z_{k} \overline{w}_{k} &
 z_{k} - \overline{w}_{k} \\ z_{k} - \overline{w}_{k} &
 1 - z_{k} \overline{w}_{k}  \end{bmatrix}
 \label{augAglerdecom-Intro}
 \end{align}
 where $\circ$ is the Schur or entrywise matrix product.

  \item[(3)] There exist Hilbert spaces $\cH_{1}, \dots, \cH_{d}$ and
a
   unitary colligation $U$ of the structured form
   \begin{equation}  \label{colligation}
   U = \begin{bmatrix} A & B \\ C & D \end{bmatrix} =
   \begin{bmatrix}  A_{11} & \cdots & A_{1d} & B_{1} \\
                    \vdots &       & \vdots & \vdots \\
            A_{d1} & \cdots & A_{dd} & B_{d} \\
            C_{1} & \cdots & C_{d} & D \end{bmatrix}  \colon
            \begin{bmatrix} \cH_{1} \\ \vdots \\ \cH_{d} \\ \cE
\end{bmatrix}
  \to \begin{bmatrix} \cH_{1} \\ \vdots \\ \cH_{d} \\ \cE_{*}
  \end{bmatrix}
 \end{equation}
 so that $S(z)$ is realized in the form
 \begin{equation}  \label{realization}
   S(z) = D + C (I - Z_{\diag}(z) A)^{-1} Z_{\diag}(z) B
 \end{equation}
 where we have set
 $$
   Z_{\diag}(z) = \begin{bmatrix} z_{1}I_{\cH_{1}} & & \\ & \ddots &
\\
          & & z_{d} I_{\cH_{d}} \end{bmatrix}.
 $$
 \end{enumerate}
 \end{theorem}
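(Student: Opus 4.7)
My plan is to establish the cycle $(1) \Rightarrow (2) \Rightarrow (3) \Rightarrow (1)$ together with $(3) \Rightarrow (2')$. Since $(2') \Rightarrow (2)$ is immediate (extract the $(1,1)$-block of \eqref{augAglerdecom-Intro} and read off $H_k = H^1_k$), this yields every equivalence in the statement. I expect the main obstacle to be $(1) \Rightarrow (2)$, which demands a non-constructive Hahn--Banach / duality argument in the spirit of Agler and Agler--McCullough.

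For $(3) \Rightarrow (2)$ I will compute directly: substituting \eqref{realization} into $I - S(z) S(w)^{*}$ and using the identities $AA^{*} + BB^{*} = I$, $AC^{*} + BD^{*} = 0$, $CC^{*} + DD^{*} = I$ coming from $U U^{*} = I$, a routine algebraic manipulation produces
\[
  I - S(z) S(w)^{*} = C(I - Z_{\diag}(z) A)^{-1} \bigl(I - Z_{\diag}(z) Z_{\diag}(w)^{*}\bigr)(I - A^{*} Z_{\diag}(w)^{*})^{-1} C^{*}.
\]
Since $I - Z_{\diag}(z) Z_{\diag}(w)^{*} = \sum_{k=1}^{d} (1 - z_k \overline{w_k}) P_k$ with $P_k$ the orthogonal projection of $\bigoplus_j \cH_j$ onto $\cH_k$, setting $H_k(z) = C(I - Z_{\diag}(z) A)^{-1}|_{\cH_k}$ yields \eqref{Aglerdecom}. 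The augmented version $(2')$ follows from the analogous calculation for the $2 \times 2$ block kernel in \eqref{augAglerdecom-Intro}, using in addition the adjoint realization $S(\overline z)^{*} = D^{*} + B^{*} Z_{\diag}(z)(I - A^{*} Z_{\diag}(z))^{-1} C^{*}$ that arises from $U^{*}$; it furnishes the $(2,2)$-block and the cross-kernels. For $(3) \Rightarrow (1)$ the same algebra, tensored with $\cK$ and with $z$ replaced by a commuting $d$-tuple $T = (T_1, \dots, T_d)$ of strict contractions on $\cK$, gives
\[
  I - S(T) S(T)^{*} = \widetilde{C}\, (I - Z_{\diag}(T) \widetilde{A})^{-1} \Bigl(\sum_{k=1}^{d} P_k \otimes (I - T_k T_k^{*})\Bigr) (I - \widetilde{A}^{*} Z_{\diag}(T)^{*})^{-1} \widetilde{C}^{*} \ge 0,
\]
where $\widetilde{A} = A \otimes I_{\cK}$ and $\widetilde{C} = C \otimes I_{\cK}$; convergence of all series is ensured by $\|Z_{\diag}(T)\| < 1$.

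For $(2) \Rightarrow (3)$ I will use the lurking-isometry technique. Rewriting \eqref{Aglerdecom} as
\[
  I_{\cE} + \sum_{k=1}^{d} z_k \overline{w_k} H_k(z) H_k(w)^{*} = S(z) S(w)^{*} + \sum_{k=1}^{d} H_k(z) H_k(w)^{*}
\]
presents both sides as Gramians of vector-valued functions of $w \in \bbD^d$ indexed by $\cE$ and the $\cH'_k$. This produces a well-defined isometry $V$ sending $(\eta, \{\overline{w_k} H_k(w)^{*} \xi_k\}_k) \mapsto (S(w)^{*} \eta, \{H_k(w)^{*} \xi_k\}_k)$ on the closed linear span of such vectors in the source; its adjoint, extended if necessary by defect spaces, furnishes a unitary colligation $U$ of the block form \eqref{colligation}, and a standard Neumann-series computation recovers \eqref{realization}.

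The main obstacle is $(1) \Rightarrow (2)$. I will consider the convex cone $\cC$ of hereditary $\cL(\cE_{*})$-valued kernels on $\bbD^d$ spanned by finite sums $\sum_{k=1}^{d} (1 - z_k \overline{w_k}) K_k(z,w)$ with each $K_k$ positive semidefinite. If $K_S(z,w) := I - S(z) S(w)^{*}$ were not in the closure of $\cC$ in a suitable locally convex topology, Hahn--Banach separation would provide a continuous linear functional $\Lambda$ with $\Lambda \ge 0$ on $\cC$ and $\Lambda(K_S) < 0$. Positivity of $\Lambda$ on the building blocks $(1 - z_k \overline{w_k}) f(z) f(w)^{*}$ permits a Gelfand--Naimark--Segal-type construction producing a Hilbert space $\widetilde{\cK}$, a commuting $d$-tuple $T$ of contractions on $\widetilde{\cK}$, and a vector $v$ with $\Lambda(K_S) = \langle (I - S(T) S(T)^{*}) v, v \rangle$; replacing $T$ by $rT$ for $r \uparrow 1$ makes $T$ into strict contractions and contradicts $(1)$. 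Finally $(1) \Rightarrow (2')$ is obtained from the chain $(1) \Rightarrow (2) \Rightarrow (3) \Rightarrow (2')$ already in place.
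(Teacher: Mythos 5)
Your outline coincides with the route the paper itself indicates for this theorem, which it quotes from Agler, Agler--McCullough and Ball--Trent: the transparent computation (3)$\Rightarrow$(2),(2$'$) from the unitary relations \eqref{unitary} with $H_{k}(z)=C(I-Z_{\diag}(z)A)^{-1}P_{k}$ and \eqref{H-intro2}, the lurking-isometry argument for (2)$\Rightarrow$(3), and Agler's cone/separation argument for the hard implication (1)$\Rightarrow$(2); your direct (3)$\Rightarrow$(1) substitution of a commuting tuple of strict contractions is also the standard one and is sound (the algebra never uses commutativity of $Z_{\diag}(T)$ with $A$, only \eqref{unitary}). The paper's own novelty for this circle of ideas is a different, scattering/FRKHS proof of (2$'$)$\Rightarrow$(3), which you do not attempt; that is fine for proving the theorem. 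However, two steps of your sketch have genuine gaps as written.

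First, in the lurking-isometry step you define $V$ on vectors $(\eta,\{\overline{w_k}H_k(w)^{*}\xi_k\}_k)$ with the $\xi_k$ independent of $\eta$. The identity obtained from \eqref{Aglerdecom} only equates Gramians of the \emph{tied} vectors in which every component carries the same $\eta\in\cE_{*}$, i.e. it makes $V\colon(\eta,\{\overline{w_k}H_k(w)^{*}\eta\}_k)\mapsto(S(w)^{*}\eta,\{H_k(w)^{*}\eta\}_k)$ isometric on the span of such vectors; with independent $\xi_k$ the map need not be isometric (or even well defined). Also, when extending to a unitary you must enlarge the defect spaces blockwise so that the extension still respects the $d$-fold state-space decomposition demanded by \eqref{colligation}; this is standard but has to be arranged, not merely asserted. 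Second, in (1)$\Rightarrow$(2) your Hahn--Banach argument, even if carried out, only shows that $I-S(z)S(w)^{*}$ lies in the \emph{closure} of the cone; to get an actual decomposition \eqref{Aglerdecom} you still need the cone to be closed, or you must run the separation on finite subsets of $\bbD^{d}$ and pass to the limit by a normal-families/weak-$*$ compactness argument, using the a priori bound $\sum_{k}(1-|z_k|^{2})H_k(z)H_k(z)^{*}\le I$ to obtain locally uniform bounds on the $H_k$. Moreover the GNS step needs more than the one-line description: positivity of the separating functional on the blocks $(1-z_k\overline{w_k})f(z)f(w)^{*}$ must be converted, via the hereditary functional calculus on polynomials, into a \emph{commuting} $d$-tuple of contractions $T$ together with the identification $\Lambda(K_S)=\langle(I-S(T)S(T)^{*})v,v\rangle$; commutativity and this identification are precisely the technical heart of Agler's theorem. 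All of this is repairable (it is what the cited references do), but as it stands the proposal does not contain these ingredients.
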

 We remark that the proof of (3) $\Rightarrow$ (2) or (3)
 $\Rightarrow$ (2$^{\prime}$) is particularly transparent:  given
 that \eqref{realization} holds with $U$ as in \eqref{colligation}
 unitary, one uses the relations
  \begin{align}
    &  AA^{*} + BB^{*} = I,  \qquad AC^{*} + B D^{*} = 0, \qquad
    CC^{*} + DD^{*} = I, \notag \\
    & A^{*}A + C^{*}C = I, \qquad A^{*}B + C^{*} D = 0, \qquad
    B^{*}B + D^{*} D = I
    \label{unitary}
   \end{align}
   arising from the unitary property of $U$ to check that
   \eqref{Aglerdecom} holds with
   \begin{equation*}
       H_{k}(z) = C (I - Z_{\diag}(z) A)^{-1} P_{k}
    \end{equation*}
    and that \eqref{augAglerdecom-Intro} holds with
    \begin{equation}  \label{H-intro2}
  H^{1}_{k}(z) = C (I - Z_{\diag}(z) A)^{-1}P_{k}, \qquad
  H^{2}_{k}(z) = B^{*}(I - Z_{\diag}(\overline{z})^{*}A^{*})^{-1}
P_{k}
  \end{equation}
  with $P_{k}$ the orthogonal projection of $\cH =
\bigoplus_{j=1}^{d} \cH_{j}$
  onto its $k$-th coordinate space $\cH_{k}$ (identified as a
  subspace of $\cH$).
  The original proof of the converse direction (2) $\Rightarrow$
(3)
   involves what we call the ``lurking isometry argument'' (see
\cite{Ball}):
   from the given Agler decomposition \eqref{Aglerdecom} there is
   associated a natural partially defined isometry $V$; one can then
   identify a one-to-one correspondence between unitary extensions $U$
   (possibly outside the original Hilbert space) of $V$ and
realizations
   of $S(z)$ in the form \eqref{realization} associated with the
   structured unitary colligation \eqref{colligation}.
 In the sequel, we shall refer to a structured unitary colligation $U$
 of the form \eqref{colligation} as a {\em Givone--Roesser unitary}
 (GR-unitary) colligation\footnote{We note that the term {\em Roesser
 unitary colligation} was used in \cite{MammaBear, BabyBear}.}.
(Multidimensional linear systems whose coefficient matrix has a
structure of \eqref{colligation}, though without any norm
constraints on its blocks, and whose transfer function has the
form \eqref{realization}, were introduced in \cite{GR1,GR2}; see
\eqref{sys-free} for the explicit form of the system equations.)

We also note that the equivalence (1)$\Leftrightarrow$(3) in
Theorem \ref{T:Agler} for the case of rational inner matrix-valued
functions on the bidisk $\mathbb{D}^2$ was independently proved by
Kummert \cite{Kummert} in a stronger form: the spaces
$\mathcal{H}_1$, $\mathcal{H}_2$ in (3) are finite-dimensional.
This strengthening of Agler's theorem, which also includes item
(2) of Theorem \ref{T:Agler} with rational matrix-valued functions
$H_1$, \ldots, $H_d$, appears also in
\cite{GW,BabyBear,Knese2008}. Moreover, it turns out that this
version of Theorem \ref{T:Agler} for the case of rational inner
matrix-valued functions can be extended to any number of variables
\cite{Knese2011}.

 A new approach to the issue of unitary realizations of
 Schur-Agler-class
 functions was made in \cite{BabyBear}.  A
 multievolution Lax--Phillips scattering system ${\mathfrak S}$
(associated with
 function theory on the polydisk) was defined in \cite{CS1, CS2} (see
also
 \cite{S-MSRI}).  A minimal such scattering system
 ${\mathfrak S}$ is completely determined (up to unitary equivalence)
 by its scattering matrix $S$ which can be any Schur-class function.
 If the scattering system has some additional geometric structure
 (``colligation geometry''), one
 can identify a unitary colligation $U$ of the form
 \eqref{colligation} which is embedded in ${\mathfrak S}$.  When this
 is the case, the scattering matrix is also the transfer function
 of the unitary colligation $U$ embedded in ${\mathfrak S}$
 and thus $S$ is in fact in the Schur--Agler class
 $\mathcal{SA}(\cE, \cE_{*})$ rather than just the Schur class
 $\cS(\cE, \cE_{*})$. (Notice that scattering systems associated with
a different type of colligation
 geometry were considered in \cite{K-JOT2000}.)  In \cite{BabyBear} we
arrived at a new criterion for a
 Schur-class function $S$ to be a Schur-Agler-class function: {\em
 the Schur-class function $S$ is in the Schur--Agler class if and only
 if there is a multievolution scattering system (not necessarily
 minimal) with scattering
 function $S$ possessing the special colligation geometry. }
 By combining the results of \cite{Ag90, AgMcC, BT} with
\cite{BabyBear},
 it is clear that, given a Schur-class function $S$ on the polydisk
$\bbD^{d}$,
 there is a multievolution scattering system ${\mathfrak S}$ having
 scattering matrix $S$ along with the special colligation geometry if
and
 only if $S$ has an Agler decomposition \eqref{Aglerdecom} (or,
 equivalently, an augmented Agler decomposition
 \eqref{augAglerdecom-Intro}).  One of
 the goals of this paper is to construct such a multievolution
 scattering system ${\mathfrak S}$ (in functional-model form)
directly from a given Agler
 decomposition \eqref{Aglerdecom} for $S$.

 Given a Schur-class function $S$, in \cite{BabyBear} we presented
 three functional models (see \cite{NV1, NV2} for the $d=1$ case)
 for a minimal multievolution scattering system having $S$ (up to
 trivial identifications) as its scattering matrix, called the {\em
 Pavlov model}, the {\em de Branges--Rovnyak model} and the {\em
 Sz.-Nagy--Foia\c{s} model}.  We find that the most convenient one
for our
 purposes here is the de Branges--Rovnyak model.  The modification of
 the de Branges--Rovnyak model introduced in \cite{NV1, NV2} works
 with two components, one of which is analytic on the disk $\bbD$
while the
 other is conjugate-analytic on $\bbD$; it turns out that
 this modified version of the original de Branges--Rovnyak model is
 more convenient to work with for many manipulations and formulas.
 However there are difficulties with extending this formalism to the
 multivariable context; a function may be the product of an analytic
 function in one variable and a conjugate analytic function in
 another variable.  We here modify the de Branges--Rovnyak functional
 model once again by making use of the formal power series
 formalism from \cite{FRKHS}; with this formalism the extension to
 several variables can be done relatively smoothly.
 In particular, the formal version of the augmented Agler
 decomposition \eqref{augAglerdecom-Intro} plays a key role for the
 analysis to follow (see \eqref{enlargedAglerdecom} below).

 The formal reproducing-kernel formalism is the tool which helps
 clarify the more complicated connections between the various notions
of
 scattering and system minimality (scattering-minimal,
 closely-connected, strictly closely-connected) in the multivariable
 situation which were introduced but left mysterious in \cite{BT}.
 These complications are caused by the fact that the colligation
geometry
 may not mesh well with the scattering geometry; specifically, the
 projections coming from the colligation geometry may not commute
 with the projection onto the minimal part of the scattering system.

 The paper is organized as follows.  Section \ref{S:FRKHS} develops
the preliminaries on formal reproducing kernel Hilbert spaces.
 Here we also clarify and enhance some results from \cite{FRKHS} on
 multipliers in the context of formal reproducing kernel Hilbert
 spaces.  Section \ref{S:scatmodels} places the de Branges--Rovnyak model
 scattering system \cite{BabyBear} into a  reproducing kernel framework.
 Section \ref{S:scat-col}, in
 addition to recalling the admissible-trajectory construction
 from \cite{MammaBear} for embedding a given GR-unitary colligation
into a multievolution scattering system having scattering matrix
equal to
 the transfer function of the colligation, also presents a
 multivariable version of the Sch\"affer-matrix construction to
 achieve the same embedding.  Here we also recall the converse
 colligation-geometry characterization of those scattering systems
containing an
 embedded GR-unitary colligation.  In Section \ref{S:scat-col-model},
we obtain a reproducing-kernel analogue of this colligation
geometry.  Under an
 additional hypothesis which eliminates the presence of nuisance
 overlapping spaces, this leads to the true presence of
  the colligation geometry inside the de
 Branges--Rovnyak functional-model minimal multievolution scattering
 system; this is exactly the special case where a minimal
 multievolution scattering system carries a colligation geometry.
 Section \ref{S:minimal} assumes that we are given a GR-unitary
colligation
 $U = \sbm{ A & B \\ C & D}$ and sorts out various notions of
 minimality in terms of the operators $A,B,C,D$.  Section \ref{S:cc}
 shows how the original lurking-isometry approach can be carried out
 in functional-model form; the defect spaces involved in a known
 parametrization  of the set of all the  GR-unitary realizations
 of a given augmented Agler decomposition (see e.g.~\cite{BBQ2})
 can be identified with the overlapping
 spaces arising from the augmented Agler decomposition. In
particular, if the
 augmented Agler decomposition is strictly closely connected (a
 weaker form of minimal), then the associated closely-connected
 GR-unitary colligation realization
 is unique. Here we also discuss how these canonical functional models
 for a Schur-Agler-class function with given Agler decompostion
 relate to those obtained in \cite{BB-Gohberg}.
 The final Section \ref{S:Aglerdecom-scatsys} analyzes how to construct a
 functional-model multievolution scattering system carrying the
colligation
 geometry directly from a given augmented Agler decomposition. Here
 the Sch\"affer-matrix construction sheds some insight on the
 structure of the problem; the results of Section \ref{S:cc} in some
sense
 solve this problem.

 Since the paper is quite long we indicate several ``pointers'' that
the reader can use to jump directly to several key
 locations:
 \begin{itemize}
 \item
 Remark \ref{R:min-summary} summarizes the various notions of
minimality:
 for (augmented) Agler decompositions,
 for Givone--Roesser unitary colligations,
 and for multievolution scattering systems, and the interrelations
among these notions.
 \item
 Theorem \ref{T:minfuncmodel} identifies explicitly the colligation
geometry inside
 the de Branges--Rovnyak functional-model minimal multievolution
scattering
 system in terms of a minimal (augmented) Agler decomposition, and
provides a direct construction
 of scattering minimal GR-unitary realizations.
 \item
 Theorem \ref{T:parametrize} provides a parametrization of all
GR-unitary realizations
 of a given (augmented) Agler decomposition in terms of functional
models and overlapping spaces.
 \end{itemize}

 We mention that some analogous results concerning de Branges--Rovnyak
 func\-tional-model realizations for a given Schur-class function,
but in the
 context of Drury--Arveson spaces and coisometric colligations of
Fornasini--Marchesini type,
 have been obtained recently in \cite{ADR, BBF2a, BBF3}.

 We finally note that an analogue of the Agler-decomposition
characterization was recently obtained in \cite{GKVVW} for the
operator-valued $d$-variable Schur class using the methods
developed in the present paper (with a reference to its earlier
preprint version), namely the formal de Branges--Rovnyak model for
a scattering system associated with the given Schur-class
function. (This result was later reproved in \cite{Knese_CAOT}
using standard reproducing-kernel techniques, however only in the
case of scalar-valued functions.)

 The notation is mostly standard.  However there is the potential for
 confusion between internal and external orthogonal direct sums.  We
 use the notation $\oplus$ and $\bigoplus$ for internal orthogonal
 direct sums and $\widehat \oplus$ and $\widehat \bigoplus$ for
 external orthogonal direct sums;
 for a (finite or countably infinite) collection $\left\{\cH_j \colon j \in J\right\}$ of Hilbert spaces
 and $h_j \in \cH_j$, $j \in J$, with $\sum_{j \in J} \|h_j\|_{\cH_j}^2 < \infty$,
 we denote by $\widehat \bigoplus h_j$ the corresponding element of $\widehat \bigoplus \cH_j$.

\section{Function theory on $L^{2}(\bbT^{d},  \cF)$: the formal
reproducing kernel point of view} \label{S:FRKHS}

\subsection{Preliminaries on formal reproducing kernel Hilbert
spaces} \label{S:prelimFRKHS}

We recall the notion of a formal reproducing kernel Hilbert space
(FRKHS for short) in the commuting indeterminates $ z = (z_{1},
\dots, z_{d})$ from \cite{FRKHS}. For $n = (n_{1}, \dots, n_{d})
\in \bbZ^{d}$, we set $z^{n} = z_{1}^{n_{1}} \cdots z_{d}^{n_{d}}$
(note that we allow each integer $n_{k}$ to be positive as well as
negative or zero). We say that a Hilbert space whose elements are
formal power series $f(z) = \sum_{n \in \bbZ^{d}} f_{n} z^{n}$
with coefficients in the Hilbert space $\cF$ (i.e., $f \in
\cF[[z^{\pm 1}]]$ where here and throughout for brevity
$\cF[[z^{\pm 1}]]$ denotes the space of formal power series in
the indeterminates $z_{1}, \dots, z_{n}$ {\em and} their inverses
$z_{1}^{-1}, \dots, z_{d}^{-1}$ that we shall refer to as formal Laurent series\footnote{
We emphasize that unlike the usual algebraic definitions we impose no restrictions
on the support of our formal Laurent series; as a result the product operation
is only partially defined; see Section \ref{S:multipliers} below
and especially Remark \ref{R:trap}.})
is a {\em formal reproducing
kernel Hilbert space} if the linear operator $\pi_n\colon f
\mapsto f_{n}$ is continuous for each $n \in \bbZ^{d}$.  In this
case, for each $n \in \bbZ^{d}$ there is a formal power series
$K_{n}(z) \in \cL(\cF)[[z^{\pm 1}]]$ such that, for each vector $u
\in \cF$ we have
\begin{equation} \label{reproduce}
   \langle f, K_{n}u \rangle_{\cH} = \langle f_{n}, u \rangle_{\cF}.
\end{equation}
If we then define $K(z,w) = \sum_{n \in \bbZ^{d}} K_{n}(z) w^{-n}
\in
  \cL(\cF)[[z^{\pm 1},w^{\pm 1}]]$,
then \eqref{reproduce} can be written more compactly in a way
suggestive of the classical (non-formal) case
\begin{equation}  \label{reproduce'}
    \langle f(z), K(z,w) u \rangle_{\cH \times \cH[[w^{\pm 1}]]} =
    \langle f(w),  u \rangle_{\cF[[w^{\pm 1}]] \times \cF}.
\end{equation}
Here we use, for an arbitrary coefficient Hilbert space ${\mathcal
C}$,  the general notion of a pairing
$$
\langle \cdot, \cdot \rangle_{{\mathcal C} \times {\mathcal
C}[[z^{\prime \pm 1}]]} \colon {\mathcal C} \times {\mathcal
C}[[z^{\prime \pm 1}]] \to {\mathbb C}[[z^{\prime \pm 1}]] $$
defined by
$$
\Big\langle c, \sum_{n' \in {\mathbb Z}^{d}} f_{n'} z^{\prime -n'}
\Big\rangle_{{\mathcal C} \times {\mathcal C}[[z^{\prime \pm 1}]]}
= \sum_{n' \in {\mathbb Z}^{d}} \langle c, f_{n'}
\rangle_{{\mathcal C}} z^{\prime n'}. $$ When convenient we take
on occasion the pairing in the reverse order:
$$
\Big\langle \sum_{m' \in {\mathbb Z}^{d}} f_{m'} z^{\prime m'}, c
\Big\rangle_{{\mathcal C}[[z^{\prime \pm 1}]] \times {\mathcal C}}
= \sum_{m' \in {\mathbb Z}^{d}} \langle f_{m'}, c
\rangle_{{\mathcal C}} z^{\prime m'}.
$$
These are actually special cases of a more general partially
defined pairing on ${\mathcal C}[[z^{\prime \pm 1}]]$ with values
in ${\mathbb C}[[z^{\prime \pm 1}]]$ given by $$ \left\langle
\sum_{m' \in {\mathbb Z}^{d}} f_{m'} z^{\prime m'}, \sum_{n' \in
{\mathbb Z}^{d}} g_{n'} z^{\prime -n'} \right\rangle_{{\mathcal
C}[[z^{\prime \pm 1}]] \times {\mathcal C}[[ z^{\prime \pm 1}]]} =
\sum_{m' \in {\mathbb Z}^{d}} \left[ \sum_{n' \in {\mathbb Z}^{d}}
\langle f_{m'-n'}, g_{n'} \rangle_{{\mathcal C}} \right] z^{\prime
m'}
$$
where the sum required to compute the coefficient of $z^{\prime
m'}$ is finite under the assumption that at least one of $f$ or
$g$ is a polynomial in $z_{1}, \dots, z_{d}$ and $z_{1}^{-1},
\dots, z_{d}^{-1}$. When this is the case, we write $\cH = \cH(K)$
and we say that {\em $K \in \cL(\cF)[[z^{\pm 1}, w^{\pm 1}]]$ is
the reproducing kernel for the formal reproducing kernel Hilbert
space $\cH(K)$}.

The following proposition extends the reproducing property
\eqref{reproduce'} to the case where a vector $u\in\mathcal{F}$ is
replaced with a formal power series $u(w)\in\mathcal{F}[[w^{\pm
1}]]$.
\begin{proposition}\label{P:repr-ext}
Let $\mathcal{F}$ be a Hilbert space, let
$K(z,w)=\sum_{n,m\in\mathbb{Z}^d}K_{n,m}z^nw^{-m}\in\mathcal{L(F)}[[z^{\pm
1},w^{\pm 1}]]$ be the reproducing kernel for a formal reproducing
kernel Hilbert space $\mathcal{H}(K)$, where
 we will identify $K(z,w)$ with
$\sum_{m\in\mathbb{Z}^d}K_m(z)w^{-m}\in\mathcal{H}(K)[w^{\pm 1}]$.
Let $u(w)=\sum_{m\in\mathbb{Z}^d}u_mw^{-m}\in\mathcal{F}[[w^{\pm
1}]]$ be such that the series
$\sum_{\ell\in\mathbb{Z}^d}K_{m-\ell}(z)u_\ell$ converges weakly
in $\mathcal{H}(K)$ for every $m\in\mathbb{Z}^d$. Then
$K(z,w)u(w)\in \mathcal{H}(K)[w^{\pm 1}]$, for every
$f(z)=\sum_{n\in\mathbb{Z}^d}f_nz^n\in\mathcal{H}(K)$ and every
$m\in\mathbb{Z}^d$ the series $\sum_{\ell\in\mathbb{Z}^d}\langle
f_{m-\ell},u_\ell\rangle_\mathcal{F}$ converges, and
\begin{equation}\label{E:repr-ext}
\langle
f(z),K(z,w)u(w)\rangle_{\mathcal{H}(K)\times\mathcal{H}(K)[[w^{\pm
1}]]}= \langle f(w),u(w)\rangle_{\mathcal{F}[[w^{\pm
1}]]\times\mathcal{F}[[w^{\pm 1}]]}.
\end{equation}
\end{proposition}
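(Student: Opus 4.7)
The plan is to reduce everything to the scalar reproducing identity $\langle f, K_n u\rangle_{\mathcal{H}(K)} = \langle f_n, u\rangle_{\mathcal{F}}$ (valid for $u \in \mathcal{F}$) by working coefficient-by-coefficient in the formal variable $w$. The whole argument is essentially a bookkeeping exercise combining the definition of the partially-defined pairing on $\mathcal{F}[[w^{\pm 1}]] \times \mathcal{F}[[w^{\pm 1}]]$, the weak-convergence hypothesis, and the reproducing property.

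First I would formally multiply the two Laurent series to get
\[
K(z,w)\,u(w) = \sum_{m,\ell \in \mathbb{Z}^d} K_m(z)\,u_\ell\,w^{-m-\ell},
\]
so that the coefficient of $w^{-m}$ equals $\sum_{\ell} K_{m-\ell}(z)\,u_\ell$. By hypothesis this series converges weakly in $\mathcal{H}(K)$; call its weak limit $(Ku)_m(z) \in \mathcal{H}(K)$. This already gives $K(z,w)u(w) \in \mathcal{H}(K)[[w^{\pm 1}]]$, which is the first claim. Unwinding the definition of the left-hand pairing in \eqref{E:repr-ext} then produces the formal series
\[
\bigl\langle f(z),\, K(z,w)u(w)\bigr\rangle_{\mathcal{H}(K) \times \mathcal{H}(K)[[w^{\pm 1}]]} = \sum_m \langle f, (Ku)_m\rangle_{\mathcal{H}(K)}\, w^m,
\]
and by weak convergence together with the reproducing property \eqref{reproduce},
\[
\langle f, (Ku)_m\rangle_{\mathcal{H}(K)} = \lim_{N\to\infty} \sum_{|\ell|\le N} \langle f, K_{m-\ell} u_\ell\rangle_{\mathcal{H}(K)} = \lim_{N\to\infty} \sum_{|\ell|\le N} \langle f_{m-\ell}, u_\ell\rangle_{\mathcal{F}}.
\]
This proves convergence of the scalar series $\sum_\ell \langle f_{m-\ell}, u_\ell\rangle_{\mathcal{F}}$ for every $m$ (the second claim) and identifies its sum with $\langle f, (Ku)_m\rangle_{\mathcal{H}(K)}$.

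To close the loop I would compare this with the right-hand side of \eqref{E:repr-ext}: plugging $f(w) = \sum_n f_n w^n$ and $u(w) = \sum_\ell u_\ell w^{-\ell}$ into the definition of the general pairing on $\mathcal{F}[[w^{\pm 1}]] \times \mathcal{F}[[w^{\pm 1}]]$, the coefficient of $w^m$ in $\langle f(w), u(w)\rangle$ is precisely $\sum_\ell \langle f_{m-\ell}, u_\ell\rangle_{\mathcal{F}}$, so both sides of \eqref{E:repr-ext} agree term by term as elements of $\mathbb{C}[[w^{\pm 1}]]$. The only subtlety worth flagging is that the pairing on $\mathcal{F}[[w^{\pm 1}]] \times \mathcal{F}[[w^{\pm 1}]]$ is only partially defined in general, but the weak-convergence hypothesis is exactly what forces every required scalar series to converge, so the right-hand side of \eqref{E:repr-ext} is automatically well-defined. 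No separate estimates are needed; the only real obstacle is ensuring that the three pairings (coefficient-wise Hilbert, left-hand side, and right-hand side) are interpreted consistently.
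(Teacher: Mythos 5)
Your proposal is correct and follows essentially the same route as the paper's own proof: both identify the coefficient of $w^{-m}$ in $K(z,w)u(w)$ as the weak limit $\sum_{\ell}K_{m-\ell}(z)u_\ell$, use weak convergence plus the reproducing property to pass from $\langle f, K_{m-\ell}u_\ell\rangle_{\mathcal{H}(K)}$ to $\langle f_{m-\ell},u_\ell\rangle_{\mathcal{F}}$ (which simultaneously yields convergence of the scalar series), and then match coefficients of $w^m$ on both sides of \eqref{E:repr-ext}. Your remark that the hypothesis is exactly what makes the otherwise partially-defined right-hand pairing well-defined is the same observation implicit in the paper's argument.
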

\begin{proof}
The first statement follows from the identity
$$K(z,w)u(w)=\sum_{m\in\mathbb{Z}^d}\Big(\text{weak}\,\sum_{\ell\in\mathbb{Z}^d}K_{m-\ell}(z)u_\ell\Big)w^{-m}.$$
Next, for every $f\in\mathcal{H}(K)$ and $m\in\mathbb{Z}^d$,
$$\sum_{\ell\in\mathbb{Z}^d}\langle
f(z),K_{m-\ell}(z)u_\ell\rangle_{\mathcal{H}(K)}=\sum_{\ell\in\mathbb{Z}^d}\langle
f_{m-\ell},u_\ell\rangle_{\mathcal{F}},$$ and since the sum on the
left-hand side converges, so does the sum on the right-hand side.
Notice that
$$\langle f(w),u(w)\rangle_{\mathcal{F}[[w^{\pm
1}]]\times\mathcal{F}[[w^{\pm 1}]]}=\sum_{m\in\mathbb{Z}^d}
\Big(\sum_{\ell\in\mathbb{Z}^d}\langle
f_{m-\ell},u_\ell\rangle_{\mathcal{F}}\Big)w^m.$$ On the other
hand,
\begin{align*}
&\langle
f(z),K(z,w)u(w)\rangle_{\mathcal{H}(K)\times\mathcal{H}(K)[[w^{\pm
1}]]}\\
& \quad =\Big\langle
f(z),\sum_{m\in\mathbb{Z}^d}\Big(\text{weak}\,\sum_{\ell\in\mathbb{Z}^d}K_{m-\ell}(z)u_\ell\Big)w^{-m}
\Big\rangle_{\mathcal{H}(K)\times\mathcal{H}(K)[[w^{\pm 1}]]}\\
& \quad =\sum_{m\in\mathbb{Z}^d}\Big\langle f(z),\text{weak}\,
\sum_{\ell\in\mathbb{Z}^d}K_{m-\ell}(z)u_\ell\Big\rangle_{\mathcal{H}(K)}
w^{m}\\
& \quad =\sum_{m\in\mathbb{Z}^d}\Big(\sum_{\ell\in\mathbb{Z}^d}
\langle f(z),K_{m-\ell}(z)u_\ell\rangle_{\mathcal{H}(K)}\Big) w^{m}\\
& \quad
=\sum_{m\in\mathbb{Z}^d}\Big(\sum_{\ell\in\mathbb{Z}^d}\langle
f_{m-\ell},u_\ell\rangle_{\mathcal{F}}\Big)w^m,
\end{align*}
and \eqref{E:repr-ext} follows.
\end{proof}

 The following theorem from \cite{FRKHS} characterizes
which formal power series
$$K \in \cL(\cF)[[z^{\pm 1},w^{\pm 1}]]
$$
arise as the reproducing kernel for some formal reproducing kernel
Hilbert space $\cH(K)$.

\begin{theorem} \label{T:FRKHS} (see \cite[Theorem 2.1]{FRKHS})
    Suppose that $\cF$ is a Hilbert space and that we are given a
    formal power series $K \in \cL(\cF)[[z^{\pm 1},w^{\pm 1}]]$.  Then
    the following are equivalent.
    \begin{enumerate}
    \item
    $K$ is the reproducing kernel for a uniquely determined FRKHS
    ${\mathcal H}(K)$ of formal power
    series in the commuting variables
    $$
    z^{\pm 1} = (z_{1}, \dots, z_{d},
    z_{1}^{-1}, \dots, z_{d}^{-1})
    $$
    with
    coefficients in $\cF$.
    \item There is an auxiliary Hilbert space ${\mathcal H}'$ and a
formal
    power series $H(z) \in {\mathcal L}({\mathcal H}', {\mathcal
    F})[[z^{\pm 1}]]$ so that
    $$
    K(z,w) = H(z) H(w)^{*}
    $$
    where we use the convention
\begin{equation}  \label{convention}
    (w^{n})^{*} = w^{-n}, \qquad
    H(w)^{*} = \sum_{n \in \bbZ^{d}} H_{n}^{*} w^{-n} \text{ if }
    H(z) = \sum_{n \in \bbZ^{d}} H_{n} z^{n}.
  \end{equation}

    \item $K(z,w) = \sum_{n,n' \in \bbZ^{d}} K_{n,n'} z^{n} w^{-n'}$
    is a {\em positive kernel} in the sense that
    $$ \sum_{n,n'
    \in {\mathbb Z}^{d}} \langle K_{n,n'} u_{n'}, u_{n}
    \rangle_{{\mathcal F}} \ge 0
    $$
    for all finitely supported ${\mathcal F}$-valued functions $n \mapsto
    u_{n}$ on ${\mathbb Z}^{d}$.
    \end{enumerate}
    Moreover, in this case the FRKHS ${\mathcal H}(K)$ can be defined
    directly in terms of the formal power series $H(z)$ appearing in
    condition (2) by
    $$
    {\mathcal H}(K) = \{ H(z) h \colon h \in {\mathcal H}' \} $$
    with norm taken to be the pullback norm $$ \| H(z)h\|_{{\mathcal
    H}(K)} = \| Q h\|_{{\mathcal H}'} $$
    where $Q$ is the orthogonal projection of ${\mathcal H}'$ onto the
    orthogonal complement of the kernel of the map
    $M_H \colon {\mathcal H}' \mapsto {\mathcal F}[[z^{\pm 1}]]$ given by
    $$ M_H \colon h \mapsto H(z) h.
    $$
    \end{theorem}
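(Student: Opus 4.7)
This is the formal Laurent-series analog of the Moore--Aronszajn theorem, so the structure of the argument is to run (1)$\Rightarrow$(2)$\Rightarrow$(3)$\Rightarrow$(1), with the ``Moreover'' clause falling out of the construction in (2)$\Rightarrow$(1).

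For (1)$\Rightarrow$(2), I would take $\cH' := \cH(K)$ itself and let $H_n \in \cL(\cH(K), \cF)$ be the continuous coefficient-extraction functional $H_n \colon f \mapsto f_n$, which exists by the FRKHS hypothesis. The reproducing property \eqref{reproduce} immediately identifies its adjoint as $H_n^{*} u = K_n u \in \cH(K)$, and a one-line computation shows
$$
H_n H_{n'}^{*} u = H_n (K_{n'} u) = (K_{n'} u)_n = K_{n,n'} u,
$$
so with $H(z) = \sum_n H_n z^n$ we obtain $K(z,w) = H(z) H(w)^{*}$ using the convention \eqref{convention}. For (2)$\Rightarrow$(3), given such a factorization and finitely supported $\{u_n\}$, one rearranges
$$
\sum_{n,n'} \langle K_{n,n'} u_{n'}, u_n \rangle_\cF = \sum_{n,n'} \langle H_{n'}^{*} u_{n'}, H_n^{*} u_n \rangle_{\cH'} = \Big\| \sum_n H_n^{*} u_n \Big\|_{\cH'}^2 \ge 0,
$$
which is condition (3).

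The substantive direction is (3)$\Rightarrow$(1). I would start with the vector space $\cH_0$ of finite linear combinations $\sum_{n'} K_{n'}(z) u_{n'}$, where $K_{n'}(z) := \sum_m K_{m,n'} z^m \in \cL(\cF)[[z^{\pm 1}]]$ is the coefficient of $w^{-n'}$ in $K(z,w)$. Viewing these as formal Laurent series with $\cF$-coefficients, define the pre-inner product
$$
\Big\langle \sum_{n'} K_{n'} u_{n'},\, \sum_{m'} K_{m'} v_{m'} \Big\rangle := \sum_{n',m'} \langle K_{m',n'} u_{n'}, v_{m'} \rangle_\cF,
$$
whose positive semidefiniteness is exactly hypothesis (3). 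Cauchy--Schwarz plus reproducing already forces any null vector $g \in \cH_0$ to have every coefficient zero, so the semi-norm separates the formal-series structure; quotienting and completing produces a Hilbert space $\cH$. The main obstacle, and the step that genuinely uses the formal-power-series framework, is to show that every element of the completion $\cH$ still corresponds to a bona fide element of $\cF[[z^{\pm 1}]]$, i.e.\ that the coefficient functionals $g \mapsto g_n$ extend continuously from $\cH_0$ to $\cH$. For this I would use the standard reproducing-kernel Cauchy--Schwarz estimate
$$
\| g_n \|_\cF = \sup_{\|u\|_\cF \le 1} |\langle g_n, u \rangle_\cF| = \sup_{\|u\|_\cF \le 1} |\langle g, K_n u \rangle_{\cH_0}| \le \|K_{n,n}\|^{1/2}\, \|g\|_{\cH_0},
$$
which gives the needed continuous extension of each $\pi_n$, realizes $\cH$ as a space of formal Laurent series with continuous coefficient maps, and verifies the reproducing property on the dense subspace $\cH_0$ (hence everywhere).

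Finally, the ``Moreover'' assertion is essentially a repackaging of (2)$\Rightarrow$(1): given any factorization $K(z,w) = H(z) H(w)^{*}$, define $M_H \colon \cH' \to \cF[[z^{\pm 1}]]$ by $h \mapsto H(z) h$ and let $Q$ be the projection onto $(\ker M_H)^\perp$. Equipping $\operatorname{range}(M_H)$ with the pullback norm $\|H(z) h\| = \|Qh\|_{\cH'}$ makes $M_H$ a coisometry, so the range is Hilbert; the observation that $H_n^{*} u \in (\ker M_H)^\perp$ for every $u \in \cF$ (since $v \in \ker M_H$ forces $H_m v = 0$ for all $m$) lets one read off $K_{n'} u = M_H(H_{n'}^{*} u)$ and verify the reproducing identity directly. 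Uniqueness of the FRKHS with given reproducing kernel — proved by showing $\operatorname{span}\{K_{n'} u\}$ is dense in any such space — then shows this pullback construction recovers $\cH(K)$ independently of the choice of factorization.
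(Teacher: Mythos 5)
Your proof is correct and follows essentially the same route as the source the paper cites for this theorem (\cite[Theorem 2.1]{FRKHS}): the canonical factorization with $\cH'=\cH(K)$ and $H_n=\pi_n$, $H_{n'}^*u=K_{n'}u$ is exactly what the paper records in Remark \ref{R:special_Kolmogorov}, the Moore--Aronszajn-type completion argument for (3)$\Rightarrow$(1) with the bound $\|g_n\|_\cF\le\|K_{n,n}\|^{1/2}\|g\|$ is the standard one, and the pullback/coisometry description of $\cH(K)$ from an arbitrary Kolmogorov factorization is the intended content of the ``Moreover'' clause. No genuine gaps; only minor points left implicit (well-definedness of the pre-inner product independent of the chosen representation, and injectivity of the coefficient realization of the completion) are routine and follow from the same reproducing-kernel computations you already use.
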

\begin{remark} \label{R:special_Kolmogorov}
We can always choose a factorization $K(z,w)=H(z)H(w)^*$ with
$\ker M_H=\{0\}$ so that $M_H$ is an isometry of ${\mathcal H}'$
onto ${\mathcal H}(K)$. (A ``canonical'' choice is ${\mathcal
H}'={\mathcal H}(K)$ and $H_n = \pi_n \colon \sum_{n \in {\mathbb
Z}^d} f_n z^n \mapsto f_n$; it is then easily seen that $H_{n'}^*
u = \sum_{n \in {\mathbb Z}^d} (K_{n,n'} u) z^n$.)
\end{remark}
When the kernel $K \in \cL(\cF)[[z^{\pm 1},w^{\pm 1}]]$ satisfies
any of the equivalent conditions in Theorem \ref{T:FRKHS}, we
shall write
$$
   K \pos 0.
$$

One can also express the FRKHS $\cH(K)$ directly from the Laurent
series coefficients $K(z,w) = \sum_{n,n' \in \bbZ^{d}} K_{n,n'}
z^{n} w^{-n'}$ in a more operator-theoretic way as follows.
Suppose that $[K_{n,n'}]_{n,n'}$ is a matrix of operators on the
Hilbert space $\cF$ with rows and columns indexed by $\bbZ^{d}$.
Note that any such operator-matrix defines a linear operator,
denoted by $K$ with the expectation that this will cause no
confusion, from the space of polynomials $\cF[z^{\pm 1}]$ into the
space of formal power series $\cF[[z^{\pm 1}]]$ according to the
formula
$$
  (Kp)(z) = \sum_{n \in \bbZ^{d}} \left( \sum_{n' \in \bbZ^{d}}
  K_{n,n'} p_{n'} \right) z^{n} \text{ if } p(z) = \sum_{n' \in
  \bbZ^{d}} p_{n'} z^{n'}.
$$
Note that this formula involves only finite sums under the
assumption that $p(z)$ is a polynomial (so $p_{n'} = 0$ for all
but finitely many $n' \in \bbZ^{d}$).  We also note that there is
a natural pairing between $\cF[[z^{\pm 1}]]$ and $\cF[z^{\pm 1}]$:
\begin{equation*}
    \langle f(z), p(z) \rangle_{L^2} =
    \sum_{n \in \bbZ^{d}} \langle f_{n}, p_{n}\rangle_{\cF}
\end{equation*}
if $f(z) = \sum_{n \in \bbZ^{d}}f_{n} z^{n} \in \cF[[z^{\pm 1}]]$
    and  $p(z) = \sum_{n \in \bbZ^{d}} p_{n} z^{n} \in \cF[z^{\pm
1}]$. We say that the operator $K = [K_{n,n'}]_{n,n' \in
\cL(\cF)}$ is {\em positive-semidefinite} if
$$ \langle K p, p \rangle_{L^2} \ge 0
\text{ for all } p \in \cF[z^{\pm 1}].
$$

\begin{theorem} \label{T:FRKHS-Laurent}
    (See \cite[Proposition 2.2]{FRKHS}.)
    Let $[K_{n,n'}]_{n,n'}$ be a matrix of operators on the Hilbert
space
    $\cF$ with rows and columns indexed by $\bbZ^{d}$, let $K =
    [K_{n,n'}]_{n,n' \in \bbZ^{d}}$ be the associated operator from
    $\cF[z^{\pm 1}]$ into $\cF[[z^{\pm 1}]]$, and define a kernel
   $K(z,w) \in \mathcal{L}(\cF)[[z^{\pm 1}, w^{\pm 1}]]$ by
  $$ K(z,w) = \sum_{n,n' \in \bbZ^{d}} K_{n,n'} z^{n} w^{-n'}.
 $$
 Then the operator $K$ is positive-semidefinite if and only if the
    associated kernel $K(z,w)$ is a positive kernel (i.e., satisfies
    any one of the three equivalent conditions in Theorem
\ref{T:FRKHS}).
    In this case, the associated FRKHS $\cH(K)$ can be described as
    the closure of the linear manifold $K  \cF[z^{\pm 1}] \subset
    \cF[[z^{\pm 1}]]$ in the $\cH(K)$-inner product given by
 \begin{equation*}
  \langle Kp, Kp' \rangle_{\cH(K)} =
  \langle K p, p' \rangle_{\cF[[z^{\pm 1}]] \times \cF[z^{\pm 1}]}.
  \end{equation*}
  \end{theorem}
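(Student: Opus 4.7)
My plan is to reduce the theorem to Theorem \ref{T:FRKHS} and the reproducing property \eqref{reproduce}. First I would unpack the pairing $\langle Kp, p\rangle_{L^2}$ for $p(z) = \sum_{n'} p_{n'} z^{n'} \in \cF[z^{\pm 1}]$: by the definition of $K$ as an operator on polynomials,
\begin{equation*}
\langle Kp, p \rangle_{L^2} = \sum_{n \in \bbZ^d} \Big\langle \sum_{n' \in \bbZ^d} K_{n,n'} p_{n'}, p_n \Big\rangle_\cF = \sum_{n,n' \in \bbZ^d} \langle K_{n,n'} p_{n'}, p_n \rangle_\cF,
\end{equation*}
where only finitely many summands are nonzero. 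Identifying the coefficients $\{p_n\}$ with an arbitrary finitely supported $\cF$-valued function on $\bbZ^d$, this sum is nonnegative for all $p$ if and only if condition (3) of Theorem \ref{T:FRKHS} holds for $K(z,w)$. This gives the first equivalence and, by Theorem \ref{T:FRKHS}, produces a uniquely determined FRKHS $\cH(K)$.

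Next I would describe $\cH(K)$ explicitly. Writing $K(z,w) = \sum_{n \in \bbZ^d} K_n(z) w^{-n}$ as in the definition of the reproducing kernel gives $K_n(z) = \sum_m K_{m,n} z^m$, so for the monomial polynomial $p(z) = z^n u$ with $u \in \cF$ one has $(Kp)(z) = K_n(z) u$. By linearity, for a general $p(z) = \sum_{n'} p_{n'} z^{n'} \in \cF[z^{\pm 1}]$ we obtain the finite sum
\begin{equation*}
(Kp)(z) = \sum_{n' \in \bbZ^d} K_{n'}(z)\, p_{n'} \in \cH(K),
\end{equation*}
so $K\cF[z^{\pm 1}] \subset \cH(K)$. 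The reproducing property \eqref{reproduce} then gives $\langle K_n u, K_{n'} v\rangle_{\cH(K)} = \langle K_{n',n} u, v\rangle_\cF$, and summing yields
\begin{equation*}
\langle Kp, Kp' \rangle_{\cH(K)} = \sum_{n,n'} \langle K_{n',n} p_n, p'_{n'} \rangle_\cF = \sum_{n'} \langle (Kp)_{n'}, p'_{n'} \rangle_\cF = \langle Kp, p' \rangle_{L^2},
\end{equation*}
which is the claimed inner product formula.

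Finally, I would establish density of $K\cF[z^{\pm 1}]$ in $\cH(K)$. If $f \in \cH(K)$ is orthogonal to every $K_n u = K(z^n u)$, then by \eqref{reproduce} each Laurent coefficient $f_n$ satisfies $\langle f_n, u\rangle_\cF = \langle f, K_n u\rangle_{\cH(K)} = 0$ for all $u \in \cF$, forcing $f = 0$.

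The argument is essentially a bookkeeping exercise once the indexing conventions are pinned down; the only mild subtlety I anticipate is keeping the two roles of $K$ (as an operator on polynomials and as the reproducing kernel) visually distinct and verifying that the identification $(Kp)(z) = \sum_{n'} K_{n'}(z) p_{n'}$ correctly turns the $L^2$-pairing $\langle Kp, p'\rangle_{L^2}$ into the $\cH(K)$-inner product via the reproducing formula.
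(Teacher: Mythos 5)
Your proof is correct: the identification of $\langle Kp,p\rangle_{L^2}$ with the quadratic form in condition (3) of Theorem \ref{T:FRKHS}, the observation that $K(z^{n'}u)=K_{n'}(z)u$ so that $K\cF[z^{\pm 1}]$ is the span of kernel functions, and the use of the reproducing property \eqref{reproduce} both for the inner-product formula and for density are exactly the intended argument. The paper itself gives no internal proof (it cites \cite[Proposition 2.2]{FRKHS}), and your route is the standard one that reference follows, so there is nothing to add.
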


   Given any positive formal kernel
   $$
    K(z,w) = \sum_{n,m \in {\mathbb Z}^{d}} K_{n,m} z^{n} w^{-m} \in
    \cL(\cF)[[z^{\pm 1}, w^{\pm 1}]],
    $$
    we have already introduced the notation
    \begin{equation}  \label{notation}
    K_m(z) = \sum_{n \in {\mathbb Z}^{d}}
    K_{n,m} z^{n} \in \cL(\cF)[[z^{\pm}]].
    \end{equation}
    Then, in terms of the associated formal reproducing kernel Hilbert
    space $\cH(K)$ as in Theorem \ref{T:FRKHS}, we then have
    $$
    K_m h \in \cH(K) \text{ for each } m \in {\mathbb Z}^{d} \text{
and } h\in\mathcal{F}.
    $$

\subsection{Multipliers between formal reproducing kernel Hilbert
spaces} \label{S:multipliers}

Let now $K \in \cL(\cF)[[z^{\pm 1},w^{\pm 1}]]$ and $K' \in
\cL(\cF')[[z^{\pm 1},w^{\pm 1}]]$ be two arbitrary positive
kernels with associated FRKHSs $\cH(K)$ and $\cH(K')$. We say that
the formal power series
$$
S(z) = \sum_{n \in \bbZ^{d}} S_{n} z^{n}  \in \cL(\cF,
\cF')[[z^{\pm 1}]]
$$
is a {\em bounded multiplier}  from $\cH(K)$ into $\cH(K')$
(written $S \in \cM(K,K')$) if, for each $f \in \cH(K)$ the
product of formal Laurent series $S(z) \cdot f(z)$ is
well-defined, i.e.,
\begin{equation}\label{E:mult-coefs}
(Sf)_{n} = \sum_{\ell \in \bbZ^{d}} S_{n - \ell} f_{\ell}
\end{equation}
converges in  the weak topology on $\cF'$ for each $n \in
\bbZ^{d}$, the resulting series $(Sf)(z) = \sum_{n} (S f)_{n}
z^{n}$ is in $\cH(K')$, and the associated operator $M_{S} \colon
\cH(K) \to \cH(K')$ is bounded as an operator from $\cH(K)$ into
$\cH(K')$.

As explained in \cite{FRKHS}, many of the results concerning
formal reproducing kernel Hilbert spaces can be reduced to the
corresponding results for the classical case by observing that any
formal reproducing kernel Hilbert space can be viewed as a
classical reproducing kernel Hilbert space over ${\mathbb Z}^{d}$
with reproducing kernel $K(n,m) = K_{n,m}$ corresponding to
evaluation of the $n$-th Laurent coefficient $f_{n}$ (i.e., the
value of $f$ at the ``point'' $n$) of a generic element $f(z) =
\sum_{n \in {\mathbb Z}^{d}} f_{n} z^{n}$ of the space.   While
this technique works well for some results, its applicability for
the analysis of bounded multipliers is quite limited since the
multipliers of interest in the formal setting act via convolution
(rather then pointwise or Schur) multiplication when expressed in
terms of Laurent coefficients.  Since the convolution
multiplication involves a possibly infinite sum, one is
necessarily confronted with convergence issues (see Remark
\ref{R:trap} below)  and much more elaborate arguments to verify
results whose analogues in the classical setting are
straightforward.  A first instance of this phenomenon is the
following proposition.

\begin{proposition} \label{P:convergences}
Suppose that $K$ is positive kernel with coefficients in
$\cL(\cF)$, with associated FRKHS $\cH(K)$, and suppose that $S$
is a formal power series in $\cL(\cF, \cF')[[z^{\pm 1}]]$. If for
each $f \in \cH(K)$ the product of formal Laurent series $S(z)
\cdot f(z)$ is well-defined as in \eqref{E:mult-coefs}, then $S(z)
K(z,w) S(w)^{*}$ is a well-defined formal power series
\begin{equation} \label{eq:sum_carefully-1}
 S(z) K(z,w) S(w)^{*} = \sum_{n,m} \Gamma_{n,m} z^{n} w^{-m}
\end{equation}
with coefficients $\Gamma_{n,m}$ given by either of the following
two iterated sums
\begin{multline} \label{eq:sum_carefully-2}
 \Gamma_{n,m} = {\rm weak}\,\sum_{\ell}\Big({\rm weak}\,\sum_{\ell'}
S_{n-\ell} K_{\ell, \ell'} S_{m -
 \ell'}^{*}\Big)\\
 ={\rm weak}\,\sum_{\ell'}\Big( {\rm weak}\,\sum_{\ell}S_{n-\ell}
K_{\ell, \ell'} S_{m -
 \ell'}^{*}\Big),\quad n,m\in\mathbb{Z}^d.
\end{multline}
\end{proposition}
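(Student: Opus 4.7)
The plan is to use a Kolmogorov factorization of $K$ to reduce both iterated weak sums in \eqref{eq:sum_carefully-2} to a single composition of bounded operators, and then to push through the bookkeeping with weak-to-weak continuity. By Theorem \ref{T:FRKHS} together with Remark \ref{R:special_Kolmogorov} I choose $H(z)=\sum_n H_n z^n\in\cL(\cH',\cF)[[z^{\pm 1}]]$ with $K(z,w)=H(z)H(w)^*$, so $K_{\ell,\ell'}=H_\ell H_{\ell'}^*$, and with $M_H\colon h\mapsto H(z)h$ an isometric isomorphism $\cH'\to\cH(K)$. Applying the hypothesis to $f(z)=H(z)h\in\cH(K)$ then gives, for every $h\in\cH'$ and every $n\in\bbZ^d$, the weakly convergent sum
\[
T_n h \,:=\, \text{weak}\,\sum_{\ell}S_{n-\ell}H_\ell h \ \in\cF',
\]
which defines a linear map $T_n\colon\cH'\to\cF'$.

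The first substantive step is to prove that $T_n$ is bounded. The finite partial sums $T_n^{(F)}:=\sum_{\ell\in F}S_{n-\ell}H_\ell$, indexed by finite $F\subset\bbZ^d$, are bounded operators that converge to $T_n$ in the pointwise weak sense on $\cH'$. Since the scalar series $\sum_\ell\langle S_{n-\ell}H_\ell h,v\rangle$ converges unconditionally for each pair $(h,v)$, we have $\sup_F|\langle T_n^{(F)}h,v\rangle|<\infty$; uniform boundedness applied in the bidual of $\cF'$ gives norm boundedness of $\{T_n^{(F)}h\}_F$ for each $h$, and a second application of uniform boundedness to $\{T_n^{(F)}\}_F$ yields $\sup_F\|T_n^{(F)}\|<\infty$. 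Lower semicontinuity of the norm under weak limits then gives $\|T_n h\|\le\sup_F\|T_n^{(F)}\|\cdot\|h\|$. With $T_n$ bounded, its Hilbert space adjoint $T_m^*\colon\cF'\to\cH'$ exists, and pairing the definition of $T_m$ against any $v\in\cF'$ immediately shows
\[
T_m^* v \,=\, \text{weak}\,\sum_{\ell'}H_{\ell'}^*S_{m-\ell'}^* v \ \in\cH'.
\]

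With these identities I set $\Gamma_{n,m}:=T_n T_m^*\in\cL(\cF')$ and verify both lines of \eqref{eq:sum_carefully-2} by substituting $K_{\ell,\ell'}=H_\ell H_{\ell'}^*$ and passing bounded operators through weak sums. For the first iterated formula, with $\ell$ fixed the bounded operator $S_{n-\ell}H_\ell$ can be pulled through the inner weak sum to yield $S_{n-\ell}H_\ell T_m^*$; the outer weak sum over $\ell$, applied to a vector $v\in\cF'$, then becomes the series defining $T_n$ acting on $T_m^* v$, producing $T_n T_m^* v=\Gamma_{n,m}v$. The second iterated formula is obtained symmetrically: with $\ell'$ fixed, the inner weak sum over $\ell$ is $T_n H_{\ell'}^* S_{m-\ell'}^*$ by the defining identity of $T_n$, and the outer weak sum over $\ell'$ is $T_n$ applied to the series defining $T_m^*$, again giving $T_n T_m^*$.

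The main obstacle is the boundedness of $T_n$. The hypothesis only furnishes pointwise weak convergence, whereas both the existence of $T_m^*$ and each operator-versus-weak-sum interchange in the argument above require operator-theoretic boundedness. Two applications of the uniform boundedness principle — one converting weak convergence of individual orbits to norm boundedness, the other upgrading pointwise operator-norm bounds to a uniform bound — are exactly what turn the pointwise hypothesis into the operator control needed, after which the remainder is an essentially algebraic rearrangement of weakly convergent sums.
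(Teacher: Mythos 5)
Your proof is correct and follows essentially the same route as the paper's: factor $K(z,w)=H(z)H(w)^{*}$, note that the multiplier hypothesis makes $\sum_{\ell}S_{n-\ell}H_{\ell}$ weakly convergent as an operator, and pass weak limits through bounded factors (the paper's ``general principle'' on iterated weak limits of products $A_{L}B_{L'}$) to identify both iterated sums with $T_{n}T_{m}^{*}$. The only real difference is that you make explicit, via Banach--Steinhaus, the boundedness of the limit operator $T_{n}$, which the paper leaves implicit when asserting weak operator convergence in $\cL(\cH',\cF')$.
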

\begin{proof}
Indeed, write $K(z,w) = \sum_{n,m \in \bbZ^{d}} K_{n,m} z^{n}
w^{-m}$ where $K_{n,m}$ has the factored form
$$
K_{n,m} = H_{n} H_{m}^{*}
$$
for operators $H_{n} \in \cL(\cH', \cF)$, with $\cH'$  some
auxiliary Hilbert space.  Moreover, as explained in Theorem
\ref{T:FRKHS}, $f(z) = H(z)h=\Big(\sum_{n \in \bbZ^{d}}
H_{n}z^n\Big) h \in \cH(K)$ for each $h \in \cH'$.  The assumption
that $S$ is a multiplier for $\cH(K)$ tells us that the series
$\sum_{n \in \bbZ^{d}} S_{n - \ell} H_{\ell}$ converges in the
weak operator topology of $\cL(\cH', \cF)$. Then $\sum_{\ell' \in
\bbZ^{d}}  H_{\ell'}^{*}S_{m - \ell'}^{*}$ converges in the weak
operator topology. We note the following general principle:  {\em
if $\{A_{L}\}_{L \in {\mathbb Z}_{+}}$ and $\{B_{L'}\}_{L' \ge 0}$
are sequences of operators for which the weak limits
$\operatorname{weak} \lim_{L \to \infty} A_{L} = A$ and
$\operatorname{weak} \lim_{L' \to \infty} B_{L'} = B$ exist, then
$$
\operatorname{weak} \lim_{L \to \infty} \left( \operatorname{weak}
\lim_{L' \to \infty} A_{L} B_{L'} \right) = \operatorname{weak}
\lim_{L \to \infty} A_{L}B = AB
$$
and similarly,
$$
\operatorname{weak} \lim_{L' \to \infty} \left(\operatorname{weak}
\lim_{L \to \infty} A_{L} B_{L'} \right) = \operatorname{weak}
\lim_{L' \to \infty} A B_{L'} = AB,
$$
i.e.,
$$
\operatorname{weak} \lim_{L\to \infty} \left( \operatorname{weak}
\lim_{L' \to \infty} A_{L} B_{L'}\right) = AB =
\operatorname{weak} \lim_{L' \to \infty} \left(
\operatorname{weak} \lim_{L \to \infty} A_{L} B_{L'} \right).
$$}
For each fixed $n$ and $m$, we apply this general principle to the
operator sequences
$$
A_{L} = \sum_{\ell \colon |\ell| \le L} S_{n-\ell} H_{\ell}, \quad
B_{L'} = \sum_{\ell' \colon |\ell'| \le L'} H^{*}_{\ell'} S^{*}_{m
- \ell'}
$$
where in general we set
\begin{equation}  \label{def|ell|}
|\ell| = \sum_{i=1}^{d} |\ell_{i}| \text{ for } \ell = (\ell_{1},
\dots, \ell_{d}) \in {\mathbb Z}^{d}.
\end{equation}
This enables us to conclude that
 \begin{multline*}
 {\rm weak}\,\sum_{\ell}\Big({\rm weak}\,\sum_{\ell'} S_{n-\ell}
H_\ell H_{\ell'}^* S_{m -
 \ell'}^{*}\Big)\\
 ={\rm weak}\,\sum_{\ell'}\Big( {\rm weak}\,\sum_{\ell}S_{n-\ell}
H_\ell H_{\ell'}^* S_{m -
 \ell'}^{*}\Big)=\Gamma_{n,m},
\end{multline*}
as asserted\footnote{\label{F:old_sins} The statement of
\cite[Theorem 2.6]{FRKHS} is flawed in not specifying the topology
on ${\mathcal F}$ making the product $S(z)f(z)$ well-defined and
in using the strong operator topology and not specifying the order
of summation making the product $S(z) K(z,w) S(w)^*$ well-defined.
It is also flawed in asserting that the converse of Proposition
\ref{P:convergences}  holds whereas in reality this is false.  We
notice that the proof of \cite[Theorem 2.5]{FRKHS} is incorrect as
well, since it confuses coefficientwise (Schur) multiplication
with the infinite-sum convolution multiplication required in the
FRKHS setting here;  for a correct proof, see Proposition
\ref{P:pullback} below.}
\end{proof}

\begin{remark} \label{R:trap}
    While this formalism of formal Laurent series is often convenient
    for computations, it does have its traps. First of all, the
product of two formal power series is not always defined,
    and thus the associative and distributive properties may be
meaningless for some series. On the other hand, even
    when the relevant products of the series are well defined, they
may violate the associative law.
    As an example, let
    $k_{\Sz}(z,w) = \sum_{n \in \bbZ^{d}} z^{n} w^{-n}$ be the
    bilateral Szeg\H{o}
    kernel (see Section \ref{S:Szego} below).  Then is it easily
    verified that
    $$ \left( \sum_{\ell = 0}^{\infty} z_{1}^{\ell}
    w_{1}^{-\ell}\right) (1 - z_{1}w_{1}^{-1}) =1+
    \sum_{\ell=1}^{\infty}(1-1) z_{1}^{\ell} w_{1}^{-\ell} = 1
    $$
 and hence
 $$ \left[ \left(\sum_{\ell=0}^{\infty} z_{1}^{\ell}
 w_{1}^{-\ell}\right) (1 - z_{1} w_{1}^{-1}) \right] \cdot
k_{\Sz}(z,w)
 = 1 \cdot k_{\Sz}(z,w) = k_{\Sz}(z,w)
 $$
 while, on the other hand,
 \begin{multline*}
  \left( \sum_{\ell=0}^{\infty} z_{1}^{\ell} w_{1}^{-\ell}\right)
 \cdot \left[ (1 - z_{1}w_{1}^{-1}) k_{\Sz}(z,w) \right]=
 \left( \sum_{\ell=0}^{\infty} z_{1}^{\ell} w_{1}^{-\ell}\right)
  \sum_{n \in \bbZ^{d}}(1-1) z^{n} w^{-n}\\
  = \left( \sum_{\ell=0}^{\infty} z_{1}^{\ell} w_{1}^{-\ell}\right)
 \cdot 0 = 0,
 \end{multline*}
 and we have a violation of the associative law.

 If $F$, $G$, and $H$ are formal power series such that $(F+G)H$ is
defined while $FH$ or $GH$ are not, then the distributive property
does not make sense. E.g., for $F=-G=k_{Sz}$ and
$H=\sum_{\ell=0}^\infty z_{1}^{\ell} w_{1}^{-\ell}$, we have
$(F+G)H=0$ while the product
$$FH= \left(\sum_{n \in \bbZ^{d}} z^{n} w^{-n}\right) \left(
\sum_{\ell=0}^{\infty} z_{1}^{\ell} w_{1}^{-\ell} \right)=
 \sum_{m\in\mathbb{Z}^d}\left(\sum_{\ell=0}^\infty 1\right) z^{m}
w^{-m}
 $$
 is not defined. On the other hand, if both $FH$ and $GH$ are
defined, then so is $(F+G)H$, and $(F+G)H=FH+GH$. For simplicity,
we will show this for the series in $z^{\pm 1}$ only (the proof
for the case of series in $z^{\pm 1},w^{\pm 1}$ is analogous),
$F=F(z)$, $G=G(z)$, $H=H(z)$:
 \begin{multline*}
F(z)H(z)+G(z)H(z)=\sum_{n\in\mathbb{Z}^d}\left(\sum_{\ell\in\mathbb{Z}^d}
F_{n-\ell}H_\ell\right)z^n+
\sum_{n\in\mathbb{Z}^d}\left(\sum_{\ell\in\mathbb{Z}^d}G_{n-\ell}H_\ell\right)
z^n\\
=\sum_{n\in\mathbb{Z}^d}\left(\sum_{\ell\in\mathbb{Z}^d}F_{n-\ell}H_\ell+
\sum_{\ell\in\mathbb{Z}^d}G_{n-\ell}H_\ell\right)z^n
=\sum_{n\in\mathbb{Z}^d}\left(\sum_{\ell\in\mathbb{Z}^d}(F_{n-\ell}+G_{n-\ell})
H_\ell\right)z^n\\
 =[F(z)+G(z)]H(z).
 \end{multline*}
\end{remark}

    A well known result in the classical
    reproducing-kernel-Hilbert-space
    setting is:  {\em if the $\cL(\cF, \cF')$-valued function
    $S$ is such that $S(z) f(z) \in \cH(K')$ for each $f \in \cH(K)$
    (so $M_{S} \colon f(z) \mapsto S(z) \cdot f(z)$ is well defined
    as an operator from $\cH(K)$ to $\cH(K')$), then necessarily
    $M_{S}$ is bounded (i.e., $\| M_{S}\|_{\cL(\cH(K), \cH(K'))} < \infty$).}
    Indeed, it is easily verified that $M_{S}$ is closed and then the
    result follows from the Closed Graph Theorem (see
    e.g.~\cite[page 51]{Rudin}).  For the FRKHS setting, the parallel result
    holds, but under the hypothesis that the series $\sum_{\ell}
    S_{n-\ell} f_{\ell}$ defining the coefficients of $S(z) \cdot
    f(z)$ converges in the norm (not just the weak) topology of
    $\cF'$, and as a consequence of two applications of the
    Banach-Steinhaus Theorem rather than of the Closed Graph
    Theorem.
    Also it is convenient to assume that the coefficient
    space $\cF'$ is finite-dimensional, although this hypothesis can
    be weakened (see Remark \ref{R:auto-bded} below).
    In detail, we have the following result.

    \begin{theorem}   \label{T:auto-bded}
    Let $K(z,w) \in \cL(\cF)[[ z^{\pm 1}, w^{\pm 1}]]$, $K'(z,w)
    \in \cL(\cF')[[z^{\pm 1} w^{ \pm 1} ]]$ be formal positive
    kernels, and let $S(z) \in \cL(\cF, \cF')[[z,w]]$.  Assume
    that,
    for all $n$ and for all $f \in \cH(K)$, the infinite series
    $(Sf)_{\ell}: = \sum_{m \in {\mathbb Z}^{d}} S_{\ell - m} f_{m}$
    converges in the norm of $\cF'$, and that the resulting
    formal Laurent series $Sf: = \sum_{\ell \in {\mathbb Z}^{d}} (Sf)_{\ell}
    z^{\ell}$ is in $\cH(K')$.  Assume also that $\dim \cF' <
    \infty$. Then $M_{S} \colon f \mapsto S f$ is a bounded
    linear operator form $\cH(K)$ into $\cH(K')$.
    \end{theorem}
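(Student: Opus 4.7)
The plan is to mimic the classical closed-graph argument for multipliers by two applications of Banach--Steinhaus: the first controls the convolution series defining each Laurent coefficient of $Sf$, and the second converts these coefficientwise bounds into an operator bound on $M_S$ by testing against a dense family in $\cH(K')$.

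\emph{Step 1 (first Banach--Steinhaus).} Fix $\ell\in\bbZ^{d}$ and $u\in\cF'$, and consider the partial-sum functionals on $\cH(K)$,
\[
\phi^{(N)}_{\ell,u}(f) := \sum_{|m|\leq N}\langle S_{\ell-m}f_m,u\rangle_{\cF'}
           = \sum_{|m|\leq N}\langle f_m, S^{*}_{\ell-m}u\rangle_{\cF}.
\]
Each $\phi^{(N)}_{\ell,u}$ is a finite combination of the bounded functionals $f\mapsto\langle f_m,v\rangle_{\cF}$, bounded because the Laurent-coefficient map $\pi_m$ is continuous on the FRKHS $\cH(K)$. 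The hypothesis that $\sum_m S_{\ell-m}f_m$ converges in the norm of $\cF'$ gives pointwise convergence $\phi^{(N)}_{\ell,u}(f)\to\phi_{\ell,u}(f):=\langle(Sf)_\ell,u\rangle_{\cF'}$ for every $f$, so the family $\{\phi^{(N)}_{\ell,u}\}_N$ is pointwise bounded on $\cH(K)$. Banach--Steinhaus then yields $\sup_N\|\phi^{(N)}_{\ell,u}\|<\infty$, and hence $\phi_{\ell,u}$ is bounded. Running $u$ over an orthonormal basis of the finite-dimensional space $\cF'$ upgrades this to boundedness of the operator $T_\ell\colon\cH(K)\to\cF'$, $T_\ell f:=(Sf)_\ell$; call its norm $C_\ell$.

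\emph{Step 2 (second Banach--Steinhaus).} Let $D\subset\cH(K')$ be the linear span of the reproducing-kernel elements $\{K'_\ell u:\ell\in\bbZ^{d},\ u\in\cF'\}$; the reproducing property shows $D^{\perp}=\{0\}$, so $D$ is dense in $\cH(K')$. For $h=K'_\ell u$, the reproducing identity in $\cH(K')$ gives
\[
\psi_h(f):=\langle M_S f, h\rangle_{\cH(K')} = \langle(Sf)_\ell,u\rangle_{\cF'} = \langle T_\ell f,u\rangle_{\cF'},
\]
so $\psi_h$ is bounded on $\cH(K)$ with $\|\psi_h\|\leq C_\ell\|u\|_{\cF'}$; by linearity, $\psi_h$ is bounded for every $h\in D$. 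For each fixed $f\in\cH(K)$ the hypothesis $M_S f\in\cH(K')$ gives
\[
\sup_{h\in D,\ \|h\|_{\cH(K')}\leq 1}|\psi_h(f)| \leq \sup_{\|h\|_{\cH(K')}\leq 1}|\langle M_S f,h\rangle_{\cH(K')}| = \|M_S f\|_{\cH(K')}<\infty,
\]
so $\{\psi_h:h\in D,\ \|h\|\leq 1\}$ is pointwise bounded on $\cH(K)$. Banach--Steinhaus produces $C:=\sup_{h\in D,\ \|h\|\leq 1}\|\psi_h\|<\infty$. Since $D$ is dense in $\cH(K')$, the displayed supremum over $h\in D$ of $|\psi_h(f)|$ actually equals $\|M_S f\|_{\cH(K')}$, so
\[
\|M_S f\|_{\cH(K')} \leq C\,\|f\|_{\cH(K)} \qquad\text{for all } f\in\cH(K),
\]
which is the claimed boundedness.

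\emph{Expected main obstacle.} The subtle point is Step~2: the naive approach of invoking the Closed Graph Theorem is obstructed by the convergence traps described in Remark~\ref{R:trap}, so one instead must apply uniform boundedness only to $\psi_h$ with $h$ in the dense set $D$, where boundedness of each individual $\psi_h$ comes directly from Step~1; density then upgrades the estimate to an upper bound for $\|M_S f\|_{\cH(K')}$. The role of $\dim\cF'<\infty$ is precisely to translate boundedness of the scalar functionals $\phi_{\ell,u}$ into boundedness of the $\cF'$-valued operator $T_\ell$; in the infinite-dimensional setting one obtains only weak-operator boundedness, and a stronger hypothesis on $S$ would be required, presumably the content of Remark~\ref{R:auto-bded}.
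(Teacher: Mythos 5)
Your proof is correct, but the second half takes a genuinely different route from the paper. Both arguments begin the same way: a first application of Banach--Steinhaus to the partial sums of the convolution series shows that each coefficient map $f \mapsto (Sf)_{\ell}$ is continuous. From there the paper reconstructs $Sf$ inside $\cH(K')$ from finitely many of its Laurent coefficients via Lemma \ref{L:approx}, i.e.\ through the finite kernel sections ${\mathbb K}'_{L}$ and their pseudo-inverses; this is exactly where $\dim \cF' < \infty$ (or the closedness hypothesis of Remark \ref{R:auto-bded}) enters, to make ${\mathbb K}'^{[-1]}_{L}$ bounded, and a second Banach--Steinhaus is then applied to the resulting sequence of bounded approximants. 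You instead dualize: you bound the functionals $\psi_h(f) = \langle M_S f, h\rangle_{\cH(K')}$ for $h$ in the dense span $D$ of kernel elements, apply the uniform boundedness principle to the family $\{\psi_h : h \in D,\ \|h\| \le 1\}$ (pointwise bounded because $M_S f \in \cH(K')$ by hypothesis), and recover $\|M_S f\|$ by density. This is simpler, avoids Lemma \ref{L:approx} entirely, and in fact never truly uses $\dim \cF' < \infty$: the only step where you invoke it --- upgrading the scalar functionals $\phi_{\ell,u}$ to operator boundedness of $T_\ell$ --- is not needed in Step 2, which only requires boundedness of the $\phi_{\ell,u}$ themselves (and operator boundedness of $T_\ell$ is in any case available without finite dimension, by applying Banach--Steinhaus directly to the operator-valued partial sums, using the assumed norm convergence). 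Consequently your closing remark, that infinite-dimensional $\cF'$ would force only weak-operator bounds and require a stronger hypothesis along the lines of Remark \ref{R:auto-bded}, understates what your own argument achieves; your proof of boundedness goes through verbatim for arbitrary $\cF'$, whereas the paper's reconstruction-based proof is what needs the extra hypothesis.
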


    For the proof we require the following lemma.

    \begin{lemma}  \label{L:approx}
Suppose that $K(z,w) \in \cL(\cF))[[z^{\pm 1}, w^{\pm 1}]]$ is a
formal positive kernel with associated FRKHS $\cH(K)$ and that
$f(z) = \sum_{n \in {\mathbb Z}^{d}} f_{n} z^{n}$ is an element of
$\cH(K)$.  Then we can recover $f$ from the collection of
coefficients $\{f_{\ell}\}_{\ell \in {\mathbb Z}^{d}}$ according
to the following approximation scheme:
\begin{equation}  \label{approx}
    f = \text{$\cH(K)$--}\lim_{L \to \infty} \operatorname{row} [
    \pi_{\ell}^{*}]_{\ell \colon |\ell| \le L} {\mathbb K}_{L}^{[-1]}
    \operatorname{col} [f_{\ell}]_{\ell \colon |\ell| \le L},
\end{equation}
where $\pi_{\ell} \colon f \mapsto f_{\ell}$ is the coefficient
evaluation map with adjoint
$$
  \pi_{\ell}^{*} \colon u \mapsto K_{\ell}(z) u = \sum_{n \in
  {\mathbb Z}^{d}} (K_{n,\ell}u) z^{n},
$$
where ${\mathbb K}_{L}$ is the operator on $\cF^{N_{L}}$ (where we
set $N_{L} = \#\{ \ell \in {\mathbb Z}^{d} \colon |\ell| \le L\}$
and the notation $|\ell|$ is as in \eqref{def|ell|}) with block
matrix decomposition given by
$$
{\mathbb K}_{L}:= \left[ K_{\ell', \ell} \right]_{|\ell'|, |\ell|
\le L},
$$
and where ${\mathbb K}_{L}^{[-1]}$ is the (possibly unbounded)
inverse of the injective selfadjoint operator ${\mathbb
K}_{L}|_{({\rm ker} {\mathbb K}_{L})^{\perp}}$.
\end{lemma}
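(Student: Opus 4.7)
The plan is to identify the right-hand side of \eqref{approx} with $P_L f$, where $P_L$ is the orthogonal projection of $\cH(K)$ onto the closure $\cH_L := \overline{\operatorname{span}}\{\pi_\ell^* u \colon |\ell|\le L,\ u \in \cF\}$ of the kernel sections at indices $|\ell|\le L$, and then to invoke density to conclude that $P_L f \to f$ in $\cH(K)$.

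First I would assemble the operator $\Pi_L := \row[\pi_\ell^*]_{|\ell|\le L} \colon \cF^{N_L} \to \cH(K)$, so that $\Pi_L c = \sum_{|\ell|\le L} K_\ell(z) c_\ell$ has range dense in $\cH_L$. The reproducing property \eqref{reproduce} gives $\Pi_L^* f = \col[f_\ell]_{|\ell|\le L}$, while the short computation $\pi_{\ell'}\, \pi_\ell^* u = (K_\ell u)_{\ell'} = K_{\ell',\ell} u$ identifies $\Pi_L^* \Pi_L$ with $\mathbb{K}_L$, in particular confirming that $\mathbb{K}_L$ is positive self-adjoint with $\ker \mathbb{K}_L = \ker \Pi_L$. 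Applying the standard factorization
\[
 P_L = \Pi_L\, (\Pi_L^* \Pi_L)^{[-1]}\, \Pi_L^*
\]
for the orthogonal projection onto $\overline{\operatorname{range}\,\Pi_L} = \cH_L$, with $(\Pi_L^* \Pi_L)^{[-1]} = \mathbb{K}_L^{[-1]}$ interpreted as the inverse of $\mathbb{K}_L|_{(\ker \mathbb{K}_L)^\perp}$, yields
\[
  P_L f = \row[\pi_\ell^*]_{|\ell|\le L}\, \mathbb{K}_L^{[-1]}\, \col[f_\ell]_{|\ell|\le L},
\]
which is precisely the right-hand side of \eqref{approx}.

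It then remains to verify that $\bigcup_L \cH_L$ is dense in $\cH(K)$. If $g \in \cH(K)$ is orthogonal to every $\pi_\ell^* u$, the reproducing property forces $\langle g_\ell, u\rangle_\cF = 0$ for every $\ell \in \bbZ^d$ and every $u \in \cF$, so all Laurent coefficients of $g$ vanish and hence $g = 0$ as a formal power series; consequently $P_L f \to f$ in $\cH(K)$ as $L \to \infty$. The one point I expect to require care is justifying that $\col[f_\ell]_{|\ell|\le L}$ actually lies in the domain $\operatorname{range}(\mathbb{K}_L|_{(\ker \mathbb{K}_L)^\perp})$ of $\mathbb{K}_L^{[-1]}$; since $\Pi_L^* f = \Pi_L^* P_L f$ (because $\Pi_L^*$ annihilates $(I-P_L) f \perp \operatorname{range}\Pi_L$), this reduces to whether $P_L f$ is attained in $\operatorname{range}\Pi_L$ rather than merely its closure, and when it is not one must read the formula as the Tikhonov-regularized limit $\lim_{\epsilon \downarrow 0} \Pi_L (\mathbb{K}_L + \epsilon I)^{-1} \Pi_L^* f$. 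This domain issue is the only non-routine point; everything else is a direct manipulation with the reproducing property.
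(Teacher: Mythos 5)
Your proposal is correct and takes essentially the same route as the paper: both identify $\operatorname{row}[\pi_\ell^*]_{|\ell|\le L}\,{\mathbb K}_L^{[-1]}\operatorname{col}[f_\ell]_{|\ell|\le L}$ with the orthogonal projection of $f$ onto the span of the kernel sections $K_\ell(\cdot)u$, $|\ell|\le L$ (the paper does this by solving $f_{\ell'}=\sum_{|\ell|\le L}K_{\ell',\ell}u_\ell$ by hand rather than quoting the factorization $P_L=\Pi_L({\Pi_L^*\Pi_L})^{[-1]}\Pi_L^*$), and then let $L\to\infty$ using that these projections increase strongly to the identity. The domain issue you flag is real but is glossed over in the paper (which writes $\cH_L^\perp$ as the non-closed span of kernel sections, harmless in the finite-dimensional setting where the lemma is actually applied in Theorem \ref{T:auto-bded}), so your extra care there is a refinement rather than a divergence.
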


\begin{proof}
    Let us set $\cH_{L} = \{ h \in \cH(K) \colon h_{\ell} = 0 \text{ for
} |\ell| \le L\}$ (where the notation $|\ell|$ is as in
\eqref{def|ell|}). Then the orthogonal complement
$\cH_{L}^{\perp}$ of $\cH_{L}$ is given by the span of the kernel
functions
\begin{equation}   \label{HLperp}
 \cH_{L}^{\perp} = \operatorname{span} \{ K_{\ell}(z) u \colon |\ell|
 \le L,\, u \in \cF\}.
\end{equation}
For $f \in \cH(K)$, set $g_{L} = P_{L} f$ where $P_{L} \colon
\cH(K) \to \cH_{L^{\perp}}$ is the orthogonal projection.  Then $f
- g_{L} \in \cH_{L}$ and consequently $(f)_{\ell} =
(g_{L})_{\ell}$ for $|\ell| \le L$.  Notice that the sequence of
subspaces $\{ \cH_{L}\}_{L = 1,2, \dots}$ is decreasing
($\cH_{L+1} \subset \cH_{L}$) with trivial intersection
($\bigcap_{L \ge 1} \cH_{L} = \{0\}$).  It follows that the
associated sequence of orthogonal projections $P_{L} \colon \cH(K)
\to \cH_{L}^{\perp}$ is increasing with strong limit equal to the
identity operator:
$$
   \operatorname{strong} \lim_{L \to \infty} P_{L} = I_{\cH(K)}.
$$
Let us set $N_{L} = \#\{ \ell \in {\mathbb Z}^{d} \colon |\ell|
\le L\}$ and define a block $N_{L} \times N_{L}$ matrix ${\mathbb
K}_{L}$ by
$$
    {\mathbb K}_{L}: = \left[ {\mathbb K}_{\ell', \ell}
    \right]_{|\ell'|, |\ell| \le L} \colon \cF^{\oplus N_{L}} \to
    \cF^{\oplus N_{L}}.
$$
We know as a consequence of the description of $\cH_{L}^{\perp}$
in \eqref{HLperp} that the vector $g_{L}$ has a presentation of
the form
$$
   g_{L} = \sum_{\ell \colon |\ell| \le L} K_{\ell}(z) u_{\ell}
$$
for some choice of vectors $u_{\ell} \in \cF$.  To solve for the
vectors $u_{\ell}$ (these vectors also depend on the choice of $L$
but we suppress this fact in the notation for simplicity),  we
note that, for any vector $v \in \cF$
\begin{align*}
    \langle f_{\ell'}, v \rangle_{\cF} & = \langle f, K_{\ell'}(z) v
    \rangle_{\cH(K)} \\
    & = \langle g_{L}, K_{\ell'}(z v
    \rangle_{\cH(K)} \\
    & = \left\langle \sum_{|\ell| \le L} K_{\ell}(z) u_{\ell},
    K_{\ell'}(z) v \right\rangle_{\cH(K)} \\
    & = \sum_{|\ell|\le L} \langle K_{\ell', \ell} u_{\ell}, v
    \rangle_{\cF}
\end{align*}
from which we deduce that
$$
  f_{\ell'} = \sum_{\ell \colon |\ell| \le L} K_{\ell', \ell}
  u_{\ell}.
$$
It thus follows that $g_{L} \in \operatorname{im} {\mathbb K}_{L}
\subset \cF^{\oplus N_{L}}$.  Let ${\mathbb K}_{L}^{[-1]} \colon
\cF_{L} \to \cF_{L} \subset \cF^{\oplus N_{L}}$ be the inverse of
${\mathbb K}_{L}|_{(\operatorname{ker} {\mathbb K}_{L})^{\perp}}$.
We see that $[u_{\ell}]_{|\ell| \le L} = {\mathbb K}_{L}^{[-1]}
[f_{\ell}]_{|\ell| \le L}$.  Therefore
\begin{align*}
    f & = \lim_{L \to \infty} P_{L} f = \lim_{L \to \infty}
    \sum_{\ell \colon |\ell| \le L} K_{\ell}(z) u_{\ell} \\
    & = \operatorname{row} [ \pi_{\ell}^{*}]_{|\ell| \le L} \cdot
    {\mathbb K}_{L}^{[-1]} \cdot [ f_{\ell} ]_{|\ell| \le L}
\end{align*}
and the formula \eqref{approx} follows.
\end{proof}

    \begin{proof}[Proof of Theorem \ref{T:auto-bded}]
    Notice that
$$
  (S f)_{\ell} = \lim_{M \to \infty} \sum_{m \colon |m| \le M} S_{\ell
  -m} f_{m} = \sum_{m \colon |m| \le M} \left( S_{\ell -m} \circ
  \pi_{m}\right)(f)
$$
where $\pi_{m} \colon \cH(K) \to \cF$ is the $m$-th Laurent
coefficient evaluation functional, assumed to be norm-continuous.
A first application of the Banach-Steinhaus theorem (see
\cite[Chapter III Section 14]{Conway}) gives us that the map
$$
\pi_{S, \ell} \colon f \mapsto (Sf)_{\ell}
$$
is bounded as a linear operator from $\cH(K)$ to $\cF'$ for each
$\ell \in {\mathbb Z}$.

We next apply Lemma \ref{L:approx} to $Sf \in \cH(K')$; thus we
get
\begin{align}
 Sf  & = \lim_{L \to \infty} \operatorname{row} [ \pi^{\prime *}_{\ell}
 ]_{|\ell| \le L} \cdot  {\mathbb K}^{\prime [-1]}_{L} \cdot
 \operatorname{col} [(Sf)_{\ell}]_{|\ell| \le L}   \notag \\
 & = \lim_{L \to \infty} \operatorname{row} [ \pi^{\prime *}_{\ell}
 ]_{|\ell| \le L} \cdot  {\mathbb K}^{\prime [-1]}_{L} \cdot
 \operatorname{col} \left[ \pi_{S,\ell}(f) \right]_{|\ell| \le L}
 \label{compmap}
\end{align}
with the obvious adaptation of the notation. The next key
observation is that the map $f \mapsto [f_{\ell}]_{|\ell| \le L}$
is bounded for each $L = 1, 2, \dots$; as we are assuming that
$\dim \cF' < \infty$, we are guaranteed that ${\mathbb K}^{ \prime
[-1]}_{L}$ is bounded for each $L$. Hence also the composition
$\operatorname{row} [ \pi_{\ell}^{*}]_{\ell \le L} \cdot {\mathbb
K}^{\prime [-1]}_{L}$ is bounded from $\cF^{\prime N_{L}}$ to
$\cH(K')$ for each $L$.  Finally, by the first application of the
Banach-Steinhaus theorem in the first paragraph of the proof, we
know that the map $f \mapsto \left[ \pi_{S, \ell}(f)
\right]_{|\ell| \le L}$ is bounded from $\cH(K)$ to $\cF^{\prime
N_{L}}$, and hence the composite map in \eqref{compmap} is bounded
for each $L$. A second application of the Banach-Steinhaus Theorem
now guarantees us that the map $M_{S} \colon f(z) \mapsto S(z)
f(z)$ is bounded from $\cH(K)$ into $\cH(K')$ as wanted.
\end{proof}

\begin{remark}  \label{R:auto-bded}
    A careful analysis of the proof of Theorem \ref{T:auto-bded} shows
that the conclusion still holds if one replaces the hypothesis
that $\cF'$ be finite-dimensional with the alternate hypothesis
\begin{itemize}
\item {\em the subspace $\cF'_{L} \subset \cF^{\prime \oplus
N_{L}}$ consisting of all vectors $[u']_{\ell}$ in $\cF^{\prime
\oplus N_{L}}$ of the form $[(Sf)_{\ell}]_{|\ell| \le L}$ for some
$f \in \cH_{L}^{\perp}$ is a closed subspace of $\cF'_{L}$.}
\end{itemize}
Note that this condition is automatic in case $\cF'$ itself is
finite-dimensional.

\end{remark}

We shall be particularly interested in the case where $\|M_{S}\|
\le 1$; in this case we write $S \in \cB \cM(K,K')$. We have the
following characterization of when a given $S$ is in $\cB
\cM(K,K')$ or is a coisometry.  We use the convention
\eqref{convention} with respect to both sets of variables $z =
(z_{1}^{\pm 1}, \dots, z_{d}^{\pm 1})$ and $w = (w_{1}^{\pm 1},
\dots, w_{d} ^{\pm 1})$:  if $S(z) = \sum_{n \in \bbZ^{d}} S_{n}
z^{n} \in \cL(\cF, \cF')[[ z^{\pm 1}]]$, then we define $S(z)^{*}
\in \cL(\cF', \cF)[[ z^{\pm 1}]]$ by
$$
  S(z)^{*} = \sum_{n \in \bbZ^{d}} S_{n}^{*} z^{-n}.
$$

\begin{proposition}  \label{P:multipliers}
    Suppose that $K,K'$ are two positive kernels with coefficients
    in $\cL(\cF)$ and $\cL(\cF')$ respectively and with respective
    associated FRKHSs $\cH(K)$ and $\cH(K')$, and suppose that $S$ is
    a formal power series in $\cL(\cF, \cF')[[z^{\pm 1}]]$.
    Then the following holds:
    \begin{enumerate}
    \item $S \in \cB \cM(K,K')$ if and only if $S(z) f(z)$ is a
    well-defined power series (i.e., weak $\sum_{\ell \in {\mathbb Z}^d}
    S_{m-\ell} f_\ell$ exists in $\cF'$ for all $m \in {\mathbb Z}^d$)
for each $f \in \cH(K)$ and hence also
    $S(z)K(z,w) S(w)^{*}$ is a well-defined formal power series as in
    \eqref{eq:sum_carefully-1}--\eqref{eq:sum_carefully-2}, and
moreover $K_{S} \pos 0$, where
   \begin{equation}  \label{KSdef}
   K_{S}(z,w):= K'(z,w) - S(z)K(z,w) S(w)^{*} \in
   \cL(\cF')[[z^{\pm 1},w^{\pm 1}]].
   \end{equation}
   In this case, $M_{S}$ is a coisometry if and only if $K_{S}(z,w) =
   0$.

   \item
   $S^{*} \in \cB \cM(K',K)$ if and only if $S(z)^* f(z)$ is a
well-defined power series for each $f \in \cH(K')$  and hence also
   $S(z)^{*}K'(z,w) S(w)$ is a well-defined formal power series
similarly to
    \eqref{eq:sum_carefully-1}--\eqref{eq:sum_carefully-2}, and
moreover $K_{S^{*}} \pos 0$, where
     \begin{equation*}
      K_{S^{*}}(z,w):= K(z,w) - S(z)^{*}K'(z,w) S(w) \in
      \cL(\cF)[[z^{\pm 1},w^{\pm 1}]].
     \end{equation*}
      In this case, $M_{S^{*}}$ is a coisometry if and only if
$K_{S^{*}}(z,w) =
      0$.
   \end{enumerate}
  \end{proposition}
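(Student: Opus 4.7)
The plan is to adapt the classical RKHS proof that $S$ is a contractive multiplier iff $K' - SKS^* \ge 0$, paying careful attention to the formal-series convergence pitfalls of Remark~\ref{R:trap}. Part (2) will then follow from part (1) by symmetry, using the involution $S(z)^* = \sum_n S_n^* z^{-n}$ on formal Laurent series to swap the roles of $S$ and $S^*$ and of $K$ and $K'$.

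For the forward direction of (1), assume $\|M_S\| \le 1$. Well-definedness of $S \cdot f$ for $f \in \cH(K)$ is built into the definition of $\cB\cM(K,K')$, and Proposition~\ref{P:convergences} then supplies well-definedness of $S(z) K(z,w) S(w)^*$ as a formal Laurent series with coefficients $\Gamma_{n,m}$ as in \eqref{eq:sum_carefully-2}. The key computation is on reproducing-kernel elements: a direct adjoint-plus-reproducing-property calculation shows that the $\ell$-th Laurent coefficient of $M_S^*(K'_m u)$ is $\sum_{\ell'} K_{\ell,\ell'} S^*_{m-\ell'} u$, from which
\[
\langle M_S M_S^*(K'_{m'} u'),\, K'_m u\rangle_{\cH(K')} = \langle \Gamma_{m,m'} u', u\rangle_{\cF'}.
\]
Comparing with $\langle K'_{m'} u', K'_m u\rangle_{\cH(K')} = \langle K'_{m,m'} u', u\rangle$, the operator inequality $I - M_S M_S^* \ge 0$ translates exactly into the positive-kernel condition $K_S \pos 0$ of Theorem~\ref{T:FRKHS}(3); and equality ($M_S$ a coisometry) is equivalent to $K_S \equiv 0$ since reproducing-kernel elements span $\cH(K')$.

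For the backward direction, assume $Sf$ is well-defined for all $f \in \cH(K)$ and $K_S \pos 0$. On the dense subspace $\cL' = \operatorname{span}\{K'_m u : m \in \bbZ^d,\ u \in \cF'\}$ of $\cH(K')$ I define a candidate $T$ for $M_S^*$ via $T(K'_m u) := \sum_{\ell'} K_{\ell'}(z)\, S^*_{m-\ell'} u$. To avoid associativity hazards I work with truncations $T^L$ over $|\ell'| \le L$; these are honest finite linear combinations of kernel functions in $\cH(K)$, and a direct Gram-matrix computation for $\phi = \sum_i K'_{m_i} u_i$ yields
\[
\|\phi\|^2_{\cH(K')} - \|T^L \phi\|^2_{\cH(K)} \ \longrightarrow\ \sum_{i,j}\langle (K_S)_{m_j, m_i} u_i, u_j\rangle_{\cF'} \ge 0
\]
as $L \to \infty$, using the weak-sum convergence of the $\Gamma_{m,m'}$ guaranteed by Proposition~\ref{P:convergences}. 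This simultaneously establishes that the $T^L \phi$ are Cauchy in $\cH(K)$ with a limit $T\phi$ independent of the representation of $\phi$, and that $\|T\|_{\cL'} \le 1$. Continuous extension yields $T \colon \cH(K') \to \cH(K)$; testing $T^*$ against reproducing kernels gives $\langle T^* f, K'_m u\rangle_{\cH(K')} = \langle f, T K'_m u\rangle_{\cH(K)} = \langle (Sf)_m, u\rangle_{\cF'}$, which by reproducing-property uniqueness forces $T^* f = Sf$ as formal Laurent series. Hence $Sf \in \cH(K')$ and $M_S = T^*$ is bounded with $\|M_S\| \le 1$.

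The main obstacle I expect is precisely avoiding the formal-series traps from Remark~\ref{R:trap} in the backward direction: one cannot simply factor $K_S = G(z)G(w)^*$ and assemble a ``lurking isometry'' $V\colon S(z)H(z)\,h \oplus G(z)\,h' \mapsto H(z)\,h$ in the usual way, because the requisite regroupings of infinite sums may not be legitimate. The truncation-and-Gram-matrix approach reduces all computations to finite linear combinations of kernel functions, where the $\cH(K)$- and $\cH(K')$-inner products are defined directly in terms of the matrix entries $K_{n,n'}$ and $K'_{n,n'}$ via Theorem~\ref{T:FRKHS-Laurent}, and the hypothesis $K_S \pos 0$ together with the weak-sum convergence of the $\Gamma_{m,m'}$ controls the tails.
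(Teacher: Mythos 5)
Your forward direction and your reduction of part (2) to part (1) follow the paper's own route: compute $M_{S}^{*}$ on kernel elements via the reproducing property (your coefficient formula is exactly \eqref{defT0}) and translate $I-M_{S}M_{S}^{*}\ge 0$ into positivity of the kernel \eqref{beastker}, i.e.\ $K_{S}\pos 0$. The backward direction, however, has a genuine gap in the truncation step. Your displayed limit
\[
\|\phi\|^{2}_{\cH(K')}-\|T^{L}\phi\|^{2}_{\cH(K)}\ \longrightarrow\ \sum_{i,j}\langle (K_{S})_{m_{j},m_{i}}u_{i},u_{j}\rangle_{\cF'}
\]
is a statement about \emph{square (diagonal)} partial sums of the double series defining the $\Gamma_{m,m'}$, whereas Proposition \ref{P:convergences} only establishes convergence of the \emph{iterated} weak sums: writing $K_{\ell'',\ell'}=H_{\ell''}H_{\ell'}^{*}$, one has $\|T^{L}\phi\|^{2}=\sum_{i,j}\langle B^{(i)}_{L}u_{i},B^{(j)}_{L}u_{j}\rangle_{\cH'}$ with $B^{(i)}_{L}=\sum_{|\ell'|\le L}H^{*}_{\ell'}S^{*}_{m_{i}-\ell'}$ converging only weakly, and inner products of two merely weakly convergent sequences evaluated along the diagonal $L=L'$ need not converge at all, let alone to the iterated value (this is precisely the kind of order-of-summation trap Remark \ref{R:trap} and the footnote to Proposition \ref{P:convergences} warn about). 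For the same reason your claim that the $T^{L}\phi$ are Cauchy is unjustified: it amounts to \emph{norm} convergence of $\sum_{\ell'}K_{\ell'}(z)S^{*}_{m-\ell'}u$ in $\cH(K)$, while the hypothesis ``$Sf$ is well defined for all $f\in\cH(K)$'' delivers only weak convergence of these series (and even if $\|T^{L}\phi\|$ converged, Cauchyness would further require its limit to equal the norm of the weak limit, which weak convergence does not give).

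The repair is the paper's route, which you can adopt with almost no change to your outline: define $T_{0}(K'_{m}u):=\operatorname{weak}\sum_{\ell'}K_{\ell'}(z)S^{*}_{m-\ell'}u$ directly as a weak limit in $\cH(K)$ --- its existence follows from the well-definedness hypothesis via the Kolmogorov factorization and Proposition \ref{P:repr-ext} --- and then compute the Gram matrix of these genuine elements of $\cH(K)$ by taking the two limits one at a time ($\langle x,y\rangle=\lim_{L'}\lim_{L}\langle x_{L},y_{L'}\rangle$ for weak limits $x,y$). This produces exactly the iterated sums $\Gamma_{m,m'}$ of \eqref{eq:sum_carefully-2}, so $K_{S}\pos 0$ gives $\|T_{0}\phi\|\le\|\phi\|$ on the span of kernel functions without ever invoking norm convergence of truncations; the identification $T_{0}^{*}=M_{S}$ then proceeds exactly as in your last step.
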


  \begin{proof}
      Suppose that $S \in \cM(K,K')$.
      By Proposition \ref{P:convergences}, $S(z)K(z,w) S(w)^{*}$ is a well-defined
formal power series as in
      \eqref{eq:sum_carefully-1}--\eqref{eq:sum_carefully-2}. For all
$f \in {\mathcal H}(K)$
\begin{align*}
  & \langle f(z), (M_{S}^{*} \otimes I_{\bbC[[w^{\pm 1}]]}) K'(z,w)
    u' \rangle_{\cH(K) \times \cH(K)[[w^{\pm 1}]]} \\
  & \qquad \qquad =  \langle S(z) f(z), K'(z,w) u' \rangle_{\cH(K') \times
    \cH(K')[[w^{\pm 1}]]} \\
    & \qquad \qquad = \langle S(w) f(w), u' \rangle_{\cF'[[w^{\pm 1}]] \times \cF'}
\end{align*}
 where we used the reproducing kernel property of $K'$.
 Furthermore,
\begin{align} \label{eq:kernelcalc1}
   \langle S(w) f(w), u' \rangle_{\cF'[[w^{\pm 1}]] \times \cF'}
&  = \langle f(w), S(w)^{*} u' \rangle_{\cF[[w^{\pm 1}]] \times
    \cF[[w^{\pm 1}]]} \notag\\
    &  =
    \langle f(z), K(z,w) S(w)^{*}u' \rangle_{\cH(K) \times
    \cH(K)[[w^{\pm 1}]]}.
\end{align}
Here we used Proposition \ref{P:repr-ext} for the last equality.
Indeed,  as we saw in the proof of Proposition
\ref{P:convergences}, if $K(z,w)=H(z)H(w)^*$ as in Theorem
\ref{T:FRKHS}(2), then $H(w)^*S(w)^*u'\in\mathcal{H}[[w^{\pm 1}]]$
for every $u'\in\mathcal{F}'$, with the series
$\sum_{\ell'\in\mathbb{Z}^d}H_{\ell'}^*S^*_{m-\ell'}u'=\sum_{\ell'\in\mathbb{Z}^d}H_{m-\ell'}^*S^*_{\ell'}u'$
convergent weakly in $\mathcal{H}$ for every $m\in\mathbb{Z}^d$.
Then
$$
\sum_{\ell'\in\mathbb{Z}^d}K_{m-\ell'}(z)S^*_{\ell'}u'=
\sum_{\ell'\in\mathbb{Z}^d}H(z)H_{m-\ell'}^*S^*_{\ell'}u'
$$
converges weakly in $\mathcal{H}(K)$ and we may apply Proposition
\ref{P:repr-ext} with $u(w)=S(w)^*u'\in\mathcal{F}[[w^{\pm 1}]]$.
It follows that
  \begin{equation}  \label{kernelid}
      (M_{S}^{*} \otimes I_{\bbC[[w^{\pm 1}]]}) K'(z,w) u' =
     K(z,w) S(w)^{*} u'
   \end{equation}
 for all $u' \in \cF'$. Note that the action of $M_{S}^{*} \otimes
 I_{{\mathbb C}[[w^{\pm 1}]]}$ can be expanded out in terms of
 coefficients of $w^{m}$ as
 $$
 M_{S}^{*} \otimes I_{{\mathbb C}[[w^{\pm 1}]]} \colon \sum_{n}
 K'_{n}(z) u' w^{-n} \mapsto \sum_{n} \left( \sum_{\ell} K_{n-\ell}(z)
 S_{\ell}^{*} u' \right) w^{-n}.
 $$
 and hence we see that
 \begin{equation}   \label{defT0}
     M_{S}^{*} \colon K'_{n}(z) u' \mapsto \sum_{\ell} K_{n-\ell}(z)
     S_{\ell}^{*} u'.
 \end{equation}
 From the formula \eqref{defT0} we see that the fact that $M_{S}^{*}
 \colon \cH(K') \to \cH(K)$ has $\| M_{S} \| \le 1$ is equivalent to
 the expression
 \begin{align}
& \langle K'_n u'_n, K'_m u'_m \rangle_{\cF'} - \langle
\sum_{\ell} K_{n - \ell} S_{\ell}^{*} u_N', \sum_{\ell'} K_{m-
\ell'}
S^{*}_{\ell'} u'_m \rangle_{\cF} \notag \\
& \quad = \langle (K'_{mn} -  \sum_{\ell, \ell'} S_{\ell'}
K_{m-\ell', n-\ell} S_\ell^*) u'_n, u_m' \rangle_{\cF}.
\label{beastker}
\end{align}
being positive as a kernel on $({\mathbb Z}^d \times \cF')$ (here
we view $(n,u'_n)$ and $(m, u'_m)$ as generic elements of
${\mathbb Z}^d \times \cF'$).
 From the definitions we see that this expression is positive when
viewed as a kernel on ${\mathbb Z}^d \times \cF$ if and only if $
K_{S}(z,w) = K'(z,w) - S(z) K(z,e) S(w)^* \ge 0$ is a positive
formal kernel.  Note that $M_{S}$ being coisometric means that
$M_{S}^{*}$ is isometric, or \eqref{beastker} is the zero kernel;
this in turn corresponds to $K_{S}$ being the zero kernel. This
completes the proof of necessity in part (1) of Proposition
\ref{P:multipliers}.

To prove sufficiency in part (1) of Proposition
\ref{P:multipliers}, we now assume that $S(z) f(z)$ is well
defined for each $f \in \cH(K)$, so weak $\sum_{\ell} S_{m-\ell}
H_{\ell}$ exists for each $m \in {\mathbb Z}^{d}$ (where $K_{m,n}
= H_{m} H_{n}^{*}$ is the Kolmogorov decomposition for $K(z,w)$)
and hence also weak $\sum_{\ell} H^{*}_{\ell} S^{*}_{m - \ell}$
exists.  The formula \eqref{defT0} for the action of $M_{S}^{*}$
on kernel functions suggests that we define an operator $T_{0}$
mapping kernel functions  of $\cH(K')$ into $\cH(K)$ according the
formula in \eqref{defT0}:
\begin{equation}   \label{defT0'}
    T_{0} \colon K'_{n}(z) u' \mapsto \sum_{\ell} K_{n-\ell}(z)
    S_{\ell}^{*} u'.
\end{equation}
The computation in the first part of the proof (read backwards)
tells us that the assumption $K_{S} \ge 0$ implies that $T_{0}$
extends uniquely to a contraction operator from $\cH(K')$ into
$\cH(K)$. Furthermore the action of $T: = T_{0} \otimes
I_{{\mathbb C}[[w^{\pm 1}]]}$ on kernel functions $K'(z,w)u' \in
\cH(K')[[w^{\pm 1}]]$ is given as in formula \eqref{kernelid}:
\begin{equation}  \label{defT}
T \colon K'(z,w) u' \mapsto K(z,w) S(w)^{*}u'.
\end{equation}
>From this formula \eqref{defT} for $T$ it is easy to compute a
formula for the action of $T_{0}^{*}$: for $f \in \cH(K)$ and $u'
\in \cF'$ we have, using the reproducing kernel property of $K'$:
\begin{align*}
   & \langle (T_{0}^{*} f)(w), u' \rangle_{\cF'[w^{\pm 1}]] \times
   \cF'} \\
   & \quad = \langle T_{0}^{*} f,
    K'(z,w) u' \rangle_{\cH(K') \times \cH(K') [[w^{\pm 1}]]} \\
    & = \langle f, T K'(z,w) u' \rangle_{\cH(K) \times \cH(K)
    [[w^{\pm 1}]]}  \\
    & = \langle f, K(z,w) S(w)^{*} u' \rangle_{\cH(K) \times \cH(K)
     [[w^{\pm 1}]]}.
\end{align*}
>From \eqref{eq:kernelcalc1} read backwards, we finally conclude
that
$$
\langle (T_{0}^{*}f)(w), u' \rangle_{\cF'[[w^{\pm}]] \times \cF'}
= \langle S(w) f(w), u' \rangle_{\cF'[[w^{\pm 1}]] \times \cF'}
$$
from which we conclude that $T_{0}^{*} = M_{S} \colon f(z) \mapsto
S(z) f(z)$.  As we have already identified $T_{0}$ as mapping
$\cH(K')$ contractively into $\cH(K)$, it follows that $T_{0}^{*}
= M_{S} \colon \cH(K) \to \cH(K')$ is well defined with $\|M_{S}\|
\le 1$.

Part (2) amounts to part (1) applied to $S(z)^{*}$ in place of
  $S(z)$.
 \end{proof}

 \begin{remark}  \label{R:expiate_old_sins}
One might think that the kernel $K_{S}(z,w)$ given by
\eqref{KSdef} being well-defined and positive would be sufficient
for the results of Proposition \ref{P:multipliers} to be valid.
That this is not the case can be seen from the following example.

We take as our Hilbert space $\cX = \ell^{2}$ with standard
orthonormal basis $\{e_{n} \colon n = 0,1,2, \dots \}$ and we set
$P_{n}$ equal to the orthogonal projection onto the span of the
basis vector $e_{n}$. We define a formal power series $S(z) =
\sum_{n \in {\mathbb Z}} S_{n} z^{n}$ and a formal positive kernel
$K(z,w) = \sum_{n,m \in {\mathbb Z}} H_{n} H_{m}^{*} z^{n} w^{-m}$
by
$$
  S_{n}  = I_{\cX}, \quad H_{n}  = \begin{cases}  n^{2} P_{n} -
  (n-1)^{2} P_{n-1} &\text{for } n > 0, \\
  0 &\text{for } n \le 0. \end{cases}
$$
Then
$$
  \sum_{\ell \colon |\ell| \le L} H_{\ell} = \sum_{\ell = 1}^{L}
  [ \ell^{2}P_{\ell} - (\ell-1)^{2} P_{\ell-1}] = L^{2} P_{L}
$$
and hence
$$\lim_{L \to \infty} \sum_{\ell \colon |\ell| \le L} S_{n-\ell} H_{\ell} = \lim_{L \to
\infty} L^{2} P_{L} \text{ is not weakly convergent.}
$$
However
$$ \text{weak} \sum_{\ell} \left( \text{weak} \sum_{\ell'} S_{n-\ell}
H_{\ell} H^{*}_{\ell'} S^{*}_{m-\ell'} \right) = \text{weak}
\sum_{\ell} \left( \text{weak} \lim_{L' \to \infty} H_{\ell}
(L^{\prime 2} P_{L'} \right) = 0
$$
and similarly
$$
\text{weak} \sum_{\ell'} \left( \sum_{\ell} S_{n-\ell} H_{\ell}
H^{*}_{\ell'} S^{*}_{m - \ell'} \right) = 0.
$$
Moreover the kernel
$$ K_{S}(z,w) = K(z,w) - S(z) K(z,w) S(z)^{*} = K(z,w) \ge 0.
$$
Thus the kernel $K_{S}(z,w)$ is well defined and positive, but the
multiplier $M_{S} \colon f(z) \mapsto S(z) f(z)$ is not well
defined (much less bounded with norm at most 1) on $\cH(K)$.
\end{remark}

    \subsection{Formal reproducing kernel Hilbert spaces constructed
     from others} \label{S:FRKHSs-FRKSH}

    There are a couple of ways to construct more complicated FRKHSs
    from a stock of given (simpler) FRKHSs.  We mention what we shall call
    {\em lifted-norm FRKHSs} and {\em pullback FRKHSs} described in
    the next two subsections.

    \subsubsection{Lifted-norm FRKHSs}   \label{S:liftednorm}

    Suppose that $W = W^{*}$ is a self-adjoint operator on a FRKHS
       $\cH(K)$ which is positive semidefinite.  We
       define a space $\cH_{W}^{\ell}$ (the {\em lifted-norm FRKHS
       associated with $W$}) via the following recipe.  We take a
dense
       subset of $\cH^{\ell}_{W}$ to be
       $$
       \operatorname{im}W = \{ W  f \colon f \in \cH(K)\}
       $$
       with inner product given by
      $$
    \langle (W f)(z), (W g)(z) \rangle_{\cH^{\ell}_{W}} =
    \langle (Wf)(z), g(z) \rangle_{\cH(K)}
       $$
       The completion of $\cH^{\ell}_{W}$ can be identified as
       $\operatorname{im} ({W}^{1/2})$ with pullback inner product
       \begin{equation}  \label{Winnerprod}
     \langle  {W}^{1/2}f, {W}^{1/2} g \rangle_{\cH^{\ell}_{W}} =
     \langle Qf, g \rangle_{\cH(K)}
       \end{equation}
       where $Q \colon \cH(K) \to (\ker {W})^{\perp}$  is the
       orthogonal projection.  The following proposition summarizes
the
       properties of $\cH^\ell_{W}$ which we shall need in the sequel.

       \begin{proposition}  \label{P:liftednorm}
       Suppose that $W \in \cL(\cH(K))$ is
       positive semidefinite and the space
       $\cH^{\ell}_{W} = \operatorname{im} ({W}^{1/2})$  is defined
       with inner product given by  \eqref{Winnerprod}.   Then
$\cH^\ell_{W}$
       is a FRKHS with formal reproducing kernel $K^\ell_{W}(z,w) \in
       \cL(\cF)[[z^{\pm 1}, w^{\pm 1}]]$ given by
       $$
    K^{\ell}_{W}(z,w) = ((W \otimes I_{\bbC[[w^{\pm 1}]]}) K)(z,w).
       $$
       \end{proposition}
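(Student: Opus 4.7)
The plan is to verify the three ingredients of a FRKHS in turn: that $\cH^\ell_W$ is a Hilbert space in the inner product \eqref{Winnerprod}, that its elements remain formal Laurent series in $\cF[[z^{\pm 1}]]$ with continuous coefficient functionals, and finally that $K^\ell_W(z,w) = ((W \otimes I_{\bbC[[w^{\pm 1}]]})K)(z,w)$ satisfies the formal reproducing identity \eqref{reproduce'}.

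Let $Q$ denote the orthogonal projection of $\cH(K)$ onto $(\ker W)^\perp$. The restriction $W^{1/2}|_{(\ker W)^\perp} \colon (\ker W)^\perp \to \operatorname{im}(W^{1/2}) = \cH^\ell_W$ is a bijection, and \eqref{Winnerprod} is constructed so that this restriction is a unitary isomorphism (one readily checks that the right-hand side of \eqref{Winnerprod} depends only on the images $W^{1/2}f$, $W^{1/2}g$, and is symmetric and positive definite there). Consequently $\cH^\ell_W$ is a Hilbert space in its own right, although it need not be closed in $\cH(K)$. Moreover, for $g = W^{1/2}f \in \cH^\ell_W$ with $f = Qf$,
\[
\|g\|_{\cH(K)}^{2} = \langle Wf, f\rangle_{\cH(K)} \le \|W\|\,\|Qf\|_{\cH(K)}^{2} = \|W\|\,\|g\|_{\cH^\ell_W}^{2},
\]
so the set-theoretic inclusion $\cH^\ell_W \hookrightarrow \cH(K)$ is a bounded injection. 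Composing with the continuous coefficient evaluation $\pi_n$ on $\cH(K)$ transfers continuity of each $\pi_n$ to $\cH^\ell_W$, so $\cH^\ell_W$ is a FRKHS.

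To identify the reproducing kernel, fix $m \in \bbZ^d$ and $u \in \cF$. Since $WK_m u = W^{1/2}(W^{1/2}K_m u)$ belongs to $\operatorname{im}(W^{1/2})$, each coefficient of
\[
K^\ell_W(z,w)u = \sum_{m \in \bbZ^d} (WK_m u)(z)\, w^{-m}
\]
lies in $\cH^\ell_W$, so $K^\ell_W(\cdot, w) u \in \cH^\ell_W[[w^{\pm 1}]]$. For $g = W^{1/2}h \in \cH^\ell_W$ with $h = Qh$, we compute, using \eqref{Winnerprod}, self-adjointness of $W^{1/2}$, and the $\cH(K)$-reproducing property of $K_m$,
\[
\langle g, W K_m u\rangle_{\cH^\ell_W} = \langle Qh, W^{1/2}K_m u\rangle_{\cH(K)} = \langle W^{1/2}h, K_m u\rangle_{\cH(K)} = \langle (W^{1/2}h)_m, u\rangle_{\cF} = \langle g_m, u\rangle_{\cF}.
\]
Assembling these identities over $m \in \bbZ^d$ in the sense of the pairing \eqref{reproduce'} yields the formal reproducing formula for $K^\ell_W$, completing the proof.

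I do not anticipate any serious obstacle. The point that needs the most care is keeping the projection $Q$ in place so that the pullback inner product \eqref{Winnerprod} is unambiguously defined on all of $\operatorname{im}(W^{1/2})$ (not just on $W^{1/2}(\ker W)^\perp$), and recognizing that continuity of coefficient evaluation on $\cH^\ell_W$ comes for free from the bounded (though in general non-isometric) inclusion $\cH^\ell_W \hookrightarrow \cH(K)$, rather than from any intrinsic argument inside $\cH^\ell_W$.
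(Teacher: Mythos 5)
Your proof is correct and follows essentially the same route as the paper: completeness of $\cH^{\ell}_{W}$ via its unitary identification with $(\ker W)^{\perp}$ (the paper phrases this as a Cauchy-sequence argument), and verification of the reproducing identity by pushing the lifted inner product \eqref{Winnerprod} back to $\cH(K)$ and invoking the reproducing property of $K$. Your explicit check that the coefficient evaluations are continuous, via the bounded inclusion $\cH^{\ell}_{W} \hookrightarrow \cH(K)$, is a small extra step the paper leaves implicit (it also follows from the reproducing formula itself) and does not change the argument.
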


       \begin{proof}
       If $f_{n} = W^{\frac{1}{2}} g_{n}$ is a Cauchy sequence in
       $\cH^{\ell}_{W}$, then $Qg_{n}$ is Cauchy in $\cH(K)$ and
       hence converges to a $g \in \cH(K)$.  It then follows
       that $\{f_{n}\}$ converges to $W^{\frac{1}{2}}Qg =
       W^{\frac{1}{2}}g$ in $\cH^{\ell}_{W}$.  In this way we see
       that $\cH^{\ell}_{W}$ is complete.

 By construction $((W \otimes I_{\bbC[[w^{\pm 1}]]})K)(z,w) u \in
\cH^{\ell}_{W}[[w^{\pm 1}]]$ for
 each $u \in \cF$ and each $f \in \cH^{\ell}_{W}$ has the form
$W^{\frac{1}{2}} g$ for a $g \in \cH(K)$. We now compute
\begin{align*}
   & \langle f(z),  ((W \otimes I_{{\mathbb C}[[w^{\pm 1}]]})
    K)(z,w) u  \rangle_{\cH^{\ell}_{W} \times \cH^{\ell}_{W}[[w^{\pm
1}]]}  \\
    & \quad = \langle(W^{\frac{1}{2}}g)(z), ((W \otimes I_{{\mathbb
C}[[w^{\pm 1}]]})
 K)(z,w) u  \rangle_{\cH^{\ell}_{W} \times \cH^{\ell}_{W}[[w^{\pm
1}]]} \\
    &\quad  = \langle \langle (W^{\frac{1}{2}} g)(z), K(z,w) u
    \rangle_{\cH(K) \times \cH(K)[[w^{\pm}]]} \\
    & \quad = \langle f(z), K(z, w) u \rangle_{\cH(K) \times \cH(K)[[
    w^{ \pm 1}]]}
\end{align*}
and we see that indeed  $K^{\ell}_{W}(z,w):= (W \otimes
     I_{\bbC[[w^{\pm 1}]]}K)(z,w)$ plays
     the role of the formal reproducing kernel for the space
     $H^{\ell}_{W}$.

     \end{proof}

     \subsubsection{Pullback FRKHSs}  \label{S:pullback}
     Suppose now that $B \in \cM(K,K')$ is a bounded multiplier
     between two FRKHSs $\cH(K)$ and $\cH(K')$.
     We define the {\em pullback RKHS associated with $B$}
     by
     $$
      \cH^{p}_{B} = \operatorname{im} M_B = \{ B(z) f(z) \colon f \in
      \cH(K) \} \subset \cH(K')
      $$
      with inner product
      \begin{equation}  \label{Binnerprod}
      \langle B(z) f(z), B(z) g(z) \rangle_{\cH^{p}_{B}} = \langle
      (Qf, g \rangle_{\cH(K)}
      \end{equation}
      where $Q \in \cL(\cH(K))$ is the orthogonal projection
      onto $(\operatorname{ker} M_{B})^{\perp}$.
      The following Proposition summarizes the main features about
      pullback FRKHSs.

      \begin{proposition} \label{P:pullback}
      Suppose that $B \in \cM(K,K')$ and that
      $\cH^{p}_{B}$ is the associated pullback space with inner product
      given by \eqref{Binnerprod}.  Then
      \begin{enumerate}
\item $\cH^{p}_{B}$ is a FRKHS with
      reproducing kernel
      $$
      K^{p}_{B}(z,w) = B(z) K(z,w) B(w)^{*}.
      $$
 \item If $B \in \cB \cM(K,K')$ then $\cH^{p}_{B}$ is contractively
      included in $\cH(K')$:
      $$
       \| Bf \|_{\cH(K')} \le \|Bf\|_{\cH^{p}_{B}} \text{ for all } Bf
       \in \cH^{p}_{B}
      $$
      with equality for all $Bf \in \cH^{p}_{B}$ if and only if $M_{B}
      \colon \cH(K) \to \cH(K')$ is a coisometry.
  \item  If $B \in \mathcal{BM}(K, K')$, then the Brangesian
  complementary space $\left(\cH^{p}_{B}\right)^{\perp \rm{dB}}$
  defined as the space of all $f \in \cH(K)$ with finite
  $(\cH^{p}_{B})^{\perp \rm{dB}}$-norm:
 $$ \|f\|^{2}_{(\cH^{p}_{B})^{\perp \rm{dB}}} : = \sup \{ \| f +
 B g \|^{2}_{\cH(K')} - \| g\|^{2}_{\cH(K)} \colon g \in \cH(K) \} <
 \infty
 $$
 is the lifted norm space $\cH^{\ell}_{I - M_{B} M_{B}^{*}}$ and is
 equipped with the reproducing kernel
 $$
    K^{\ell}_{I - M_{B} M_{B}^{*}}(z,w) = K'(z,w) - B(z) K(z,w)
    B(w)^{*}.
 $$
\end{enumerate}
\end{proposition}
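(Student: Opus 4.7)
The three parts can be proved in sequence, each leveraging the earlier results in the paper.

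For part (1), I would begin by noting that $M_B$ restricted to $(\ker M_B)^\perp$ is tautologically a unitary onto $\cH^p_B$ equipped with the inner product \eqref{Binnerprod}, so $\cH^p_B$ is automatically a Hilbert space. To verify the FRKHS property, the coefficient evaluation $\pi^{\cH^p_B}_n \colon Bf \mapsto (Bf)_n$ factors as $\pi^{\cH(K')}_n \circ M_B \circ (M_B|_{(\ker M_B)^\perp})^{-1}$ and so is bounded, since $M_B$ is bounded and $\cH(K')$ is itself a FRKHS. To identify the reproducing kernel, invoke Remark \ref{R:special_Kolmogorov} to choose a Kolmogorov factorization $K(z,w) = H(z) H(w)^*$ with $M_H \colon \cH' \to \cH(K)$ unitary, and set $\widetilde H(z) = B(z) H(z)$. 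By Proposition \ref{P:convergences}, $\widetilde H \in \cL(\cH', \cF')[[z^{\pm 1}]]$ is well-defined and $\widetilde H(z) \widetilde H(w)^* = B(z) K(z,w) B(w)^*$. Since $M_{\widetilde H} = M_B \circ M_H$ with $M_H$ unitary, the pullback norm from $\cH'$ along $\widetilde H$ coincides with the declared inner product on $\cH^p_B$, and Theorem \ref{T:FRKHS} then identifies $\cH^p_B$ as the FRKHS with kernel $B(z) K(z,w) B(w)^*$.

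Part (2) is a direct norm estimate: writing $Q$ for the projection of $\cH(K)$ onto $(\ker M_B)^\perp$, we have $Bf = B(Qf)$, so
\[
\|Bf\|_{\cH(K')} = \|M_B Qf\|_{\cH(K')} \leq \|M_B\|\,\|Qf\|_{\cH(K)} \leq \|Qf\|_{\cH(K)} = \|Bf\|_{\cH^p_B}.
\]
Equality for every $Bf$ forces $M_B|_{(\ker M_B)^\perp}$ to be an isometry onto $\cH^p_B$, which, together with $\cH^p_B = \operatorname{im} M_B$, is exactly the assertion that $M_B M_B^* = I_{\cH(K')}$, i.e.\ that $M_B$ is a coisometry. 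The converse direction (from $M_B$ coisometric to the equality of norms) is immediate from the definitions.

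For part (3), I would invoke the classical de Branges complementary-space theorem: for any contraction $A \colon \cX \to \cX'$ of Hilbert spaces, the set of $f \in \cX'$ with $\sup_{g \in \cX}\{\|f + Ag\|^2 - \|g\|^2\} < \infty$ coincides with $\operatorname{im} (I - AA^*)^{1/2}$, and the sup equals the pullback-norm squared; this is established by completing the square in $\|f + Ag\|^2 - \|g\|^2 = \|f\|^2 + 2\operatorname{Re}\langle A^*f,g\rangle - \|(I - A^*A)^{1/2}g\|^2$. Applying this to $A = M_B$ identifies $(\cH^p_B)^{\perp\rm{dB}}$ with the lifted-norm space $\cH^\ell_{I - M_B M_B^*}$. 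Its reproducing kernel is given by Proposition \ref{P:liftednorm} as $((I - M_B M_B^*) \otimes I_{\bbC[[w^{\pm 1}]]}) K'(z,w)$. To compute $(M_B M_B^* \otimes I) K'(z,w)$ explicitly, I would use the formula $(M_B^* \otimes I) K'(z,w) u' = K(z,w) B(w)^* u'$ obtained in the proof of Proposition \ref{P:multipliers} (see \eqref{kernelid}), and then apply $M_B \otimes I$ (coefficientwise multiplication by $B(z)$) to arrive at $B(z) K(z,w) B(w)^*$, yielding the stated kernel.

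The main obstacle is bookkeeping rather than conceptual. In part (1), one must take the Kolmogorov factorization with $\ker M_H = 0$ so that the pullback identification of norms transfers cleanly from $\cH'$ to $\cH^p_B$; with a less special factorization, additional projections clutter the computation. In part (3), the delicate step is ensuring that the formal operations $(M_B \otimes I)(M_B^* \otimes I) K'(z,w) = B(z) K(z,w) B(w)^*$ are valid at the level of formal Laurent series and that the two iterated sums defining the coefficients agree; this is exactly where the care encoded in Proposition \ref{P:convergences} (and Remark \ref{R:trap}) is crucial.
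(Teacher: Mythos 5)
Your parts (1) and (3) are correct, but part (1) takes a genuinely different route from the paper's. The paper proves (1) by a direct verification of the reproducing property: it pairs $B(z)f(z)$ against $B(z)K(z,w)B(w)^{*}u'$ in the $\cH^{p}_{B}$-inner product \eqref{Binnerprod} and uses Proposition \ref{P:repr-ext} to make sense of $K(z,w)B(w)^{*}u'$ as an element of $\cH(K)[[w^{\pm 1}]]$. You instead push a Kolmogorov factorization $K=HH^{*}$ with $\ker M_{H}=\{0\}$ (Remark \ref{R:special_Kolmogorov}) through $M_{B}$, form $\widetilde H = BH$, and invoke the pullback-norm clause of Theorem \ref{T:FRKHS}. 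Both are sound; the paper's computation is shorter, while your argument delivers the Hilbert-space structure, the kernel, and the norm identification in one stroke and makes explicit where Proposition \ref{P:convergences} and the choice $\ker M_{H}=\{0\}$ enter (with a unitary $M_{H}$ the pullback norm along $\widetilde H$ visibly coincides with \eqref{Binnerprod}). For (3) you follow essentially the paper's route: the complementation statement is the classical Sarason/de Branges operator-theoretic fact (the paper simply cites it), and the kernel is obtained from Proposition \ref{P:liftednorm} together with \eqref{kernelid}, exactly as you do; your closing remark about where Proposition \ref{P:convergences} is needed is apt.

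In part (2), however, the ``only if'' step is a non sequitur. Equality $\|Bf\|_{\cH(K')}=\|Bf\|_{\cH^{p}_{B}}$ for all $f$ says only that $M_{B}$ restricted to $(\ker M_{B})^{\perp}$ is an isometry into $\cH(K')$, i.e.\ that $M_{B}$ is a partial isometry; then $M_{B}M_{B}^{*}$ is the orthogonal projection of $\cH(K')$ onto the (closed) subspace $\cH^{p}_{B}$, not the identity, unless one also knows $\operatorname{im} M_{B}=\cH(K')$. That surjectivity does not follow from the norm equality: take $d=1$, $K(z,w)=1$, $K'(z,w)=1+z_{1}w_{1}^{-1}$, $B(z)=1$; then $\cH^{p}_{B}$ (the constants) sits isometrically inside $\cH(K')$, yet $M_{B}M_{B}^{*}$ is a rank-one projection and $K'-BKB^{*}=z_{1}w_{1}^{-1}\neq 0$, so $M_{B}$ is not a coisometry (compare Proposition \ref{P:multipliers}). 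So your phrase ``isometry onto $\operatorname{im}M_{B}$, together with $\cH^{p}_{B}=\operatorname{im}M_{B}$, is exactly $M_{B}M_{B}^{*}=I_{\cH(K')}$'' conflates partial isometry with coisometry. The paper disposes of (2) with ``immediate from the definitions,'' and the example shows the discrepancy traces back to the statement itself: the clean equivalences are ``equality of the two norms for all elements $\Leftrightarrow$ $M_{B}$ is a partial isometry'' and ``$M_{B}$ coisometric $\Leftrightarrow$ $\cH^{p}_{B}=\cH(K')$ isometrically.'' Your contractivity estimate and the ``if'' direction are fine; the converse as you (and the statement) phrase it needs the extra surjectivity hypothesis.
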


\begin{proof} To prove the first statement (1),  notice that by Proposition
    \ref{P:repr-ext}, the product $B(z) K(z,w) B(w)^{*} u'$ is well defined and
    is in $\cH^{p}_{B}[[w^{\pm 1}]]$.  Calculate
 \begin{align*}
  & \langle B(z) f(z), B(z) K(z,w) B(w)^{*} u' \rangle_{\cH^{p}_{B} \times
  \cH^{p}_{B}[[ w^{\pm 1}]]}  \\
  & \qquad \qquad = \langle Q f(z), K(z,w) B(w)^{*} u'
  \rangle_{\cH(K) \times \cH(K)[[w^{\pm 1}]]} \\
  & \qquad \qquad = \langle B(w) f(w), u' \rangle_{\cF'[[w^{\pm 1}]]
  \times \cF'}
 \end{align*}
 and  the identity $K^{p})_{B}(z,w) = B(z) K(z,w) B(w)^{*}$ follows.

Notice next that statement (2) is immediate from the definitions.

For the proof of the first part of statement (3), we refer the
reader to Sarason's book \cite[Chapter 1]{Sarason}  for a general
operator-theoretic treatment which handles the FRKHS setup here as
a particular case (see also \cite[Theorem 2.5]{FRKHS}). The
statement about the reproducing kernels follows from Proposition
\ref{P:liftednorm} and the identity \eqref{kernelid}.
\end{proof}

\begin{remark}   \label{R:equiv-prop}
    Proposition \ref{P:pullback} (1) is essentially equivalent to
    Proposition \ref{P:multipliers}.  Indeed, assume Proposition
    \ref{P:multipliers}.  Let $B \in \cM(K,K')$.  Define a kernel
    $\widetilde K$ by $\widetilde
    K(z,w) = B(z) K(z,w) B(w)^{*}$.  It follows from Proposition
    \ref{P:multipliers} that $M_{B} \colon f(z) \mapsto B(z) f(z)$ is
    a coisometry from $\cH(K)$ onto $\cH(\widetilde K)$, or
    equivalently, $\cH(\widetilde K) = \cH^{p}_{B}$.  Conversely,
    assume Proposition \ref{P:pullback} (1).  Let $S$ be such that
    $S(z) f(z)$ is well defined for all $f \in \cH(K)$.  Set
    $\widetilde K(z,w) = S(z) K(z,w)S(w)^{*}$.
    By Proposition \ref{P:pullback} (1), $\cH(\widetilde K) =
    \cH^{p}_{B}$.  If $K_{S} = K' - \widetilde K$ (see
    \eqref{KSdef}) is a positive kernel, then $\cH(\widetilde K)$ is
    contractively included in $\cH(K')$ (see \cite[Theorem
  2.5]{FRKHS}), meaning that $S \in \mathcal{BM}(K,K')$.
    \end{remark}

      \begin{remark}  \label{R:SK=KS}
Suppose that $B\in\mathcal{M}(K)$ ($=\mathcal{M}(K,K)$) and
$M_B=M_B^*\ge 0$. By \eqref{kernelid}, we have
        \begin{multline}
            B(z) K(z,w) = \Big((M_B\otimes I_{\mathbb{C}[[w^{\pm
1}]]})K\Big)(z,w)\\
 =\Big((M_B^*\otimes I_{\mathbb{C}[[w^{\pm
 1}]]})K\Big)(z,w)=K(z,w)B(w)^*,
 \label{SK=KS} \end{multline}
Then $W:= (M_B)^2=M_{B^2} \ge 0$ and from  the
          discussion above we see that the lifted norm space
          $\cH^{\ell}_{W}$ with kernel $K^{\ell}_{W}(z,w) = B(z)^2
K(z,w)$ is
          the same as the pullback space $\cH^{p}_{B}$ with kernel
          $K^{p}_{B}(z,w) = B(z) K(z,w) B(w)^{*}$.  The
          identity \eqref{SK=KS} explains why these two forms of the
kernel
          agree.
          \end{remark}

      \subsubsection{Overlapping spaces}  \label{S:overlapping}
      Suppose that we are given a finite or infinite countable
collection $\{\cH(K_{j})\colon j\in J\}$ of FRKHSs with the same
coefficient space $\cF$;
we adapt the convention \eqref{notation} by writing
\begin{equation} \label{notation'}
K_{j}(z,w) = \sum_{n,m \in {\mathbb Z}^{d}} [K_{j}]_{n,m} z^{n}
w^{-m}, \quad [ K_{j}]_{m}(z) = \sum_{n \in {\mathbb Z}^{d}}
[K_{j}]_{n,m} z^{n}.
\end{equation}
We let $K(z,w) =\sum_{j\in
J}K_{j}(z,w)$, where we assume that the series converges in
$\cL(\cF)[[z^{\pm 1},w^{\pm 1}]]$ coefficientwise in the weak operator topology, i.e., for every
$n,m\in\mathbb{Z}^d$ the series of coefficients $\sum_{j\in
J}[K_j]_{n,m}$ converges in the weak operator topology of $\cL(\cF)$\footnote{
Equivalently, the series converges in the strong operator topology.
Indeed, 
$\left[\begin{smallmatrix} [K_j]_{n,n} & [K_j]_{n,m} \\ [K_j]_{m,n} & [K_j]_{m,m} \end{smallmatrix}\right] \geq 0$,
and it is well known that for a series of nonnegative operators (or for an increasing sequence of selfadjoint operators)
on a Hilbert space, weak operator topology and strong operator topology convergence coincide.}. 
Then by Theorem \ref{T:FRKHS} $K(z,w)$ is also a positive kernel with associated
FRKHS $\cH(K)$;
to avoid confusion, for the kernel $K$ we use the more elaborate
notation
\begin{equation}  \label{notation''}
    K(z,w) = \sum_{n,m \in {\mathbb Z}^{d}} [K]_{n,m} z^{n} w^{-m},
    \quad  [K]_{m}(z) = \sum_{n \in {\mathbb Z}^{d}} [K]_{n,m} z^{n}.
\end{equation}
It is often
      of interest to understand the relation between $\cH(K)$ and the
      subsidiary spaces $\cH(K_{j})$ ($j\in J$).  

We introduce the factorizations
      \begin{equation}  \label{fact1}
       K_{j}(z,w) = H_{j}(z) H_{j}(w)^{*} \text{ for some } H_j(z) = \sum_{n\in{\mathbb Z}^d} [H_j]_n z^n \in
       \cL(\cH_{j}', \cF)[[z^{\pm 1}]]
      \end{equation}
      so that as in Theorem \ref{T:FRKHS} we know that $\cH(K_{j}) =
      \{H_{j}(z) h_{j} \colon h_{j} \in \cH_{j}' \}$. 
Since $[K_j]_{n,m} = [H_j]_n [H_j]_m^*$ we have that
$\sum_{j\in J} [H_j]_n [H_j]_m^*$ converges in the weak operator topology for all
$n,m\in\mathbb{Z}^d$, and in particular 
$\sum_{j\in J} \langle [H_j]_n [H_j]_n^* v, v \rangle$ converges 
for all $n\in\mathbb{Z}^d$ and all $v\in\cF$. It follows that $\col_{j\in J}\left[[H_j]_n^*\right]$
is a well defined bounded linear operator from $\cF$ to $\widehat\bigoplus_{j\in J}\cH_{j}'$
with adjoint
$$
[H]_n = \row_{j\in J}\left[[H_j]_n\right] \in \cL\Big(\widehat\bigoplus_{j\in J}\cH_{j}',\cF)
$$
given more precisely by $\widehat\bigoplus_{j \in J} h_j' \mapsto \text{weak} \sum_{j\in J} [H_j]_n h_j'$.
From
\eqref{fact1} we see now that $K(z,w)$ has the factorization
      \begin{equation*}
          K(z,w) =  H(z) H(w)^{*} \text{ where }
          H(z) = \row_{j\in J}[H_j(z)]\in\cL\Big(\widehat\bigoplus_{j\in J}
\cH_{j}', \cF\Big)[[ z^{\pm 1} ]].
       \end{equation*}
      Hence, again from Theorem \ref{T:FRKHS} we read off that $\cH(K)$
      can be identified as
      \begin{align*}
      \cH(K) & = \Big\{ H(z)h=\sum_{j\in J}H_{j}(z) h_{j} \colon
h=\widehat\bigoplus_{j\in J}h_{j} \in
      \widehat\bigoplus_{j\in J}\cH_{j}'\Big\} \\
      & =\Big\{ \sum_{j\in J}f_j\colon \widehat{\bigoplus_{j\in
J}}f_j\in\widehat{\bigoplus_{j\in J}}\cH(K_{j})\Big\},
      \end{align*}
where the series converge in $\cF[[z^{\pm 1}]]$ coefficientwise in weak topology.
      However,
      only in special circumstances is this sum identifiable
      with a direct sum since the sum map ${\mathbf s} \colon
      \widehat \bigoplus_{j\in J} \cH(K_{j}) \to \cH(K)$ given by
      \begin{equation}  \label{sum}
       {\mathbf s} \colon \widehat{\bigoplus_{j\in J}}f_j(z) \mapsto
\sum_{j\in J}f_j(z)
      \end{equation}
      may have a kernel.  In fact, again as a consequence of Theorem
      \ref{T:FRKHS}, we see that the norm on elements of $\cH(K)$ is
      given by
      \begin{align*}
\Big\|\sum_{j\in J}H_j(z)h_j\Big\|_{\mathcal{H}(K)}
&=\Big\|P_{(\ker\mathbf{s})^\perp}\widehat{\bigoplus_{j\in
J}}H_j(z)h_j\Big\|_{\widehat\bigoplus_{j\in J}\mathcal{H}(K_j)}\\
&=\Big\|P_{(\ker H(z))^\perp}\widehat\bigoplus_{j\in
J}h_j\Big\|_{\widehat\bigoplus_{j\in J}\mathcal{H}_j'}.
      \end{align*}
To quantify the overlapping of the spaces $\cH(K_{j})$ ($j\in J$),
in the spirit of the work of de Branges and Rovnyak \cite{dBR1,
dBR2} (see also \cite{Ball78}), we introduce the {\em overlapping
space} $\cL(\{K_{j}\colon j\in J\})$ defined by
\begin{equation}  \label{overlapping}
\cL(\{K_{j}\colon j\in J\})  = \ker {\mathbf s}
\end{equation}
with norm inherited from $\widehat\bigoplus_{j\in J} \cH(K_{j})$.
Then $\cL(\{K_{j}\colon j\in J\})$ is again a FRKHS whose
reproducing kernel $K_{\cL(\{K_{j}\colon j\in J\})}$ can be
computed explicitly in some special cases (see
\cite{dBR1,dBR2,Ball78, Cuntz-rep}). In any case we have the
following result:

      \begin{proposition} \label{P:overlapping}
          Suppose that $\{K_{j}\colon j\in J\}$ is a finite or infinite
countable
          collection of positive kernels with the common coefficient
space $\mathcal{F}$, and we set $K(z,w) =\sum_{j\in J}K_j(z,w)$
where we assume that the series converges coefficientwise in the weak operator topology in
$\mathcal{L(F)}[[z^{\pm 1},w^{\pm 1}]]$.
           Then:
      \begin{enumerate}
          \item The sum map ${\mathbf s}$ given by
          \eqref{sum} is a coisometry from $\widehat
          \bigoplus_{j\in J} \cH(K_{j})$ onto $\cH(K)$ with initial
          space ${\mathcal D}_{{\mathbf s}}$ given by
       \begin{equation*}
           {\mathcal D}_{{\mathbf s}} = \overline{\operatorname{span}}
           \left\{
\widehat{\bigoplus_{j\in J}}[K_{j}]_{m}(z) u \colon u \in
          \cF, \, m \in {\mathbb Z}^{d} \right\}.
    \end{equation*}

    \item
      There is a unitary identification map
      $$ \tau \colon \widehat{\bigoplus_{j\in J}} \cH(K_{j}) \to \cH(K)
      \widehat\oplus\cL(\{K_{j}\colon j\in J\})
      $$
       given by
      $$
       \tau \colon \widehat{ \bigoplus_{j\in J}} f_{j} \mapsto
        \begin{bmatrix} {\mathbf s} \\ P_{\ker
     {\mathbf s}} \end{bmatrix}
     \left( \widehat{ \bigoplus_{j\in J}} f_{j} \right) =
        \begin{bmatrix} \sum_{j\in J}f_{j}\\
        P_{\operatorname{ker} {\mathbf
        s}}\left( \widehat\bigoplus_{j\in J} f_{j} \right) \end{bmatrix}
      $$
      where the overlapping space $\cL(\{K_{j}\colon j\in J\})$ is
      given by \eqref{overlapping}.
      In particular, if $\cL(\{K_{j}\colon j\in J\}) = \{0\}$, then
      $$
       \cH(K) =
       \bigoplus_{j\in J} \cH(K_{j}).
      $$
      \end{enumerate}
      \end{proposition}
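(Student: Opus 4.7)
The plan is to prove both statements by a single computation of $\mathbf{s}^*$ on reproducing kernels.

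For part (1), first I would check that $\mathbf{s}$ makes sense and is bounded, which follows immediately since for any $\widehat\bigoplus_{j\in J} f_j \in \widehat\bigoplus_{j\in J} \mathcal{H}(K_j)$ the coefficientwise series $\sum_{j\in J} f_j$ converges weakly in $\mathcal{F}$ by Cauchy-Schwarz (since $\sum_j \|(f_j)_n\|^2_\mathcal{F} \le \sum_j \|f_j\|^2_{\mathcal{H}(K_j)} \langle[K_j]_{n,n}u,u\rangle/\|u\|^2$ type bounds from reproducing) and the result is in $\mathcal{H}(K)$. To identify $\mathbf{s}^*$, I compute, for $u \in \mathcal{F}$ and $m \in \mathbb{Z}^d$,
\begin{align*}
\langle \mathbf{s}(\widehat\bigoplus_{j} f_j), [K]_m(z)u\rangle_{\mathcal{H}(K)}
&= \Big\langle \sum_{j}(f_j)_m, u\Big\rangle_\mathcal{F}
 = \sum_{j} \langle f_j, [K_j]_m(z)u\rangle_{\mathcal{H}(K_j)} \\
&= \Big\langle \widehat\bigoplus_{j} f_j,\; \widehat\bigoplus_{j} [K_j]_m(z) u\Big\rangle_{\widehat\bigoplus_{j}\mathcal{H}(K_j)},
\end{align*}
where the bilateral element $\widehat\bigoplus_j [K_j]_m(z)u$ really lies in $\widehat\bigoplus_j\mathcal{H}(K_j)$ because $\sum_j \|[K_j]_m(z)u\|^2_{\mathcal{H}(K_j)} = \sum_j \langle [K_j]_{m,m}u,u\rangle = \langle [K]_{m,m}u,u\rangle = \|[K]_m(z)u\|^2_{\mathcal{H}(K)} < \infty$ by weak convergence of the kernel series. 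This identifies
$$ \mathbf{s}^* \colon [K]_m(z) u \mapsto \widehat\bigoplus_{j} [K_j]_m(z) u.$$

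Next I would verify directly that $\mathbf{s}^*$ is an isometry on the dense subspace of finite linear combinations of kernel functions: for $\sum_i [K]_{m_i}(z)u_i$ the image has squared norm $\sum_{i,i'}\sum_j \langle [K_j]_{m_{i'},m_i}u_i,u_{i'}\rangle$, which by the weak-operator convergence of $K = \sum_j K_j$ equals $\sum_{i,i'}\langle [K]_{m_{i'},m_i}u_i,u_{i'}\rangle = \|\sum_i [K]_{m_i}(z)u_i\|^2_{\mathcal{H}(K)}$. Since the span of kernel functions is dense in $\mathcal{H}(K)$, $\mathbf{s}^*$ extends to an isometry, so $\mathbf{s}$ is a coisometry. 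The initial space $\mathcal{D}_\mathbf{s}$ is then by definition the image of $\mathbf{s}^*$, whose description $\overline{\operatorname{span}}\{\widehat\bigoplus_j [K_j]_m(z)u\}$ is read off from the formula above.

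For part (2), I would use the orthogonal decomposition $\widehat\bigoplus_{j\in J}\mathcal{H}(K_j) = \mathcal{D}_\mathbf{s} \oplus \ker\mathbf{s} = \mathcal{D}_\mathbf{s}\oplus\mathcal{L}(\{K_j \colon j\in J\})$. On $\mathcal{D}_\mathbf{s}$ the map $\mathbf{s}$ is an isometry onto $\mathcal{H}(K)$ and $P_{\ker\mathbf{s}}$ is zero, while on $\ker\mathbf{s}$ the map $\mathbf{s}$ is zero and $P_{\ker\mathbf{s}}$ is the identity onto $\mathcal{L}(\{K_j \colon j\in J\})$; assembling these gives unitarity of $\tau$. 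For the final claim, $\mathcal{L}(\{K_j\colon j\in J\}) = \{0\}$ forces $\mathbf{s}$ to be unitary, and then the images of the embeddings $f_j \mapsto \mathbf{s}(\widehat\bigoplus_{i} \delta_{ij} f_j)$ inside $\mathcal{H}(K)$ are pairwise orthogonal, which gives the internal orthogonal direct sum decomposition $\mathcal{H}(K) = \bigoplus_{j\in J}\mathcal{H}(K_j)$.

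The main obstacle I anticipate is purely bookkeeping with the convergence of the infinite sums when $J$ is infinite: ensuring that weak-operator-topology convergence of $\sum_j K_j$ really does imply both that $\widehat\bigoplus_j [K_j]_m(z)u$ lies in the external direct sum and that the inner product interchange $\sum_{i,i'}\sum_j = \sum_j\sum_{i,i'}$ is legal. This is handled by the nonnegativity remark in the footnote, which upgrades weak-operator convergence to strong-operator convergence of nonnegative-operator series and thereby legitimizes swapping the order of summation of positive terms via Fubini/Tonelli.
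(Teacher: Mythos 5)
Your proof is correct, but it reaches the coisometry of ${\mathbf s}$ by a different route than the paper. The paper gets that part "for free" from the discussion preceding the proposition: it writes each $K_j(z,w)=H_j(z)H_j(w)^*$, assembles $H(z)=\operatorname{row}_{j\in J}[H_j(z)]$ so that $K(z,w)=H(z)H(w)^*$, and invokes Theorem \ref{T:FRKHS} (the pullback-norm description of $\cH(K)$) to see that the sum map is a coisometry with kernel exactly the overlapping space; its proof of the proposition proper then only computes ${\mathbf s}^*(K(\cdot,w)u)=\widehat\bigoplus_j K_j(\cdot,w)u$ to identify the initial space -- the same reproducing-property computation you perform. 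You instead bypass the Kolmogorov factorization entirely and verify by hand that the map $[K]_m(z)u\mapsto\widehat\bigoplus_j[K_j]_m(z)u$ is isometric on the span of kernel functions, using positivity of the terms $\|\sum_i[K_j]_{m_i}u_i\|^2$ to justify interchanging the $j$-sum with the finite sums, which is a legitimate and somewhat more self-contained argument; what the paper's factorization approach buys is that boundedness of ${\mathbf s}$ and the norm formula on $\cH(K)$ come packaged together, whereas your write-up asserts at the outset that ${\mathbf s}(\widehat\bigoplus_j f_j)$ lands in $\cH(K)$ boundedly "immediately," which is not immediate and is in fact what is being proved. This is only an ordering issue, not a gap: if you define the isometry $T$ on kernel functions first, extend it, and then check via the reproducing property that $T^*$ acts as coefficientwise summation, you recover well-definedness and the coisometry of ${\mathbf s}=T^*$ in one stroke, and your part (2) (decomposing $\widehat\bigoplus_j\cH(K_j)={\mathcal D}_{\mathbf s}\oplus\ker{\mathbf s}$ and assembling $\tau$) then goes through exactly as written.
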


      \begin{proof}  In view of the discussion preceding the
        statement, we only check the assertion concerning the
        initial space.  Note that the computation
     \begin{align*}
      &   \Big\langle {\mathbf s} \Big(\widehat \bigoplus_{j\in J}
f_{j}\Big),
         K(\cdot, w) u \Big\rangle_{\cH(K) \times \cH(K)[[w^{\pm
         1}]]}
          = \sum_{j\in J} \langle f_{j}(w), u
         \rangle_{\cF[[w^{\pm1}]] \times \cF} \\
         & \qquad  = \sum_{j\in J} \langle f_{j}, K_{j}(\cdot, w) u
         \rangle_{\cH(K_{j}) \times \cH(K_{j}) [[ w^{\pm 1}]]}  \\
        & \qquad  = \Big\langle \widehat \bigoplus_{j\in J} f_{j},
\widehat\bigoplus_{j\in J} K_{j}(\cdot, w) u
         \Big\rangle_{\widehat \bigoplus_{j\in J} \cH(K_{j})
         \times \widehat \bigoplus_{j\in J} \cH(K_{j})[[w^{\pm 1}]]}
         \end{align*}
     shows that ${\mathbf s}^{*}(K(\cdot, w) u)
=\widehat\bigoplus_{j\in J} K_{j}(\cdot, w) u$. This in turn
amounts to the
     simultaneous set of equalities
     $$
     [{\mathbf s}^{*}([K]_{m}(\cdot) u)]_{m}(z) =
\widehat{\bigoplus_{j\in J}}[K_{j}]_{m}(z)u
      \text{ for all } u \in \cF \text{ and } m \in {\mathbb Z}^{d}.
     $$
     Hence $(\operatorname{ker}
     {\mathbf s})^{\perp} =  \overline{\operatorname{im}}\, {\mathbf
     s}^{*} = {\mathcal D}_{{\mathbf s}}$ as asserted.
    \end{proof}

   \subsection{The Szeg\H{o} kernel and associated FRKHSs} \label{S:Szego}
 We apply these ideas in particular to the case $k_{\Sz}(z,w) =
\sum_{n \in \bbZ^{d}} z^{n} w^{-n}$ (the {\em bilateral Szeg\H{o}
kernel})
    in the space $ \bbC[[z^{\pm 1}, w^{\pm 1}]]$ (here we
    identify the complex numbers $\bbC$ with the space $\cL(\bbC)$ of
    linear operators on $\bbC$).  Note that $k_{\Sz}(z,w) = h_{\Sz}(z)
    h_{\Sz}(w)^{*}$
    if we set
    $$
      h_{\Sz}(z) = \operatorname{row}_{n \in \bbZ^{d}} [ z^{n} ]
       \in \cL(\ell^{2}(\bbZ^{d}), \bbC)[[z^{ \pm 1}]].
    $$
    Hence condition (2) in Theorem \ref{T:FRKHS} is verified for
    $k_{\Sz}$ and
    it follows that $k_{\Sz}$ is the reproducing kernel for a formal
reproducing kernel Hilbert space
    $\cH(k_{\Sz})$.  To identify $\cH(k_{\Sz})$ explicitly, note that
    $k_{\Sz}(z,w) =
    \sum_{n \in \bbZ^{d}} k_{\Sz,n}(z) w^{-n}$ where $k_{\Sz,n}(z) =
z^{n}$.
    Hence, for $f(z) = \sum_{n \in \bbZ^{d}} f_{n} z^{n} \in
    \cH(k_{\Sz})$ we
    have the identity
    $$
      \langle f, z^{n} \rangle_{\cH(k_{\Sz})} = f_{n} \text{ for all
} n \in
      \bbZ^{d}
    $$
    from which we see that $\cH(k_{\Sz})$ can be identified with the
Lebesgue
    space $L^{2}(\bbT^{d})$ of measurable functions on the torus
$\bbT^{d}$
    with modulus-squared integrable with respect to Lebesgue measure
via
    identification of a function $f \in L^{2}(\bbT^{d})$ with its
    $d$-variable  Fourier series $f(z) \sim  \sum_{n \in \bbZ^{d}}
f_{n}
    z^{n}$ (with $f_{n} = \int_{\bbT^{d}} f(\zeta) \zeta^{-n}
    \ |d\zeta|/(2 \pi)^{d}$).  The fact that the operator $M_{z_{k}}$
    of multiplication by the
    variable $z_{k}$ is unitary on $L^{2}(\bbT^{d})$ implies then that
    $M_{z_{k}}$ is unitary on $\cH(k_{\Sz})$; this can be seen
directly
    in the FRKHS context as an application of
    Proposition \ref{P:multipliers} combined with the easily verified
    identities
    $$ k_{\Sz}(z,w) = z_{k} k_{\Sz}(z,w) w_{k}^{-1}, \qquad
      k_{\Sz}(z,w) = z_{k}^{-1} k_{\Sz}(z,w) w_{k}
     $$
    and  the observation that $M_{z_k}^*=M_{z_k^{-1}}$   for  $k = 1,
\dots, d$; this is also a special case of Proposition
\ref{P:Szmultipliers} below.

Let $\cF$ be a separable coefficient Hilbert space. Following
Sz.-Nagy-Foia\c{s} \cite[Chapter V]{NF} where the case $d=1$ is
handled or specializing the very general setting of Hille-Phillips
\cite[Section 3.1]{HP} or the somewhat different general setting
of Dixmier \cite[Part II]{Dix}, we say that a $\cF$-valued
function $\zeta \mapsto f(\zeta)$ defined on ${\mathbb T}^{d}$ is
{\em measurable in the weak sense} if
\begin{itemize}
    \item {\em the scalar-valued function $\zeta \mapsto \langle
    f(\zeta)u, \widetilde u \rangle_{\cF}$ is measurable for each
    choice of $u, \widetilde u \in \cF$}
\end{itemize}
and is {\em measurable in the strong sense} if
\begin{itemize}
    \item {\em the function $\zeta \mapsto f(\zeta)$ is the
    almost-everywhere pointwise limits of simple functions
    $$
    f(\zeta) = \lim_{n \to \infty} f_{n}(\zeta)
    $$
    where each $f_{n}$ simple means that $f_{n}$ is a finite linear
    combination  (with vector coefficients from $\cF$) of characteristic
    functions of measurable sets.}
  \end{itemize}
The fact that $\cF$ is separable implies that these two notions of
measurable are equivalent (see \cite[Corollary 2 page 73]{HP}) and
we simply say that $f$ is measurable for either of these notions.
>From the Parseval relation
$$
  \| f(\zeta) \|^{2} = \sum_{n=0}^{\infty} |\langle f(\zeta) e_{n},
  e_{n} \rangle|^{2}
$$
where $\{e_{n}\}_{n \in {\mathbb Z}_{+}}$ is any orthonormal basis
for $\cF$, we see immediately that $\zeta \mapsto \|
f(\zeta)\|^{2}$ is measurable in the standard sense for a
scalar-valued function.\footnote{This remains true for
Banach-space valued functions $\zeta \mapsto f(\zeta)$ but by a
different argument (see \cite[Theorem 3.52 page 72]{HP}).} Hence
it makes sense to define a Hilbert space $L^{2}({\mathbb T}^{d},
\cF)$ as the space of all measurable $\cF$-valued functions $\zeta
\mapsto f(\zeta)$ which are norm-squared integrable:
$$
  \| f \|^{2}_{L^{2}({\mathbb T}^{d}, \cF)} : = \int_{{\mathbb
  T}^{d}} \| f(\zeta)\|^{2}\, {\tt d}\zeta < \infty.
$$
Furthermore, it can be shown (see \cite[page 190]{NF} for the case
$d=1$) that $L^{2}({\mathbb T}^{d}, \cF)$ has an orthogonal
decomposition
$$
L^{2}({\mathbb T}^{d}, \cF) = \bigoplus_{k \in {\mathbb Z}^{d}}
{\mathfrak E}_{k}
$$
where we set
$$
 {\mathfrak E}_{k} = \zeta^{k} \cF \subset L^{2}({\mathbb T}^{d}, \cF)
$$
for $k  \in {\mathbb Z}^{d}$.  From this decomposition it is
easily verified that we can identify the function space
$L^{2}({\mathbb T}^{d}, \cF)$ with the formal reproducing kernel
Hilbert space $\cH(k_{\Sz} I_{\cF})$ by identifying the function
$\zeta^{k}u \in L^{2}({\mathbb T}^{d}, \cF)$ with the formal
monomial $z^{k} u \in \cH(k_{\Sz}I_{\cF})$ for each $u \in \cF$
and then by extending by linearity and taking of limits.  What is
the same, we identify the function $f \in L^{2}({\mathbb T}^{d},
\cF)$ with the formal series $\widehat f(z) = \sum_{n \in {\mathbb
Z}^{d}} f_{n} z^{n}$ by making use of the Fourier series
representation for $f$
$$
  f(z) \sim \sum_{n \in {\mathbb Z}^{d}} f_{n} \zeta^{n}
$$
where the Fourier coefficients $f_{n}$ are given by
$$
  f_{n} = \int_{{\mathbb T}} f(\zeta) \zeta^{-n}\, {\tt d} |\zeta|/(2
  \pi)^{d}
  \text{ (weak integral)},
$$
just as explained above for the scalar-valued case.

Now let us suppose that we have two separable Hilbert coefficient
spaces $\cF$ and $\cF'$ and suppose that $\zeta \mapsto S(\zeta)$
is an $\cL(\cF, \cF')$-operator-valued function on ${\mathbb
T}^{d}$.  We say that $S$ is {\em measurable} if $S$ is weakly
measurable in the sense that $\zeta \mapsto \langle S(\zeta) u, u'
\rangle_{\cF'}$ is a measurable scalar-valued function for each $u
\in \cF$ and $u' \in \cF'$. A consequence of Theorem 1 in
\cite[Part II, Chapter 2, Section 5]{Dix} is that bounded linear
operators $T$ from $L^{2}({\mathbb T}^{d}, \cF)$ to
$L^{2}({\mathbb T}^{d}, \cF')$ intertwining the scalar
multiplication operators $M_{z_{k}}$ on $L^{2}({\mathbb T}^{d},
\cF)$ and $L^{2}({\mathbb T}^{d}, \cF')$
$$
  T (M_{z_{k}} \otimes I_{\cF}) = (M_{z_{k}} \otimes I_{\cF'}) T
$$
($k=1, \dots, d$) are characterized as operators of the form $T =
M_{S}$
$$
  M_{S} \colon f(\zeta) \mapsto S(\zeta) f(\zeta)
$$
for an essentially bounded measurable function $\zeta \mapsto
S(\zeta) \in \cL(\cF, \cF')$, with operator norm of $T$ equal to
the essential infinity norm of $S$
$$
  \| T\|_{\rm op} = \| S \|_{\infty}
$$
(see Proposition 2 in \cite[Part II Chapter 2 Section 3]{Dix}).
For convenience we denote the space of all such essentially
bounded measurable $\cL(\cF, \cF')$-valued functions $S$ by
$L^{\infty}({\mathbb T}^{d}, \cL(\cF, \cF'))$. Under the
isomorphism $f(\zeta) \sim f(z)$ of $L^{2}({\mathbb T}^{d}, \cF)$
with $\cH(k_{\Sz} I_{\cF})$ and of $L^{2}({\mathbb T}^{d}, \cF')$
with $\cH(k_{\Sz} I_{\cF'})$, it is clear that such multiplication
operators $M_{S}$ coincide with the space of multipliers
$\cM(k_{\Sz} I_{\cF}, k_{\Sz} I_{\cF'})$ and that this
correspondence  is again given by the Fourier series
representation:
\begin{align*}
  S(\zeta) \in L^{\infty}({\mathbb T}^{d}, \cL(\cF, \cF')) & \sim S(z)
  =  \sum_{n \in {\mathbb Z}^{d}} S_{n} z^{n} \in \cM(k_{\Sz}I_{\cF},
  k_{\Sz} I_{\cF'}) \\
  & \text{where } S_{n} =
   \int_{{\mathbb T}^{d}} F(\zeta) \zeta^{-n}\,
  {\tt d} |\zeta|/(2 \pi)^{d}.
\end{align*}

Proposition \ref{P:multipliers}
    specialized to this situation gives the following.

    \begin{proposition}  \label{P:Szmultipliers}
    Suppose that $S$ is an element of
     $L^{\infty}(\bbT^{d}, \cL(\cF, \cF'))$ with
    associated Fourier series $S(z) \sim \sum_{n \in \bbZ^{d}} S_{n}
    z^{n}$ viewed as a formal Laurent series in $\cL(\cF, \cF')[[z^{\pm
    1}]]$.  Then:
    \begin{enumerate}

        \item The function $S$ has $\| S \|_{\infty} \le 1$ if and only
        if its Fourier series $S(z) \sim \sum_{n \in \bbZ^{d}} S_{n}
        z^{n}$  is such that any, and hence both, of the kernels
        $$ K_{S}(z,w) = k_{\Sz}(z,w)( I_{\cF'} - S(z) S(w)^{*}),
        \qquad K_{S^{*}}(z,w) = k_{\Sz}(z,w)(I_{\cF} -S(z)^{*} S(w))
        $$
        are positive kernels (in $\cL(\cF')[[z^{\pm 1}, w^{\pm 1}]]$
        and $\cL(\cF)[[z^{\pm 1}, w^{\pm 1}]]$ respectively).

 \item $S$ has isometric values almost everywhere on $\bbT^{d}$
 ($S(\zeta)^{*}S(\zeta) = I_{\cF}$ for a.e.~$\zeta \in  \bbT^{d}$) if
 and only if its Fourier series $S(z) \sim \sum_{n \in \bbZ^{d}} S_{n} z^{n}$
 satisfies the formal Laurent series identity
$$
 k_{\Sz}(z,w)(I_{\cF} - S(z)^{*}S(w)) = 0.
 $$

 \item $S$ has coisometric values almost every on $\bbT^{d}$
 ($S(\zeta)S(\zeta)^{*} = I_{\cF'}$ for a.e.~$\zeta \in \bbT^{d}$) if
and only if its Fourier series $S(z) \sim \sum_{n \in \bbZ^{d}}
S_{n} z^{n}$ satisfies the formal Laurent series identity
$$
 k_{\Sz}(z,w)(I_{\cF} - S(z)S(w)^{*}) = 0.
$$
    \end{enumerate}
    \end{proposition}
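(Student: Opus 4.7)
The plan is to apply Proposition~\ref{P:multipliers} with $K=k_{\Sz}I_{\cF}$ and $K'=k_{\Sz}I_{\cF'}$, transporting the question from $L^{2}(\bbT^{d},\cdot)$ to the FRKHS setting via the Fourier-series identification $L^{2}(\bbT^{d},\cF)\simeq\cH(k_{\Sz}I_{\cF})$ recalled just above the statement.

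First I would dispose of well-definedness. Since $S\in L^{\infty}(\bbT^{d},\cL(\cF,\cF'))$ by hypothesis, the multiplication operator $M_{S}$ is bounded from $L^{2}(\bbT^{d},\cF)$ to $L^{2}(\bbT^{d},\cF')$, and under the Fourier isomorphism the sequence $(Sf)_{n}=\sum_{\ell}S_{n-\ell}f_{\ell}$ is just the Fourier coefficient sequence of the $L^{2}$-function $Sf$; in particular this sum converges in norm in $\cF'$ and the resulting series lies in $\cH(k_{\Sz}I_{\cF'})$. The same holds for $S(z)^{*}$ acting on $\cH(k_{\Sz}I_{\cF'})$. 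Thus the well-definedness hypotheses of Proposition~\ref{P:multipliers} on $S(z)f(z)$, $S(z)^{*}f(z)$ and, via Proposition~\ref{P:convergences}, on the formal products $S(z)k_{\Sz}(z,w)S(w)^{*}$ and $S(z)^{*}k_{\Sz}(z,w)S(w)$ are automatically satisfied.

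For part (1), the operator identification recalled just above the statement gives $\|M_{S}\|_{\mathrm{op}}=\|S\|_{\infty}$, so $\|S\|_{\infty}\le 1$ is equivalent to $M_{S}$ being a contraction, and, since $M_{S^{*}}=M_{S}^{*}$ in the $L^{2}$ picture, equivalent also to $M_{S^{*}}$ being a contraction. Proposition~\ref{P:multipliers}(1) then identifies the first condition with positivity of $K_{S}=k_{\Sz}(I_{\cF'}-S(z)S(w)^{*})$, and Proposition~\ref{P:multipliers}(2) identifies the second with positivity of $K_{S^{*}}=k_{\Sz}(I_{\cF}-S(z)^{*}S(w))$. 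Since each positivity condition is by itself equivalent to $\|S\|_{\infty}\le 1$, the two are mutually equivalent, which yields the ``any, and hence both'' assertion. For parts (2) and (3), $S$ has isometric (respectively coisometric) values a.e.~on $\bbT^{d}$ iff $M_{S}$ is an isometry (respectively coisometry) on $L^{2}$, equivalently $M_{S^{*}}$ is a coisometry (respectively isometry); the ``coisometry iff the kernel vanishes'' refinements of Proposition~\ref{P:multipliers}(1) and~(2) then translate these into $K_{S^{*}}(z,w)=0$ and $K_{S}(z,w)=0$, giving (2) and~(3).

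The one genuine conceptual hazard is the phenomenon flagged in Remark~\ref{R:expiate_old_sins}: in general, positivity of a formal kernel $K_{S}$ does not force the multiplier $M_{S}$ to be well defined, so one is not automatically entitled to invoke Proposition~\ref{P:multipliers} from the positivity side alone. Here that hazard is neutralized from the outset by the hypothesis $S\in L^{\infty}$, which upgrades the formal convolution sums to honest $L^{2}$ Fourier coefficients; beyond this observation the argument is essentially bookkeeping between the function-theoretic picture on $L^{2}(\bbT^{d},\cdot)$ and the formal reproducing kernel picture on $\cH(k_{\Sz}I_{\cdot})$.
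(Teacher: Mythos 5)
Your proposal is correct and follows essentially the same route as the paper, which proves this proposition simply by specializing Proposition \ref{P:multipliers} to $K=k_{\Sz}I_{\cF}$, $K'=k_{\Sz}I_{\cF'}$ via the identification $L^{2}(\bbT^{d},\cF)\simeq\cH(k_{\Sz}I_{\cF})$ and the equality $\|M_{S}\|_{\rm op}=\|S\|_{\infty}$ (together with $M_{S}^{*}=M_{S^{*}}$, as in Remark \ref{SzSK=KS}). Your explicit verification that the $L^{\infty}$ hypothesis supplies the well-definedness needed to avoid the pitfall of Remark \ref{R:expiate_old_sins} is exactly the detail the paper leaves implicit.
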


    \begin{remark}  \label{SzSK=KS}
      In Proposition \ref{P:Szmultipliers}, we use the fact that
$M_S^*=M_{S^*}$ in this case, and it follows from \eqref{kernelid}
that
    $$
    k_{\Sz}(z,w)S(z)   = k_{\Sz}(z,w) S(w)
    $$
(notice that formal power series with operator coefficients
commute with formal power series with scalar coefficients)
    i.e.,
    $$
    \sum_{n, n' \in \bbZ^{d}} S_{n} z^{n+n'} w^{-n'} =
     \sum_{n,n' \in \bbZ^{d}} S_{n'}z^{n}  w^{-n+n'}
     $$
      whenever $S(z) \sim \sum_{n \in \bbZ^{d}} S_{n} z^{n} \in
      L^{\infty}(\bbT^{d}, \cL(\cF, \cF'))$.  Of course this
      identity can be verified directly by performing the appropriate
      change of variable in the summation index.
      \end{remark}

    Suppose that $S = S^{*} \in L^{\infty}(\bbT^{d}, \cL(\cF))$
    with positive-semidefinite values $S(\zeta) \ge 0$ for
    a.e.~$\zeta \in \bbT^{d}$.  Then in particular the multiplication
    operator $W=M_{S}$ on $L^{2}(\bbT^{d}, \cF) \sim
\cH(k_{\Sz} I_{\cF})$ is positive semidefinite and we may define
the lifted norm
    space $\cH^{\ell}_{W}$ as in Section \ref{S:liftednorm}.
    The following proposition lists a few properties of this space.

    \begin{proposition} \label{P:Szliftednorm}
    Suppose that $S \in \cL(\cF)[[z^{\pm 1}]]$ is
    such that $W=M_{S}$ is positive semidefinite on $\cH(k_{\rm Sz}
    I_{\cF})$ and the space $\cH_{W}^\ell = \operatorname{im} ({W}^{1/2})$
    with inner product given by  \eqref{Winnerprod} with $K=k_{\rm
Sz}$.   Then $\cH_{W}^\ell$
    is a FRKHS with formal reproducing kernel $K_{W}^\ell(z,w) \in
    \cL(\cF)[[z^{\pm 1}, w^{\pm 1}]]$ given by
    $$
     K^{\ell}_{W}(z,w) =k_{\Sz}(z,w) S(z)  =  k_{\Sz}(z,w)S(w).
    $$
    \end{proposition}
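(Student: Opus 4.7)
The plan is to deduce Proposition \ref{P:Szliftednorm} as a direct specialization of Proposition \ref{P:liftednorm} combined with the identity from Remark \ref{R:SK=KS}. Since $W = M_S$ is assumed positive semidefinite on $\cH(k_{\Sz} I_\cF)$, Proposition \ref{P:liftednorm} applies with $K = k_{\Sz} I_\cF$ and yields immediately that $\cH^{\ell}_W$ is a FRKHS with reproducing kernel
$$
 K^{\ell}_W(z,w) = \bigl((M_S \otimes I_{\bbC[[w^{\pm 1}]]})(k_{\Sz} I_\cF)\bigr)(z,w).
$$
So the only content left is to verify that this abstractly-defined kernel equals $k_{\Sz}(z,w) S(z) = k_{\Sz}(z,w) S(w)$.

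For the first equality, I would unpack the tensor action coefficient-by-coefficient in $w$: writing $k_{\Sz}(z,w) u = \sum_{n \in \bbZ^d} (z^n u) w^{-n}$ for $u \in \cF$ and applying $M_S$ in the $z$-variable gives $(M_S \otimes I_{\bbC[[w^{\pm 1}]]})(k_{\Sz}(z,w)u) = \sum_n S(z) z^n u\, w^{-n} = S(z) k_{\Sz}(z,w) u$. Since scalar series commute with operator-valued coefficients, this is the same as $k_{\Sz}(z,w) S(z) u$, establishing the first asserted formula.

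For the equality $k_{\Sz}(z,w) S(z) = k_{\Sz}(z,w) S(w)$, I would invoke the identity \eqref{SK=KS} from Remark \ref{R:SK=KS}, applied with $K = k_{\Sz} I_\cF$ and $B = S$, to obtain $S(z) k_{\Sz}(z,w) = k_{\Sz}(z,w) S(w)^*$. The self-adjointness hypothesis $M_S = M_S^*$ is equivalent to $S_{-n} = S_n^*$ for all $n \in \bbZ^d$, so
$$
 S(w)^* = \sum_{n \in \bbZ^d} S_n^* w^{-n} = \sum_{n \in \bbZ^d} S_{-n} w^{-n} = \sum_{m \in \bbZ^d} S_m w^m = S(w),
$$
which gives $k_{\Sz}(z,w) S(z) = k_{\Sz}(z,w) S(w)$ as claimed. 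No serious obstacle arises; the proof is essentially a bookkeeping exercise combining the general lifted-norm construction with the commutation relation between $S$ and the Szeg\H{o} kernel already recorded for self-adjoint multipliers.
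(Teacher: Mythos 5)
Your proof is correct and follows essentially the same route as the paper: both reduce the statement to Proposition \ref{P:liftednorm} for the FRKHS structure and the formula $K^{\ell}_{W}(z,w)=((M_S\otimes I)k_{\Sz}I_{\cF})(z,w)$, and then identify this with the stated expressions via a commutation identity between $S$ and the bilateral Szeg\H{o} kernel. The only cosmetic difference is that the paper cites Remark \ref{SzSK=KS}, where $k_{\Sz}(z,w)S(z)=k_{\Sz}(z,w)S(w)$ is obtained for arbitrary $S\in L^{\infty}$ by a change of summation index, whereas you route through Remark \ref{R:SK=KS} together with the self-adjointness of $M_S$ (giving $S(w)^*=S(w)$), which is equally valid since $M_S=M_S^*\ge 0$ is part of the hypothesis.
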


    \begin{proof}
    Note the equality of the two expressions for $K^{\ell}_{W}(z,w)$
follows
    from Remark \ref{SzSK=KS}. The remaining points of the proposition
    are just specializations of the various results in Proposition
    \ref{P:liftednorm}.
  \end{proof}

  Suppose now that $B \in L^{\infty}(\bbT^{d}, \cL(\cF, \cF'))
\sim
  \cM(k_{\Sz} I_{ \cF}, k_{\Sz} I_{\cF'})$.  Then we are in
  the situation of Proposition \ref{P:pullback}: we may define the
  pullback FRKHS $\cH^{p}_{B} = M_{B} \cdot (\cH(k_{\Sz}I_{\cF})$
  with inner product given by
   \begin{equation*}
   \langle B(z) f(z), B(z) f(z) \rangle_{\cH^{p}_{B}} = \langle
   (Qf)(z), (Qf)(z) \rangle_{\cH(k_{\Sz} I_{\cF})}
   \end{equation*}
   where $Q \in \cL(\cH(k_{\Sz} I_{\cF})$ is the orthogonal
projection
   onto $(\operatorname{ker} M_{B})^{\perp}$.
   The following proposition specializes the summary in Proposition
   \ref{P:pullback} to this special setting.

   \begin{proposition}  \label{P:Szpullback}
       For $B \in L^{\infty}(\cF, \cF')$, the pullback FRKHS
       $\cH^{p}_{B}$ has reproducing kernel
       $$
       K^{p}_{B}(z,w) = k_{\Sz}(z,w)B(z) B(w)^{*}
       $$
       and the Brangesian complementary space
$(\cH^{p}_{B})^{\perp dB}$
       has reproducing kernel
       $$ K^{p \perp}_{B}(z,w) = k_{\Sz}(z,w)(I - B(z) B(w)^{*}) .
       $$
    \end{proposition}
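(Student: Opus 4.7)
The plan is to apply Proposition \ref{P:pullback} directly, taking the two base FRKHSs to be $\cH(k_{\Sz} I_\cF) \sim L^2(\bbT^d, \cF)$ and $\cH(k_{\Sz} I_{\cF'}) \sim L^2(\bbT^d, \cF')$. First I would observe that the hypothesis $B \in L^\infty(\bbT^d, \cL(\cF,\cF'))$ translates, via the Fourier series identification discussed just before Proposition \ref{P:Szmultipliers}, to $B(z) \in \cM(k_{\Sz} I_\cF, k_{\Sz} I_{\cF'})$, so the pullback construction of Section \ref{S:pullback} is available. Proposition \ref{P:pullback}(1) then immediately gives that $\cH^p_B$ is a FRKHS with reproducing kernel
$$K^p_B(z,w) = B(z)\bigl(k_{\Sz}(z,w) I_\cF\bigr) B(w)^* = k_{\Sz}(z,w) B(z) B(w)^*,$$
where in the last step the scalar formal series $k_{\Sz}(z,w)$ commutes past $B(z)$ (a simple coefficientwise check, as also used in Remark \ref{SzSK=KS}).

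For the second statement, assuming the contractive case $\|B\|_\infty \le 1$ so that $B \in \cB\cM(k_{\Sz} I_\cF, k_{\Sz} I_{\cF'})$ (guaranteed by Proposition \ref{P:Szmultipliers}(1)), I would invoke Proposition \ref{P:pullback}(3). That proposition identifies the Brangesian complement $(\cH^p_B)^{\perp\mathrm{dB}}$ as the lifted-norm space $\cH^\ell_{I - M_B M_B^*}$, whose reproducing kernel is computed there to be
$$K'(z,w) - B(z) K(z,w) B(w)^* = k_{\Sz}(z,w) I_{\cF'} - k_{\Sz}(z,w) B(z) B(w)^* = k_{\Sz}(z,w)\bigl(I - B(z) B(w)^*\bigr),$$
which is exactly the stated formula for $K^{p\perp}_B$.

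Because both parts are pure specializations of Proposition \ref{P:pullback}, no genuinely new calculation is required; the only thing to watch is that the series manipulations implicit in $B(z) k_{\Sz}(z,w) = k_{\Sz}(z,w) B(z)$ are legitimate in the formal-power-series setting, which is where the traps of Remark \ref{R:trap} could in principle bite. The main obstacle, such as it is, would therefore be the verification that these scalar--operator formal products are well defined and commute in the sense used here — but this is precisely the content of Remark \ref{SzSK=KS} (applied coefficientwise), so the argument proceeds without difficulty.
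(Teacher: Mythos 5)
Your argument is correct and is exactly the route the paper takes: Proposition \ref{P:Szpullback} is stated there precisely as a specialization of Proposition \ref{P:pullback}, with the scalar--operator commutation $B(z)k_{\Sz}(z,w)=k_{\Sz}(z,w)B(z)$ handled as in Remark \ref{SzSK=KS}. Your explicit flagging of the contractivity hypothesis $\|B\|_\infty\le 1$ needed for the Brangesian-complement part (via Proposition \ref{P:Szmultipliers}(1) and Proposition \ref{P:pullback}(3)) is appropriate, since the paper's statement leaves that hypothesis implicit.
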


    More generally, following \cite{BabyBear} and abusing notation
    somewhat, for $\Omega$ equal to any subset of $\bbZ^{d}$ we let
    $H^{2}(\Omega)$ be the subspace of $L^{2}(\bbT)$ consisting of
    functions $f \in L^{2}(\bbT)$ with Fourier coefficients supported
    on $\Omega$:  $f(z) \sim \sum_{n \in \Omega} f_{n} z^{n}$.  Then
    it is easily seen that $H^{2}(\Omega)$ may be viewed (by viewing
    the Fourier series of $f \in H^{2}(\Omega)$ as a formal series
    in $\bbC[[z^{\pm 1}]]$) as a FRKHS with reproducing kernel equal
    to the truncated Szeg\H{o} kernel
    $$
      k_{\Sz,\Omega}(z,w) = \sum_{n \in \Omega} z^{n} w^{-n}.
    $$
    Note that in this notation, the Hardy space over the
    polydisk, usually written as $H^{2}(\bbD^{d})$ is now written as
    $H^{2}(\bbZ^{d}_{+})$ and $L^{2}(\bbT^{d})$ is also written as
    $H^{2}(\bbZ^{d})$.

  \section{Formal de Branges--Rovnyak model for a scattering system}
  \label{S:scatmodels}

  We recall the definition of a multievolution scattering system (with
  polydisk model) from \cite{BabyBear}.  We define a {\em
  $d$-evolution scattering system} (of Lax--Phillips type with
polydisk
  model) to be a collection
  \begin{equation}  \label{scatsys}
  {\mathfrak S} = (\cK; \cU = (\cU_{1}, \dots, \cU_{d}); \cF, \cF_{*})
  \end{equation}
  where $\cK$ is a Hilbert space (the {\em ambient space}) for the
scattering system, ${\mathcal U}_{1}, \dots, {\mathcal U}_{d}$ is
a $d$-tuple of commuting unitary operators on ${\mathcal K}$ (the
{\em evolutions} of the system), and such that ${\mathcal F}$ and
${\mathcal F}_{*}$ are wandering subspaces for ${\mathcal U} =
({\mathcal U}_{1}, \dots, {\mathcal U}_{d})$, i.e.,
\begin{equation*}
       {\mathcal U}^{n} {\mathcal F} \perp {\mathcal F}, \qquad
       {\mathcal U}^{n} {\mathcal F}_{*} \perp {\mathcal F}_{*}
       \text{ for all } n \in {\mathbb Z}^{d} \text{ with } n \ne 0.
\end{equation*}
We say that a subset $\Omega$ of the integer lattice $\bbZ^{d}$ is
a {\em shift-invariant sublattice} if $\Omega$ is invariant for
each of the shift operators
\begin{equation*}
 \sigma_{k} \colon n \mapsto n + \be_{k}
\end{equation*}
for $k = 1, \dots, d$, where we let
\begin{equation}  \label{bek}
     \be_{k} = (0, \dots, 0, 1, 0, \dots, 0)
\end{equation}
be the $k$-th standard basis vector for ${\mathbb R}^{d}$ ($k = 1,
\dots, d$). The main examples of shift-invariant sublattices are
\begin{align*}
  & \Omega = \bbZ^{d}, \qquad \Omega = \emptyset, \\
  & \Omega =  \bbZ^{d}_{+}: = \{ n = (n_{1}, \dots, n_{d}) \in
\bbZ^{d} \colon
    n_{k} \ge 0 \text{ for } k = 1, \dots, d \}, \\
  &  \Omega = \bbZ^{d} \setminus \bbZ^{d}_{+} =
  \{ n = (n_{1}, \dots, n_{d}) \in \bbZ^{d} \colon n_{k} < 0 \text{
  for at least one } k = 1, \dots, d \}, \\
  & \Omega = \Omega^{B}: = \{ n = (n_{1}, \dots, n_{d}) \in \bbZ^{d}
  \colon n_{1} + \cdots + n_{d} \ge 0 \}
\end{align*}
where the $B$ in the notation $\Omega^{B}$ suggests {\em
balanced}. Given such a shift-invariant sublattice $\Omega$, we
define associated {\em outgoing subspace} $\cW^{\Omega}$ and {\em
incoming subspace} $\cW_{*}^{\Omega}$ by
$$ \cW^{\Omega}: = \bigoplus_{n \in \Omega} \cU^{n} \cF, \qquad
\cW_{*}^{\Omega}:= \bigoplus_{n \in \bbZ^{d}\setminus \Omega}
\cU^{n} \cF_{*}.
$$
We say that the multievolution scattering system ${\mathfrak S}$
is {\em $\Omega$-orthogonal} if
\begin{equation*}
    \cW^{\Omega} \perp \cW^{\Omega}_{*}.
\end{equation*}
It turns out (see \cite[Proposition 3.1]{BabyBear}) that
${\mathfrak S}$ is $\Omega$-orthogonal for all shift-invariant
sublattices $\Omega$ as soon as it is $\bbZ^{d}_{+}$-orthogonal.
We therefore say simply that the $d$-evolution scattering system
${\mathfrak S}$ is {\em orthogonal} if ${\mathfrak S}$ is
$\bbZ^{d}_{+}$-orthogonal (and therefore $\Omega$-orthogonal for
all shift-invariant sublattices $\Omega$).  Thus, in any
orthogonal multievolution scattering system ${\mathfrak S}$, the
ambient space $\cK$ has an orthogonal decomposition
\begin{equation*}
 \cK = \cW^{\Omega}_{*} \oplus \cV^{\Omega} \oplus \cW^{\Omega}
\end{equation*}
associated with any shift-invariant lattice $\Omega \subset
\bbZ^{d}$, where
$$
\cV^{\Omega}: = \cK \ominus [ \cW^{\Omega} \oplus
\cW^{\Omega}_{*}]
$$
is the {\em scattering space} associated with $\Omega$.  In the
sequel we shall have as a standing assumption that the
multievolution scattering system ${\mathfrak S}$ is orthogonal.

Another property which often comes up is minimality.
\begin{definition}  \label{D:minimal-scat}
We shall say that the multievolution scattering system ${\mathfrak
S}$ is {\em minimal} if the smallest subspace of $\cK$ containing
$\cF$ and $\cF_{*}$ and invariant for each of $\cU_{1}, \dots,
\cU_{d}$ and for each of $\cU_{1}^{*}, \dots, \cU_{d}^{*}$ is the
whole space $\cK$. Equivalently, $\widetilde \cW + \widetilde
\cW_{*}$ is dense in $\cK$, where we have set
$$ \widetilde \cW:= \cW^{\bbZ^{d}} = \bigoplus_{n \in \bbZ^{d}}
\cU^{n} \cF, \qquad \widetilde \cW_{*}:= \cW^{\emptyset}_{*} =
\bigoplus_{n \in \bbZ^{d}} \cU^{n } \cF_{*}.
$$
\end{definition}

Given any such orthogonal scattering system ${\mathfrak S}$, we
associate Fourier representation operators
$$ \Phi \colon \cK \to L^{2}(\bbT^{d}, \cF) \sim \cH(k_{\Sz} I_{\cF}), \qquad
\Phi_{*} \colon \cK \to L^{2}(\bbT^{d}, \cF_{*}) \sim \cH(k_{\Sz}
I_{\cF_{*}})
$$
by
\begin{equation}  \label{Fourierrep}
    \Phi \colon k \mapsto \sum_{n \in \bbZ^{d}} (P_{\cF} \cU^{-n}k)
    z^{n}, \qquad \Phi_{*} \colon k \mapsto \sum_{n \in \bbZ^{d}}
    (P_{\cF_{*}} \cU^{-n}k) z^{n}.
 \end{equation}
Then $\Phi$ is a coisometry from $\cK$ onto $L^{2}(\bbT^{d},\cF)$
with initial space equal to $\widetilde \cW$ while $\Phi_{*}$ is a
coisometry form $\cK$ onto $L^{2}(\bbT^{d}, \cF_{*})$ with initial
space equal to $\widetilde \cW_{*}$.  What is more, if $\Omega$ is
any subset of $\bbZ^{d}$, then $\Phi|_{\cW^{\Omega}}$ is unitary
from $\cW^{\Omega}$ onto $H^{2}(\Omega, \cF)$ and
$\Phi_{*}|_{\cW^{\Omega}_{*}}$ is unitary from $\cW^{\Omega}_{*}$
onto $H^{2}(\bbZ^{d} \setminus \Omega, \cF^{*})$. A consequence of
the orthogonality assumption on ${\mathfrak S}$ is that
$\operatorname{im} \Phi^{*}|_{H^{2}(\bbZ^{d}_{+}, \cF)}
 = \cW^{\Omega}$ is orthogonal to $\cW^{\Omega}_{*}$.  As a
 consequence
 $$
 \Phi_{*} \Phi^{*} \colon H^{2}(\bbZ^{d}_{+}, \cF) \to
 H^{2}(\bbZ^{d}_{+}, \cF_{*})
 $$
 and commutes with $M_{z_{k}}$ for $k = 1, \dots, d$.  We conclude
 that $\Phi_{*} \Phi^{*}$ has the form
 $ \Phi_{*} \Phi^{*} = M_{S}$ for a function $S \in
 H^{\infty}(\bbZ^{d}_{+}, \cL(\cF, \cF_{*}))$ (i.e., $S \in
 H^{\infty}(\bbD^{d}, \cL(\cF, \cF_{*}))$ in the more standard
 notation is the boundary-value function of a bounded analytic
 function on the unit polydisk $\bbD^{d}$---see \cite[Section V.3]{NF}).
 Moreover, since $M_{S} =\Phi_{*} \Phi^{*}$ is the product of partial isometries,
 necessarily $M_{S}$ has norm at most 1 and $S$ has $\| S \|_{\infty}
 \le 1$, i.e., $S \in \cB H^{\infty}(\bbZ^{d}_{+}, \cL(\cF,
 \cF_{*}))$. This polydisk Schur-class
 function $S$ is called the {\em scattering matrix} of the
 $d$-evolution scattering system ${\mathfrak S}$.

 We recall the de Branges--Rovnyak model for the multievolution
 scattering system from \cite{MammaBear} associated with any polydisk
 Schur-class function $S \in \cS(\cF, \cF_{*})$.  The de Branges--Rovnyak multievolution
scattering
 system ${\mathfrak S}_{dBR}^{S}$ associated with the polydisk
 Schur-class function $S \in \cS(\cF, \cF_{*})$ is defined to be
 \begin{equation}  \label{frakSdBR}
 {\mathfrak S}^{S}_{dBR} = (\cK_{dBR}^{S}; \cU^{S}_{dBR} =
 ( \cU^{S}_{dBR,1}, \dots, \cU^{S}_{dBR,d}); \cF^{S}_{dBR},
 \cF^{S}_{dBR *})
 \end{equation}
 where
 $$
 \cK^{S}_{dBR} = \operatorname{im} \begin{bmatrix}  I & M_{S} \\
 M_{S}^{*} & I \end{bmatrix}^{1/2} \subset \begin{bmatrix}
 L^{2}(\bbT^{d}, \cF_{*}) \\ L^{2}(\bbT^{d}, \cF)
 \end{bmatrix}
 $$
 with lifted inner product
 $$
 \left \langle \begin{bmatrix} I & M_{S} \\ M_{S}^{*} & I
 \end{bmatrix} ^{1/2} \begin{bmatrix} f \\ g \end{bmatrix},
 \begin{bmatrix} I & M_{S} \\ M_{S}^{*} & I
  \end{bmatrix} ^{1/2} \begin{bmatrix} f' \\ g' \end{bmatrix}
  \right\rangle_{\cK^{S}_{dBR}} =
  \left\langle Q \begin{bmatrix} f \\ g \end{bmatrix}, \begin{bmatrix}
  f' \\ g' \end{bmatrix} \right\rangle_{L^{2}(\bbT^{d},\cF_{*}
   \oplus \cF)}
  $$
  where $Q$ is the orthogonal projection onto the orthogonal
  complement of \newline
  $\ker \sbm{ I & M_{S} \\ M_{S}^{*} & I}^{1/2}$, $\cU^{S}_{dBR,k} =
  M_{z_{k}} \otimes I_{\cK^{S}_{dBR}}$ on $\cK^{S}_{dBR}$, and where
we
  set
  $$
  \cF^{S}_{dBR} = \begin{bmatrix} S \\ I \end{bmatrix} \cF, \qquad
  \cF^{S}_{dBR *} = \begin{bmatrix} I \\ S^{*} \end{bmatrix} \cF_{*}.
  $$
  It is easily checked that ${\mathfrak S}^{S}_{dBR}$ is a minimal
  scattering system for any polydisk Schur-class function $S$.
  Conversely, if ${\mathfrak S}$ as in
  \eqref{scatsys} is any $d$-evolution scattering system with
  scattering matrix $S$, then it is shown in \cite{BabyBear} that the
map
  \begin{equation}  \label{PidBR}
   \Pi_{dBR} \colon \Phi_{*}^{*} f + \Phi^{*} g \mapsto
   \begin{bmatrix} I & M_{S} \\ M_{S^{*}} & I \end{bmatrix}
   \begin{bmatrix} f \\ g \end{bmatrix}
  \end{equation}
   extends by continuity to define a unitary map from the minimal
   part $\cK_{\text{min}}: = \operatorname{clos}(
   \operatorname{im} \Phi^{*}_{*} + \operatorname{im} \Phi^{*})$ of
   the ambient space $\cK$ of ${\mathfrak S}$ onto $\cK^{S}_{dBR}$
   such that
   \begin{align}
      & \Pi_{dBR} \cU_{k} = \cU^{S}_{dBR,k} \Pi_{dBR} \text{ for }
   k = 1, \dots, d, \notag\\
  & \Pi_{dBR} \colon \cF \to \cF^{S}_{dBR}, \qquad
   \Pi_{dBR} \colon \cF_{*} \to \cF^{S}_{dBR *} \text{ unitarily}
   \label{Piaction}
   \end{align}
   and the scattering matrix $S_{{\mathfrak S}^{S}_{dBR}}$
   associated with the de Branges--Rovnyak model scattering system
   ${\mathfrak S}^{S}_{dBR}$ {\em coincides} with
   the original scattering
   function $S$ in the sense that
   $$ S_{{\mathfrak S}^{S}_{dBR}}(z) = i^{S}_{dBR*} S(z)
   (i^{S}_{dBR})^{*}
   $$
   where $i^{S}_{dBR}$ and $i^{S}_{dBR*}$ are the unitary
   identification maps appearing in \eqref{Piaction}
   $$
   i^{S}_{dBR} = \Pi_{dBR}|_{\cF}: u \mapsto \begin{bmatrix} S \\ I
   \end{bmatrix} u, \qquad
   i^{S}_{dBR *} = \Pi_{dBR}|_{\cF_{*}} \colon u_{*} \mapsto
   \begin{bmatrix} I \\ S^{*} \end{bmatrix} u_{*}.
   $$
   Let us also mention that $\Pi_{dBR}$ can be given more explicitly
   than the formula \eqref{PidBR} above given in \cite{BabyBear}, as
given
   in the following Lemma.

   \begin{lemma}  \label{L:PidBR}  Suppose that ${\mathfrak S}$ is a
   multievolution scattering system with map $\Pi_{dBR}$ determined
   on $\cK_{\text{min}}$ by \eqref{PidBR} and extended by linearity to
   the whole space $\cK$ by the requirement that $\Pi_{dBR}|_{\cK
   \ominus \cK_{\text{min}}} = 0$.  Then $\Pi_{dBR}$ can be given by
   the alternative formula
   \begin{equation}  \label{PidBR'}
     \Pi_{dBR} = \begin{bmatrix} \Phi_{*} \\ \Phi \end{bmatrix}
   \colon  \cK \to \begin{bmatrix}  L^{2}(\bbT^{d}, \cF_{*})
   \\ L^{2}(\bbT^{d}, \cF) \end{bmatrix}.
   \end{equation}
   \end{lemma}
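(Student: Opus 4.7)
The plan is to verify the claimed identity $\Pi_{dBR} = \sbm{\Phi_{*} \\ \Phi}$ by checking that both operators agree on $\cK_{\text{min}}$ and then noting that both vanish on the orthogonal complement $\cK \ominus \cK_{\text{min}}$.

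First I would assemble the algebraic ingredients. Since $\Phi$ and $\Phi_{*}$ are coisometries onto $L^{2}(\bbT^{d},\cF)$ and $L^{2}(\bbT^{d},\cF_{*})$ respectively, one has $\Phi \Phi^{*} = I$ and $\Phi_{*} \Phi_{*}^{*} = I$. The construction of the scattering matrix recalled in the text gives $\Phi_{*}\Phi^{*} = M_{S}$, and by taking adjoints $\Phi\Phi_{*}^{*} = (\Phi_{*}\Phi^{*})^{*} = M_{S}^{*} = M_{S^{*}}$ (using $M_{S}^{*} = M_{S^{*}}$ as in Remark \ref{SzSK=KS} or Proposition \ref{P:Szmultipliers}).

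Next, I would apply $\sbm{\Phi_{*} \\ \Phi}$ to a generic vector $k = \Phi_{*}^{*} f + \Phi^{*} g$ in $\operatorname{im}\Phi_{*}^{*} + \operatorname{im}\Phi^{*}$ and compute block-by-block:
\begin{equation*}
\begin{bmatrix} \Phi_{*} \\ \Phi \end{bmatrix} (\Phi_{*}^{*} f + \Phi^{*} g) = \begin{bmatrix} \Phi_{*} \Phi_{*}^{*} f + \Phi_{*} \Phi^{*} g \\ \Phi \Phi_{*}^{*} f + \Phi \Phi^{*} g \end{bmatrix} = \begin{bmatrix} f + M_{S} g \\ M_{S^{*}} f + g \end{bmatrix} = \begin{bmatrix} I & M_{S} \\ M_{S^{*}} & I \end{bmatrix} \begin{bmatrix} f \\ g \end{bmatrix},
\end{equation*}
which is exactly the prescription \eqref{PidBR} defining $\Pi_{dBR}$ on the dense subset $\operatorname{im}\Phi_{*}^{*} + \operatorname{im}\Phi^{*}$ of $\cK_{\text{min}}$. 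Since $\sbm{\Phi_{*} \\ \Phi}$ is continuous, the two operators agree on all of $\cK_{\text{min}}$ by continuous extension.

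Finally, I would observe that $\cK \ominus \cK_{\text{min}} = (\operatorname{im}\Phi_{*}^{*} + \operatorname{im}\Phi^{*})^{\perp} = \ker \Phi_{*} \cap \ker \Phi$, so $\sbm{\Phi_{*} \\ \Phi}$ automatically vanishes on $\cK \ominus \cK_{\text{min}}$, matching the convention $\Pi_{dBR}|_{\cK \ominus \cK_{\text{min}}} = 0$. Combining the two cases gives the desired identity \eqref{PidBR'} on all of $\cK$. There is no real obstacle here; the entire argument is a bookkeeping exercise in the identities $\Phi\Phi^{*} = I$, $\Phi_{*}\Phi_{*}^{*} = I$, $\Phi_{*}\Phi^{*} = M_{S}$, and the definition of $\cK_{\text{min}}$.
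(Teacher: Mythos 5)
Your proposal is correct and follows essentially the same route as the paper's proof: apply $\sbm{\Phi_{*} \\ \Phi}$ to $\Phi_{*}^{*}f + \Phi^{*}g$, use $\Phi_{*}\Phi_{*}^{*}=I$, $\Phi\Phi^{*}=I$, $\Phi_{*}\Phi^{*}=M_{S}$, $\Phi\Phi_{*}^{*}=M_{S}^{*}$ to recover the matrix in \eqref{PidBR}, and observe that vectors orthogonal to $\cK_{\text{min}}$ are killed by both $\Phi$ and $\Phi_{*}$. Your explicit remarks about continuity on the dense subset and the identification $\cK\ominus\cK_{\text{min}}=\ker\Phi\cap\ker\Phi_{*}$ are just the details the paper leaves implicit.
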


   \begin{proof} For $f \in L^{2}(\bbT^{d}, \cF_{*})$ and $g
   \in L^{2}(\bbT^{d}, \cF)$ we have
   \begin{align*}
   \begin{bmatrix} \Phi_{*} \\ \Phi \end{bmatrix} (\Phi^{*}_{*}f +
\Phi^{*} g)
       &  = \begin{bmatrix} \Phi_{*} \\ \Phi \end{bmatrix}
       \begin{bmatrix} \Phi_{*}^{*} & \Phi^{*} \end{bmatrix}
       \begin{bmatrix} f \\ g \end{bmatrix}
   = \begin{bmatrix}  \Phi_{*} \Phi^{*}_{*} & \Phi_{*} \Phi^{*} \\
     \Phi \Phi^{*}_{*} & \Phi \Phi^{*} \end{bmatrix} \begin{bmatrix}
     f \\ g \end{bmatrix} \\
     & = \begin{bmatrix} I & M_{S} \\ M_{S}^{*} & I \end{bmatrix}
     \begin{bmatrix} f \\ g \end{bmatrix}
      = \Pi_{dBR}( \Phi_{*}^{*} f + \Phi^{*} g)
  \end{align*}
  and it follows that \eqref{PidBR} and \eqref{PidBR'} agree on
  $\cK_{\text{min}}$.  For $k \perp \cK_{\text{min}}$, by construction
  both $\Phi k = 0$ and $\Phi_{*} k = 0$.  The lemma now follows.
  \end{proof}

 For our purposes here, it will be convenient to use the
 identifications discussed in Section \ref{S:Szego} and view Fourier series
 as formal Laurent series and identify the various spaces
 $L^{2}(\bbT^{d}, \cF)$, $L^{\infty}(\bbT^{d}, \cL(\cF, \cF_{*}))$,
 $H^{2}(\Omega, \cF)$ and
 $H^{\infty}(\Omega, \cL(\cF, \cF_{*}))$ as the corresponding
 spaces of formal Laurent series $\cH(k_{\Sz} I_{\cF})$,
 $\cM(k_{\Sz} I_{\cF}, k_{\Sz}  I_{\cF'})$,
 $\cH(k_{\Sz,\Omega}  I_{\cF})$ and $\cM(k_{\Sz, \Omega} I_{\cF},
 k_{\Sz, \Omega} I_{\cF_{*}})$ respectively. This is done in
 the next proposition.

 \begin{proposition}  \label{P:dBRFRKHSs}
     After identification $F(z) \sim \sum_{n \in \bbZ^{d}} F_{n}
     z^{n}$ of an $L^{2}$-function with its Fourier series viewed as
     a formal Laurent series, the spaces $\cK^{S}_{dBR}$,
     $\cF^{S}_{dBR}$, $\cF^{S}_{dBR*}$, $\cW^{S,\Omega}_{dBR} =
     \Pi_{dBR}(\cW^{\Omega})$, $\cW^{S, \Omega}_{dBR*} =
     \Pi_{dBR}(\cW^{\Omega}_{*})$ and $\cV^{S, \Omega}_{dBR} =
     \Pi_{dBR}(\cV^{\Omega})$ can be viewed as FRKHSs
     $\cH(K_{\cK^{S}_{dBR}})$, $\cH(K_{\cF^{S}_{dBR}})$,
     $\cH(K_{\cF^{S}_{dBR}*})$, $\cH(K_{\cW^{S, \Omega}_{dBR}})$,
     $\cH(K_{\cW^{S, \Omega}_{dBR}*})$ and
     $\cH(K_{\cV^{S,\Omega}_{dBR}})$ where the respective reproducing
     kernels are given by
     \begin{align}
     &  K_{\cK^{S}_{dBR}}(z,w) = k_{\Sz}(z,w)\begin{bmatrix} I & S(z)
\\
             S(z)^{*} & I \end{bmatrix}, \qquad
       K_{\cF^{S}_{dBR}}(z,w) = \begin{bmatrix}
             S(z) S(w)^{*} & S(z) \\ S(w)^{*} & I \end{bmatrix},
         \notag \\
   &  K_{\cF^{S}_{dBR}*}(z,w) = \begin{bmatrix} I & S(w) \\
               S(z)^{*} & S(z)^{*}S(w) \end{bmatrix},  \notag  \\
      & K_{\cW^{S, \Omega}_{dBR}}(z,w) =  k_{\Sz,
\Omega}(z,w)\begin{bmatrix} S(z) S(w)^{*} & S(z)
                \\ S(w)^{*} & I \end{bmatrix}
      = \sum_{n \in \Omega} z^{n} K_{\cF^{S}_{dBR}}(z,w) w^{-n}, \notag
\\
      & K_{\cW^{S, \Omega}_{dBR}*}(z,w) = k_{\Sz, \bbZ^{d}
             \setminus \Omega}(z,w)\begin{bmatrix} I & S(w) \\
             S(z)^{*} & S(z)^{*} S(w) \end{bmatrix}  =
         \sum_{n \in \bbZ^{d} \setminus \Omega} z^{n}
         K_{\cF^{S}_{dBR}*}(z,w) w^{-n}, \notag \\
      & K_{\cV^{S, \Omega}_{dBR}}(z,w) = K_{\cK^{S}_{dBR}}(z,w) -
                     K_{\cW^{S, \Omega}_{dBR}}(z,w) - K_{\cW^{S,
                     \Omega}_{dBR}*}(z,w) \notag \\
      & \qquad = \begin{bmatrix} k_{\Sz,
                  \Omega}(z,w)(I - S(z) S(w)^{*})  &  k_{\Sz,
\bbZ^{d} \setminus
                    \Omega}(z,w)(S(z) - S(w)) \\
                k_{\Sz, \Omega}(z,w) (S(z)^{*} - S(w)^{*}) &
                 k_{\Sz, \bbZ^{d} \setminus \Omega}(z,w) (I -
S(z)^{*} S(w))
                 \end{bmatrix} \notag \\
      & \qquad =
             \begin{bmatrix}  k_{\Sz, \Omega}(z,w)(I - S(z) S(w)^{*})
               & - k_{\Sz,  \Omega}(z,w)(S(z) - S(w)) \\
               -k_{\Sz,\bbZ^{d} \setminus \Omega}(z,w)(S(z)^{*} -
S(w)^{*}) &
            k_{\Sz, \bbZ^{d} \setminus \Omega}(z,w) (I - S(z)^{*} S(w))
             \end{bmatrix} \notag \\
   & \qquad =
  k_{\Sz, \Omega}(z,w)  \begin{bmatrix} I - S(z) S(w)^{*} & S(w) -
S(z) \\ S(z)^{*} -
   S(w)^{*} & S(z)^{*}S(w) - I \end{bmatrix}.
  \label{dBRkernels}
  \end{align}
  \end{proposition}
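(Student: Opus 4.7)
The plan is to identify each of the six subspaces as a FRKHS by applying one of the standard constructions (lifted-norm, Theorem~\ref{T:FRKHS}~(2), orthogonal direct sum via Proposition~\ref{P:overlapping}, or orthogonal subtraction) to known reproducing kernels, and then to use the identities from Remark~\ref{SzSK=KS} to simplify to the forms listed.

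First, for $\cK^{S}_{dBR}$: by definition $\cK^{S}_{dBR}=\operatorname{im}W^{1/2}$ where $W=\sbm{I&M_{S}\\M_{S}^{*}&I}$ acts on $L^{2}(\bbT^{d},\cF_{*}\oplus\cF)$, which is identified (Section~\ref{S:Szego}) with $\cH(k_{\Sz}I_{\cF_{*}\oplus\cF})$. I will apply Proposition~\ref{P:Szliftednorm} with the matrix-valued symbol $\sbm{I & S(z)\\ S(z)^{*}&I}$ to read off $K_{\cK^{S}_{dBR}}(z,w)=k_{\Sz}(z,w)\sbm{I&S(z)\\S(z)^{*}&I}$; Remark~\ref{SzSK=KS} lets me trade $S(z)$ for $S(w)$ in the off-diagonal entries. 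Next, for the ``wandering'' spaces $\cF^{S}_{dBR}=\sbm{S\\I}\cF$ and $\cF^{S}_{dBR *}=\sbm{I\\S^{*}}\cF_{*}$ the identities $W\sbm{0\\u}=\sbm{Su\\u}$ and $W\sbm{u_{*}\\0}=\sbm{u_{*}\\S^{*}u_{*}}$ show that the maps $u\mapsto\sbm{S\\I}u$ and $u_{*}\mapsto\sbm{I\\S^{*}}u_{*}$ are isometric onto, so by Theorem~\ref{T:FRKHS}~(2) the reproducing kernels are obtained as $H(z)H(w)^{*}$ with $H(z)=\sbm{S(z)\\I_{\cF}}$ and $H_{*}(z)=\sbm{I_{\cF_{*}}\\S(z)^{*}}$ respectively, giving the two formulas claimed.

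Now for the outgoing/incoming subspaces: since $\Pi_{dBR}$ intertwines $\cU_{k}$ with $M_{z_{k}}$ and sends $\cF,\cF_{*}$ isometrically onto $\cF^{S}_{dBR},\cF^{S}_{dBR *}$, one has $\cW^{S,\Omega}_{dBR}=\bigoplus_{n\in\Omega}z^{n}\cF^{S}_{dBR}$ and $\cW^{S,\Omega}_{dBR *}=\bigoplus_{n\in\bbZ^{d}\setminus\Omega}z^{n}\cF^{S}_{dBR *}$, orthogonal direct sums in $\cK^{S}_{dBR}$ (the orthogonality $z^{n}\cF^{S}_{dBR}\perp z^{m}\cF^{S}_{dBR}$ for $n\ne m$ follows by direct computation in the lifted inner product, matching $\langle u,z^{n-m}u'\rangle_{L^{2}(\bbT^{d},\cF)}=\delta_{n,m}\langle u,u'\rangle_{\cF}$). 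Because the sum is orthogonal, Proposition~\ref{P:overlapping} applies with trivial overlapping space and identifies the reproducing kernel of $\cW^{S,\Omega}_{dBR}$ as the sum of the individual kernels $z^{n}K_{\cF^{S}_{dBR}}(z,w)w^{-n}=k_{\Sz,\Omega}(z,w)K_{\cF^{S}_{dBR}}(z,w)$, and similarly for $\cW^{S,\Omega}_{dBR *}$.

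Finally, the $\Omega$-orthogonality of $\mathfrak{S}^{S}_{dBR}$ gives the orthogonal decomposition $\cK^{S}_{dBR}=\cW^{S,\Omega}_{dBR *}\oplus\cV^{S,\Omega}_{dBR}\oplus\cW^{S,\Omega}_{dBR}$, so $K_{\cV^{S,\Omega}_{dBR}}=K_{\cK^{S}_{dBR}}-K_{\cW^{S,\Omega}_{dBR}}-K_{\cW^{S,\Omega}_{dBR *}}$. The task is then the algebraic verification of the three displayed forms. The main step is to use $k_{\Sz}(z,w)=k_{\Sz,\Omega}(z,w)+k_{\Sz,\bbZ^{d}\setminus\Omega}(z,w)$ to recombine the direct subtraction entry-by-entry, and then apply the key commuting identity of Remark~\ref{SzSK=KS}, namely $k_{\Sz}(z,w)(S(z)-S(w))=0$ and $k_{\Sz}(z,w)(S(z)^{*}-S(w)^{*})=0$, to freely replace any $k_{\Sz,\Omega}$ by $-k_{\Sz,\bbZ^{d}\setminus\Omega}$ (and vice versa) when multiplying the off-diagonal differences $S(z)-S(w)$ and $S(z)^{*}-S(w)^{*}$. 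The main obstacle is bookkeeping in this last step---making sure the formal products of Laurent series are all well-defined (using that $S\in L^{\infty}$) and tracking signs so that the three forms of $K_{\cV^{S,\Omega}_{dBR}}$ are seen to coincide; the trap described in Remark~\ref{R:trap} warns against naively applying associativity, so the identities in Remark~\ref{SzSK=KS} must be invoked exactly as stated.
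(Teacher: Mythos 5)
Your proposal follows essentially the same route as the paper's proof: identify $\cK^{S}_{dBR}$ as the lifted-norm space of the multiplier $\left[\begin{smallmatrix} I & S \\ S^{*} & I\end{smallmatrix}\right]$ (Proposition \ref{P:liftednorm}/\ref{P:Szliftednorm}), identify $\cF^{S}_{dBR}$ and $\cF^{S}_{dBR*}$ via the isometry computation and the factorization $H(z)H(w)^{*}$ (the paper phrases this as a pullback space, Proposition \ref{P:pullback}, which is the same computation), obtain the kernels of $\cW^{S,\Omega}_{dBR}$ and $\cW^{S,\Omega}_{dBR*}$ from the orthogonal shift decomposition, and get $K_{\cV^{S,\Omega}_{dBR}}$ by subtraction together with the identity $k_{\Sz}(z,w)(S(z)-S(w))=0$ of Remark \ref{SzSK=KS}.

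One caveat on your final bookkeeping claim that ``the three forms of $K_{\cV^{S,\Omega}_{dBR}}$ are seen to coincide'' using only $k_{\Sz}(z,w)(S(z)-S(w))=0$ and $k_{\Sz}(z,w)(S(z)^{*}-S(w)^{*})=0$: these identities handle the off-diagonal entries (and hence the passage between the first two displayed forms, which is all the paper's own proof asserts), but they do not reach the $(2,2)$ entry of the last form. Direct subtraction gives $(2,2)$ entry $k_{\Sz,\bbZ^{d}\setminus\Omega}(z,w)(I-S(z)^{*}S(w))$, and rewriting this as $k_{\Sz,\Omega}(z,w)(S(z)^{*}S(w)-I)$ would require $k_{\Sz}(z,w)(I-S(z)^{*}S(w))=0$, i.e.\ $S$ isometric-valued a.e.\ on $\bbT^{d}$ (Proposition \ref{P:Szmultipliers}), which fails for a general Schur-class $S$ (try $S=0$, $d=1$, $\Omega=\bbZ_{+}$: the subtraction gives $k_{\Sz,\bbZ\setminus\bbZ_{+}}$ in the $(2,2)$ slot, not $-k_{\Sz,\bbZ_{+}}$). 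So do not claim the last displayed equality follows from the off-diagonal identities; restrict the ``coincidence'' argument to the first two expressions, exactly as the paper does.
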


  \begin{proof}
      By construction $\cK^{S}_{dBR}$ is the lifted norm space
      associated with the multiplier $\sbm{ I & S(z) \\ S(z)^{*} & I
}$
      acting on $\cH(k_{\Sz}  I_{\cF_{*} \oplus \cF})$.
      Hence Proposition \ref{P:liftednorm} implies
      the formula for $K_{\cK^{S}_{dBR}}(z,w)$ in \eqref{dBRkernels}.
      Note next that, for $u, u' \in \cF$,
      \begin{align*}
     \left\langle  \begin{bmatrix} S(z) \\ I \end{bmatrix} u,
      \begin{bmatrix} S(z) \\ I \end{bmatrix} u'
      \right \rangle_{\cF^{S}_{dBR}}
      &  =
        \left \langle \begin{bmatrix} I & S(z) \\ S(z)^{*} & I
        \end{bmatrix} \begin{bmatrix} 0 \\ u \end{bmatrix},
    \begin{bmatrix} I & S(z) \\ S(z)^{*} & I
        \end{bmatrix} \begin{bmatrix} 0 \\ u' \end{bmatrix}
        \right\rangle_{\cF^{S}_{dBR}} \\
  &  =
  \left\langle \begin{bmatrix} I & S(z) \\ S(z)^{*} & I
     \end{bmatrix} \begin{bmatrix} 0 \\ u \end{bmatrix},
     \begin{bmatrix} 0 \\ u' \end{bmatrix}
         \right \rangle_{\cH(k_{\Sz} I_{\cF_{*} \oplus \cF})} \\
  & =
  \langle u, u' \rangle_{\cF}
  \end{align*}
  and hence $\cF^{S}_{dBR}$ is just the pullback space coming from
  multiplier $B(z) = \sbm{S(z) \\ I }$ mapping $\cF$ into
  $\cH(k_{\Sz} I_{\cF_{*} \oplus \cF})$.  It follows from
  Proposition \ref{P:pullback} that
  $$
  K_{\cF^{S}_{dBR}}(z,w) =k_{\Sz, \emptyset}(z,w) B(z)  B(w)^{*}
  = \begin{bmatrix} S(z) \\ I \end{bmatrix} \begin{bmatrix} S(w)^{*} &
  I \end{bmatrix} =
  \begin{bmatrix} S(z) S(w)^{*} & S(z) \\ S(w)^{*} & I \end{bmatrix}
   $$
  in agreement with the formula for $K_{\cF^{S}_{dBR}}(z,w)$ given
  in \eqref{dBRkernels}.  The remaining formulas in
  \eqref{dBRkernels} can be derived similarly.  The equivalence of the
  two expressions for $K_{\cV^{S, \Omega}_{dBR}}(z,w)$ follows from
  the identity
  $$
     k_{\Sz}(z,w)(S(z) - S(w)) = 0
  $$
  (see
  Remark \ref{SzSK=KS}).  This completes the proof of Proposition
  \ref{P:dBRFRKHSs}.  The equivalence of the first two expressions for
  $K_{\cV^{S, \Omega}_{dBR}}(z,w)$ also enables one to verify
directly the
  selfadjointness property:  $K_{\cV^{S, \Omega}_{dBR}}(z,w) =
  (K_{\cV^{S, \Omega}_{dBR}}(w,z))^{*}$.
   \end{proof}

   \begin{remark}  \label{R:dBRscatmodel}
       It is straightforward to check directly from the definitions
       that
    $$
     z^{n} \cdot \cF^{S}_{dBR} \perp z^{n'} \cdot \cF_{dBR}^{S}
\text{ and }
     z^{n} \cdot \cF^{S}_{dBR*} \perp z^{n'} \cdot \cF^{S}_{dBR*}
\text{
     in } \cK^{S}_{dBR} \text{ for } n \ne n' \text{ in } \bbZ^{d}.
    $$
    By minimal-decomposition theory the identities
    \begin{align*}
    & K_{\cW^{S, \Omega}_{dBR}}(z,w) = \sum_{n \in \Omega } z^{n}
    K_{\cF^{S}_{dBR}}(z,w) w^{-k}, \\
   &  K_{\cW^{S, \Omega}_{dBR}*}(z,w) = \sum_{n \in \bbZ^{d} \setminus
    \Omega} z^{n}  K_{\cF^{S}_{dBR}}(z,w) w^{-k}
    \end{align*}
    appearing in \eqref{dBRkernels} holding for a shift-invariant
    sublattice $\Omega$
    imply the internal orthogonal-sum decompositions
    \begin{equation}  \label{shiftdecom}
  \cH(K_{\cW^{S, \Omega}_{dBR}}) =  \bigoplus_{n \in \Omega}
    z^{n} \cH(K_{\cF^{S}_{dBR}}), \qquad
    \cH(K_{\cW^{S, \Omega}_{dBR}*}) = \bigoplus_{n \in \bbZ^{d}
\setminus \Omega}
   z^{n} \cH(K_{\cF^{S}_{dBR*}})
    \end{equation}
     as is to be expected.
    We also know from
    $$
    \cK = \cW_{*}^{\Omega} \oplus \cV^{\Omega}
    \oplus \cW^{\Omega}.
    $$
  that
  \begin{equation}  \label{dBRkerneldecom'}
  \cH(K_{\cK^{S}_{dBR}}) = \cH(K_{\cW^{S,\Omega}_{dBR}*}) \oplus
  \cH(K_{\cV^{S, \Omega}_{dBR}}) \oplus \cH(K_{\cW^{S,
\Omega}_{dBR}}).
  \end{equation}
  We emphasize that the decompositions in \eqref{shiftdecom} and
  \eqref{dBRkerneldecom'} are {\em internal} orthogonal direct sums.
  The decomposition \eqref{dBRkerneldecom'} is consistent with the
last of the identities in
  \eqref{dBRkernels} with the added information that the spaces
  $$
  \cH(K_{\cW^{S,\Omega}_{dBR}*}),\qquad
  \cH(K_{\cV^{S, \Omega}_{dBR}}), \qquad
  \cH(K_{\cW^{S, \Omega}_{dBR}})
  $$
  have no overlap.

     \end{remark}

 \section{Scattering system containing an embedded unitary
colligation}
 \label{S:scat-col}

 Suppose now that $U = \sbm{ A & B \\ C & D }$ as in
\eqref{colligation}
 is a Givone--Roesser unitary colligation.  As in \cite{BabyBear} we
 associate the system of equations
 \begin{equation} \label{polysys}
 \Sigma = \Sigma(U) \left\{  \begin{array}{rcl}
 x_{1}(n + \be_{1}) & = & A_{11} x_{1}(n) + \dots + A_{1d} x_{d}(n) +
 B_{1} u(n) \\
 \vdots & & \vdots \\
 x_{d}(n + \be_{d}) & =& A_{d1} x_{1}(n) + \dots + A_{dd} x_{d}(n) +
 B_{d} u(n) \\
 y(n) & = & C_{1} x_{1}(n) + \dots + C_{d} x_{d}(n) + D u(n).
 \end{array}  \right.
 \end{equation}
 where the basis vectors  $\be_{k}$ are as
 in \eqref{bek} and for any $n\in\mathbb{Z}^d$ one has
$u(n)\in\mathcal{E}$, $y(n)\in\mathcal{E}_*$,
$x_k(n)\in\mathcal{H}_k$, $k=1,\ldots,d$.
 Since we are assuming that $U$ is unitary, we may also write the
 system equations as a backward recursion
 \begin{equation} \label{polysysback}
 \Sigma = \Sigma(U) \left\{  \begin{array}{rcl}
 x_{1}(n) & = & A_{11}^{*} x_{1}(n + \be_{1}) + \dots + A_{d1}^{*}
 x_{d}(n + \be_{d}) + C_{1}^{*} y(n) \\
   & \vdots &  \\
 x_{d}(n) & =& A_{1d}^{*} x_{1}(n + \be_{1}) + \dots + A_{dd}^{*}
 x_{d}(n + \be_{d}) + C_{d}^{*} y(n) \\
 u(n) & = & B_{1}^{*} x_{1}(n + \be_{1}) + \dots + B_{d}^{*}
 x_{d}((n + \be_{d}) + D^{*} y(n).
 \end{array}  \right.
 \end{equation}
 The form of the system equations here
 are of {\em Givone--Roesser} type (see \cite{GR1, GR2}) with the
 additional property that the connecting matrix (or colligation) $U$
 is unitary---hence we refer to such a system as a {\em conservative
 Givone--Roesser system} (see \cite{MammaBear} for more details on the
 system-theoretic aspects of such systems---where the term {\em
 Roesser system} is used instead).

 We now recall from \cite{BabyBear} how one can associate a
 multievolution Lax--Phillips scattering system
 $$
  {\mathfrak S}(\Sigma(U)) =
  ( \cT; \cU = (\cU_{1}, \dots, \cU_{d}); \cF, \cF_{*})
  $$
 with any conservative Givone--Roesser system $\Sigma(U)$.
 The ambient space for the scattering system ${\mathfrak
S}(\Sigma(U))$ is the space $\cT$ of
 {\em admissible trajectories} of $\Sigma(U)$ defined as follows.
 By a {\em trajectory} of the system $\Sigma(U)$ we mean a $\cE
 \times \cH_{1} \times \cdots\times \cH_{d} \times \cE_{*}$-valued
function
 $$
  n \mapsto  (u(n), x_{1}(n), \dots, x_{d}(n), y(n))
 $$
 on $\bbZ^{d}$ which satisfies the system of equations
 \eqref{polysys} (or equivalently \eqref{polysysback}) for all $n
 \in \bbZ^{d}$.  A given trajectory $(u,x,y)$ is said to be {\em
 admissible} if it has finite energy:
 \begin{equation}  \label{calTnorm}
   \| (u,x,y)\|^{2}_{\cT} : = \|
 u|_{\Omega}\|^{2}_{\ell^{2}(\Omega, \cE)} + \| x|_{\partial
 \Omega}\|^{2}_{\partial \Omega} + \| y|_{\bbZ^{d}\setminus \Omega}
 \|^{2}_{\ell^{2}(\bbZ^{d}\setminus \Omega,  \cE_{*})} < \infty.
 \end{equation}
 In general we use the notation $\ell^{2}(\Omega, \cF)$ to indicate
  the space of $\cF$-valued functions on the set $\Omega$ which are
  norm-square integrable with respect to the discrete measure on
  $\Omega$:
  $$
  \ell^{2}(\Omega, \cF) = \Big\{ \{x(\omega)\}_{\omega \in \Omega}
\colon
  x(\omega) \in \cF \text{ with } \sum_{\omega \in \Omega} \|
  x(\omega)\|^{2}_{\cF} < \infty \Big\}.
  $$
 Of the three terms occurring in the definition \eqref{calTnorm} of
$\| (u,x,y)
 \|^{2}_{\cT}$ the first and last are fairly evident
 $$
   \| u|_{\Omega}\|^{2}_{\ell^{2}(\Omega, \cE)} = \sum_{n \in
   \Omega} \| u(n) \|^{2}_{\cE}, \qquad
   \| y|_{\bbZ^{d} \setminus
   \Omega}\|^{2}_{\ell^{2}(\bbZ^{d}\setminus \Omega,
   \cE_{*})} = \sum_{n \in \bbZ^{d}\setminus \Omega} \| y(n)
   \|^{2}_{\cE_{*}}
  $$
  while the precise meaning of the middle term is explained below.
  The space of all admissible trajectories is denoted by $\cT$ and is
a
  Hilbert space in the $\cT$-norm given by \eqref{calTnorm}.

  To explain the meaning of $\| x|_{\partial \Omega} \|^{2}_{\partial
  \Omega}$, we need to recall the decomposition of boundary of
  a shift-invariant sublattice $\Omega$ in {\it finite} and
  {\it infinite} parts, as introduced in \cite{MammaBear}.
  The finite boundary of $\Omega$ is given by
  $$   \partial \Omega_{\text{fin}} = \bigcup_{k=1}^{d} \partial_{k}
     \Omega_{\text{fin}}
  $$
       where
  $$    \partial_{k} \Omega_{\text{fin}}  = \{(n_{1}, \dots, n_{d})
     \in {\mathbb Z}^{d} \colon
     n_{k} = \mathop{\text{min}} \{ m
     \colon (n_{1}, \dots, n_{k-1}, m, n_{k+1}, \dots, n_{d}) \in
     \Omega\} \},
  $$
  while the infinite boundary $\partial \Omega_{\infty}$ is the union
of
  the boundaries at plus and minus infinity:
  $$\partial  \Omega_{\infty} = \partial \Omega_{+\infty}
  \cup \partial \Omega_{-\infty}.
  $$
  The boundary of $\Omega$ at plus infinity is in turn given by the
union of
  plus-infinity $k$-boundaries
  $$ \partial \Omega_{+\infty} = \bigcup_{k=1}^{d} \partial_{k}
\Omega_{+\infty}
  $$
  where an element of the plus-infinity $k$-boundary is defined to be
  the collection of
  lines  of the form\footnote{Note that a line $\ell_{n,k}$ as in
\eqref{ellnk}
  is parametrized  by the pair $n \in {\mathbb Z}^{d}$ and $k \in
\{1, \dots, d\}$ but
  the resulting line $\ell_{n,k}$ is independent of the particular
  value $n_{k}$ of the $k$-th coordinate of the $d$-tuple $n \in
  {\mathbb Z}^{d}$; such lines are really determined by the
  ($k-1$)-tuple $\widehat n^{k} = (n_{1}, \dots, n_{k-1}, n_{k+1},
\dots, n_{d})$
  together with the choice of $k \in \{1, \dots, d\}$.  The
  oversimplified notation $\ell_{n,k}$ nevertheless is convenient for
  the manipulations to follow. In \cite{MammaBear, BabyBear} the
  $(d-1)$-tuple $\widehat n^{k}$ was used as the index rather the the
  $d$-tuple $n$.}
  \begin{equation}  \label{ellnk}
  \ell_{n,k} = n + {\mathbb Z} \cdot \be_{k} \text{ for some } n \in
\bbZ^{d}
  \text{ and } k \in \{1, \dots, d\}
  \end{equation}
  which have trivial intersection with $\Omega$:
  $$ \partial_{k} \Omega_{+\infty} = \{ \ell_{n,k} \colon n \in
  {\mathbb Z}^{d} \text{ such that } \ell_{n,k} \cap \Omega =
  \emptyset \}.
  $$
  Similarly, the boundary of $\Omega$ at minus infinity is defined to
be
  the union of minus-infinity $k$-boundaries
  $$ \partial \Omega_{-\infty} = \bigcup_{k=1}^{d} \partial_{k}
  \Omega_{-\infty}
  $$
  with the minus-infinity $k$-boundary in turn defined to be the set
of
  all lines $\ell_{n,k}$ (for some $n \in {\mathbb Z}^{d}$) which are
  completely contained in $\Omega$:
  $$ \partial_{k} \Omega_{-\infty} = \{ \ell_{n,k} \colon \ell_{n,k}
  \subset \Omega \}.
  $$

  For $x(n) = \sbm{ x_{1}(n) \\ \vdots \\ x_{d}(n)}$ an
  ${\mathcal H}$-valued function on ${\mathbb Z}^{d}$,
  where ${\mathcal H} = \sbm{ {\mathcal H}_{1} \\ \vdots \\
  {\mathcal H}_{d} }$ as in the state space for a Givone--Roesser
  system, and $\Omega$ is a shift-invariant sublattice,
  we now are able to define the middle term on
  the right-hand side of \eqref{calTnorm} by
  \begin{align*}
      \| x|_{\partial \Omega} \|^{2}_{\partial \Omega} & =
  \sum_{k=1}^{d} \left\{ \sum_{ n \in \partial_{k}
\Omega_{\text{fin}}}
  \| x_{k}(n) \|^{2} +
  \sum_{ n \colon \ell_{n,k} \in \partial_{k}\Omega_{-\infty} }
  \lim_{m \to -\infty} \|
  (x_{k}|_{\ell_{n,k}})(m) \|^{2} \right. \\
  & \qquad + \left. \sum_{n \colon \ell_{n,k} \in
\partial_{k}\Omega_{+\infty}}
  \lim_{m \to \infty} \|(x_{k}|_{\ell_{n,k}})(m) \|^{2} \right\},
  \end{align*}
  where we have set in general
  $$
  (x_{k}|_{\ell_{n,k}})(m) = x_{k}(n + (m - n_{k}) \be_{k})
  = x_{k}(n_{1}, \dots, n_{k-1}, m, n_{k+1}, \dots n_{d}).
  $$
   Note that this quantity $\|x|_{\partial \Omega}
  \|^{2}_{\partial \Omega}$ is always defined (finite or
  infinite), and that by the Monotone Convergence Theorem, the order
of
  the summation and limit signs in the last two terms is immaterial.
  When this quantity is finite, we say that $x$ is {\em norm
  square-summable over} $\partial \Omega$.
 Note that the quantity $\|x|_{\partial
  \Omega}\|^{2}_{\partial \Omega}$ is the sum of more elementary
  pieces:
  \begin{align*}
  \|x|_{\partial \Omega}\|^{2}_{\partial \Omega} & =
       \| x|_{\partial
  \Omega_{\text{fin}}}\|^{2}_{\partial \Omega_{\text{fin}}}
  + \|x|_{\partial \Omega_{-\infty}}\|^{2}_{\partial \Omega_{-\infty}}
  +\|x|_{\partial \Omega_{+\infty}}\|^{2}_{\partial \Omega_{+\infty}}
  \end{align*}
where
  \begin{align*}
     \|x|_{\partial \Omega_{\text{fin}}}\|^{2}_{\partial
     \Omega} & = \sum_{k=1}^{d}
     \|x_{k}|_{\partial_{k}\Omega_{\text{fin}}}\|^{2}_{\ell^{2}
     (\partial\Omega, \cH_{k})}  \\
  \| x|_{\partial \Omega_{-\infty}}\|^{2}_{\partial \Omega} & =
     \sum_{k=1}^{d} \ \sum_{n \colon \ell_{n,k} \in \partial_{k}
     \Omega_{-\infty}} \| x_{k}|_{\ell_{n,k}}
     \|^{2}_{- \infty}   \\
      \|x|_{\partial \Omega_{+\infty}}\|^{2}_{\partial
      \Omega} & =
      \sum_{k=1}^{d} \ \sum_{n \colon \ell_{n,k} \in \partial_{k}
      \Omega_{+\infty}} \| x_{k}|_{\ell_{n,k}}\|^{2}_{+\infty}
      \end{align*}
 and finally,
  \begin{align*}
  \|x_{k}|_{\partial_{k}\Omega_{\text{fin}}}\|^{2}_{\partial
        \Omega}
     & = \|x_{k}|_{\partial_{k}\Omega_{\text{fin}}}\|^{2}_{\ell^{2}
     (\partial_{k}\Omega, {\mathcal H}_{k})}
     := \sum_{n \in \partial_{k}\Omega_{\text{fin}}} \| x_{k}(n)
     \|^{2}_{{\mathcal H}_{k}} \\
     \|x_{k}|_{\ell_{n,k}}\|^{2}_{-\infty} & = \lim_{m \to
     -\infty} \| (x_{k}|_{\ell_{n,k}})(m)\|^{2}_{{\mathcal H}_{k}} \\
     \|x_{k}|_{\ell_{n,k}}\|^{2}_{+\infty} & =
     \lim_{m \to +\infty} \|(x_{k}|_{\ell_{n,k}})(m)\|^{2}_{{\mathcal
     H}_{k}}.
  \end{align*}
  It is shown in \cite[Proposition 3.4]{MammaBear} that
  admissibility and in fact the
  $\cT$-norm given by \eqref{calTnorm} is
   independent of the choice of shift-invariant sublattice.
  Then the space ${\mathcal T} = {\mathcal
  T}^{\Sigma}$ consisting of all admissible system trajectories
  $(u,x,y)$ is a Hilbert space in the ${\mathcal T}$-norm.

  To make sense of initial conditions at minus infinity or of final
  conditions at plus infinity for system trajectories, it is
  convenient to introduce so-called {\em residual spaces}
  ${\mathcal R}_{k}$ and ${\mathcal R}_{*k}$ by
  \begin{align*}
  {\mathcal R}_{k} & = \{ \vec h = \{h(t)\}_{t=-\infty}^{\infty}
\colon
  h(t) \in {\mathcal H}_{k} \text{ with } h(t) =
  A_{kk}^{*}h(t+1) \text{ for all } t \in {\mathbb Z} \text{ and } \\
  & \qquad \| \vec h\|_{{\mathcal R}_{k}}^{2} := \lim_{t \to \infty}
\|
  h(t) \|^{2}_{{\mathcal H}_{k}} < \infty \}, \\
  {\mathcal R}_{*k} & =
  \{ \vec h = \{ h(t) \}_{t=-\infty}^{\infty}  \colon
  h(t) \in {\mathcal H}_{k} \text{ with }
  h(t+1) = A_{kk} h(t) \text{ for all } t \in {\mathbb Z}
  \text{ and } \\
  & \qquad \| \vec h \|^{2}_{{\mathcal R}_{*k}} := \lim_{t \to
-\infty}
  \|h(t)\|^{2}_{{\mathcal H}_{k}} < \infty \}. \end{align*}
  We also need a formal definition of the notion of two vector-valued
  sequences having the same asymptotics at plus or minus infinity:
   given two ${\mathcal H}$-valued sequences $x$ and $x'$ we say
  that {\em $x$ is asymptotic to $x'$ at plus-infinity}, denoted as
  $x \underset{+\infty} \sim x'$ {\em  if} $\lim_{t \to
  +\infty} \| x(t) - x'(t) \|^{2}_{{\mathcal H}} = 0$. Similarly, we
  say that {\em $x$ is asymptotic to $x'$ at minus-infinity}, denoted
  as $x \underset{-\infty} \sim x'$, {\em if } $\lim_{t \to
  -\infty} \| x(t) - x'(t) \|^{2}_{{\mathcal H}} = 0$.
  Then, as is also shown in \cite{MammaBear},  {\em given a
GR-unitary colligation
  $U$ and a shift-invariant sublattice $\Omega$ as above, then,
  corresponding to any
  $\cH_{k}$-valued sequence of the form $x_{k}|_{\ell_{n,k}}$ for an
  admissible trajectory $(u,x,y)$ and a line $\ell_{n,k} \in
  \partial_{k} \Omega_{+\infty}$, there is a sequence $\vec h_{k} \in
  \cR_{k}$ so that $x_{k}|_{\ell_{n,k}} \underset{+\infty} \sim \vec
  h_k$.}  Similarly, {\em given any line $\ell_{n,k} \in \partial_{k}
  \Omega_{-\infty}$, there is a sequence $\vec h_{*k} \in \cR_{*k}$
  so that $x_{k}|_{\ell_{n,k}} \underset{-\infty} \sim \vec h_{*k}$.}
  Conversely, one can solve the initial value problem for any
  square-summable initial data: {\em given initial/final state data,
  future input string and past output string relative to a given
  invariant sublattice $\Omega$ of the form}
  \begin{align*}
    & \begin{bmatrix} \widehat \bigoplus_{k=1}^{d}
      x_{k}^{\partial_{k}\Omega_{\text{fin}},0} \\ \widehat
\bigoplus_{k=1}^{d}
      x_{k}^{(0,+\infty),0)} \\
      \widehat \bigoplus_{k=1}^{d} x_{k}^{(-\infty,0),0} \end{bmatrix}
      \in \begin{bmatrix} \widehat \bigoplus_{k=1}^{d}
\ell^{2}(\partial_{k}
      \Omega_{\text{fin}}, \cH_{k}) \\
      \widehat \bigoplus_{k=1}^{d} \ell^{2}(\partial_{k}
\Omega_{+\infty},
      \cR_{k}) \\
     \widehat  \bigoplus_{k=1}^{d} \ell^{2}(\partial_{k}
\Omega_{-\infty},
      \cR_{*k})
  \end{bmatrix} \\
  & u^{0} \in \ell^{2}(\Omega, \cU), \qquad y^{0} \in
  \ell^{2}(\bbZ^{d} \setminus \Omega, \cY),
  \end{align*}
  {\em there is a unique admissible system trajectory $(u,x,y)$ so
that}
  \begin{align*}
 & u|_{\Omega} = u^{0}, \qquad  y|_{\bbZ^{d} \setminus \Omega} =
 y^{0}, \qquad  x|_{\partial_{k}\Omega_{\text{fin}}} =
 x_{k}^{\partial_{k} \Omega_{\text{fin}},0},  \\
 & x|_{\ell_{n,k}} \underset{+ \infty}\sim [ x_{k}^{(0,
 +\infty),0}]_{\ell_{n,k}} \text{ for each } \ell_{n,k} \in
 \partial_{k} \Omega_{+\infty}, \\
&  x|_{\ell_{n,k}} \underset{- \infty}\sim [ x_{k}^{(
-\infty,0),0}]_{\ell_{n,k}}
 \text{ for each } \ell_{n,k} \in \partial_{k} \Omega_{-\infty}.
 \end{align*}
 Hence we see that the space $\cT$ is isometrically isomorphic to the
 space its $\Omega$-coordinate version
 \begin{align*}
  \cT_{\Omega}  : = & \ell^{2}(\Omega, \cE) \widehat \oplus \widehat
\bigoplus_{k=1}^{d}
  \ell^{2}(\partial_{k}\Omega_{\text{fin}}, \cH_{k}) \widehat \oplus
  \widehat \bigoplus_{k=1}^{d} \ell^{2}(\partial_{k}
  \Omega_{+\infty}, \cR_{k})  \\
  & \qquad \widehat \oplus
\widehat \bigoplus_{k=1}^{d} \ell^{2}(\partial_{k}
\Omega_{-\infty}, \cR_{*k}) \widehat \oplus \ell^{2}(\bbZ^{d}
\setminus \Omega, \cE_*)
\end{align*}
under the map $\Gamma \colon \cT \to \cT_{\Omega}$ given by
\begin{equation}  \label{Gamma}
\Gamma \colon (u,x,y) \mapsto
\begin{bmatrix} u|_{\Omega} \\ \widehat \bigoplus_{k=1}^{d}
x|_{\partial_{k}
    \Omega_{\text{fin}}} \\ \widehat \bigoplus_{k=1}^{d} \{
    x|_{\ell_{n,k}}\}_{\ell_{n,k} \in \partial_{k}\Omega_{+\infty}}
    \\ \widehat \bigoplus_{k=1}^{d} \{
    x|_{\ell_{n,k}}\}_{\ell_{n,k} \in \partial_{k}\Omega_{-\infty}}
    \\
    y|_{\bbZ^{d} \setminus \Omega}
    \end{bmatrix}
\end{equation}
with the inverse $\Gamma^{-1}$ of $\Gamma$ computed by solving the
initial value problem described above.

  Moreover, by using the independence of the $\cT$-norm on the choice
of
  shift-invariant sublattice $\Omega$, it is not difficult to show
  that the shift operators
  \begin{equation} \label{U-calU}
     {\mathcal U}_{k} \colon (u(\cdot), x(\cdot), y(\cdot)) \mapsto
  (u'(\cdot), x'(\cdot), y'(\cdot))
  \end{equation}
  where
  \begin{align*}
     u'(n) & = u(n - \be_{k}) = u(\sigma_{k}^{-1}(n))   \\
     x'(n) & = x(n - \be_{k}) = x(\sigma_{k}^{-1}(n))  \\
     y'(n) & = y(n - \be_{k}) = y(\sigma_{k}^{-1}(n)).
     \end{align*}
  for $k=1, \dots, d$ are well-defined and unitary on $\cT$ with
  inverses given by
  $$
  {\mathcal U}_{k}^{-1} \colon (u,x,y) \mapsto  (u'',x'', y'')
  $$
  with
  \begin{align*} \notag
     u''(n) & = u(n + \be_{k}) = u(\sigma_{k}(n))  \notag  \\
     x''(n) & = x(n + \be_{k}) = x(\sigma_{k}(n)) \notag \\
     y''(n) & = y(n + \be_{k}) = y( \sigma_{k}(n)).
  \end{align*}
  In addition we define subspaces $\cF$ and $\cF_{*}$ of $\cT$ by
  \begin{align*}
       &  {\mathcal F}  = \{ (u,x,y) \colon u|_{\Omega
     \setminus \{0\}} = 0, \ \|x|_{\partial \Omega}\|^{2}_{\partial
     \Omega} = 0,\ y|_{{\mathbb Z}^{d} \setminus \Omega } =
     0\}  \text{ where }  \notag
      \\
      & \qquad \Omega \text{ is any shift-invariant sublattice with }
    0 \in  \Omega    \\
       &  {\mathcal F}_{*}  = \{ (u,x,y) \colon u|_{\Omega'} = 0,\
     \|x|_{\partial \Omega'}\|^{2}_{\partial \Omega'} = 0, \
     y|_{({\mathbb Z}^{d} \setminus \Omega') \setminus \{0\}} = 0 \}
     \text{ where }  \notag   \\
       & \qquad \Omega' \text{ is any shift-invariant sublattice with
}
       0 \in {\mathbb Z}^{d} \setminus \Omega'.
  \end{align*}
  Then, by \cite[Theorem 4.4]{BabyBear},
  \begin{equation*}
  {\mathfrak S}(\Sigma(U)) = (\cT; \cU = (\cU_{1}, \dots,
  \cU_{d}); \cF, \cF_{*})
  \end{equation*}
  is a multievolution Lax--Phillips scattering system in the sense
  defined in Section \ref{S:scatmodels} above.  Moreover, the
  scattering matrix $S(z)$ of the scattering system ${\mathfrak
  S}(\Sigma(U))$ can be identified with the transfer function
  $T_{\Sigma(U)}(z)$ of the conservative Givone--Roesser system
$\Sigma(U)$
  after some trivial modifications:  if we let
  \begin{align}
  & \Omega = \text{ any shift-invariant sublattice with } 0 \in
  \Omega, \notag \\
  & \Omega' = \text{ any shift-invariant sublattice with }0 \notin
  \Omega',
  \label{Omega/Omega'}
  \end{align}
  then
  \begin{equation}  \label{trans-scat-func}
   S_{{\mathfrak S}(\Sigma(U))}(z) i = i_{*} T_{\Sigma(U)}(z)
  \text{ for almost all } z \in \bbT^{d}
  \end{equation}
  where $i \colon \cE \to \cF$ and $i_{*} \colon \cE_{*} \to \cF_{*}$
  are the unitary identification maps\footnote{It turns out that
  the operators $i$ and $i_{*}$ defined via \eqref{iEF} and
\eqref{iE*F*} are
  independent of the particular choice of shift-invariant sublattices
  $\Omega$ and $\Omega'$
  satisfying \eqref{Omega/Omega'}.}
  \begin{align}
    &  i \colon e \mapsto (u(\cdot), x(\cdot), y(\cdot)) \in \cT
      \text{ such that } u|_{\Omega} = \{ \delta_{n,0}e\}_{n \in
\Omega},
      x|_{\partial \Omega} = 0, y|_{\bbZ^{d}\setminus \Omega} = 0
      \label{iEF}  \\
 &  i_{*} \colon e_{*} \mapsto (u(\cdot), x(\cdot), y(\cdot)) \in \cT
   \text{ such that } \notag \\
   & \qquad
   u|_{\Omega'} = 0, x|_{\partial \Omega'} = 0, y|_{\bbZ^{d} \setminus
   \Omega'} = \{ \delta_{n,0} e_{*} \}_{n \in \bbZ^{d} \setminus
\Omega'}.
   \label{iE*F*}
 \end{align}

  Let us say that a
  multievolution scattering system ${\mathfrak S}$ of the form
  ${\mathfrak S}(\Sigma(U))$ for a conservative Givone--Roesser system
  $\Sigma(U)$ is a {\em multievolution scattering system with
   embedded unitary colligation $U$}; one of the main connections
   between $U$ and ${\mathfrak S}(\Sigma(U))$ is the property
   \eqref{trans-scat-func}:  {\em the scattering
  matrix for ${\mathfrak S}(\Sigma(U))$ coincides with the transfer
  function for $\Sigma(U)$}.

  \begin{remark}  \label{R:Schaffer}
      The above analysis shows that a unitarily equivalent version
       of the multievolution
      scattering system is
      determined by using the unitary map $\Gamma$ given by
      \eqref{Gamma} to map the space of admissible trajectories $\cT$
      to the $\Omega$-coordinatized version $\cT_{\Omega}$;
      specifically, we may define ${\mathfrak S}_{\Omega}(\Sigma(U))$
      by
      $$
    {\mathfrak S}_{\Omega}(\Sigma(U)) = (\cT_{\Omega}; \,
\cU_{\Omega} =
    (\cU_{\Omega,1}, \dots, \cU_{\Omega,d});\,  \cF_{\Omega},
    \cF_{_{\Omega} *})
     $$
     where, for $k = 1, \dots, d$, $\cU_{\Omega,k}$ on $\cT_{\Omega}$
is given by
     $$
     \cU_{\Omega,k} = \Gamma \cU_{k} \Gamma^{*}, \cF_{\Omega} =
     \Gamma \cF,\, \cF_{\Omega*} = \Gamma \cF_{*}
     $$
     (with $\cU_{k}$ on $\cT$ given by \eqref{U-calU}).
     Of course,
     $\cT$ depends on $U$ and $\Gamma$ and $\Gamma^{*}$ depend on $U$
     and $\Omega$; as $U$ and $\Omega$ are considered fixed, we
     suppress this dependence from the notation.  In the preceding
     discussion (as well as in \cite{BabyBear}) we avoided working
     out these operators
     $\cU_{\Omega,k}$ explicitly.  A convenient special case is the
     {\em balanced} shift-invariant sublattice $\Omega^{B}$ (see
     Section 4.4.1 of \cite{BabyBear} and also Section 1.5 of
\cite{K-JOT2000}) given by
     $$
     \Omega^{B} = \{ n = (n_{1}, \dots, n_{d}) \in {\mathbb Z}^{d}
     \colon n_{1} + \cdots + n_{d} \ge 0 \}
     $$
     as there are no infinite boundary components in this case.  For
     each $k$ the finite boundary is equal to the subset $\Xi$ of
     ${\mathbb Z}^{d}$ given by
     $$
     \Xi = \{ n = (n_{1}, \dots, n_{d}) \in {\mathbb Z}^{d} \colon
     n_{1} + \cdots + n_{d} = 0\}.
     $$
     Hence the $\Omega^{B}$-coordinatized version of $\cT$ reduces to
     $$
     \cT_{\Omega^{B}} = \begin{bmatrix} \ell^{2}({\mathbb
     Z}^{d}\setminus \Omega^{B}, \cE_{*}) \\ \bigoplus_{k=1}^{d}
     \ell^{2}(\Xi, \cH_{k}) \\ \ell^{2}(\Omega^{B}, \cE)
 \end{bmatrix}.
 $$
 We view elements of $\ell^{2}({\mathbb Z}^{d} \setminus
 \Omega^{B},\cE_{*})$ as functions $n \mapsto \vec e_{*}(n)$ of $n \in
 {\mathbb Z}^{d} \setminus \Omega^{B}$ with values $\vec e_{*}(n) \in
 \cE_{*}$, and similarly for $\ell^{2}(\Omega^{B}, \cE)$.  We view
 elements of $\bigoplus_{k=1}^{d} \ell^{2}(\Xi, \cH_{k})$ as
functions
 $(n,j) \mapsto \vec x(n,j)$ of $(n,j) \in \Xi \times \{1, \dots,
 d\}$ such that the value $x(n,j)$ is in $\cH_{j}$. For the
 computation to follow, we drop the subscript $\Omega^{B}$ and write
 $\cU_{k}$ rather than $\cU_{\Omega^{B},k}$.
 For a fixed $k
 \in \{1, \dots, d\}$, the operator $\cU_{k}^{*}$ on
 $\cT_{\Omega^{B}}$ can be viewed as a $3 \times 3$-block matrix of
 the form
 $$
 \cU_{k}^{*} = \begin{bmatrix} [\cU_{k}^{*}]_{11} &
 [\cU_{k}^{*}]_{12} & [\cU^{*}]_{13} \\
 0 & [ \cU_{k}*]_{22} & [ \cU_{k}^{*}]_{23} \\
 0 & 0 & [ \cU_{k}^{*}]_{33} \end{bmatrix}.
 $$
 The various matrix entries can be given explicitly as follows:
 \begin{align}
    & \left( [\cU^{*}_{k}]_{11} \,  \vec e_{*}\right)(n) =
\begin{cases}
    \vec e_{*}(n + \be_{k}) &\text{if } n + \be_{k} \in {\mathbb
    Z}^{d} \setminus \Omega^{B}, \\
    0 & \text{otherwise,} \end{cases} \notag
    \\
    & \left( [\cU_{k}^{*}]_{12}\, \vec x\right) (n) = \begin{cases}
    \sum_{j=1}^{d} C_{j} \vec x(n+ \be_{k},j) &\text{if } n +
    \be_{k} \in \Xi, \\
    0 &\text{otherwise,}
    \end{cases}  \notag
    \end{align}
    \begin{align*}
    & \left( [\cU_{k}^{*}]_{13}\,  \vec e \right)  (n) =
\begin{cases} D
    \vec e(n+e_{k}) &\text{if } n + \be_{k} \in \Xi \subset
    \Omega^{B}, \notag \\
    0 & \text{otherwise,}  \end{cases} \\
    & \left( [ \cU^{*}_{k}]_{22} \,  \vec x \right) (n,j) = \sum_{j'
= 1}
    ^{ d} A_{j j'} \vec x (n + \be_{k} - \be_{j}, j'), \notag \\
    &  \left( [\cU_{k}^{*}]_{23} \, \vec e \right) (n,j) = B_{j} \vec
    u(n+\be_{k} - \be_{j}), \notag \\
    & \left( [\cU_{k}^{*}]_{33} \, \vec e\right) (n) = \vec e(n +
    \be_{k})
    \end{align*}

    One can understand the geometric structure of $\cU_{k}^{*}$ as
    follows.  In the following discussion, let us identify
    $\ell^{2}({\mathbb Z}^{d} \setminus \Omega^{B}, \cE_{*})$ with
    its embedded version $\sbm{ \ell^{2}({\mathbb Z}^{d}
    \setminus \Omega^{B}, \cE_{*}) \\ 0 \\ 0 }$ inside $\cT_{\Omega^{B}}$,
    and similarly for $\ell^{2}(\Xi, \cH_{k})$,
    $\ell^{2}(\Omega_{B}, \cE)$ and subspaces thereof.  Then one can
    see that $\cU_{k}^{*}$ maps the first subspace unitarily onto the
    second for the following pairs of subspaces:
    \begin{align}
    & \cU_{k}^{*} \colon \ell^{2}({\mathbb Z}^{d} \setminus
    \Omega^{B}, \cE_{*}) \to \ell^{2}( - \be_{k} + ({\mathbb Z}^{d}
    \setminus \Omega^{B}), \cE_{*})  \label{action1}\\
    & \cU_{k}^{*} \colon \ell^{2}(\be_{k} + \Omega^{B}, \cE) \to
    \ell^{2}_{\cE}(\Omega^{B}), \label{action2} \\
    & \cU_{k}^{*} \colon \begin{bmatrix} \bigoplus_{k=1}^{d}
    \ell^{2}(\Xi, \cH_{k}) \\ \ell^{2}_{\cE}(\Xi) \end{bmatrix} \to
    \begin{bmatrix} \bigoplus_{k=1}^{d} \ell^{2}(\Xi, \cH_{k}) \\
    \ell^{2}(-\be_{k} + \Xi, \cE_{*}) \end{bmatrix}.
    \label{action3}
    \end{align}
  where the actions in \eqref{action1} and \eqref{action2} are via the
  $k$-th backward shift operator $\xi(n) \mapsto \xi(n + \be_{k})$.
  Given that each of these three actions is unitary, the unitarity of
  $\cU_{k}^{*}$ now becomes obvious from the fact that
  $\cT_{\Omega^{B}}$ can be decomposed orthogonally as
  \begin{align*}
    \cT_{\Omega^{B}}& = \ell^{2}({\mathbb  Z}^{d} \setminus
\Omega^{B}, \cE_{*})
     \oplus \bigoplus_{k=1}^{d} \ell^{2}(\Xi, \cH_{k})
    \oplus \left( \ell^{2}(\Xi, \cE) \oplus \ell^{2}( \be_{k}
    + \Omega^{B}, \cE)  \right)  \\
    & = \left( \ell^{2}(-\be_{k} + ({\mathbb Z}^{d}
    \setminus \Omega^{B}), \cE_{*}) \oplus \ell^{2}(-\be_{k}
    + \Xi, \cE_{*}) \right)
    \oplus \bigoplus_{k=1}^{d} \ell^{2}(\Xi, \cH_{k}) \oplus
    \ell^{2}(\Omega^{B}, \cE).
  \end{align*}
  Moreover, the last action \eqref{action3} can be decomposed
  further; for each $n^{0}\in \Xi$ we have that
  \begin{equation} \label{action4}
  \cU_{k}^{*} \colon \begin{bmatrix} \delta_{n^{0}}
\Big(\bigoplus_{j=1}^{d}
  \ell^2(\Xi, \cH_{j})\Big) \\ \delta_{n^{0}} \ell^2(\Xi, \cE)
\end{bmatrix} \mapsto \begin{bmatrix}
  \bigoplus_{j=1}^{d} \delta_{n^{0} + \be_{j} - \be_{k}}
\ell^2(\Xi, \cH_{j}) \\ \delta_{n^{0}-
  \be_{k}}\ell^2(\Xi, \cE_{*}) \end{bmatrix}
  \end{equation}
  unitarily (where, for $m \in \Xi$,  $\delta_{m}$ denotes the delta
  function on $\Xi$:  $\delta_{m}(n) = 0$ for $n \ne m$ and $
  \delta_{m}(n) = 1$ for $n=m$ for $n \in \Xi$. Given that
  \eqref{action4} is unitary, it follows that \eqref{action3} is
  unitary from the fact that the mapping
  $$
    (n,j) \mapsto (n+\be_{j} - \be_{k}, j)
   $$
   is bijective on $\Xi \times \{1, \dots, d\}$.

  Explicitly, the
  mapping \eqref{action4} is given by
  \begin{multline*}
  \cU_{k}^{*} \colon \begin{bmatrix} \delta_{n^{0}} \Big(
\bigoplus_{j=1}^{d}
  x_{j}(\cdot)\Big) \\ \delta_{n^{0}} e(\cdot) \end{bmatrix} =
  \begin{bmatrix}   \bigoplus_{j=1}^{d}
  x_{j}(n^0) \\ e(n^0) \end{bmatrix}\\
    \mapsto \begin{bmatrix}
  \delta_{n^{0} + \be_{1} - \be_{k}} & & & \\
  & \ddots & & \\ & & \delta_{n^{0}+ \be_{d} - \be_{k}} \\ & & &
  \delta_{n^{0}-\be_{k}}
  \end{bmatrix} \begin{bmatrix} A & B \\ C & D \end{bmatrix}
  \begin{bmatrix} \bigoplus_{j=1}^dx_{j}(\cdot) \\ e(\cdot)
\end{bmatrix}\\
  = \begin{bmatrix} A & B \\ C & D \end{bmatrix}
  \begin{bmatrix} \bigoplus_{j=1}^dx_{j}(n^0+e_j-e_k) \\ e(n^0-e_k)
\end{bmatrix},
  \end{multline*}
  from which the unitarity of the mapping \eqref{action4} follows
  immediately from the unitarity of the colligation matrix $U = \sbm{
A
  & B \\ C & D }$.  With a little more work one can verify explicitly
  that the  $d$-tuple $(\cU_{1}, \dots, \cU_{d})$ is commutative.
  This representation for the scattering system with given embedded
  GR-unitary colligation reduces to the well-known Sch\"affer matrix
  for the classical $d=1$ case (see \cite{NF}).
 \end{remark}

  It is also shown in \cite{BabyBear} that the multievolution
scattering
  system ${\mathfrak S}(\Sigma(U))$  with given embedded unitary
  colligation $U$
  carries some additional geometric structure.  Given such a
  scattering system ${\mathfrak S}(\Sigma(U))$,  define subspaces
  ${\mathcal L}_{k}$, ${\mathcal M}_{k}$ and ${\mathcal
  M}_{*k}$ by
  \begin{align*}
     & {\mathcal L}_{k} = \{ (u,x,y) \in {\mathcal T} \colon
     u|_{\Omega} = 0, \
     x|_{\partial \Omega_{\infty}} = 0,\
     x|_{\partial_{j}\Omega_{\text{fin}}} = 0 \text{ for } j \ne k,
     \notag \\
    & \qquad  x|_{\partial_{k}\Omega_{\text{fin}} \setminus \{0\}} = 0, \
     y|_{{\mathbb Z}^{d} \setminus \Omega} = 0\}
     \notag \\
     & \qquad
      \text{where } \Omega \text{ is any shift-invariant
     sublattice with } 0 \in \partial_{k}\Omega_{\text{fin}},
       \\
  & {\mathcal M}_{k} = \{ (u,x,y) \in {\mathcal T} \colon u|_{\Omega}
=
  0, \ x|_{\partial \Omega_{\text{fin}}} = 0, \ x|_{\partial
  \Omega_{-\infty}} = 0, \ x|_{\partial_{j} \Omega_{+\infty}} = 0
\text{
  for } j \ne k, \notag \\
  & \qquad x|_{\partial_{k} \Omega_{+\infty} \setminus \ell_{0,k}} =
  0, \  y|_{{\mathbb Z}^{d} \setminus \Omega} = 0 \} \notag \\
  & \qquad
      \text{where } \Omega \text{ is any shift-invariant
      sublattice with }  \ell_{0,k} \in \partial_{k}\Omega_{+\infty},
      \\
  & {\mathcal M}_{*k} = \{ (u,x,y) \in {\mathcal T} \colon
u|_{\Omega} =
  0, \ x|_{\partial \Omega_{\text{fin}}} = 0, \ x|_{\partial
  \Omega_{+\infty}} = 0, \ x|_{\partial_{j} \Omega_{-\infty}} = 0
\text{
  for } j \ne k,\notag \\
  & \qquad x|_{\partial_{k} \Omega_{-\infty} \setminus \ell_{0,k}} =
  0, \  y|_{{\mathbb Z}^{d} \setminus \Omega} = 0 \} \notag \\
  & \qquad
      \text{where } \Omega \text{ is any shift-invariant
      sublattice with }  \ell_{0,k} \in \partial_{k}\Omega_{-\infty}.
  \end{align*}
  It is shown in \cite{BabyBear} that $\cL_{1}, \dots, \cL_{d}$ are
mutually
  orthogonal and that $\cL_{k}$ is wandering for the commuting
  unitary $(d-1)$-tuple $\widehat \cU^{k}$ (equal to the tuple $\cU$
  with $\cU_{k}$ left out), i.e.,
  $$
  (\widehat \cU^{k})^{\widehat n^{k}} \cL_{k} \perp (\widehat
  \cU^{k})^{\widehat n^{\prime k}}
  \cL_{k} \text{ if } n \ne n' \text{ in } \bbZ^{d-1};
  $$
  here we use the notation
  $$ (\widehat \cU^{k})^{\widehat n^{k}} = \cU_{1}^{n_{1}} \cdots
  \cU_{k-1}^{n_{k-1}} \cU_{k+1}^{n_{k+1}} \cdots \cU_{d}^{n_{d}}
  $$
  for $n = (n_{1}, \dots, n_{d}) \in {\mathbb Z}^{d}$ and $k \in \{1,
  \dots, d\}$.
 Also shown in \cite{BabyBear} is that fact that
 $\cM_{k}$ and $\cM_{*k}$ are doubly invariant for $\cU_{k}$ for
 $k = 1, \dots, d$.
Moreover, if we let $\Omega$ be any shift-invariant sublattice,
then the scattering subspace ${\mathcal V}^{\Omega}$ of
${\mathfrak S}(\Sigma(U))$ associated with $\Omega$ given by
 $$
       {\mathcal V}^{\Omega} = {\mathcal T} \ominus \left[
      \left(\bigoplus_{n \in {\mathbb Z}^{d} \setminus \Omega}
      {\mathcal U}^{n} {\mathcal F}_{*}\right)
      \oplus \left( \bigoplus_{n \in \Omega}
      {\mathcal U}^{n} {\mathcal F}\right)  \right]
      $$
 has the orthogonal decomposition
       \begin{align}
       {\mathcal V}^{\Omega} & =
       \left(\bigoplus_{k=1}^{d}\,
       \bigoplus_{n \in \partial_{k}\Omega_{\text{\rm fin}}} {\mathcal
       U}^{n}{\mathcal L}_{k} \right) \notag \\
      & \oplus \left( \bigoplus_{k=1}^{d}\, \bigoplus_{n' \colon
\ell_{n',k} \in
       \partial_{k}\Omega_{+\infty}} ( \widehat{\mathcal
       U}^{k})^{\widehat{n'}^{k}}
       {\mathcal M}_{k} \right) \oplus \left( \bigoplus_{k=1}^{d}\,
       \bigoplus_{n''\colon \ell_{n'',k} \in
       \partial_{k} \Omega_{-\infty}} (\widehat{\mathcal
       U}^{k})^{\widehat{n''}^{k}}
       {\mathcal M}_{*k}  \right) .
       \tag{OD}
      \label{Omegadecom}
      \end{align}
In addition, the subspaces ${\mathcal F}$, ${\mathcal F}_{*}$,
${\mathcal L}_{k}$, ${\mathcal M}_{k}$ and ${\mathcal M}_{*k}$
satisfy the Compatible Decomposition Property
\begin{equation}
     \left( \bigoplus_{k=1}^{d} {\mathcal L}_{k}\right)  \oplus
{\mathcal F}
     = {\mathcal  F}_{*} \oplus \left( \bigoplus_{j=1}^{d} {\mathcal
U}_{j}
     {\mathcal L}_{j}\right), \tag{CDP} \label{CDP}
\end{equation}
and the strong limit properties
\begin{align}
     &  \slim_{m \to \infty} P_{{\mathcal U}_{k}^{m}
       {\mathcal L}_{k}} = P_{{\mathcal M}_{k}},
       \tag{SL} \label{slim}  \\
     &  \slim_{m \to -\infty}
       P_{{\mathcal U}_{k}^{m}{\mathcal L}_{k}} = P_{{\mathcal
M}_{*k}}
       \tag{SL$_*$} \label{slim*}
\end{align}
(see \cite[Theorem 4.6]{BabyBear}).  One of the main results from
\cite{BabyBear} is the converse.

\begin{theorem} \label{T:BabyBear} (See \cite[Theorem 4.8]{BabyBear}.)
    Let ${\mathfrak S} = ({\mathcal K};
    {\mathcal U} = ({\mathcal U}_{1}, \dots, {\mathcal U}_{d});
    {\mathcal F}, {\mathcal F}_{*})$  be a $d$-evolution
    unitary scattering system and let $\Omega$ be a shift-invariant
    sublattice.  Assume:
    \begin{enumerate}
    \item There exist subspaces ${\mathcal L}_{1}, \dots, {\mathcal
    L}_{d}$, ${\mathcal M}_{1}, \dots, {\mathcal M}_{d}$ and ${\mathcal
    M}_{*1}, \dots, {\mathcal M}_{*d}$ of ${\mathcal K}$ such that
    \begin{enumerate}
        \item

    ${\mathcal M}_{k}$ and ${\mathcal M}_{*k}$ are doubly invariant for
    ${\mathcal U}_{k}$ for each $k = 1, \dots, d$, and
    \item the scattering subspace
    $$ {\mathcal V}^{\Omega}: = {\mathcal K} \ominus \left[ \left(
    \bigoplus_{n \in \Omega} {\mathcal U}^{n} {\mathcal F} \right)
    \oplus \left( \bigoplus_{n \in {\mathbb Z}^{d} \setminus \Omega}
    {\mathcal U}^{n} {\mathcal F}_{*} \right) \right]
    $$
    has the orthogonal decomposition
    \begin{align}
        {\mathcal V}^{\Omega} = &
    \left(\bigoplus_{k=1}^{d}
       \bigoplus_{n \in \partial_{k}\Omega_{\text{fin}}} {\mathcal
       U}^{n}{\mathcal L}_{k} \right)
       \oplus  \left( \bigoplus_{k=1}^{d} \bigoplus_{n' \colon
\ell_{n',k} \in
       \partial_{k}\Omega_{+\infty}} ( \widehat{\mathcal
       U}^{k})^{\widehat{n'}^{k}}
       {\mathcal M}_{k} \right)   \notag \\
       & \qquad \oplus
     \left( \bigoplus_{k=1}^{d}
       \bigoplus_{n''\colon \ell_{n'',k} \in
       \partial_{k} \Omega_{-\infty}} (\widehat{\mathcal
       U}^{k})^{\widehat{n''}^{k}}
       {\mathcal M}_{*k}  \right).
       \tag{\ref{Omegadecom}}
    \end{align}
    \end{enumerate}
    \item $({\mathfrak S}, {\mathcal L}_{1}, \dots, {\mathcal L}_{d})$
    has the compatible decomposition property
    \begin{equation}
         \left( \bigoplus_{k=1}^{d} {\mathcal L}_{k} \right) \oplus
         {\mathcal F} = {\mathcal F}_{*} \oplus \left(
         \bigoplus_{k=1}^{d} {\mathcal U}_{k} {\mathcal L}_{k} \right).
         \tag{\ref{CDP}}
    \end{equation}
    \item
    The subspaces ${\mathcal L}_{k}$ and ${\mathcal M}_{k}$ are
    connected via the formula
    \begin{equation}  \label{slim'}
        \slim_{m \to \infty} P_{{\mathcal
        U}_{k}^{m}{\mathcal L}_{k}} = P_{{\mathcal M}_{k}}.
        \tag{\ref{slim}}
    \end{equation}
    Similarly the subspaces ${\mathcal L}_{k}$ and ${\mathcal M}_{*k}$
are
    connected via the formula
    \begin{equation}  \label{slim*'}
        \slim_{m \to -\infty} P_{{\mathcal
        U}_{k}^{m}{\mathcal L}_{k}} = P_{{\mathcal M}_{*k}}.
        \tag{\ref{slim*}}
    \end{equation}
    \end{enumerate}
    Let $\cH_{k}$, $\cE$, $\cE_{*}$ be copies of $\cL_{k}$, $\cF$ an
    $\cF_{*}$ respectively with associated unitary identification maps
    $$i_{\cH_{k}} \colon \cH_{k} \to \cL_{k} \text{ for } k = 1, \dots,
d,
    \qquad
    i_{\cE} \colon \cE \to \cF, \qquad
    i_{\cE_{*}} \colon \cE_{*} \to \cF_{*}.
    $$
    Define the operator $U \colon \left(\widehat  \bigoplus_{k=1}^{d}
{\mathcal
    H}_{k} \right) \widehat \oplus {\mathcal E} \to  \left(
    \widehat \bigoplus_{k=1}^{d} {\mathcal
    H}_{k} \right) \widehat \oplus {\mathcal E}_{*}$ by
    \begin{equation}
           U = \begin{bmatrix} (i_{{\mathcal H}_{1}})^{*} {\mathcal
U}_{1}^{*} \\
          \vdots \\ (i_{{\mathcal H}_{d}})^{*} {\mathcal U}_{d}^{*} \\
          (i_{{\mathcal E}_{*}})^{*}  \end{bmatrix}
          \begin{bmatrix} i_{{\mathcal H}_{1}} & \dots & i_{{\mathcal
          H}_{d}} & i_{{\mathcal E}} \end{bmatrix}
    \colon \begin{bmatrix} {\mathcal H}_{1} \\ \vdots \\ {\mathcal H}_{d}
    \\  {\mathcal E} \end{bmatrix}  \to
    \begin{bmatrix} {\mathcal H}_{1} \\ \vdots \\ {\mathcal H}_{d}
    \\  {\mathcal E}_{*} \end{bmatrix}.
    \label{UfromcalU'}
    \end{equation}
        Then $U$ is a $d$-variable unitary colligation such that
${\mathfrak
    S}$ is unitarily equivalent to  ${\mathfrak S}(\Sigma(U))$ under the
    map  ${\mathcal J} \colon {\mathcal K} \to
    {\mathcal T}$ given by \eqref{Jmap1}--\eqref{Jmap2}:
    \begin{equation} \label{Jmap1}
    {\mathcal J} \colon \xi \mapsto (u(\cdot), x_{1}(\cdot), \dots,
    x_{d}(\cdot), y(\cdot))
    \end{equation}
    with
    \begin{equation} \label{Jmap2}
    u(n)  = i_{\cE}^{*}{\mathcal U}^{*n} \xi, \qquad
    x_{k}(n)  = i_{\cH_{k}}^{*} {\mathcal U}^{*n} \xi \text{ for } k
    = 1, \dots, d, \qquad
    y(n)  = i_{\cE_{*}}^{*}{\mathcal U}^{*n} \xi
    \end{equation}
    for $n \in {\mathbb Z}^{d}$.    Moreover, if $\Omega'$
    is any other shift-invariant sublattice, then
    property \eqref{Omegadecom} holds with $\Omega'$ in place of
    $\Omega$ as well.

    The Fourier representation
    operators $\Phi$ and $\Phi_{*}$ (see \eqref{Fourierrep}) are
    given by
    \begin{equation}  \label{Fourierrep-colmod}
     ( \Phi \xi)(z) = \sum_{n \in \bbZ^{d}} i_{\cE}u(n)
     z^{n}, \qquad
     (\Phi_{*}\xi)(z) = \sum_{n \in \bbZ^{d}} i_{\cE_{*}} y(n)
     z^{n} \text{ if } {\mathcal J} \xi = (u(\cdot), x(\cdot), y(\cdot)).
     \end{equation}

    In addition, the scattering system ${\mathfrak S}(\Sigma(U))$
    is minimal (see Definition \ref{D:minimal-scat}) if and only
    if the GR-unitary colligation $U$ is {\em scattering-minimal}
    in the sense that for some (or equivalently,
    for any) shift-invariant sublattice $\Omega$ the map
    \begin{equation}  \label{col-scatmin}
    \Pi^{dBR,\partial \Omega}_{U} \colon \begin{bmatrix}
    \widehat \bigoplus_{k=1}^{d} \ell^{2}(\partial_{k}\Omega,
    \operatorname{im}P_{k}) \\
       \widehat \bigoplus_{k=1}^{d}
\ell^{2}(\partial_{k}\Omega_{+\infty},
    \cR_{k}) \\  \widehat \bigoplus_{k=1}^{d}
    \ell^{2}(\partial_{k}\Omega_{-\infty}, \cR_{*k}) \end{bmatrix} \to
              (\cE_{*} \oplus \cE) [[ z^{\pm 1} ]] \text{ \em is
injective}
    \end{equation}
    where $\Pi^{dBR, \partial \Omega}_{U}$ is   given by
    $$ \Pi^{dBR, \partial \Omega}_{U} \colon
    \begin{bmatrix}  \widehat \bigoplus_{k=1}^{d}
        \widehat{x_{k}^{0,fin}}^{\partial_{k}\Omega_{\text{fin}}}(z) \\
       \widehat  \bigoplus_{k=1}^{d} \widehat{ x_{k}^{0,+\infty}}(z) \\
       \widehat  \bigoplus_{k=1}^{d} \widehat{ x_{k}^{0,-\infty}}(z)
    \end{bmatrix} \mapsto
     \begin{bmatrix} \widehat y(z) \\ \widehat u(z) \end{bmatrix}
    $$
    where
 \begin{align}
     & \widehat y(z) =
     C (I - Z_{\text{diag}}(z)A)^{-1}
     \sbm{
\widehat{x_{1}^{0,\text{fin}}}^{\partial_{1}\Omega_{\text{\text{fin}}}}(z)
     \\ \vdots \\
     \widehat{x_{d}^{0,\text{fin}}}^{\partial_{d}\Omega_{\text{fin}}}(z)}
     \notag \\
     & \,  +
     C \sbm{ \widehat{x_{1}^{0,-\infty}}(z) \\ \vdots \\
     \widehat{x_{d}^{0,-\infty}}(z)} - C(I - Z_{\text{diag}}(z) A)^{-1}
     \left(\sbm{ \widehat{x_{1}^{0,-\infty}}(z) \\ \vdots \\
     \widehat{x_{d}^{0,-\infty}}(z)} - Z_{\text{diag}}(z)A
     \sbm{ \widehat{x_{1}^{0,-\infty}}(z) \\ \vdots \\
     \widehat{x_{d}^{0,-\infty}}(z)} \right),
     \notag
     \end{align}
     \begin{align}
 & \widehat u(z) =  B^{*} Z_{\text{diag}}(z)^{-1} (I - A^{*}
     Z_{\text{diag}}(z)^{-1})^{-1}
     \sbm{
\widehat{x_{1}^{0,\text{fin}}}^{\partial_{1}\Omega_{\text{fin}}}(z)
     \\ \vdots \\
\widehat{x_{d}^{0,\text{fin}}}^{\partial_{d}\Omega_{\text{fin}}}(z)
}
     \notag \\
     & \,  + B^{*} Z_{\text{diag}}(z)^{-1}
     \sbm{ \widehat{x_{1}^{0, +\infty}}(z) \\ \vdots \\
     \widehat{x_{d}^{0, +\infty}}(z) }  +
     B^{*} Z_{\text{diag}}(z)^{-1} (I - A^{*}
     Z_{\text{diag}}(z)^{-1})^{-1} \cdot \notag \\
     & \qquad \cdot \left( A^{*}Z_{\text{diag}}(z)^{-1}
     \sbm{ \widehat{x_{1}^{0, +\infty}}(z) \\ \vdots \\
    \widehat{x_{d}^{0, +\infty}}(z) } -
    \sbm{ \widehat{x_{1}^{0, +\infty}}(z) \\ \vdots \\
        \widehat{x_{d}^{0, +\infty}}(z) } \right).
    \label{uyfromx}
 \end{align}
 where
 $$
 \widehat{x_{k}^{0,+\infty}}(z) = \sum_{m  \colon \ell_{m,k} \in
 \partial_{k}\Omega_{+\infty}} \sum_{t=-\infty}^{\infty} \left(
 x_{k}^{0,+\infty}|_{\ell_{m,k}}\right)(t)
 z_{k}^{t}(\widehat{z}^{k})^{\widehat m^{k}}
 $$
 and where
 $$
 \widehat{x_{k}^{0,-\infty}}(z) = \sum_{m \colon \ell_{m,k} \in
 \partial_{k}\Omega_{-\infty}} \sum_{t=-\infty}^{\infty} \left(
 x_{k}^{0,-\infty}|_{\ell_{m,k}}\right)(t)
 z_{k}^{t}(\widehat{z}^{k})^{\widehat m^{k}}.
 $$

    \end{theorem}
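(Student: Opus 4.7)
The plan is to build the unitary intertwining $\mathcal{J}$ between $\mathfrak{S}$ and the admissible-trajectory model $\mathfrak{S}(\Sigma(U))$ in three steps.

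First, for the unitarity of $U$, the compatible decomposition property \eqref{CDP} furnishes two internal orthogonal decompositions of a common subspace of $\cK$:
\[
\mathcal{N} := \Big(\bigoplus_{k=1}^{d}\cL_{k}\Big)\oplus\cF
= \cF_{*}\oplus\Big(\bigoplus_{k=1}^{d}\cU_{k}\cL_{k}\Big).
\]
The row operator $\row_{k=1}^{d}[\,i_{\cH_{k}}\ i_{\cE}\,]$ is unitary from $\Big(\widehat\bigoplus_{k=1}^{d}\cH_{k}\Big)\widehat\oplus\cE$ onto $\mathcal{N}$. Dually, any $\xi\in\mathcal{N}$ has a unique representation $\xi = f_{*}+\sum_{k}\cU_{k}\ell_{k}'$ with $f_{*}\in\cF_{*}$ and $\ell_{k}'\in\cL_{k}$; using the pairwise orthogonalities $\cF_{*}\perp\cU_{k}\cL_{k}$ and $\cU_{j}\cL_{j}\perp\cU_{k}\cL_{k}$ for $j\neq k$ that are built into \eqref{CDP}, one verifies that $(i_{\cH_{k}})^{*}\cU_{k}^{*}\xi = (i_{\cH_{k}})^{*}\ell_{k}'$ and $(i_{\cE_{*}})^{*}\xi = (i_{\cE_{*}})^{*}f_{*}$, so the column operator in \eqref{UfromcalU'} restricts to a unitary $\mathcal{N}\to\Big(\widehat\bigoplus_{k=1}^{d}\cH_{k}\Big)\widehat\oplus\cE_{*}$. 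Consequently $U$ is unitary with the Givone--Roesser block structure displayed in \eqref{UfromcalU'}.

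Second, the map $\mathcal{J}$ defined by \eqref{Jmap1}--\eqref{Jmap2} takes values in $\cT$ and is isometric. That the trajectory $\mathcal{J}\xi=(u(\cdot),x(\cdot),y(\cdot))$ satisfies \eqref{polysys} reduces to checking that $(I-P_{\mathcal{N}})\cU^{*n}\xi$ is orthogonal to each $\cU_{k}\cL_{k}$ and to $\cF_{*}$, which is automatic since $\mathcal{N}^{\perp}\perp\mathcal{N}$. For the isometry, fix any shift-invariant sublattice $\Omega$, use the orthogonal decomposition $\cK = \cW^{\Omega}_{*}\oplus\cV^{\Omega}\oplus\cW^{\Omega}$ together with \eqref{Omegadecom} for the middle factor, and match each summand with a term of \eqref{calTnorm}: $\|u(n)\|^{2}=\|P_{\cU^{n}\cF}\xi\|^{2}$ yields $\|u|_{\Omega}\|^{2}_{\ell^{2}(\Omega,\cE)}=\|P_{\cW^{\Omega}}\xi\|^{2}$ and similarly for $y$; the finite-boundary part of $\|x|_{\partial\Omega}\|^{2}_{\partial\Omega}$ captures the $\cU^{n}\cL_{k}$-pieces; and for lines $\ell_{n,k}\in\partial_{k}\Omega_{\pm\infty}$ one computes
\[
\|(x_{k}|_{\ell_{n,k}})(t)\|^{2} = \|P_{\cU_{k}^{t}\cL_{k}}(\widehat\cU^{k})^{-\widehat n^{k}}\xi\|^{2}
\]
(after commuting $\cU_{k}^{t}$ past $(\widehat\cU^{k})^{-\widehat n^{k}}$ and using unitarity) and then passes to the limit via \eqref{slim}--\eqref{slim*} to obtain $\|P_{(\widehat\cU^{k})^{\widehat n^{k}}\cM_{k}}\xi\|^{2}$ (respectively $\|P_{(\widehat\cU^{k})^{\widehat n^{k}}\cM_{*k}}\xi\|^{2}$). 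Combined with the obvious intertwining of the respective shifts and the identifications $\mathcal{J}\cF = i(\cE)$, $\mathcal{J}\cF_{*} = i_{*}(\cE_{*})$, this yields the claimed unitary equivalence; surjectivity follows because $\mathcal{J}$ maps each summand in \eqref{Omegadecom} onto the corresponding component of $\cT_{\Omega}$ in the coordinatization \eqref{Gamma}. The Fourier formulas \eqref{Fourierrep-colmod} are immediate from the definitions, and the $\Omega'$-version of \eqref{Omegadecom} is transferred from the corresponding property of $\mathfrak{S}(\Sigma(U))$ (known from \cite[Theorem 4.6]{BabyBear}) through $\mathcal{J}$.

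Third, for the minimality characterization, observe that $\xi\perp\widetilde\cW + \widetilde\cW_{*}$ if and only if $u\equiv 0$ and $y\equiv 0$ in $\mathcal{J}\xi = (u,x,y)$; under this vanishing the state equations become homogeneous and $x$ is determined entirely by its boundary data $\bigl(x_{k}^{0,\text{fin}},x_{k}^{0,+\infty},x_{k}^{0,-\infty}\bigr)$, with asymptotic traces in $\cR_{k}$ and $\cR_{*k}$ furnished by \eqref{slim}--\eqref{slim*}. Taking the formal $z$-transform of the forward recursion \eqref{polysys} with $u\equiv 0$ propagates this boundary data to produce $\widehat y(z)$, with $(I-Z_{\diag}(z)A)^{-1}$ implementing the forward state propagation and the correction terms in \eqref{uyfromx} accounting for the asymptotic initial conditions at $-\infty$; dually, the $z$-transform of \eqref{polysysback} with $y\equiv 0$ yields $\widehat u(z)$ via $(I-A^{*}Z_{\diag}(z)^{-1})^{-1}$ in an analogous way. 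The resulting pair $(\widehat y(z),\widehat u(z))$ coincides exactly with $\Pi^{dBR,\partial\Omega}_{U}$ applied to the boundary data and vanishes precisely when $u\equiv 0$ and $y\equiv 0$ by Fourier uniqueness. Hence minimality of $\mathfrak{S}(\Sigma(U))$ (triviality of $\{\xi : u=y=0\}$) is equivalent to injectivity of $\Pi^{dBR,\partial\Omega}_{U}$. The main obstacle throughout is the careful bookkeeping at the infinite boundary: one must interpret the geometric-series inverses $(I-Z_{\diag}(z)A)^{-1}$ and $(I-A^{*}Z_{\diag}(z)^{-1})^{-1}$ in the formal power series framework of Section~\ref{S:FRKHS}, and one must verify via \eqref{slim}--\eqref{slim*} that the asymptotic traces of state trajectories really do lie in the residual spaces $\cR_{k},\cR_{*k}$ with the correct norms.
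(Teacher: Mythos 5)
Your argument is essentially sound, but it takes a deliberately more self-contained route than the paper does. The paper's own proof of this theorem is very short: everything except the minimality criterion is imported by citation from \cite[Theorem 4.8]{BabyBear}, and the formula \eqref{uyfromx} is quoted from \cite[Theorem 4.15]{MammaBear}; the only argument actually carried out is the translation ``$\xi\perp\cK_{\text{scat-min}}$ $\Leftrightarrow$ $\Phi\xi=\Phi_*\xi=0$ $\Leftrightarrow$ $\mathcal{J}\xi=(0,x(\cdot),0)$,'' followed by passing through $\Gamma$ and the initial-value-problem formula to get injectivity of $\Pi^{dBR,\partial\Omega}_U$ as in \eqref{col-scatmin}. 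Your third step is exactly this argument, so on the genuinely new content you and the paper coincide. What you do differently is reprove the imported material: unitarity of $U$ in \eqref{UfromcalU'} directly from the two internal orthogonal decompositions in \eqref{CDP}, and the unitarity and surjectivity of $\mathcal{J}$ from \eqref{Omegadecom} together with the strong limits \eqref{slim}--\eqref{slim*} matched against the trajectory norm \eqref{calTnorm} via the coordinatization \eqref{Gamma}. That buys self-containedness at the cost of length, and your outlines of these steps are correct (the observation that the system equations for $\mathcal{J}\xi$ reduce to $(I-P_{\mathcal N})\cU^{*n}\xi\perp\cU_k\cL_k,\ \cF_*$ is exactly the right mechanism). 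The one place where your sketch is thinner than what a complete proof requires is the derivation of \eqref{uyfromx} itself and the attendant bookkeeping: that the asymptotic traces of a state trajectory along lines in $\partial_k\Omega_{\pm\infty}$ genuinely lie in $\cR_k$, $\cR_{*k}$, and that the regularized $z$-transform computations converge, is precisely the nontrivial content of \cite[Theorem 4.15]{MammaBear}, which the paper invokes rather than proves; you flag this as the main obstacle but do not discharge it, so either cite it as the paper does or expect to reproduce an argument of the flavor of Sections \ref{S:inf1}--\ref{S:inf2}.
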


    \begin{proof}
  The only new observation beyond \cite[Theorem 4.8]{BabyBear} is the
  criterion  \eqref{col-scatmin} for minimality of the scattering
  system ${\mathfrak S}(\Sigma(U))$ expressed directly in terms of the
  GR-unitary colligation $U$. In general, a vector $\xi$ in the
  ambient space $\cK$ for the scattering system ${\mathfrak S}$ is
  orthogonal to the minimal subspace equal to the closure of
  $\widetilde \cW + \widetilde
  \cW_{*} \subset \cK$ for ${\mathfrak S}$ (see Definition
  \ref{D:minimal-scat}) if and only if both $\Phi \xi = 0$ and
  $\Phi_{*} \xi = 0$.  From \eqref{Fourierrep-colmod} we see that
  this is equivalent to the element ${\mathcal J} \xi$ having the
form $(0,
  x(\cdot), 0)$.  Hence minimality of the scattering system
  ${\mathfrak S}(\Sigma(U))$ translates to:
  $$
    (0, x(\cdot), 0) \in \cT \Rightarrow x(\cdot ) = 0.
  $$
  If we use the isometric isomorphism $\Gamma \colon \cT \to
  \cT_{\Omega}$ as in \eqref{Gamma}, we see that the condition is:
  $$
  \Gamma^{-1} \begin{bmatrix} 0 \\ \widehat \bigoplus_{k=1}^{d}
  x^{\partial_{k}\Omega_{\text{fin}},0} \\
     \widehat  \bigoplus_{k=1}^{d} x^{(0,\infty),0} \\ \widehat
\bigoplus_{k=1}^{d}
      x^{(-\infty,0),0} \\ 0 \end{bmatrix} \text{ has the form } (0,
      x(\cdot), 0) \Rightarrow x(\cdot) = 0.
  $$
  The formula \eqref{uyfromx} is just the formula from
  \cite[Theorem 4.15]{MammaBear} for the input string $u$ and output
string $y$
  associated with the unique trajectory $(u,x,y)$ which solves the
  initial value problem
  $$
   u|_{\Omega} = 0, \qquad x|_{\partial \Omega} =
   \begin{bmatrix} \widehat \bigoplus_{k=1}^{d}
       x^{\partial_{k}\Omega_{\text{fin}},0} \\
      \widehat \bigoplus_{k=1}^{d} x^{(0,\infty),0} \\ \widehat
\bigoplus_{k=1}^{d}
      x^{(-\infty,0),0} \end{bmatrix}, \qquad y|_{\bbZ^{d} \setminus
      \Omega} = 0;
   $$
 for more complete details on how to make rigorous sense of the
 formulas in \eqref{uyfromx}, we refer to \cite{MammaBear}.
Hence minimality of the scattering system ${\mathfrak
S}(\Sigma(U))$ is equivalent to injectivity of the map
$\Pi^{dBR,\partial \Omega}_{U}$ as wanted.
  \end{proof}

  \begin{remark}
   Note that the formula for $\widehat u$ in \eqref{uyfromx} involves
only $x|_{\partial
   \Omega_{\text{fin}} \cup \partial \Omega_{- \infty}}$ while the
formula
   for $\widehat y$ in \eqref{uyfromx} involves only $x|_{\partial
   \Omega_{\text{fin}} \bigcup \partial \Omega_{+\infty}}$.  In the
spirit of
   the terminology introduced in \cite{Kailath}, given a position $n
   \in \bbZ^{d}$ and a shift-invariant sublattice with $n \in
   \cap_{k=1}^{d} \partial_{k}\Omega_{\text{fin}}$, it makes sense to
view $x(n)$ as the
   {\em local state} at $n$, $x|_{\partial \Omega_{fin} \cup
\Omega_{-\infty}}$
   as the {\em global state} for the forward system \eqref{polysys} at
   $n$, and $x|_{\partial \Omega_{\text{fin}} \cup \partial
   \Omega_{+\infty}}$ as the {\em global state} for the backward
   system \eqref{polysysback} at position $n$.
   \end{remark}

    As a corollary, we have the following result.
    \begin{corollary}  \label{C:summary}
    Suppose that $S \in \cS(\cE, \cE_{*})$ is in the $d$-variable
    Schur--Agler class.
    Then the following conditions are equivalent.
    \begin{enumerate}

    \item $S(z)$ has an Agler decomposition
     $$ I - S(z) S(w)^{*} = \sum_{k=1}^{d} (1 - z_{k}
\overline{w_{k}})
    H_{k}(z) H_{k}(w)^{*}
     $$
     for some operator-valued functions $H_{k} \colon \bbD^{d} \to
      \cL(\cH'_{k}, \cE)$ and some Hilbert spaces $\cH'_{1}, \dots,
\cH'_{d}$.

\item There exists a multievolution scattering system ${\mathfrak
S}(\Sigma(U))$ with scattering matrix $S_{{\mathfrak S}}(z)$ equal
to $S(z)$ which satisfies conditions \eqref{Omegadecom},
\eqref{CDP}, \eqref{slim} and \eqref{slim*}.
\end{enumerate}

\end{corollary}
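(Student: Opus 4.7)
The plan is to obtain Corollary \ref{C:summary} by chaining Theorem \ref{T:Agler} (which links Agler decompositions to GR-unitary colligation realizations) with the admissible-trajectory construction ${\mathfrak S}(\Sigma(U))$ of Section \ref{S:scat-col} and the geometric-structure theorem \cite[Theorem 4.6]{BabyBear} recalled just before Theorem \ref{T:BabyBear}. Essentially, no new analysis is required; the corollary reads as an assembly of results already in place.

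For the implication (1) $\Rightarrow$ (2), I would first invoke the equivalence (1) $\Leftrightarrow$ (3) of Theorem \ref{T:Agler}: the given Agler decomposition produces a Hilbert-space $d$-tuple $\cH_{1}, \dots, \cH_{d}$ and a GR-unitary colligation $U = \sbm{A & B \\ C & D}$ of the structured form \eqref{colligation} such that $S(z)$ is realized as in \eqref{realization}. Next, I would form the multievolution scattering system ${\mathfrak S}(\Sigma(U)) = (\cT; \cU; \cF, \cF_{*})$ attached to $\Sigma(U)$ via the admissible-trajectory construction. By the scattering-matrix/transfer-function identity \eqref{trans-scat-func}, the scattering matrix $S_{{\mathfrak S}(\Sigma(U))}$ coincides with $T_{\Sigma(U)} = S$ up to the canonical unitary identifications $i$ and $i_{*}$ of \eqref{iEF}--\eqref{iE*F*}. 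Finally, with the subspaces $\cL_{k}$, $\cM_{k}$, $\cM_{*k}$ of $\cT$ defined as in Section \ref{S:scat-col}, \cite[Theorem 4.6]{BabyBear} guarantees that ${\mathfrak S}(\Sigma(U))$ satisfies the orthogonal decomposition \eqref{Omegadecom} of the scattering subspace $\cV^{\Omega}$, the compatible decomposition property \eqref{CDP}, and the strong-limit identities \eqref{slim} and \eqref{slim*}. This yields all the requirements of condition (2).

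For the converse (2) $\Rightarrow$ (1), hypothesis (2) already exhibits $S$ as the scattering matrix of a system of the form ${\mathfrak S}(\Sigma(U))$ for some GR-unitary colligation $U$; in particular, via \eqref{trans-scat-func} the transfer function $T_{\Sigma(U)}$ agrees with $S$ modulo $i$, $i_{*}$, so $S$ admits a structured unitary realization of the form \eqref{realization}--\eqref{colligation}. Thus condition (3) of Theorem \ref{T:Agler} is in force, and the implication (3) $\Rightarrow$ (2) of that theorem (whose explicit transparent proof was sketched via the relations \eqref{unitary}) delivers the Agler decomposition \eqref{Aglerdecom}.

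There is no substantive obstacle: the corollary is a packaging of Theorem \ref{T:Agler}, the admissible-trajectory construction of ${\mathfrak S}(\Sigma(U))$, and \cite[Theorem 4.6]{BabyBear}. The only mildly delicate bookkeeping is to keep track of the unitary identification maps $i, i_{*}$ appearing in \eqref{trans-scat-func} when asserting that $S_{{\mathfrak S}(\Sigma(U))}$ equals $S$, but this is purely notational. Note also that the hypothesis that $S$ is already in the Schur--Agler class is not actually used in the proof of either implication; it is stated in the corollary only to emphasize that conditions (1) and (2) are equivalent characterizations of Schur--Agler-class membership already guaranteed by Theorem \ref{T:Agler} and \cite{BabyBear}.
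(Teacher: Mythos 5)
Your proposal is correct and follows essentially the same route as the paper: for (1) $\Rightarrow$ (2) it passes from the Agler decomposition to a GR-unitary realization (Theorem \ref{T:Agler}/\cite{Ag90}) and then invokes the admissible-trajectory construction together with \cite[Theorem 4.6]{BabyBear} for \eqref{Omegadecom}, \eqref{CDP}, \eqref{slim}, \eqref{slim*}; for (2) $\Rightarrow$ (1) it extracts the embedded colligation $U$ (the paper does this via \eqref{UfromcalU'}) and applies the transparent formula $H_{k}(z) = C(I - Z_{\diag}(z)A)^{-1}P_{k}$ coming from the unitary relations \eqref{unitary}. The only cosmetic slip is citing ``(1) $\Leftrightarrow$ (3)'' of Theorem \ref{T:Agler} where the relevant implication from an Agler decomposition is (2) $\Rightarrow$ (3); this does not affect the argument.
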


\begin{proof}  For the direction (2) $\Rightarrow$ (1), use
formula \eqref{UfromcalU'} to define $U = \sbm{ A & B \\ C & D }$
and then set $H_{k}(z) = C (I - Z_{\diag}(z) A)^{-1} P_{k}$ for $k
= 1, \dots, d$.  For the direction (1) $\Rightarrow$ (2), use the
result of \cite{Ag90} to deduce that $S(z)$ has a realization
$S(z) = T_{\Sigma(U)}(z)$ as the transfer function of a
conservative Givone--Roesser system $\Sigma(U)$ and then use the
analysis in \cite{BabyBear} to deduce that the ambient space $\cK$
of the associated scattering system ${\mathfrak S}(\Sigma(U))$
contains subspaces $\cL_{k}$, $\cM_{k}$, $\cM_{*k}$ ($k = 1,
\dots, d$) meeting all the conditions \eqref{Omegadecom},
\eqref{CDP}, \eqref{slim} and \eqref{slim*}.
\end{proof}

While the proof of (2) $\Rightarrow$ (1) is quite explicit, the
proof of (1) $\Rightarrow$ (2) in Corollary \ref{C:summary} is
rather indirect.  A goal of the present paper is to provide a
direct proof of the implication (1) $\Rightarrow$ (2) in Corollary
\ref{C:summary}.  In detail, given a Schur-class function $S$, it
is always possible to form the model scattering system ${\mathfrak
S}_{dBR}^{S}$ (see \eqref{frakSdBR}) which has $S$ as its
scattering function.  If $S$ has an Agler decomposition, then we
know that there is a possibly nonminimal scattering system
${\mathfrak S}$ having $S$ as its scattering matrix for which
there are subspaces $\cL_{k}$, $\cM_{k}$, $\cM_{*k}$ ($k = 1,
\dots, d$) satisfying (OD), (CDP), (SL), (SL$*$).  As a goal for
the present paper, we seek to construct such a scattering system
with ambient space $\cK^{S}_{dBR} \oplus \cK_{\text{non-min}}$ and
identify these spaces explicitly inside the functional model space
$\cK^{S}_{dBR}$ in terms of reproducing kernel representations.
In case the various kernels in \eqref{dBRkernels} have no
overlapping, the associated scattering system is minimal and the
identification is relatively straightforward.    In the end, one
can explicitly construct subspaces $\cL_{k}$, $\cM_{k}$,
$\cM_{*k}$ ($k = 1, \cdots, d$) of $\cK^{S}_{dBR}$ from the Agler
decomposition data functions $\{H_{1}(z), \dots, H_{d}(z)\}$.
>From these subspaces, by using the results of \cite{BabyBear} one
is led to a unitary colligation $U = \sbm{A & B \\ C & D}$ so that
$S = S_{{\mathfrak S}^{S}_{dBR}} = T_{\Sigma(U)}$.  In this way we
arrive at a new proof of (1) $\Rightarrow$ (2) in Corollary
\ref{C:summary} which bypasses Theorem \ref{T:Agler}.  More
precisely, we carry out this construction under the assumption
that we have the data functions $\Big\{ \sbm{H^{1}_{k}(z) \\
H^{2}_{k}(z)} \colon k = 1, \dots, d \Big\}$ for an augmented
Agler decomposition, and thus obtain an explicit
function-theoretic proof for (2$^{\prime}$) $\Rightarrow$ (3) in
Theorem \ref{T:Agler}.

\section{Functional models for scattering systems containing an
embedded unitary colligation}  \label{S:scat-col-model}

In this section we assume that we are given a unitary colligation
$U = \sbm{ A & B \\ C & D }$ as in \eqref{colligation} to which we
associate the multievolution scattering system ${\mathfrak
S}(\Sigma(U))$ as in Section \ref{S:scat-col}. When viewed in a
coordinate-free way, ${\mathfrak S}(\Sigma(U))$ is a
multievolution scattering system ${\mathfrak S}$ as in
\eqref{scatsys} with the additional structure that there exist
subspaces $\cL_{1}, \dots, \cL_{d}$, $\cM_{1}, \dots, \cM_{d}$ and
$\cM_{*1}, \dots, \cM_{*d}$ so that \eqref{Omegadecom},
\eqref{CDP}, \eqref{slim} and \eqref{slim*} all hold. As explained
in Section \ref{S:scatmodels} (see Lemma \ref{L:PidBR}), the map
$$ \Pi_{dBR} \colon k \mapsto  \begin{bmatrix} \Phi_{*}  k \\ \Phi k
\end{bmatrix} = \begin{bmatrix} \sum_{n \in \bbZ^{d}} (P_{\cF_{*}}
\cU^{*n} k) z^{n} \\ \sum_{n \in \bbZ^{d}} (P_{\cF} \cU^{*n} k)
z^{n} \end{bmatrix}
$$
is a coisometry from the ambient space $\cK$ onto the ambient
space $\cK^{S}_{dBR}$ for the de Branges--Rovnyak model scattering
system ${\mathfrak S}^{S}_{dBR}$.  When expressed in terms of
trajectory coordinates, $\Pi_{dBR}$ assumes the form
$$
 \Pi_{dBR} \colon (u(\cdot), x(\cdot), y(\cdot)) \mapsto
   \begin{bmatrix}  \sum_{n \in \bbZ^{d}} y(n) z^{n} \\
         \sum_{n \in \bbZ^{d}} u(n) z^{n} \end{bmatrix} : =
     \begin{bmatrix} \widehat y (z) \\ \widehat u(z) \end{bmatrix}.
$$

If $S(z) = \sum_{n \in \bbZ^{d}} S_{n} z^{n}$ is in the
Schur--Agler class $\mathcal{SA}(\cF, \cF_{*})$, then we may view
$S(z)$ either in the classical sense as an analytic $\cL(\cF,
\cF_{*})$-valued function on the unit polydisk, or purely formally
as in Section \ref{S:FRKHS} as a formal power series in the
indeterminates $z_{1}, \dots, z_{d}$.  In the former case we view
$S(w)^{*}$ as the conjugate analytic function defined on ${\mathbb
D}^{d}$ given by $ S(w)^{*} = \sum_{n \in \bbZ^{d}_+} S_{n}^{*}
\overline{w}^{n}$ while in the latter we follow the convention
\eqref{convention} and define $S(w)^{*} \in \cL(\cF_{*},
\cF)[[z^{\pm 1}]]$ as $S(w)^{*} = \sum_{n \in \bbZ^{d}_+}
S_{n}^{*} w^{-n}$.  It is relatively clear how to view an Agler
decomposition \eqref{Aglerdecom} or an augmented Agler
decomposition \eqref{augAglerdecom-Intro} either in the
sesquianalytic sense or in the formal power series sense.  However
the Schur matrix product (i.e., entrywise matrix product) in
\eqref{augAglerdecom-Intro} is not so convenient.  The next lemma
gives a more convenient form of the augmented Agler decomposition
which avoids the Schur-matrix product.

 \begin{lemma} \label{L:Aglerdecom}
     Suppose that $S(z)$ is an analytic $\cL(\cF, \cF_{*})$-valued
     function on the unit polydisk ${\mathbb D}^{d}$ which has a
     (classical) sesquianalytic augmented Agler decomposition as in
     \eqref{augAglerdecom-Intro}.  Then:
     \begin{enumerate}
     \item
     If we view $S(z) = \sum_{n \in \bbZ^{d}_+} S_{n} z^{n}$
     as a formal power series in $\cL(\cF, \cF_{*})[[z^{\pm 1}]]$,
     the augmented Agler decomposition
     \eqref{augAglerdecom-Intro} can be reexpressed as the following
formal
     augmented Agler-decomposition:
 \begin{equation}  \label{enlargedAglerdecom}
 \begin{bmatrix}
     I - S(z) S(w)^{*} &  S(w) - S(z) \\ S(z)^{*} - S(w)^{*} &
     S(z)^{*} S(w)  - I \end{bmatrix} =
     \sum_{k=1}^{d} ( 1 - z_{k} w_{k}^{-1} ) \widetilde  K_{k}(z,w)
 \end{equation}
 where
 $$ \widetilde K_{k}(z,w) = \begin{bmatrix} \widetilde H^{1}_{k}(z)
 \\ \widetilde H^{2}_{k}(z^{-1}) \end{bmatrix}
 \begin{bmatrix} \widetilde  H^{1}_{k}(w)^{*} & \widetilde
H^{2}_{k}(w^{-1})^{*} \end{bmatrix}
 $$
 where we have set
 $$ \widetilde H^{1}_{k}(z) = H^{1}_{k}(z), \qquad
 \widetilde H^{2}_{k}(z^{-1}) = z_{k}^{-1} H^{2}_{k}(z^{-1}).
 $$

 \item In case we know a realization \eqref{realization} for $S$ with
 $U$ as in \eqref{colligation} unitary, then we may take
 \begin{align}
  \widetilde H^{1}_{k}(z) =  C (I - Z_{\text{\rm diag}}(z) A)^{-1}
 P_{k} \text{ and }
 \widetilde H^{2}_{k}(z^{-1}) =z_{k}^{-1} B^{*} (I - Z_{\text{\rm
diag}}(z^{-1})
 A^{*})^{-1} P_{k}
 \label{H's}
 \end{align}
 in \eqref{enlargedAglerdecom}, and the kernel $\widetilde
 K_{k}(z,w)$ appearing in \eqref{enlargedAglerdecom} is given
 explicitly as
 \begin{align*}
  &  \widetilde  K_{k}(z,w) = \begin{bmatrix} C (I - Z_{\text{\rm
diag}}(z)
    A)^{-1} \\ z_{k}^{-1} B^{*}(I -
    Z_{\text{\rm diag}}(z)^{-1}A^{*})^{-1} \end{bmatrix} P_{k} \cdot
    \notag \\
    & \qquad
    \cdot \begin{bmatrix}  (I - A^{*} Z_{\text{\rm
diag}}(w)^{-1})^{-1} C^{*} &
    (I - A Z_{\text{\rm diag}}(w))^{-1} B w_{k} \end{bmatrix}.
  \end{align*}
 \end{enumerate}
 \end{lemma}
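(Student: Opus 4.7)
The plan is to prove Part (2) first by direct algebraic verification of \eqref{enlargedAglerdecom} using the unitarity relations \eqref{unitary} of $U$, and then to obtain Part (1) by a coefficient-matching argument relating the formal and classical forms of the augmented Agler decomposition.

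For Part (2), I would set $G(z) = (I - Z_{\diag}(z) A)^{-1}$ and $G_{*}(w) = (I - A^{*} Z_{\diag}(w)^{-1})^{-1}$ and exploit the identity $\sum_{k=1}^{d} (1 - z_{k} w_{k}^{-1}) P_{k} = I - Z_{\diag}(z) Z_{\diag}(w)^{-1}$ on $\cH = \bigoplus_{k} \cH_{k}$: under this, each of the four entries of the RHS of \eqref{enlargedAglerdecom} reduces to an expression of the form $X\, G(z)\, (I - Z_{\diag}(z) Z_{\diag}(w)^{-1})\, G_{*}(w)\, Y$ with constant factors $X, Y$ built from $B$ and $C$. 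One then telescopes via the resolvent identities $G(z) Z_{\diag}(z) A = G(z) - I$ and $A^{*} Z_{\diag}(w)^{-1} G_{*}(w) = G_{*}(w) - I$, and invokes the unitarity relations \eqref{unitary} to match the result with the corresponding entry on the LHS. For the $(1,1)$ entry, this amounts to showing $C G(z)(I - Z_{\diag}(z) Z_{\diag}(w)^{-1}) G_{*}(w) C^{*} = I - S(z) S(w)^{*}$, which reduces after telescoping via $CC^{*} + DD^{*} = I$, $BD^{*} = -AC^{*}$, and $BB^{*} = I - AA^{*}$; this is essentially the computation sketched in the paragraph after Theorem \ref{T:Agler}. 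The three remaining entries follow by entirely analogous calculations using different combinations of the identities in \eqref{unitary}.

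For Part (1), with $\widetilde H^{1}_{k} = H^{1}_{k}$ and $\widetilde H^{2}_{k}(z^{-1}) = z_{k}^{-1} H^{2}_{k}(z^{-1})$, the four entries of $\sum_{k}(1 - z_{k} w_{k}^{-1}) \widetilde K_{k}(z,w)$ expand to
\begin{align*}
& \sum_{k} (1 - z_{k} w_{k}^{-1}) H^{1}_{k}(z) H^{1}_{k}(w)^{*}, \qquad \sum_{k} (w_{k} - z_{k}) H^{1}_{k}(z) H^{2}_{k}(w^{-1})^{*}, \\
& \sum_{k} (z_{k}^{-1} - w_{k}^{-1}) H^{2}_{k}(z^{-1}) H^{1}_{k}(w)^{*}, \qquad \sum_{k} (z_{k}^{-1} w_{k} - 1) H^{2}_{k}(z^{-1}) H^{2}_{k}(w^{-1})^{*},
\end{align*}
where the extra factors $w_{k}$ and $z_{k}^{-1}$ come from the built-in $w_{k}$ in $\widetilde H^{2}_{k}(w^{-1})^{*} = w_{k} H^{2}_{k}(w^{-1})^{*}$ (via $(w_{k}^{-1})^{*} = w_{k}$) and similarly for $\widetilde H^{2}_{k}(z^{-1})$. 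I would then verify \eqref{enlargedAglerdecom} by matching the coefficient of $z^{a} w^{b}$ on each side for all $(a, b) \in \bbZ^{d} \times \bbZ^{d}$; this produces a system of bilinear relations among $\{S_{n}, H^{1}_{k,n}, H^{2}_{k,n}\}$ that agrees exactly with the system obtained by expanding the classical identity \eqref{augAglerdecom-Intro} and equating coefficients of $z^{a} \overline{w}^{b}$, under the translation $\overline{w}^{m} \leftrightarrow w^{-m}$ inherent in passing from the classical adjoint $H(w)^{*} = \sum_{m} H_{m}^{*} \overline{w}^{m}$ to the formal one via $(w^{m})^{*} = w^{-m}$.

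The hard part of Part (1) is the bookkeeping for the off-diagonal entries: the classical factor $(z_{k} - \overline{w}_{k}) H^{1}_{k}(z) H^{2}_{k}(w)^{*}$ corresponds, in the formal identity, to $(w_{k} - z_{k}) H^{1}_{k}(z) H^{2}_{k}(w^{-1})^{*}$, obtained effectively by substituting $\overline{w} \to w$ and negating---rather than the substitution $\overline{w} \to w^{-1}$ used for the $(1,1)$ entry. This apparent inconsistency is reconciled precisely by the $z_{k}^{-1}$ and $w_{k}$ factors built into $\widetilde H^{2}_{k}(z^{-1})$ and $\widetilde H^{2}_{k}(w^{-1})^{*}$, which effect the required index shifts in the coefficient-matching argument. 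As an alternative to this direct verification, one could deduce Part (1) from Part (2) by invoking the implication (2$^{\prime}$) $\Rightarrow$ (3) of Theorem \ref{T:Agler} due to \cite{BT}, although the resulting $\widetilde H^{1}_{k}, \widetilde H^{2}_{k}$ need not coincide with those built directly from the given $H^{1}_{k}, H^{2}_{k}$.
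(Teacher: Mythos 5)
Your proposal is correct, but it distributes the work differently from the paper. For part (1) the paper does not match Laurent coefficients by hand: it performs a chain of global transformations on \eqref{augAglerdecom-Intro} (negate the second column; replace $w$ by $\overline w$ in the second column and $z$ by $\overline z$ in the second row; then substitute $\overline z_k \mapsto z_k^{-1}$, $\overline w_k \mapsto w_k^{-1}$ and read everything with the convention \eqref{convention}), and then observes that the transformed Schur-multiplier matrix factors as the rank-one matrix $\sbm{1 \\ z_k^{-1}}\sbm{1 & w_k}$ times $(1-z_kw_k^{-1})$, so the entrywise product can be traded for an ordinary product by absorbing $z_k^{-1}$ and $w_k$ into $\widetilde H^2_k$ --- which is exactly the definition \eqref{tildeH}. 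Your coefficient-matching argument encodes the same information (your resolution of the off-diagonal sign/shift discrepancy via the built-in $z_k^{-1}$, $w_k$ factors is precisely the rank-one factorization seen coefficientwise), but the substitution-plus-factorization route is a cleaner way to organize the bookkeeping; note also that since each product in \eqref{enlargedAglerdecom} pairs a series in $z^{\pm 1}$ alone against a series in $w^{\pm 1}$ alone, every coefficient identity involves only finitely many terms, so none of the convergence issues of Remark \ref{R:trap} intervene --- worth saying explicitly if you go the coefficientwise route. For part (2) the paper simply invokes Theorem \ref{T:Agler} (the classical augmented decomposition with the data \eqref{H-intro2}) and then applies the part (1) recipe \eqref{tildeH}; your direct verification from \eqref{unitary} is the alternative the paper mentions but does not carry out, and it has the virtue of being self-contained (independent of the classical-side result of \cite{BT}). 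One small imprecision there: only the $(1,1)$ (and, mutatis mutandis, $(2,2)$) entry literally has the sandwich form $X\,G(z)\,(I-Z_{\diag}(z)Z_{\diag}(w)^{-1})\,G_{*}(w)\,Y$; in the off-diagonal entries the right factor is $(I-AZ_{\diag}(w))^{-1}Bw_k$ rather than $G_{*}(w)C^{*}$, so the relevant middle identity is $\sum_k(w_k-z_k)P_k = Z_{\diag}(w)-Z_{\diag}(z) = (I-Z_{\diag}(z)A)Z_{\diag}(w)-Z_{\diag}(z)(I-AZ_{\diag}(w))$, and in fact these entries telescope using resolvent identities alone, with no appeal to \eqref{unitary}. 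This does not affect the correctness of your plan, but the write-up should treat the columns of $\widetilde K_k$ asymmetrically rather than claiming a uniform reduction. Your closing alternative (deducing part (1) from (2$'$) $\Rightarrow$ (3)) indeed would not prove part (1) as stated, for the reason you yourself give, so it is rightly only an aside.
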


 \begin{proof}
    Given that
    \eqref{augAglerdecom-Intro} holds, a change of sign in the second
    column gives
    \begin{align*}
 & \begin{bmatrix} I - S(z) S(w)^{*} & S(\overline{w}) - S(z) \\
    S(\overline{z})^{*} - S(w) & S(\overline{z}))^{*} S(\overline{w})
- I
    \end{bmatrix} \\
    & \qquad
    = \sum_{k=1}^{d} \begin{bmatrix} H^{1}_{k}(z) \\
    H^{2}_{k}(z) \end{bmatrix}
    \begin{bmatrix} H^{1}_{k}(w)^{*} & H^{2}_{k}(w)^{*} \end{bmatrix}
    \circ \begin{bmatrix} 1 - z_{k} \overline{w}_{k} &
    \overline{w}_{k} - z_{k} \\ z_{k} - \overline{w}_{k} & z_{k}
    \overline{w}_{k} - 1 \end{bmatrix}.
  \end{align*}
  We then replace $w$ by $\overline{w}$ in the second column and $z$
  by $\overline{z}$ in the second row to get
  \begin{align*}
 & \begin{bmatrix} I - S(z) S(w)^{*} & S(w) - S(z) \\
  S(z)^{*} - S(w)^{*} & S(z)^{*} S(w) - I
  \end{bmatrix} \\
  & \qquad
  = \sum_{k=1}^{d} \begin{bmatrix} H^{1}_{k}(z) \\
  H^{2}_{k}(\overline{z}) \end{bmatrix}
  \begin{bmatrix} H^{1}_{k}(w)^{*} & H^{2}_{k}(\overline{w})^{*}
\end{bmatrix}
      \circ \begin{bmatrix} 1 - z_{k} \overline{w}_{k} &
      w_{k} - z_{k} \\ \overline{z}_{k} - \overline{w}_{k}
      & \overline{z}_{k}  w_{k} - 1 \end{bmatrix}.
\end{align*}
We then replace $\overline{w}_{k}$ by $w_{k}^{-1}$,
$\overline{z}_{k}$ by $z_{k}^{-1}$ and interpret $S(z)$ and
$H^{j}_{k}(z)$ ($j=1,2$, $k = 1, \dots, d$) in the formal sense
with the convention \eqref{convention} in force to arrive at
\begin{align*}
   & \begin{bmatrix} I - S(z) S(w)^{*}  & S(w) - S(z) \\
    S(z)^{*} - S(w)^{*} & S(z)^{*} S(w) - I \end{bmatrix} = \\
    & \qquad
    \sum_{k=1}^{d} \begin{bmatrix} H^{1}_{k}(z) \\
    H^{2}_{k}(z^{-1}) \end{bmatrix}
  \begin{bmatrix} H^{1}_{k}(w)^{*} & H^{2}_{k}(w^{-1})^{*}
  \end{bmatrix} \circ
  \begin{bmatrix} 1 - z_{k} w_{k}^{-1} & w_{k} - z_{k} \\ z_{k}^{-1}
      - w_{k}^{-1} & z_{k}^{-1} w_{k} - 1 \end{bmatrix}.
   \end{align*}
   Next observe that
   $$
   \begin{bmatrix} 1 - z_{k} w_{k}^{-1} & w_{k} - z_{k} \\
       z_{k}^{-1} - w_{k}^{-1} & z_{k}^{-1} w_{k} - 1 \end{bmatrix} =
       \begin{bmatrix} 1 & w_{k} \\ z_{k}^{-1} & z_{k}^{-1}
       w_{k}^{-1} \end{bmatrix} (1 - z_{k} w_{k}).
 $$
 Hence, if we set
 \begin{equation}  \label{tildeH}
 \widetilde H^{1}_{k}(z) = H^{1}_{k}(z), \qquad
 \widetilde H^{2}_{k}(z^{-1}) = z_{k}^{-1}
 H_{k}^{2}(z^{-1}),
 \end{equation}
 we arrive at \eqref{enlargedAglerdecom} as wanted.

 If we know a
 realization \eqref{realization} for $S(z)$, we can arrive at the
 formulas for $\widetilde H^{j}_{k}(z)$ via direct substitution using
 the unitary relations \eqref{unitary} for $U$.  Alternatively,
 by Theorem \ref{T:Agler} we know that \eqref{augAglerdecom-Intro}
holds with $H_{k}^{1}(z)$ and
 $H^{2}_{k}(z)$ as in \eqref{H-intro2}.   Setting $\widetilde
H^{j}_{k}(z)$
 as in \eqref{tildeH} then gives us the formulas for $\widetilde
 H^{j}_{k}(z)$ given in \eqref{H's}.
 \end{proof}

 \begin{remark} Part of the content of Theorem \ref{T:Agler} is that
     $S$ has an augmented Agler decomposition (and hence the formal
     augmented Agler decomposition \eqref{enlargedAglerdecom})
     whenever $S$ has the simpler Agler decomposition
     \eqref{Aglerdecom}. The usual proof for this fact goes through
     the implication (2) $\Rightarrow$ (3) in Theorem
\ref{T:Agler}
     (achieved through the ``lurking isometry'' argument---see
     \cite{BT}) to arrive at a GR-unitary realization for $S$; one
     then arrives at (2$^{\prime}$) (or the amended form
     \eqref{enlargedAglerdecom}) via the formulas
     \eqref{H-intro2} or \eqref{H's}.  For the
     completeness of the function-theoretic approach of this paper, it
     would be nice to have a direct function-theoretic proof of (2)
     $\Rightarrow$ (2$^{\prime}$) in Theorem \ref{T:Agler} which
     does not pass through  a
     conservative-GR realization for $S$.
 \end{remark}

 In the sequel we assume that we are given a Schur-class function $S
 \in \cS(\cE, \cE_{*})$ together with an augmented Agler
 decomposition; to lighten the notation, we drop the tildes and write
 simply
 \begin{equation}  \label{augAglerdecom}
  \begin{bmatrix}
      I - S(z) S(w)^{*} & S(w) - S(z) \\ S(z)^{*} - S(w)^{*} &
      S(z)^{*} S(w) - I \end{bmatrix} =
      \sum_{k=1}^{d}  (1 - z_{k} w_{k}^{-1})  K_{k}(z,w)
  \end{equation}
  for the (formal) augmented Agler decomposition.  In case we are
given a
  realization $S(z) = D + C(I - Z_{\diag}(z) A)^{-1} Z_{\diag}(z) B$
  for $S$ with $U = \sbm{ A & B \\ C & D }$ unitary, then by  part
  (3) of Lemma \ref{L:Aglerdecom} one can
  take $K_{k}(z,w)$ to be given by
  \begin{align}
   K_{k}(z,w)  = &
   \begin{bmatrix} C (I - Z_{\diag}(z) A)^{-1} \\ z_{k}^{-1} B^{*} (I
-
   Z_{\diag}(z)^{-1} A^{*})^{-1} \end{bmatrix} P_{k}  \notag \\
   & \qquad \cdot  \begin{bmatrix} (I - A^{*} Z_{\diag}(w)^{-1})^{-1}
C^{*} &
       (I - A Z_{\diag}(w) )^{-1}B w_{k}\end{bmatrix}.
       \label{augKernel}
   \end{align}

 The augmented Agler decomposition \eqref{augAglerdecom} leads to
 the following kernel identities.

 \begin{proposition}  \label{P:dBRkerneldecom}
     Suppose that $ S \in \cS(\cE, \cE_{*})$ has augmented Agler
     decomposition \eqref{augAglerdecom} and that $\Omega \subset
     \bbZ^{d}$ is a shift-invariant sublattice.  We shall use the
     decomposition
     \begin{equation} \label{Kdecom}
     K_{k}(z,w) = \begin{bmatrix} K^{11}_{k}(z,w) & K^{12}_{k}(z,w)
     \\ K^{21}_{k}(z,w) & K^{22}_{k}(z,w) \end{bmatrix}
     \end{equation}
     of the kernels appearing in \eqref{augAglerdecom} for $k = 1,
     \dots, d$.
     Then:
    \begin{enumerate}
 \item  The kernel
     function $K_{\cV^{S, \Omega}_{dBR}}(z,w)$ appearing in
     \eqref{dBRkernels} has the decomposition
  \begin{equation}  \label{dBRkerneldecom}
      K_{\cV^{S, \Omega}_{dBR}}(z,w)  =
      \sum_{k=1}^{d}
      \left[\left(\sum_{n \in \partial_{k}\Omega_{\text{\rm{fin}}}}
z^{n}w^{-n}
      \right)
       K_{k}(z,w) + K_{\partial \Omega_{\infty},k}(z,w) \right]
   \end{equation}
   where $ K_{\partial \Omega_{\infty},k}(z,w)$ has the form
   $$
   K_{\partial \Omega_{\infty},k}(z,w) = \begin{bmatrix}
K^{11}_{\partial
   \Omega_{\infty},k}(z,w) & 0 \\ 0 &  K^{22}_{\partial
\Omega_{\infty},k}(z,w)
   \end{bmatrix}
   $$
   with
   \begin{align*} & K^{11}_{\partial \Omega_{\infty},k}(z,w) =
   \sum_{\ell_{n',k} \in \partial_{k}\Omega_{-\infty}}
   (\widehat z^{k})^{(\widehat{n^{\prime}})^{ k}}
   ({\widehat w}^{k})^{-\widehat{n'}^{k}}
      K^{11}_{-\infty,k}(z,w), \\
       &  K^{22}_{\partial \Omega_{\infty},k}(z,w) =
  \sum_{ \ell_{n'',k} \in \partial_{k}\Omega_{+\infty}}
 (\widehat z^{k})^{\widehat{n''}^{ k}}(\widehat
    w^{k})^{- (\widehat{n''})^{ k}}
    K^{22}_{+\infty,k}(z,w)
    \end{align*}
    where we  set
   \begin{align}
       K^{11}_{-\infty,k}(z,w) & = \lim_{t \to -\infty} z_{k}^{t}
       w_{k}^{-t}K_{k}^{11}(z,w), \notag \\
       K^{22}_{+\infty,k}(z,w) & = \lim_{t \to +\infty} z_{k}^{t}
       w_{k}^{-t} K^{22}_{k}(z,w)
       \label{Kkinf}
   \end{align}
   with the limit interpreted in the sense of strong convergence
   of power-series coefficients, and where
   $$
   (\widehat z^{k})^{\widehat{n'}^{k}} = z_{1}^{n_{1}} \cdots
\widehat{z_{k}} \cdots
   z_{d}^{n_{d}}  \text{ and } \widehat{n}^{k} =
   (n'_{1}, \dots, \widehat{n'_{k}}, \dots, n'_{d}),
   $$
   i.e., the $k$-th term is omitted.

   \item We have the following kernel analogue of \eqref{CDP}:
   \begin{equation}
    \sum_{k = 1}^{d}  K_{k}(z,w) + \begin{bmatrix}
       S(z) \\ I \end{bmatrix} \begin{bmatrix} S(w)^{*} & I
       \end{bmatrix}
    = \sum_{k=1}^{d} z_{k}  K_{k}(z,w) w_{k}^{-1} +
    \begin{bmatrix} I \\ S(z)^{*} \end{bmatrix}
           \begin{bmatrix} I & S(w) \end{bmatrix}.
      \label{kernelCDP}
      \end{equation}
      \end{enumerate}
  \end{proposition}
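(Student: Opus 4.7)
For Part (2), I would observe that \eqref{kernelCDP} is an algebraic rearrangement of the augmented Agler decomposition \eqref{augAglerdecom}: direct expansion gives
$$
\begin{bmatrix} I \\ S(z)^{*} \end{bmatrix}\begin{bmatrix} I & S(w) \end{bmatrix} - \begin{bmatrix} S(z) \\ I \end{bmatrix}\begin{bmatrix} S(w)^{*} & I \end{bmatrix}
= \begin{bmatrix} I - S(z)S(w)^{*} & S(w) - S(z) \\ S(z)^{*} - S(w)^{*} & S(z)^{*}S(w) - I \end{bmatrix},
$$
whose right-hand side equals $\sum_{k=1}^{d}(1 - z_{k} w_{k}^{-1}) K_{k}(z,w)$ by \eqref{augAglerdecom}; moving the $z_{k} w_{k}^{-1} K_{k}(z,w)$ terms to the opposite side yields \eqref{kernelCDP}.

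For Part (1), my plan is to start from the second of the three equivalent forms of $K_{\cV^{S,\Omega}_{dBR}}(z,w)$ listed in \eqref{dBRkernels}, in which the $(1,1)$, $(1,2)$ entries carry the factor $k_{\Sz,\Omega}$ while the $(2,1)$, $(2,2)$ entries carry $k_{\Sz,\bbZ^{d}\setminus\Omega}$. Substituting \eqref{augAglerdecom} entry-by-entry reduces matters to evaluating triple products $k_{\Sz,?}(z,w)(1 - z_{k} w_{k}^{-1}) K_{k}^{ij}(z,w)$, where $?$ is either $\Omega$ or $\bbZ^{d}\setminus\Omega$. I then decompose $k_{\Sz,\Omega}(z,w)$ along the $k$-th coordinate direction into a finite-boundary piece $A_{k}(z,w) = \sum_{n_{0}\in\partial_{k}\Omega_{\mathrm{fin}}}\sum_{t\ge 0} z^{n_{0}+t\be_{k}} w^{-n_{0}-t\be_{k}}$ (lines meeting $\Omega$ in a proper half-line) and an infinite-boundary piece supported on lines $\ell_{n',k}\in\partial_{k}\Omega_{-\infty}$ (each contributing a bilateral factor $\sum_{t\in\bbZ} z_{k}^{t}w_{k}^{-t}$); analogously for $k_{\Sz,\bbZ^{d}\setminus\Omega}$ with infinite-boundary lines in $\partial_{k}\Omega_{+\infty}$. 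On the finite-boundary piece, multiplication by $(1 - z_{k} w_{k}^{-1})$ telescopes to $\pm\sum_{n_{0}\in\partial_{k}\Omega_{\mathrm{fin}}} z^{n_{0}} w^{-n_{0}}$ (the sign determined by whether $\Omega$ or its complement is being used), and the subsequent multiplication by $K_{k}^{ij}(z,w)$ is associatively valid; summed over $k$ and over entries $(i,j)$ this produces the finite-boundary summand $\big(\sum_{n\in\partial_{k}\Omega_{\mathrm{fin}}} z^{n}w^{-n}\big) K_{k}(z,w)$ appearing in \eqref{dBRkerneldecom}.

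The main obstacle lies in the infinite-boundary piece, where the associativity trap of Remark \ref{R:trap} is in full force: although $\bigl[\sum_{t\in\bbZ} z_{k}^{t} w_{k}^{-t}\bigr](1 - z_{k} w_{k}^{-1}) = 0$ formally, the fully expanded triple product $\bigl[\sum_{t\in\bbZ} z_{k}^{t} w_{k}^{-t}\bigr](1 - z_{k} w_{k}^{-1}) K_{k}^{ij}(z,w)$ must be computed directly coefficient-by-coefficient. At position $z^{n}w^{-m}$ the result is a telescoping sum whose two endpoints as $|t|\to\infty$ involve the limits of the Laurent coefficients of $K_{k}^{ij}$ shifted along $\be_{k}$, and these limits are controlled by the support properties of $K_{k}^{ij}$ coming from the Kolmogorov-type factorization recorded in Lemma \ref{L:Aglerdecom}: the $(1,1)$ entry has its $z$- and $w^{-1}$-indices in $\bbZ^{d}_{+}$, the $(2,2)$ entry in indices with $k$-th coordinate at most $-1$ (and other coordinates non-positive), and the off-diagonal entries mix these regimes. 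A case analysis then shows: for the $(1,1)$ block one endpoint vanishes identically and the other gives precisely $K^{11}_{-\infty,k}(z,w)$; for the $(2,2)$ block the surviving endpoint is $-K^{22}_{+\infty,k}(z,w)$, whose sign cancels the overall minus in the identity $k_{\Sz,\bbZ^{d}\setminus\Omega}(I - S(z)^{*}S(w)) = -k_{\Sz,\bbZ^{d}\setminus\Omega}\sum_{k}(1-z_{k}w_{k}^{-1})K_{k}^{22}$; and for the off-diagonal entries $(1,2)$, $(2,1)$ both endpoints vanish by the misaligned supports of $K_{k}^{12}$ and $K_{k}^{21}$, confirming that $K_{\partial\Omega_{\infty},k}$ is block-diagonal. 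Assembling the infinite-boundary contributions---from $\partial_{k}\Omega_{-\infty}$ lines for the $(1,1)$ block and from $\partial_{k}\Omega_{+\infty}$ lines for the $(2,2)$ block---into $K_{\partial\Omega_{\infty},k}(z,w)$ and adding to the finite-boundary contributions recovers \eqref{dBRkerneldecom}.
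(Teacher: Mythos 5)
Your Part (2) is correct and is exactly the paper's argument: spell out \eqref{augAglerdecom} entrywise and move the $z_kw_k^{-1}K_k$ terms across.

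For Part (1) there is a genuine gap. Your finite-boundary telescoping and the off-diagonal vanishing are fine, but the diagonal infinite-boundary pieces hinge on the assertion that the surviving endpoint limits of your telescoping sums "are controlled by the support properties of $K_k^{ij}$." Support only kills one endpoint: since $K^{11}_k$ is supported on $\bbZ^d_+\times\bbZ^d_+$, the coefficients of $z_k^tw_k^{-t}K^{11}_k$ vanish as $t\to+\infty$, but the other endpoint is precisely $\lim_{t\to-\infty}z_k^tw_k^{-t}K^{11}_k(z,w)$, i.e.\ the convergence of the diagonally shifted coefficients $[K^{11}_k]_{n+T\be_k,\,m+T\be_k}$ as $T\to+\infty$ (the "asymptotically Toeplitz in direction $k$" property of Remark \ref{R:d=1lim}). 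This is not a consequence of the Kolmogorov factorization or of support---a positive kernel on $\bbZ^d_+\times\bbZ^d_+$ with bounded coefficients can have oscillating diagonal shifts---and it is exactly what the statement \eqref{Kkinf} asserts must exist. In the paper this existence is the heart of the proof: one first shows, via the cutoff sublattices $\Omega^M$ (where associativity is legitimate because there is no boundary at $-\infty$) and monotonicity of increasing sequences of positive kernels bounded above by $K^{11}_{\cV_{dBR}^{S,\Omega}}$, that the "new boundary" contributions converge (\eqref{dagger}); one then applies this to the two special sublattices $\Omega_{k_0}$ and $\Omega_{k_0}^0$ and subtracts (\eqref{dagger1-lim}, \eqref{dagger2-lim}) to isolate $\lim_{M\to-\infty}z_{k_0}^Mw_{k_0}^{-M}K^{11}_{k_0}$. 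Without some such argument your infinite-boundary triple products are not even well-defined formal series, so the decomposition of $k_{\Sz,\Omega}$ into line pieces followed by distribution over $(1-z_kw_k^{-1})K_k^{ij}$ cannot be justified (this is precisely the distributivity caveat of Remark \ref{R:trap}: distributing requires each piece-product to exist). A secondary, smaller omission: passing from a single boundary line to the sum over all lines in $\partial_k\Omega_{\pm\infty}$ requires an interchange of limit and (possibly infinite) summation, which the paper justifies by the monotone convergence of the increasing positive kernels $z_k^Mw_k^{-M}K^{11}_k$; you should supply an analogous justification.
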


  \begin{remark} \label{R:kernel-col-scat-geometry}
      We view conditions \eqref{dBRkerneldecom},
      \eqref{Kkinf} and \eqref{kernelCDP} as kernel function analogues
      of the subspace conditions \eqref{Omegadecom}, \eqref{slim'}
      and \eqref{slim*'}, and \eqref{CDP} in Theorem
      \ref{T:BabyBear}.  We indicate below (see Theorems
      \ref{T:minfuncmodel} and \ref{T:scc-real})
      how, under certain conditions, these kernel decompositions can
be translated to the
      validity of the subspace conditions \eqref{Omegadecom},
      \eqref{slim'} and \eqref{slim*'}, and \eqref{CDP} for a
      functional-model multievolution scattering system having
      scattering matrix $S$.
      \end{remark}

  \begin{proof}[Proof of Proposition \ref{P:dBRkerneldecom}]
       In addition to
      \eqref{Kdecom}, let us decompose $K_{\cV_{dBR}^{S,
      \Omega}}(z,w)$ as
  \begin{equation*}
      K_{\cV_{dBR}^{S, \Omega}}(z,w) = \begin{bmatrix}
      K^{11}_{\cV_{dBR}^{S, \Omega}}(z,w) &
      K^{12}_{\cV_{dBR}^{S, \Omega}}(z,w)  \\
      K^{21}_{\cV_{dBR}^{S, \Omega}}(z,w) &
      K^{22}_{\cV_{dBR}^{S, \Omega}}(z,w)  \end{bmatrix}.
   \end{equation*}
   We shall first verify the (1,1)-entry in the equation
   \eqref{dBRkerneldecom}.  The computations must be regularized
properly
   in order to avoid the traps illustrated in Remark \ref{R:trap}. To
perform the
      regularization we introduce the approximate shift-invariant
      sublattice
   $$ \Omega^{M} = \{ n = (n_{1}, \dots, n_{d}) \in \Omega \colon
   n_{k} \ge M \text{ for all } k = 1, \dots, d \}.
   $$
   We assert that
   \begin{equation}  \label{assert}
      k_{\Sz, \Omega}(z,w)  (I - S(z) S(w)^{*}) = \lim_{M \to
       -\infty}k_{\Sz, \Omega^{M}}(z,w) (I - S(z) S(w)^{*})
   \end{equation}
   (with convergence taken to be strong coefficientwise).  Indeed,
   the coefficient of $z^{n}w^{-m}$ on the left-hand side of
\eqref{assert} is
   \begin{align}
   & [k_{\Sz,\Omega}(z,w)(I - S(z) S(w)^{*})]_{n,m} \notag \\
   & \qquad = \chi_{\Omega}(n)\delta_{n,m}I_{\cE_{*}} \notag\\
   & +
\text{weak}\sum_{n'\in\mathbb{Z}^d_+}\Big(\text{weak}\sum_{m' \in
{\mathbb
   Z}^{d}_{+}}\chi_{\Omega}(n-n') \chi_{\Omega}(m-m')
\delta_{n-n',m-m'} S_{n'}S^{*}_{m'}
   \Big) \notag\\
   & = \chi_{\Omega}(n) \delta_{n,m} I_{\cE_{*}} +\text{weak}
\sum_{n' \in
   {\mathbb Z}^{d}_{+} \colon n -
   n' \in \Omega} S_{n'}S^{*}_{n'+m-n}
   \label{K11}
   \end{align}
   (with $\chi_{\Omega}$ equal to the characteristic function of the
   set $\Omega$ and with $(k,\ell) \mapsto \delta_{k,\ell}$ equal to
   the Kronecker delta-function); note that
     the infinite series in
   \eqref{K11} converges weakly since $S(z)$ is a bounded
multiplier from
   $\cH(k_{\Sz}  I_{\cE})$ to $\cH(k_{\Sz}
   I_{\cE_{*}})$ and, hence, is a bounded multiplier
$\cH(k_{\Sz,\Omega}  I_{\cE})$ to $\cH(k_{\Sz,\Omega}
   I_{\cE_{*}})$ (see Section \ref{S:multipliers}).
   The coefficient of $z^{n}w^{-m}$ inside the limit on the
   right-hand side of \eqref{assert} is
   \begin{align}
     &  [ k_{\Sz,\Omega^{M}}(z,w)(I - S(z) S(w)^{*})]_{n,m}
    \notag \\
   & \,  =
       \chi_{\Omega^{M}}(n) \delta_{n,m}I_{\cE_{*}} +
     \text{weak}  \sum_{n' \in
     {\mathbb Z}^{d}_{+} \colon n -
     n' \in \Omega^{M}} S_{n'}S^{*}_{n'+m-n}
       \label{KM11}
    \end{align}
    Note that the series in \eqref{KM11} is just a finite truncation
of
    the series in \eqref{K11}.  Since the series \eqref{K11} defining
    the $(n,m)$-coefficient of $k_{\Sz,
    \Omega}(z,w)(I - S(z) S(w)^{*})$ converges, the assertion
\eqref{assert} follows as wanted.

    Combining \eqref{assert} with \eqref{dBRkernels} then gives
    \begin{align}
    K^{11}_{\cV_{dBR}^{S, \Omega}}(z,w) & =
    k_{\Sz,\Omega}(z,w)(I - S(z) S(w)^{*}) \notag \\
    & = \lim_{M \to -\infty} k_{\Sz,
    \Omega^{M}}(z,w)(I - S(z) S(w)^{*}).
    \label{KV11-1}
    \end{align}
    If we plug the augmented Agler decomposition
    \eqref{augAglerdecom} ((1,1)-component
    only for the moment) into \eqref{KV11-1} we arrive at
    \begin{equation} \label{KV11-2}
    K^{11}_{\cV_{dBR}^{S, \Omega}}(z,w) =
    \lim_{M \to -\infty}    k_{\Sz, \Omega^{M}}(z,w) \left( \sum_{k=1}^{d}
   (1 - z_{k}w_{k}^{-1})    K^{11}_{k}(z,w) \right).
    \end{equation}
    Since $K_{k}^{11}(z,w) = \sum_{n,m \in \bbZ^{d}_{+}}
    [K_{k}^{11}]_{n,m} z^{n} w^{-m}$ (i.e., the coefficients are
    supported on $\bbZ^{d}_{+} \times \bbZ^{d}_{+}$) and since
$\Omega^{M}$
    has no boundary at $-\infty$, only finite sums are
    involved in the expressions defining the coefficients of
    $k_{\Sz, \Omega^{M}}(z,w) K^{11}_{k}(z,w)  $ and of
    $ z_{k} w_{k}^{-1} k_{\Sz, \Omega^{M}}(z,w)K^{11}_{k}(z,w) $, and
the
    associativity is valid.\footnote{Note in particular that the
starting
    point of the example in Remark \ref{R:trap} is the observation
that the
    collections of kernels $K_{1}(z,w) = \sum_{\ell=0}^{+\infty}
    z_{1}^{\ell} w_{1}^{-\ell}$, $K_{j}(z,w) = 0$ for $2 \le j \le d$
    form an Agler decomposition for the scalar-valued
    Schur-Agler-class function $S(z) = 0$.}  We are therefore able to
continue
    \eqref{KV11-2} in the form
    \begin{align}
    K^{11}_{\cV_{dBR}^{S, \Omega}}(z,w) & =
    \lim_{M \to -\infty}k_{\Sz, \Omega^{M}}(z,w)  \left( \sum_{k=1}^{d}
(1 - z_{k} w_{k}^{-1})
    K_{k}^{11}(z,w) \right) \notag \\
    & = \lim_{M \to -\infty} \sum_{k=1}^{d} \Big(k_{\Sz,
\Omega^{M}}(z,w) (1 - z_{k} w_{k}^{-1})\Big)
    K_{k}^{11}(z,w).
    \label{KV11-3}
   \end{align}
   Since (as has already been pointed out and used) $\partial_{k}
   \Omega^{M}_{-\infty} = \emptyset$ for each $k = 1, \dots, d$, one
   can verify the useful identity
   \begin{equation} \label{useful}
       k_{\Sz, \Omega^{M}}(z,w) (1 - z_{k} w_{k}^{-1}) = k_{\Sz,
       \partial_{k}\Omega_{\text{fin}}^{M}}(z,w).
    \end{equation}
    It is convenient to decompose the finite boundary components
    $\partial_{k}\Omega^{M}_{\text{fin}}$ of $\Omega^{M}$ as
    \begin{align*}
    \partial_{k}\Omega^{M}_{\text{fin}} & =
    ( \partial_{k} \Omega_{\text{fin}} \cap
    \partial_{k}\Omega^{M}_{\text{fin}}) \cup
    \left[ \partial_{k} \Omega^{M}_{\text{fin}} \setminus
    (\partial_{k}\Omega^{M}_{\text{fin}} \cap \partial
    \Omega_{\text{fin}}) \right] \notag \\
    & =: \partial_{k,I}^{M}(\Omega) \cup
    \partial_{k,II}^{M}(\Omega)
     \end{align*}
     Note that the asymptotics of $\partial^{M}_{k, II}(\Omega)$ can
     be understood as follows:  for a given $n' \in {\mathbb Z}^{d}$,
     we have
     \begin{align}
     &   n' \in \partial^{M}_{k,II}(\Omega) \Rightarrow n'_{k} =
     M, \notag \\
   & \ell_{n',k} \in \partial_{k}\Omega_{-\infty} \Rightarrow
   \ell_{n',k}|_{n_k'=M} \in \partial^{M}_{k, II}(\Omega), \notag \\
   & \ell_{n',k} \notin \partial_{k}\Omega_{-\infty} \Rightarrow
   \text{ there exists } N_{n'} > -\infty \text{ so that} \notag \\
   & \qquad \qquad \ell_{n',k}|_{n'_k=M} \ \notin
   \partial^{M}_{k,II}(\Omega) \text{ once } M < N_{n'}
   \label{asymptotics}
   \end{align}
   where we have used the (presumably transparent) notation
   $$
   \ell_{n',k}|_{n'_k=M} = (n'_{1}, \dots, n'_{k-1}, M, n'_{k+1},
\dots,
   n'_{d}) \in {\mathbb Z}^{d}.
   $$
   From \eqref{KV11-3} combined with \eqref{useful} we write
     \begin{equation} \label{KV11-4}
   K^{11}_{\cV_{dBR}^{S, \Omega}}(z,w) =
   \lim_{M \to -\infty}
   \sum_{k=1}^{d} \left[
   k_{\text{Sz},\partial_{k,I}^{M}(\Omega)}(z,w) K^{11}_{k}(z,w) +
   k_{\text{Sz},\partial_{k,II}^{M}(\Omega)}(z,w) K^{11}_{k}(z,w)
\right].
   \end{equation}
   Note that the term $k_{\text{Sz},\partial_{k,I}^{M}(\Omega)}(z,w)
K_{k}^{11}(z,w)$,
   as a function of $M$, is an increasing sequence of positive
   kernels as $M \to -\infty$ (i.e.,
   $k_{\Sz,\partial_{k,I}^{M}(\Omega)}(z,w)K_{k}^{11}(z,w)$ is a
positive kernel
   for each fixed $M$ and the difference
   $$
   k_{\Sz,\partial_{k,I}^{M'}(\Omega)}(z,w) K_{k}^{11}(z,w) -
   k_{\Sz,\partial_{k,I}^{M}(\Omega)}(z,w) K_{k}^{11}(z,w)
   $$
   is a positive kernel
   for $M' < M$).  As each term $k_{Sz,\partial_{k,II}^{M}}(z,w)
   K^{11}_{k}(z,w)$ is also a positive kernel, we see that the
   sequence $\{k_{\Sz,\partial_{k,I}^{M}(\Omega)}(z,w)
K_{k}^{11}(z,w)\}_{M = -1,
   -2, \dots}$ is bounded above by the positive kernel
   $K^{11}_{\cV_{ddBR}^{S, \Omega}}(z,w)$.  We conclude that the limit
   $$
   \lim_{M \to -\infty} k_{Sz,\partial_{k,I}^{M}(\Omega)}(z,w)
K^{11}_{k}(z,w)
   $$
   exists (strongly coefficientwise).  Moreover, since the sequence
   $$
   \{k_{Sz,\partial_{k,I}^{M}(\Omega)}(z,w) K^{11}_{k}(z,w)\}_{M=-1,
-2, \dots}
   $$
   can be
   identified as the sequence of partial sums for
   $k_{Sz,\partial_{k}\Omega_{\text{fin}}}(z,w)  K_{k}^{11}(z,w)$ (see
   \cite[page 173]{MammaBear}), we
   can identify the limit explicitly as
  $$
  \lim_{M \to -\infty} k_{Sz,\partial_{k,I}^{M}(\Omega)}(z,w)
K^{11}_{k}(z,w) =
  k_{Sz,\partial_{k}\Omega_{\text{fin}}}(z,w) K_{k}(z,w).
  $$
  From \eqref{KV11-4} we conclude that
 $ \lim_{M \to -\infty} \sum_{k=1}^{d}
       k_{Sz,\partial_{k,II}^{M}(\Omega)}(z,w) K_{k}^{11}(z,w)$ with
value
  \begin{equation}  \label{dagger}
      \lim_{M \to -\infty} \sum_{k=1}^{d}
      k_{Sz,\partial_{k,II}^{M}(\Omega)}(z,w) K_{k}^{11}(z,w)  =
      K^{11}_{\cV_{dBR}^{S, \Omega}}(z,w) - \sum_{k=1}^{d}
      k_{Sz,\partial_{k}\Omega_{\text{fin}}}(z,w) K^{11}_{k}(z,w).
   \end{equation}

   Our next goal is to verify that the limit
   \begin{equation}  \label{want-lim}
       \lim_{M \to -\infty} z_{k}^{M} w_{k}^{-M} K_{k}^{11}(z,w)
       =:K^{11}_{-\infty,k}(z,w)
   \end{equation}
   exists for each $k = 1, \dots, d$.  To this end, let us fix an
   index $k_{0} \in \{ 1, \dots, d\}$ and define the shift-invariant
   sublattice $\Omega_{k_{0}}$ by
   $$
   \Omega_{k_{0}} = \{ n \in {\mathbb Z}^{d}_{+} \colon n_{j} \ge 0
   \text{ for } j \ne k_{0}\}.
   $$
   We compute
   \begin{align*}
       & \partial_{k_{0}}(\Omega_{k_{0}})_{\text{fin}} = \emptyset, \\
       & \partial_{k}(\Omega_{k_{0}})_{\text{fin}} =
       \{n \in {\mathbb Z}^{d} \colon n_{k}=0, n_{j} \ge 0 \text{ for
       } j \ne k_{0} \} \text{ for } k \ne k_{0}.
   \end{align*}
   Similarly, under the assumption that $M < 0$, we have
   \begin{align*}
       & \partial_{k_{0}}(\Omega_{k_{0}})^{M}_{\text{fin}} =
       \{n \in {\mathbb Z}^{d} \colon n_{j} \ge 0 \text{ for } j \ne
       k_{0} \text{ and } n_{k_{0}} = M \}, \\
       & \partial_{k} (\Omega_{k_{0}})^{M}_{\text{fin}} =
       \{ n \in {\mathbb Z}^{d} \colon n_{k}=0, \, n_{j} \ge 0 \text{
       for } j \ne k_{0}, \, n_{k_{0}} \ge M \} \text{ for } k \ne
       k_{0}.
   \end{align*}
   We conclude that, for $M < 0$, we have
   \begin{align}
       & \partial^{M}_{k_{0},II}(\Omega_{k_{0}}) = \{ n \in {\mathbb
       Z}^{d} \colon n_{j} \ge 0 \text{ for } j \ne k_{0}, \,
       n_{k_{0}} = M \}, \notag \\
       & \partial^{M}_{k, II}(\Omega_{k_{0}}) = \emptyset \text{ for
       } k \ne k_{0}.
       \label{dagger1}
   \end{align}
   From \eqref{dagger} applied to the case $\Omega = \Omega_{k_{0}}$
   we conclude that
   \begin{equation}  \label{dagger1-lim}
       \lim_{M \to -\infty} \sum_{n \in
       \partial^{M}_{k_{0},II}(\Omega_{k_{0}})} z_{k_{0}}^{n}
w_{k_{0}}^{-n}
       K^{11}_{k_{0}}(z,w) \text{ exists.}
   \end{equation}
   We now do a similar analysis for the shift-invariant sublattice
   $\Omega_{k_{0}}^{0}$ given by
   $$
   \Omega_{k_{0}}^{0} = \Big\{ n \in {\mathbb Z}^{d} \colon n_{j} \ge
0
   \text{ for } j \ne k_{0}, \, \sum_{j \colon j \ne k_{0}} n_{j} > 0
   \Big\}.
   $$
   We again compute
   \begin{align*}
       & \partial_{k_{0}}(\Omega_{k_{0}}^{0})_{\text{fin}} =
         \emptyset, \\
       & \partial_{k}(\Omega_{k_{0}}^{0})_{\text{fin}} =
    \{ n \in {\mathbb Z}^{d} \colon n_{j} = 0 \text{ for } j
       \notin \{k,k_{0}\} \text{ and } n_{k} = 1 \} \\
       & \qquad \cup \Big\{ n \in {\mathbb Z}^{d} \colon n_{k} = 0, \,
     n_{j} \ge 0  \text{ for } j \ne k_{0}, \, \sum_{j \colon j \ne
       k_{0}} n_{j} > 0 \Big\} \text{ for } k \ne k_{0},
   \end{align*}
   while, under the assumption that $M < 0$, we have
   \begin{align*}
      & \partial_{k_{0}}(\Omega_{k_{0}}^{0})^{M}_{\text{fin}} =
 \Big\{ n \in {\mathbb Z}^{d} \colon n_{j} \ge 0 \text{ for } j \ne
k_{0},
 \, \sum_{j \colon j \ne k_{0}} n_{j} > 0, \, n_{k_{0}} = M \Big\},
\\
  & \partial_{k}(\Omega_{k_{0}}^{0})^{M}_{\text{fin}} =
 \{ n \in {\mathbb Z}^{d} \colon n_{j}=0 \text{ for } j \notin \{k,
 k_{0}\}, \, n_{k} = 1, \, n_{k_{0}} \ge M \}  \\
 & \qquad \cup \Big\{ n \in {\mathbb Z} \colon n_{k}=0,\,  n_{j} \ge 0
 \text{ for } j \ne k_{0}, \, \sum_{j \colon j \ne k_{0}} n_{j} > 0,
\,
 n_{k_{0}} \ge M \Big\} \text{ for } k \ne k_{0}.
 \end{align*}
 It then follows that
 \begin{align}
     & \partial^{M}_{k_{0}, II}(\Omega^{0}_{k_{0}}) =
     \Big\{ n \in {\mathbb Z}^{d} \colon n_{j} \ge 0 \text{ for } j
\ne
     k_{0}, \, \sum_{j \colon j \ne k_{0}} n_{j} > 0, \, n_{k_{0}} =
     M \Big\}, \notag \\
     & \partial^{M}_{k,II}(\Omega^{0}_{k_{0}}) = \emptyset \text{ for
     } k \ne k_{0}.
     \label{dagger2}
     \end{align}
     If we apply \eqref{dagger} with $\Omega = \Omega_{k_{0}}^{0}$ we
     then get
     \begin{equation}  \label{dagger2-lim}
     \lim_{M \to -\infty} \sum_{n \in
     \partial^{M}_{k_{0},II}(\Omega_{k_{0}}^{0})} z_{k_{0}}^{n}
     w_{k_{0}}^{-n}
     K^{11}_{k_{0}}(z,w) \text{ exists.}
 \end{equation}
 If we observe from \eqref{dagger1} and \eqref{dagger2} that
$ \partial^{M}_{k_{0},II}(\Omega^{0}_{k_{0}}) \subset
 \partial^{M}_{k_{0}, II}(\Omega_{k_{0}})$   with
 $$
  \partial^{M}_{k_{0}, II}(\Omega_{k_{0}}) \setminus
 \partial^{M}_{k_{0},II}(\Omega^{0}_{k_{0}}) = \{ (n_{1}, \dots,
 n_{d}) \colon n_{j} = 0 \text{ for } j \ne k_{0}, \, n_{k_{0}} = M \}
 $$
 and then subtract \eqref{dagger2-lim} from \eqref{dagger1-lim}, we
 finally arrive at the existence of the limit
 $$
 \lim_{M \to -\infty} \sum_{n \in
 \partial^{M}_{k_{0},II}(\Omega_{k_{0}}) \setminus
 \partial^{M}_{k_{0},II}(\Omega^{0}_{k_{0}})} z_{k_{0}}^{n}
w_{k_{0}}^{-n} K^{11}(z,w)
 = \lim_{M \to -\infty} z_{k_{0}}^{M} w_{k_{0}}^{-M}
 K^{11}_{k_{0}}(z,w)
 $$
 and \eqref{want-lim} follows.
  From the observation that
  \begin{equation}  \label{observation}
  z_{k_{0}}^{M} w_{k_{0}}^{-M}K^{11}_{k_{0}}(z,w) =
  k_{\Sz,\Xi_{M,k}}(z) (I - S(z) S(w)^{*})
  \end{equation}
  where the subset
  $$
  \Xi_{M,k} = \{ n \in {\mathbb Z}^{d} \colon n_{j} \ge 0 \text{ for }
  j \ne k_{0}, \, n_{k_{0}}  \ge M \}
  $$
  is increasing as $M$ decreases to $-\infty$, we see that the
  sequence $\{ z_{k_{0}}^{M} w_{k_{0}}^{-M} K^{11}_{k_{0}}(z,w)
  \}$ is an increasing sequence of positive kernels
  as $M \to -\infty$.

  We now return to the setting where $\Omega$ is a general
  shift-invariant sublattice.
  To check the validity of the $(1,1)$-entry of
\eqref{dBRkerneldecom},
  it remains now only to verify that
  $$
 \sum_{k=1}^{d}
  k_{Sz,\partial^{M}_{k,II}(\Omega)}(z,w) K^{11}_{k}(z,w)
  \to
  \sum_{k=1}^d\sum_{\widehat{n'}^{k} \colon \ell_{n',k} \in
\partial_{k}
  \Omega_{-\infty}} (\widehat{z}^{k})^{(\widehat{n'})^{k}} ({\widehat
  w}^{k})^{-\widehat{n'}^{k}} K^{11}_{-\infty,k}(z,w)
  $$
  as $M \to -\infty$.   To do this it suffices to show that the
  identity holds termwise:
  \begin{align}
     & \lim_{M \to -\infty} k_{Sz,\partial^{M}_{k,II}(\Omega)}(z,w)
      K^{11}_{k}(z,w) \notag  \\
      & \qquad = \sum_{\widehat{n'}^{k} \colon \ell_{n',k} \in
      \partial_{k} \Omega_{-\infty}}
      (\widehat{z}^{k})^{\widehat{n'}^{k}}
      (\widehat{w}^{k})^{-\widehat{n'}^{k}} K^{1}_{-\infty,k}(z,w)
      \label{toshow-k}
  \end{align}
  for each fixed $k=1, \dots, d$.  From \eqref{asymptotics} we see
that
  $$
  k_{\Sz, \partial^{M}_{k,II}(\Omega)}(z,w) = \sum_{n \in
  \partial^{M}_{k,II}(\Omega) \colon n = \ell_{n',k}|_{n'_k=M} \text{
  for some } n'} z^{n} w^{-n},
  $$
  and, for a fixed $n' \in {\mathbb Z}^{d}$, if we set $n'(M) =
  \ell_{n',k}|_{n'_k=M}$ we have
  \begin{align*}
 &  \lim_{M \to -\infty}  \chi_{\partial^{M}_{k,II}(\Omega)}n'(M)
  z^{n'(M)} w^{-n'(M)} K^{11}_{k}(z,w) \\
  & \qquad \qquad = \begin{cases}
  0 & \text{if } \ell_{n',k} \notin \partial_{k}\Omega_{-\infty}, \\
  (\widehat{z}^{k})^{\widehat{n'}^{k}}
  (\widehat{w}^{k})^{-\widehat{n'}^{k}} K^{11}_{-\infty,k}(z,w) &
  \text{if } \ell_{n',k} \in \partial_{k}\Omega_{-\infty}.
  \end{cases}
  \end{align*}
  We conclude that \eqref{toshow-k} holds up to an interchange of
  the limit and summation signs.  However, this interchange is
  justified by the monotone convergence theorem since, as we have
  observed in \eqref{observation}, the sequence
  $\{z_{k}^{M} w_{k}^{-M} K^{11}_{k}(z,w)\}$ converges monotonically
  (with positivity of a difference measured as kernel positivity)
  to $K^{11}_{-\infty,k}(z,w)$ as $M \to -\infty$.  This completes
  the proof of the validity of the $(1,1)$-block entry in
  \eqref{dBRkerneldecom}.

   The (1,2)-entry of \eqref{dBRkerneldecom} can be verified by a
   completely parallel argument.  In this case, however, it is
   automatic that
   $$
    \lim_{M \to -\infty} z_{k}^{M}w_{k}^{-M}K^{12}_{k}(z,w) = 0
   $$
   since $K^{12}(z,w)$ has the form
   $$
    K^{12}(z,w) = \sum_{n \in \bbZ^{d}_{+},m \in -\bbZ^{d}_{+}}
    [K^{12}]_{n,m} z^{n}
    w^{-m}.
   $$

   For the $(2,1)$ and $(2,2)$ terms, one should define
$\Omega^{M}_{b}$
   to be the approximating backward shift-invariant sublattice
   $$
     \Omega^{M}_{b} = \{ n = (n_{1}, \dots, n_{d}) \in \Omega \colon
     n_{k} \le M \text{ for all } k = 1, \dots, d \}
   $$
   and then take a limit as $M \to +\infty$.  With these
   adjustments, the arguments for the (2,2) and (2,1) cases
   are exactly the same as the arguments for the  (1,1) and (1,2)
   cases respectively.  This completes the verification of all four
   entries of \eqref{dBRkerneldecom}

   To verify \eqref{kernelCDP}, spell out \eqref{enlargedAglerdecom}
as
  \begin{align*}
    & \sum_{k=1}^{d}(1 - z_{k}w_{k}^{-1}) K^{11}_{k}(z,w)  = I - S(z)
      S(w)^{*}, \\
   & \sum_{k=1}^{d}  (1 - z_{k}w_{k}^{-1} )  K^{12}_{k}(z,w) = S(w) -
S(z),
   \end{align*}
   \begin{align*}
    & \sum_{k=1}^{d} (1 - z_{k}w_{k}^{-1})   K^{21}_{k}(z,w)  =
    S(z)^{*} - S(w)^{*}, \\
    & \sum_{k=1}^{d} (1 - z_{k} w_{k}^{-1} ) K^{22}_{k}(z,w)  =
    S(z)^{*}S(w) - 1.
  \end{align*}
  Then rearrange to get
  \begin{align*}
     & \sum_{k=1}^{d}  K^{11}_{k}(z,w) + S(z) S(w)^{*}  =
      \sum_{k=1}^{d} z_{k}  w_{k}^{-1}K^{11}_{k}(z,w)  + I \\
     & \sum_{k=1}^{d} \ K^{12}_{k}(z,w) + S(z)  =
      \sum_{k=1}^{d} z_{k} w_{k}^{-1} K^{12}_{k}(z,w)  + S(w), \\
     & \sum_{k=1}^{d}  K^{21}_{k}(z,w) + S(w)^{*}  =
      \sum_{k=1}^{d} z_{k} w_{k}^{-1} K^{21}_{k}(z,w)  + S(z)^{*}, \\
     & \sum_{k=1}^{d}  K^{22}_{k}(z,w) + I  = \sum_{k=1}^{d}
      z_{k} w_{k}^{-1} K^{22}_{k}(z,w)  + S(z)^{*} S(w).
  \end{align*}
  This amounts to the spelling out of \eqref{kernelCDP} as wanted.
  \end{proof}

  \begin{remark}  \label{R:d=1lim} For the case $d=1$, the limit
      \eqref{want-lim} can be evaluated explicitly as follows.
      We note that $K^{11}_{k}(z,w) = K^{11}(z,w)$ where
      \begin{align*} K^{11}(z,w) & = ({1 - z w^{-1}})^{-1}(I - S(z)
S(w)^{*})=
      \sum_{n=0}^{\infty} z^{n} w^{-n} -  \Big( \sum_{n = 0}^\infty
      z^{n} w^{-n} \Big)S(z) S(w)^{*} \\
      & = \sum_{n \ge 0} z^{n} w^{-n} - \sum_{i,j,\ell \ge 0} S_{i}
      S_{j}^{*} z^{i+\ell} w^{-j -\ell}  \\
      & = \sum_{n \ge 0} z^{n} w^{-n} - \sum_{n,m \ge 0} \sum_{ \ell =
      0}^{\operatorname{min} \{n, m\}} S_{n-\ell} S^{*}_{m - \ell}
      z^{n} w^{-m}.
      \end{align*}
      We write
      $$
      K^{11}(z,w) = \sum_{n,m \ge 0}[ K^{11}]_{n,m} z^{n} w^{-m}
\text{
      where } [K^{11}]_{n,m} = \delta_{n,m}
     -  \sum_{ \ell =  0}^{\operatorname{min} \{n, m\}} S_{n-\ell}
S^{*}_{m - \ell}.
      $$
      Hence
      $$ z^{M}K^{11}(z,w) w^{-M} = \sum_{n,m \ge M} K^{11}_{n-M,
      m-M} z^{n} w^{-m}
      $$
      where,
      $$
       [K^{11}]_{n-M, m-M} =
      \delta_{n - M, m - M} - \sum_{\ell = 0}^{
      \operatorname{min}\{ n-M, m-M \}}
       S_{n - M - \ell} S^{*}_{m-M - \ell}.
  $$
  For $n \le m$ we can then write
  $$
   [K^{11}]_{n-M, m-M} = \delta_{n,m} - \sum_{\ell = 0}^{ n-M}
   S_{\ell} S^{*}_{m-n+\ell}
   $$
 and we have
   $$
  \lim_{M \to -\infty} [ K^{11} ]_{n-M, m-M} =
  \delta_{n, m} - \sum_{\ell = 0}^{\infty} S_{\ell} S_{m
  - n + \ell}^{*} \text{ if } n \le m.
  $$
  Similarly we see that
  $$
  \lim_{M \to -\infty} [ z^{M} K^{11}(z,w) w^{-M} ]_{\alpha, \beta} =
  \delta_{\alpha, \beta} - \sum_{\ell=0}^{\infty} S_{n-m +
  \ell} S_{\ell}^{*} \text{ if } n > m.
  $$
  We note that the limiting matrix $[X]_{n, m}: = \lim_{M \to
-\infty} [
  K^{11}]_{n-M, m-M}$ is necessarily {\em Toeplitz}, i.e.,
  the matrix entry $[X]_{n,m}$ depends only on the difference
  $n-m$. One can say in general that the existence of the limit
  $\lim_{M \to -\infty} [K^{11}]_{n-M, m-M}$ means that $[K^{11}]$ is
  {\em asymptotically Toeplitz}.  The content of the assertion
  \eqref{want-lim} is that the matrices $[K^{11}_{k}]_{n,m}$
associated
  with the kernels
  $$
  K^{11}_{k}(z,w) = \sum_{n,m \in {\mathbb Z}^{d}_{+}}
  [K^{11}_{k}]_{n,m} z^{n} w^{-m}
  $$
  are asymptotically Toeplitz in direction $k$ for $k = 1, \dots, d$.
  \end{remark}

  Our next goal is to show how the kernel decompositions
  \eqref{dBRkerneldecom}, \eqref{Kkinf} and \eqref{kernelCDP}
  associated with an augmented Agler decomposition
  \eqref{augAglerdecom} can be
  turned in the subspace decompositions \eqref{Omegadecom},
  \eqref{slim'} and \eqref{slim*'}, and \eqref{CDP} for a
  functional-model multievolution scattering system having scattering
  matrix $S$.  The formula \eqref{UfromcalU'} in Theorem
  \ref{T:BabyBear} then gives rise to a GR-unitary realization for $S$
  and we have an alternative functional-model proof of (2$^{\prime}$)
  $\Rightarrow$ (3) in Theorem \ref{T:Agler}.  We carry out the
  details of this procedure only for a more tractable special case.
  For this purpose we introduce the following notion of {\em strictly
  closely connected} augmented Agler decomposition.  Section
  \ref{S:minimal} below explores the meaning of a strictly closely
  connected Agler decomposition in terms of a GR-unitary realization
  $U = \sbm{A & B \\ C & D}$ for the given augmented Agler
  decomposition.

  \begin{definition}  \label{D:augAglerdecom-scc}
       Suppose that we are given an augmented Agler
       decomposition \eqref{augAglerdecom} for a given
Schur-Agler-class
       function $S(z)$.  We say that this augmented Agler
decomposition is
       {\em strictly closely connected} if both collections of
reproducing
       kernel Hilbert spaces $(\cH(K_{1}), \dots,
       \cH(K_{d}))$  and $(z_{1} \cH(K_{1}), \dots,
       z_{d}\cH(K_{d}))$ have no overlap,
       i.e., if both associated overlapping spaces (see
      Section \ref{S:overlapping}) are trivial:
      $$ \boldsymbol{\cL}: =\cL(K_{1}, \dots, K_{d}) = \{0\} \text{ and }
      \boldsymbol{\cL}':= \cL(K'_{1}, \dots,  K'_{d}) = \{0\}
      $$
      where we set $K'_{k}(z,w) = z_{k} K(z,w) w_{k}^{-1}$ for $k
      = 1, \dots, d$.
       \end{definition}

    The connection between the kernel CDP property \eqref{kernelCDP}
    and the subspace CDP property \eqref{CDP} is particularly
    transparent for the case of strictly closely connected augmented
    Agler decompositions.

    \begin{lemma}  \label{L:sccAglerdecom}
Suppose that we are given a strictly closely connected augmented
Agler decomposition
     \eqref{augAglerdecom}
    for a Schur--Agler class function $S \in \cS(\cE, \cE_{*})$.
    Then the functional-model version of the property \eqref{CDP}
holds,
    i.e.,  \eqref{CDP} holds with $\cL_{k} = \cH(K_{k})$, $\cF =
    \cH\left( \sbm{S(z) \\ I} \sbm{ S(w)^{*} & I} \right)$,
    $\cF_{*} = \cH \left( \sbm{ I \\ S(z)^{*}} \sbm{I & S(w)}
    \right)$ and with $\cU_{k}$ equal to the multiplication operator
    $M_{z_{k}}$:
    \begin{equation}  \label{funcmodelCDP}
    \bigoplus_{k=1}^{d} \cH(K_{k}) \oplus \cH\left( \sbm{ S(z) \\
    I}\sbm{ S(w)^{*} & I} \right) =
    \bigoplus_{k=1}^{d} z_{k} \cH(K_{k}) \oplus \cH\left( \sbm{I \\
    S(z)^{*}} \sbm{I & S(w)^{*}} \right).
    \end{equation}
    \end{lemma}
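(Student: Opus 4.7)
The plan is to verify \eqref{funcmodelCDP} by exhibiting both sides as internal orthogonal direct sum decompositions of a common FRKHS $\cH^{\star}$ whose reproducing kernel is prescribed, in two equivalent ways, by the kernel CDP identity \eqref{kernelCDP}. Throughout, set $K_k'(z,w):= z_k K_k(z,w) w_k^{-1}$.

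First I would observe that the formal identity $k_{\rm Sz}(z,w) z_k w_k^{-1} = k_{\rm Sz}(z,w)$ from Remark \ref{SzSK=KS}, combined with Proposition \ref{P:pullback}, implies that multiplication by $z_k$ is an isometry from $\cH(K_k)$ onto $z_k\cH(K_k)=\cH(K_k')$. The strict-close-connectedness hypotheses $\boldsymbol{\cL}=\{0\}$ and $\boldsymbol{\cL}'=\{0\}$, together with Proposition \ref{P:overlapping}(2), then yield the internal orthogonal direct sum identifications
\[
\cH\biggl(\sum_{k=1}^d K_k\biggr) = \bigoplus_{k=1}^d \cH(K_k), \qquad \cH\biggl(\sum_{k=1}^d K_k'\biggr) = \bigoplus_{k=1}^d z_k\cH(K_k),
\]
inside the ambient space $(\cE_{*}\oplus\cE)[[z^{\pm 1}]]$.

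Next, the kernel CDP identity \eqref{kernelCDP} (Proposition \ref{P:dBRkerneldecom}) gives $\sum_k K_k + K_{\cF^{S}_{dBR}} = \sum_k K_k' + K_{\cF^{S}_{dBR*}}$, so by Theorem \ref{T:FRKHS} both sides determine the same FRKHS, which I call $\cH^{\star}$. To finish, it suffices to verify that the extended overlap spaces $\cL(\{K_1,\ldots,K_d,K_{\cF^{S}_{dBR}}\})$ and $\cL(\{K_1',\ldots,K_d',K_{\cF^{S}_{dBR*}}\})$ are both trivial; Proposition \ref{P:overlapping}(2) will then realize $\cH^{\star}$ as the internal orthogonal direct sum $\bigoplus_{k=1}^d\cH(K_k)\oplus\cF^{S}_{dBR}$ and, independently, as $\bigoplus_{k=1}^d z_k\cH(K_k)\oplus\cF^{S}_{dBR*}$, so that the two direct sum decompositions of $\cH^{\star}$ coincide, which is exactly \eqref{funcmodelCDP}.

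The main step is therefore the vanishing of these two extended overlaps, which I would establish by a coefficient-support analysis built on the explicit factorization from Lemma \ref{L:Aglerdecom}: $K_k(z,w) = \sbm{\tilde H_k^1(z)\\ \tilde H_k^2(z^{-1})}\sbm{\tilde H_k^{1*}(w) & \tilde H_k^{2*}(w^{-1})}$ with $\tilde H_k^1(z) = H_k^1(z)$ analytic on $\bbD^d$ and $\tilde H_k^2(z^{-1}) = z_k^{-1}H_k^2(z^{-1})$. Consequently, any $f_k \in \cH(K_k)$ has its second ($\cE$-valued) component supported on $\{m\in\bbZ^d : m_k\le -1,\ m_j\le 0 \text{ for } j\ne k\}$, hence with no constant term; a putative relation $\sum_k f_k = -\sbm{S(z)e\\e}$ with $e\in\cE$ then forces $e=0$ by matching constant terms in the second component, after which $\boldsymbol{\cL}=\{0\}$ yields all $f_k = 0$. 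Dually, any $z_k f_k \in z_k\cH(K_k)$ has its first ($\cE_*$-valued) component supported on $\{n\in\bbZ^d : n_k\ge 1,\ n_j\ge 0 \text{ for } j\ne k\}$, so the first-component constant term of $\sum_k z_k f_k + \sbm{e_*\\ S(z)^*e_*}=0$ forces $e_*=0$; then $\boldsymbol{\cL}'=\{0\}$, together with injectivity of each $M_{z_k}$, gives $f_k=0$. This completes the vanishing of both extended overlaps and hence the lemma.
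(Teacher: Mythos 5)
Your proof is correct, and its overall skeleton matches the paper's (show that the span of the $\cH(K_{k})$ meets $\cF$ only in $\{0\}$, that the span of the $z_{k}\cH(K_{k})$ meets $\cF_{*}$ only in $\{0\}$, and then let strict close-connectedness plus the kernel identity \eqref{kernelCDP} turn the common-kernel space into the two coinciding orthogonal decompositions), but the mechanism you use for the crucial ``no cross overlap'' step is genuinely different. The paper gets the two trivial-intersection statements from the scattering geometry: it invokes the decomposition \eqref{dBRkerneldecom} of $K_{\cV^{S,\Omega}_{dBR}}$ (the analytically heavy part (1) of Proposition \ref{P:dBRkerneldecom}, proved by the regularized limiting arguments) to place $\operatorname{span}_{k}\cH(K_{k})$ inside $\cH(K_{\cV^{S,\Omega}_{dBR}})$, and then uses the orthogonal decomposition \eqref{dBRkerneldecom'} of $\cK^{S}_{dBR}$ together with the fourth identity in \eqref{dBRkernels} (which puts $\cF^{S}_{dBR}$ inside the outgoing space) to conclude the intersections vanish; a second choice of sublattice handles the shifted case. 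You instead bypass Proposition \ref{P:dBRkerneldecom}(1) entirely and argue by Laurent-coefficient support, using the normalization of Lemma \ref{L:Aglerdecom}: the $\cE$-components of elements of $\cH(K_{k})$ live on $\{m_{k}\le -1,\ m_{j}\le 0\}$ and so have no constant term, while $\sbm{S\\ I}e$ has constant bottom coefficient $e$; dually for $z_{k}\cH(K_{k})$ versus $\cF_{*}$. This is more elementary and self-contained (only \eqref{kernelCDP}, which is the easy algebraic rearrangement, plus Proposition \ref{P:overlapping} and Theorem \ref{T:FRKHS} are needed), at the price of relying explicitly on the support structure of the kernels; you should say explicitly that this normalization is part of the standing assumption \eqref{augAglerdecom} (it is --- the paper uses the same support structure in the proof of Proposition \ref{P:dBRkerneldecom}), since for an unnormalized positive-kernel solution of \eqref{augAglerdecom} the constant-term matching would need separate justification. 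One small blemish: the unitarity of $M_{z_{k}}\colon\cH(K_{k})\to\cH(K_{k}')$ does not come from Remark \ref{SzSK=KS} (which concerns the Szeg\H{o} kernel); it follows directly from Theorem \ref{T:FRKHS}, since $K_{k}'(z,w)=\bigl(z_{k}H_{k}(z)\bigr)\bigl(w_{k}H_{k}(w)\bigr)^{*}$ and multiplication by $z_{k}$ is injective on formal Laurent series.
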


    \begin{proof}
    From
    the decomposition \eqref{dBRkerneldecom} we read off that
    $$
     \operatorname{span}_{1 \le k \le d} \cH(K_{k}) \subset
     \cH(K_{\cV^{S, \Omega}_{dBR}})
    $$
    if $\Omega$ is a shift-invariant sublattice with $0 \in
    \partial_{k} \Omega_{fin}$ for each $k = 1, \dots, d$.  From
    \eqref{dBRkerneldecom'} combined with the fourth of the identities
    \eqref{dBRkernels}, we see that it is always the case that
    $$
    \operatorname{span}_{1 \le k \le d} \cH(K_{k}) \cap \cH\left(
    \sbm{ S(z) \\ I } \sbm{ S(w)^{*} & I } \right) = \{0\}.
    $$
    Similarly, by choosing $\Omega'$ with $\be_{k} \in
    \partial_{k}\Omega_{fin}$ for $k=1, \dots, d$, one can read off
    that it is always the case that
    $$
    \operatorname{span}_{1 \le k \le d} z_{k} \cH(K_{k}) \cap
    \cH\left( \sbm{ I \\ S(z)^{*}} \sbm{ I & S(w) } \right) = \{0\}.
    $$
    Hence if $\{ K_{k}(z,w) \colon k = 1, \dots, d \}$ is a strictly
    closely connected augmented Agler decomposition, we conclude that
    \eqref{funcmodelCDP} holds as wanted.
    \end{proof}

   We next introduce the notion of {\em minimal} augmented Agler
   decomposition.

   \begin{definition} \label{D:minAglerdecom}
     Let $\Omega$ be a given shift-invariant
    sublattice of $\bbZ^{d}$.  We say that the augmented Agler
decomposition
    \eqref{enlargedAglerdecom} is {\em minimal} if both of the
    following conditions hold:
    \begin{enumerate}
    \item $\{K_{k}\}$ is   strictly closely connected (see
    Definition \ref{D:augAglerdecom-scc}).

       \item The overlapping space $\boldsymbol{\cL}_{\cV^{S,
       \Omega}_{dBR}}$ associated with the sum-decomposition of
       $K_{\cV^{S, \Omega}_{dBR}}$ in \eqref{dBRkerneldecom} is
       trivial, i.e., the collection of subspaces
     \begin{align*}
   & z^{n} \cH( K_{k}) \text{ for } n \in
    \partial_{k}\Omega_{\rm fin}, \\
   &(\widehat z^{k})^{n'} \cH\left( \begin{bmatrix} 0 & 0 \\ 0 &
     K_{+\infty,k}(z,w) \end{bmatrix} \right) \text{ for }
    \ell_{n',k} \in \partial_{k} \Omega_{+\infty},  \\
    & (\widehat z^{k})^{n''} \cH\left( \begin{bmatrix} \
    K_{-\infty,k}(z,w) & 0 \\ 0 & 0 \end{bmatrix} \right) \text{ for }
    \ell_{n'',k} \in \partial_{k} \Omega_{-\infty}
    \end{align*}
    form a direct-sum decomposition of $\cH(K_{\cV^{S,
    \Omega}_{dBR}})$.
     \end{enumerate}
     \end{definition}

     We shall see from Corollary \ref{C:minAglerdecom} below that
     condition (1) in Definition \ref{D:minAglerdecom} follows
     automatically from condition (2) in case all finite components
     $\partial_{k}\Omega_{\text{fin}}$ of the boundary of $\Omega$
     are nonempty.  On the other hand, we shall see as a consequence
     of Example \ref{E:2} and Remark \ref{R:E2} below that condition
     (1) in general does not imply condition (2).  At this stage it is
     possible to give the following explicit reproducing-kernel-space
construction
     for a scattering system realizing a given Agler decomposition
     under this minimality assumption.

    \begin{theorem} \label{T:minfuncmodel}
    \begin{enumerate}
        \item  The notion of minimal augmented Agler decomposition is
independent
      of the choice of shift-invariant sublattice.
\item   Assume that $S \in
    \cS(\cE, \cE_{*})$ is a Schur-Agler-class function with
    augmented Agler decomposition which is minimal (see Definition
    \ref{D:minAglerdecom}) with respect to the shift-invariant
sublattice
    $\Omega$. Let ${\mathfrak S}^{S}_{dBR}$ be the associated de
    Branges--Rovnyak model multievolution scattering system with
    scattering matrix $S$ (see \eqref{frakSdBR}). Define subspaces of
    $\cK^{S}_{dBR}$ according to
    \begin{align}
    & \cL_{k} = \cH(K_{k}) \text{ for } k = 1, \dots, d, \notag  \\
    & \cM_{k} = \cH\left( \begin{bmatrix} 0 & 0 \\ 0 &
    K_{+\infty,k}\end{bmatrix} \right)  \notag \\
    & \cM_{*k} = \cH\left( \begin{bmatrix} K_{-\infty,k} & 0 \\ 0
    & 0 \end{bmatrix} \right).
    \label{defLMM*}
  \end{align}
  Then the collection of spaces $\cL_{k}, \cM_{k}, \cM_{*k}$ ($k =1,
  \dots, d$) satisfies properties \eqref{Omegadecom}, \eqref{slim'},
  \eqref{slim*'} and \eqref{CDP}.  The associated GR-unitary
  colligation $U$ given by \eqref{UfromcalU'} can be written
  explicitly as
     $$
      U^{S}_{dBR} = \begin{bmatrix} A^{S}_{dBR,11} & \cdots &
      A^{S}_{dBR,1d} & B^{S}_{dBR,1} \\
      \vdots &  & \vdots & \vdots \\
      A^{S}_{dBR,d1} & \cdots & A^{S}_{dBR,dd} & B^{S}_{dBR,d}  \\
      C^{S}_{dBR,1} & \dots & C^{S}_{dBR,d} & D \end{bmatrix} \colon
      \begin{bmatrix} \cH( K_{1}) \\ \vdots \\ \cH(K_{d}) \\ \cE
\end{bmatrix} \to
      \begin{bmatrix} \cH(K_{1}) \\ \vdots \\ \cH(K_{d}) \\ \cE_{*}
\end{bmatrix}
     $$
      where
      \begin{align}
      A_{ij} \begin{bmatrix} f_{j} \\ g_{j} \end{bmatrix} =
      P_{\cH(K_{i})} \begin{bmatrix} z_{i}^{-1}f_{j}(z) \\
      z_{i}^{-1} g_{j}(z) \end{bmatrix}, &\qquad
      B_{i} e = P_{\cH(K_{i})} \begin{bmatrix} z_{i}^{-1}
      S(z) e \\ z_{i}^{-1}e \end{bmatrix}, \notag \\
      C_{j} \begin{bmatrix} f_{j} \\ g_{j} \end{bmatrix} =
      f_{j,0},  & \qquad
      D e = S(0) e
      \label{modelU}
       \end{align}
       for $i,j = 1, \dots, d$, $\sbm{f_{j} \\ g_{j}} \in \cH(K_{j})$
       and $e \in \cE$.
       \end{enumerate}
       \end{theorem}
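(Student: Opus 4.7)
My strategy is to treat part (2) first: under the minimality assumption for a fixed shift-invariant sublattice $\Omega$, I will verify \eqref{Omegadecom}, \eqref{CDP}, \eqref{slim'} and \eqref{slim*'} one at a time for the functional-model candidates $\cL_k, \cM_k, \cM_{*k}$ in \eqref{defLMM*}, and then read off the colligation from \eqref{UfromcalU'}. Part (1) will follow as a corollary by invoking Theorem \ref{T:BabyBear} applied to the colligation produced by part (2).

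For \eqref{Omegadecom} the idea is to combine three ingredients: the external decomposition \eqref{dBRkerneldecom'} of $\cH(K_{\cK^S_{dBR}})$ into the three pieces $\cH(K_{\cW^{S,\Omega}_{dBR*}})$, $\cH(K_{\cV^{S,\Omega}_{dBR}})$, $\cH(K_{\cW^{S,\Omega}_{dBR}})$; the shift decompositions \eqref{shiftdecom} for $\cW^{S,\Omega}_{dBR}$ and $\cW^{S,\Omega}_{dBR*}$; and the kernel sum-decomposition \eqref{dBRkerneldecom} of $K_{\cV^{S,\Omega}_{dBR}}$. Note that under $\cU_k = M_{z_k}$, multiplication by $z_k$ is unitary on $\cK^S_{dBR}$, so $\cU^n \cL_k = z^n\cH(K_k)$, and the observation $K_{+\infty,k}(z,w) = z_k^{-1} K_{+\infty,k}(z,w) w_k$ (from \eqref{Kkinf}) shows $\cM_k$ is doubly invariant under $\cU_k$; likewise for $\cM_{*k}$. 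Hence the subspaces listed in Definition \ref{D:minAglerdecom}(2) match $\cU^n\cL_k$, $(\widehat\cU^k)^{\widehat n^k}\cM_k$, $(\widehat\cU^k)^{\widehat n^k}\cM_{*k}$; minimality condition (2) then asserts precisely that the sum map from the external direct sum onto $\cH(K_{\cV^{S,\Omega}_{dBR}})$ is unitary, which is \eqref{Omegadecom}. Condition \eqref{CDP} is just Lemma \ref{L:sccAglerdecom}, which uses only the strict close-connectedness from condition (1) of minimality.

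For the strong-limit properties \eqref{slim'} and \eqref{slim*'}, I would use Proposition \ref{P:overlapping} together with the kernel-level fact from Proposition \ref{P:dBRkerneldecom}. Specifically, $\cU_k^m \cL_k = z_k^m \cH(K_k)$ has reproducing kernel $z_k^m K_k(z,w) w_k^{-m}$; as $m\to+\infty$ the (1,1), (1,2), (2,1) blocks tend to zero (strong coefficientwise) while the (2,2) block tends monotonically (as an increasing sequence of positive kernels, as noted in the proof of Proposition \ref{P:dBRkerneldecom}) to $K_{+\infty,k}^{22}$. The monotonicity gives strong operator convergence of the reproducing kernels, which in a reproducing-kernel context is equivalent to strong convergence of the orthogonal projections onto the corresponding subspaces. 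The limit projection is onto $\cM_k$ by definition \eqref{defLMM*}, giving \eqref{slim}. The analogous argument with $m\to-\infty$ gives \eqref{slim*}. The verification that the identification map \eqref{UfromcalU'} yields the explicit formulas in \eqref{modelU} is then a routine computation: $i_{\cH_k}$ is the inclusion $\cH(K_k)\hookrightarrow\cK^S_{dBR}$, $i_\cE\colon e\mapsto\sbm{S\\I}e$, $i_{\cE_*}\colon e_*\mapsto\sbm{I\\S^*}e_*$, and $\cU_k^* = M_{z_k^{-1}}$; for instance, $C_j\sbm{f_j\\g_j} = f_{j,0}$ is verified by pairing $\sbm{f_j\\g_j}$ against the reproducing kernel $K_{\cF^S_{dBR*}}(\cdot,0)u_*$ and using the reproducing property on $\cH(K_j)$.

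For part (1), once part (2) produces a GR-unitary colligation $U = U^S_{dBR}$ with $S_{\mathfrak S(\Sigma(U))} = S$ and with ${\mathfrak S}(\Sigma(U))$ unitarily equivalent to the (minimal) de Branges--Rovnyak model ${\mathfrak S}^S_{dBR}$, the converse direction of Theorem \ref{T:BabyBear} guarantees that properties \eqref{Omegadecom}, \eqref{CDP}, \eqref{slim}, \eqref{slim*} hold simultaneously for every other shift-invariant sublattice $\Omega'$, with the same subspaces $\cL_k = \cH(K_k)$, $\cM_k$, $\cM_{*k}$. Reading the $\Omega'$-version of \eqref{Omegadecom} backwards on the kernel level (combined with \eqref{dBRkerneldecom'} and \eqref{shiftdecom} for $\Omega'$) recovers the direct-sum property of the decomposition \eqref{dBRkerneldecom} for $\Omega'$, which is minimality condition (2) for $\Omega'$; condition (1) is independent of $\Omega$ to start with. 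The main obstacle I anticipate is the rigorous passage from the monotone coefficientwise convergence of the kernels $z_k^m K_k^{22}(z,w) w_k^{-m}$ to strong-operator convergence of the projections $P_{\cU_k^m \cL_k}\to P_{\cM_k}$; the cleanest route here is to observe that on any fixed reproducing kernel function $K_k(\cdot,w)u$ the images under $M_{z_k}^m$ have norms that stabilize at the $\cM_k$-norm of the limit, by the lifted-norm construction of Proposition \ref{P:liftednorm}, and then invoke density of these kernel functions together with uniform boundedness of projections.
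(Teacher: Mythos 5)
Your proposal follows essentially the same route as the paper's proof: \eqref{CDP} via Lemma \ref{L:sccAglerdecom}, \eqref{Omegadecom} by combining the kernel decomposition \eqref{dBRkerneldecom} with minimality condition (2) and the general decompositions \eqref{shiftdecom}, \eqref{dBRkerneldecom'}, the strong-limit conditions \eqref{slim'}, \eqref{slim*'} read off from the limits \eqref{Kkinf} together with the double invariance of the limit kernels, the colligation formula from \eqref{UfromcalU'}, and part (1) deduced from the $\Omega$-independence of \eqref{Omegadecom} in Theorem \ref{T:BabyBear}. Your fleshing out of the passage from monotone kernel convergence to strong convergence of the projections is more detailed than the paper's one-line assertion but consistent with it.
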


       \begin{proof}
  Let $\cL_{k}, \cM_{k}, \cM_{*k}$ be defined as in \eqref{defLMM*}.
   By Lemma \ref{L:sccAglerdecom} we know that the property
   \eqref{CDP} holds.
   Then the import of the kernel identity
   \eqref{dBRkerneldecom} combined with condition (2) in Definition
   \ref{D:minAglerdecom} is that the space
   $\cH(K_{\cV^{S,\Omega}_{dBR}})$ has the internal orthogonal
decomposition
   \begin{align}
     \cH(K_{\cV^{S,\Omega}_{dBR}})  = & \bigoplus_{k=1}^{d}\
\bigoplus_{n
    \in \partial_{k} \Omega_{fin}}
  z^{n} \cH(K_{k})
     \oplus \bigoplus_{k=1}^{d}\ \bigoplus_{\ell_{n'', k} \in
    \partial_{k}\Omega_{+\infty}}
     \begin{bmatrix} 0 & 0 \\ 0 &
    (\widehat z^{k})^{\widehat{n''}^{k}} \cH(K^{22}_{+\infty,k})
    \end{bmatrix} \notag \\
       &  \oplus \bigoplus_{k=1}^{d}\ \bigoplus_{ \ell_{n',k} \in
\partial_{k}\Omega_{-\infty}}
    \begin{bmatrix} (\widehat z^{k})^{\widehat{n'}^{k}}
    \cH(K^{11}_{-\infty, k}) & 0 \\ 0 & 0 \end{bmatrix}
    \label{minimal-decom}
   \end{align}
      If we combine \eqref{minimal-decom} with
   the general decompositions \eqref{shiftdecom} and
   \eqref{dBRkerneldecom'},
   then we see that condition \eqref{Omegadecom} is satisfied.
    Furthermore the existence of the limits in
   \eqref{Kkinf} amounts to the conditions \eqref{slim'} and
   \eqref{slim*'} in Theorem \ref{T:BabyBear} and it is easy to see
   from the limit definition of $\cH(K^{11}_{-\infty,k})$ and
   $\cH(K^{22}_{+\infty,k})$ that $\cH(K^{11}_{-\infty,k})$ and
   $\cH(K^{22}_{+\infty, k})$ are invariant under multiplication by
   $z_{k}$ and $z_{k}^{-1}$.
   Hence Theorem \ref{T:BabyBear} applies and we
   arrive at a GR-unitary functional-model realization for the
   Schur-class function $S$ by applying the formula \eqref{UfromcalU'}
   to this situation.  If we apply the general formula
   \eqref{UfromcalU'} to this situation, we arrive at the formula
   \eqref{modelU} for the associated GR-unitary colligation.

   Finally, the fact that the notion of minimal Agler decomposition
   is independent of the choice of shift-invariant sublattice follows
   from the general fact from \cite{BabyBear} that the property
   \eqref{Omegadecom} is independent of the choice of shift-invariant
   sublattice $\Omega$ once it is established that the multievolution
   scattering system arises from an embedded GR-unitary colligation.
   \end{proof}

   \begin{remark} \label{R:scatmincol}  We note that the
       functional-model unitary colligation \eqref{modelU}
       constructed from a minimal Agler decomposition $\{K_{k}(z,w)
       \colon k = 1, \dots, d\}$ as in Theorem \ref{T:minfuncmodel}
       is scattering-minimal (see the end of Theorem
       \ref{T:BabyBear}) since the de Branges--Rovnyak model
       multievolution scattering system ${\mathfrak S}^{S}_{dBR}$ is a
       minimal scattering system.  We do not know in general if every
       Schur-Agler-class function has a minimal augmented Agler
       decomposition.
    \end{remark}

  The next two subsections give independent approaches for computing
  explicitly the kernels at infinity $K^{11}_{-\infty,k}$ and
  $K^{22}_{+\infty,k}$  in terms of the coefficients
  $A,B,C,D$ of a conservative Givone--Roesser realization of $S(z)$.
  These two subsections (Sections \ref{S:inf1} and \ref{S:inf2} below)
  can be omitted on first reading without loss of continuity.

  \subsection{Direct computation of $K^{11}_{-\infty,k}$ in terms of
  $A,B,C,D$} \label{S:inf1}
      Assuming a conservative Givone--Roesser realization for $S$, we
can
      identify the kernels at infinity $K_{\pm \infty,k}(z,w)$
      more explicitly in terms of $A,B,C,D$ as follows.  We consider
      only the case
      $K^{11}_{-\infty,k}(z,w)$ as $K^{22}_{+\infty,k}(z,w)$ is
      similar.  A useful notation is the Givone--Roesser functional
calculus
      on a block matrix $A = \sbm{ A_{11} & \cdots & A_{1d} \\
      \vdots & & \vdots \\ A_{d1} & \cdots & A_{dd} }$ given by
      $$
      A^{w} = P_{i_{N}} AP_{i_{N-1}}A \cdots
        P_{i_{1}}A \text{ if } w = i_{N} i_{N-1} \cdots i_{1}
        \in \cF_{d}
      $$
      where $\cF_{d}$ denotes the free semigroup consisting of all
     finite  words $i_{N} \cdots i_{1}$ in the letters $\{1, \dots,
d\}$
     (so $i_{\ell} \in \{1, \dots, d\}$ for each $\ell = 1, \dots,
N$).
      It is useful to introduce the {\em abelianization map} $\ba
\colon
    \cF_{d} \to \bbZ^{d}_{+}$ given by
    \begin{equation}  \label{ba-map}
    \ba(i_{N}\cdots i_{1}) = (n_{1}, \dots, n_{d}) \text{ if }
    n_{k} = \#\{\ell \colon i_{\ell} = k\} \text{ for } k = 1, \dots,
d
    \end{equation}
    together with the abelianized Givone--Roesser functional calculus
    \begin{equation}  \label{ba-funccal}
     (A^{\ba})^{n} = \sum_{w \in \cF_{d} \colon \ba(w) = n} A^{w}.
    \end{equation}
    Then we have the formal power series expansion
    $$
      C(I - Z_{\text{diag}}(z) A)^{-1} = \sum_{n \in \bbZ^{d}_{+}} C
      (A^{\ba})^{n} z^{n}.
    $$
     From \eqref{augKernel} we know that
      $$ K^{11}_{k}(z,w) = C (I - Z_{\text{diag}}(z) A)^{-1} P_{k} (I
-
      A^{*}Z_{\text{diag}}(w)^{-1})^{-1} C^{*}.
      $$
      Therefore
      \begin{align*}
      & K^{11}_{-\infty,k}(z,w)  = \lim_{t \to -\infty} z_{k}^{t}
w_{k}^{-t}
      K^{11}_{k}(z,w) \\
      & \qquad = \lim_{t \to -\infty} z_{k}^{t}w_{k}^{-t} C (I -
Z_{\text{diag}}(z)
      A)^{-1} P_{k} (I - A^{*} Z_{\text{diag}}(z)^{-1})^{-1} C^{*} \\
      & \qquad =\lim_{t \to -\infty} \sum_{n,m \in \bbZ^{d}_{+}} C
(A^{\ba})^{n} P_{k}
      (A^{* \ba})^{m} C^{*} z_{k}^{t} z^{n} w^{-m} w_{k}^{-t}  \\
      & \qquad = \sum_{\alpha, \beta \in \bbZ^{d} \colon (\widehat
\alpha^{k}),
      (\widehat \beta^{k}) \in \bbZ^{d-1}_{+}}
      \left[ \lim_{t \to -\infty} C (A^{\ba})^{\alpha -t \be_{k}}
P_{k}
      (A^{*\ba})^{\beta - t \be_{k}} C^{*} \right] z^{\alpha}
w^{-\beta}.
    \end{align*}
    i.e.,
     $$ K^{11}_{-\infty,k}(z,w) = \sum_{\alpha, \beta \in \bbZ^{d}
\colon (\widehat \alpha^{k}),
      (\widehat \beta^{k}) \in \bbZ^{d-1}_{+}}
      [K^{11}_{-\infty,k}]_{\alpha, \beta} z^{\alpha} w^{-\beta}
      $$
      where
      \begin{equation}  \label{K-infk-coeff}
    [K^{11}_{-\infty,k}]_{\alpha, \beta} =
    \lim_{t \to -\infty} C (A^{\ba})^{\alpha -t \be_{k}} P_{k}
          (A^{*\ba})^{\beta -  t \be_{k}} C^{*}.
  \end{equation}
 In case $(\widehat \alpha^{k}) =  (\widehat \beta^{k}) = 0 \in
 \bbZ^{d-1}$, formula \eqref{K-infk-coeff} can be simplified somewhat:
 $$
  [ K_{-\infty,k}]_{\alpha, \beta} = \begin{cases}
   C_k \Delta_{A_{kk}^{*}} A_{kk}^{* \beta_{k}-\alpha_{k}}C_k^{*} &
   \text{if } \alpha_{k} \le \beta_{k} \\
   C_k A^{\alpha_{k} - \beta_{k}}\Delta_{A_{kk}^{*}} C_k^{*} &
\text{if }
   \alpha_{k} > \beta_{k}
  \end{cases}
  $$
  where we have set
  $$
    \Delta_{A_{kk}^{*}} = \operatorname{strong\ limit}_{s \to +\infty}
     A_{kk}^{s} A_{kk}^{*s}.
  $$
  In particular, this formula applies for the case $d=1$ and gives the
  kernel associated with the Sz.-Nagy--Foia\c{s} defect space
  ${\mathcal R}_{*}$ (see \cite[page 139]{MammaBear}).

 \subsection{Computation of $K^{11}_{-\infty,k}$ via solution of
 initial value problem}  \label{S:inf2}

 Suppose that $\Omega$ is a shift-invariant sublattice such that
     $\ell_{0,k}$ is part of $\partial_{k}\Omega_{-\infty}$.
     Theorem 4.15 in \cite{MammaBear} indicates how to construct a
     trajectory $\xi = (u,x,y)$ by specifying initial condition
$x^{0}$ for
     $x$ on $\partial \Omega$, a future input $u^{+} \in
\ell^{2}(\Omega,\cE)$ and a past output $y^{-} \in
\ell^{2}(\bbZ^{d}
     \setminus \Omega, \cE_{*})$.   In particular, if we
     specify
     \begin{align*}
     & u^{+} = 0,  \qquad y^{-}=0,  \\
     & x^{0}|_{\partial_{j}\Omega_{\text{fin}}} = 0 \text{ and }
     x^{0}|_{\partial_{j}\Omega_{+\infty}} = 0 \text{  for all } j =
1,
     \dots, d, \\
     & x^{0}|_{\partial_{j}\Omega_{\-\infty}} = 0 \text{ for } j \ne
     k, \\
     & x^{0}|_{\ell_{n',k}} = 0 \text{ for } \ell_{n',k} \in
     \partial_{k} \Omega_{-\infty} \text{ with } n' \ne 0, \\
     & x^{0}|_{\ell_{0,k}} = \vec h
     \end{align*}
     where $\vec h$ is a prespecified element of the defect space
     \begin{align*}
     {\mathcal R}_{*k} & = \{ \vec h = \{ h(t) \}_{t \in \bbZ} \in
     \ell(\bbZ, \cH_{k}) \colon h(t+1) = A_{kk} h(t) \\
     & \qquad \text{ and }
     \| \vec h\|^{2}_{{\mathcal R}_{*k}} := \lim_{t \to -\infty} \|
     h(t) \|^{2}_{\cH_{k}} < \infty\},
     \end{align*}
     then the solution of the initial value problem is given by
     $\xi = (u,x,y)$ where
     \begin{align*}
     &  \widehat u(z) = 0, \\
     &  \widehat x(z) = i_{k} \widehat h^{k}(z_{k}) -
     (I - Z_{\diag}(z) A)^{-1} (i_{k} \widehat h^{k}(z_{k}) -
     Z_{\diag}(z) A i_{k} \widehat h^{k}(z_{k}), \\
     &  \widehat y(z) =
    C_{k} \widehat h^{k}(z_{k}) -
         C(I - Z_{\diag}(z) A)^{-1} (i_{k} \widehat h^{k}(z_{k}) -
         Z_{\diag}(z) A i_{k} \widehat h^{k}(z_{k})
      \end{align*}
      where $i_{k}$ is the injection
      $$
      i_{k} \colon h_{k} \mapsto \sbm {0 \\ \vdots \\ h_{k} \\ \vdots
\\ 0 }
      $$
      of $\cH_{k}$ into the $k$-th component of $\bigoplus_{j=1}^{d}
      \cH_{d}$ and
      where we have set
      $$
       \widehat h^{k}(z_{k}) = \sum_{t = -\infty}^{+\infty} h(t)
z_{k}^{t}.
      $$
      The space $\cH(K^{11}_{-\infty,k})$ is the first component of
the
      image under the de
     Branges-Rovnyak map $\Pi^{dBR}$ of all such trajectories, where,
     in general, the de Branges--Rovnyak map is given by
     $$ \Pi^{dBR} \colon (u,x,y) \mapsto \begin{bmatrix} \widehat
     y(z) \\ \widehat u(z) \end{bmatrix}.
     $$
     We thus see that $\cH(K^{11}_{-\infty,k})$ is the image of the
map
     giving rise to the $Z$-transformed output signal $\widehat y(z)$
     from an initial condition $\vec h \in {\mathcal R}_{*k}$, i.e.,
     $\cH(K^{11}_{-\infty,k}) = \operatorname{im}
     \Pi^{dBR,1}_{-\infty,k}$ where $\Pi^{dBR,1}_{-\infty,k} \colon
     {\mathcal R}_{*k} \to \cE_{*}[[z^{\pm 1}]]$ is given by
     \begin{equation}  \label{PidBR1}
       \Pi^{dBR,1}_{-\infty,k} \colon \vec h \mapsto
       C_{k} \widehat h^{k}(z_{k}) -
       C(I - Z_{\diag}(z) A)^{-1} (i_{k} \widehat h^{k}(z_{k}) -
       Z_{\diag}(z) A i_{k} \widehat h^{k}(z_{k}))
     \end{equation}
     and where the norm on $\cH(K^{11}_{-\infty,k})$ is equal to the
     lifted norm from ${\mathcal R}_{*k}$. If we define an element
     $\Pi^{dBR,1}_{-\infty, k}(z) \in \cL({\mathcal R}_{*k},
     \cE_{*})[[ z^{\pm 1} ]]$ by
     \begin{equation} \label{PidBR(z)}
     \Pi^{dBR,1}_{-\infty, k}(z) (\vec h) =
      \left( \Pi^{dBR,1}_{-\infty, k} (\vec h) \right)(z).
     \end{equation}
     it then follows from the results in Section \ref{S:liftednorm}
that the
     kernel $K^{11}_{-\infty,k}$ can alternatively be computed as
     \begin{equation}  \label{K11-infk-alt}
     K^{11}_{-\infty,k}(z,w) = \Pi^{dBR,1}_{-\infty,k}(z)\left(
     \Pi^{dBR,1}_{-\infty,k}(w) \right)^{*}.
     \end{equation}
     In this subsection we show how this formula for
     $K^{11}_{-\infty, k}(z,w)$ is consistent with
\eqref{K-infk-coeff}.
     We shall use the notation $(A^{\ba})^n$ introduced in Section
     \ref{S:inf1} (see displays \eqref{ba-map} and
     \eqref{ba-funccal}).

     We first verify the following.

    \begin{proposition}  \label{P:Kinfk-IC}
     The coefficients $[\Pi^{dBR,1}_{-\infty,k}]_{m}$ of the formal power
     series
     $$ \Pi^{dBR,1}_{-\infty,k}(z) = \sum_{m \in \bbZ^{d}}
     [\Pi^{dBR,1}_{-\infty,k}]_{m} z^{m}
     $$
     defined in \eqref{PidBR1} and \eqref{PidBR(z)} are given by
     \begin{equation}  \label{PidBR-infk2}
        [\Pi^{dBR,1}_{-\infty,k}]_{m} =
          \begin{cases} 0 & \text{if } \widehat m^{k} \notin
               \bbZ^{d-1}_{+}, \\
               \lim_{T \to -\infty} C (A^{\ba})^{m - T \be_{k}} i_kh(T) &
               \text{if } \widehat{m}^k \in \bbZ^{d-1}_{+}
          \end{cases}
       \end{equation}
   with, for given $e_{*} \in \cE_{*}$, adjoint given by
   \begin{align}
      & [ \Pi^{dBR,1}_{-\infty,k}]_{m}^{*}e_{*} = 0 \text{ if }
\widehat m^{k}
      \notin \bbZ^{d-1}_{+}, \notag \\
   & [ \Pi^{dBR,1}_{-\infty,k}]_{m}^{*}e_{*} = \vec h \text{ where }
\vec h
    \text{ is the unique element of } {\mathcal R}_{*k} \text{ such
    that } \notag \\
    & \qquad \vec h \underset{-\infty} \sim \{ P_{k}(A^{* \ba})^{m -
    t \be_{k}}
    C^{*} e_{*}\}_{t \le m_{k}} \text{ if } \widehat{m}^{k}
    \in \bbZ^{d-1}_{+}.
       \label{PidBR-infk2*}
       \end{align}
     Here $\vec h \underset{-\infty} \sim \{g(t)\}_{t \le m_{k}}$
     means that $\vec h$ and $\{g(t)\}_{t \le m_{k}}$ have the
     same asymptotics at $-\infty$ in the sense that
     $$
      \lim_{t \to -\infty} \langle f(t), h(t) - g(t)
\rangle_{\cH_{k}} = 0
      \text{ for all } \vec f = \{f(t)\}_{t \in \bbZ} \in {\mathcal
      R}_{*k}.
     $$
     In case $\widehat m^{k} = 0$, the formula for $[
     \Pi^{dBR,1}_{-\infty,k}]_{m}^{*} e_{*}$ can be given explicitly
as
     \begin{align}
    [ \Pi^{dBR,1}_{-\infty,k}]_{m}^{*} e_{*} =
     \vec h = \{h(t)\}_{t \in \bbZ} \in
     {\mathcal R}_{*k} \text{ where } \notag \\
     h(t) = \begin{cases}
     \Delta_{A_{kk}^{*}}A_{kk}^{* m_{k}-t} C_k^{*} e_{*} &\text{if }
     t \le m_{k}  \\
     A_{kk}^{t-m_{k}} \Delta_{A_{kk}^{*}} C_k^{*} e_{*} & \text{if } t >
m_{k}.
     \end{cases}
     \label{PidBR-infk2*0}
       \end{align}
       where we have set
       $$ \Delta_{A_{kk}^{*}} = \operatorname{strong \ limit}_{s \to
       +\infty} A_{kk}^{s} A_{kk}^{*s}.
       $$

   \end{proposition}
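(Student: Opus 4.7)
The plan is to establish \eqref{PidBR-infk2} by a direct coefficient expansion of \eqref{PidBR1}, deduce \eqref{PidBR-infk2*} by duality and Riesz representation in $\mathcal{R}_{*k}$, and then specialize to $\widehat m^k=0$ to obtain \eqref{PidBR-infk2*0}.

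For \eqref{PidBR-infk2}, first use the defining relation $h(t+1)=A_{kk}h(t)$ in $\mathcal{R}_{*k}$ to see that the $k$-component of $i_k\widehat h^k(z_k)-Z_{\diag}(z)A\,i_k\widehat h^k(z_k)$ telescopes to zero, reducing this quantity to $-\sum_{j\neq k}z_j\,i_j A_{jk}\widehat h^k(z_k)$. Substituting this and using $C(I-Z_{\diag}(z)A)^{-1}=\sum_{n\in\bbZ^d_+}C(A^{\ba})^n z^n$, the $z^m$-coefficient of $\Pi^{dBR,1}_{-\infty,k}(z)\vec h$ is nonzero only for $\widehat m^k\in\bbZ^{d-1}_+$, in which case it equals
$$C_k h(m_k)\delta_{\widehat m^k,0} + \sum_{j\neq k,\,m_j\geq 1}\sum_{t\leq m_k}C(A^{\ba})^{m-\be_j-t\be_k}\,i_j A_{jk}h(t).$$
To identify this with $\lim_{T\to-\infty}C(A^{\ba})^{m-T\be_k}i_k h(T)$, I would apply the right-peel-off identity $(A^{\ba})^n=\sum_{j:n_j\geq 1}(A^{\ba})^{n-\be_j}P_j A$ to classify the words in $(A^{\ba})^{m-T\be_k}$ by their rightmost letter: words ending in $k$ give $(A^{\ba})^{m-(T+1)\be_k}i_k h(T+1)$ (via $A_{kk}h(T)=h(T+1)$), producing a recursion that telescopes as $T\to-\infty$ and leaves the boundary term $(A^{\ba})^{\widehat m^k}i_k h(m_k)$; applying the peel-off identity once more to this boundary term exactly fills in the $t=m_k$ slot of the sum above.

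For \eqref{PidBR-infk2*}, pair \eqref{PidBR-infk2} with $e_*\in\cE_*$:
$$\langle[\Pi^{dBR,1}_{-\infty,k}]_m\vec h, e_*\rangle_{\cE_*} = \lim_{T\to -\infty}\langle h(T), g(T)\rangle_{\cH_k}, \qquad g(T):=P_k(A^{*\ba})^{m-T\be_k}C^* e_*.$$
Boundedness of $[\Pi^{dBR,1}_{-\infty,k}]_m$ makes this a continuous functional on $\mathcal{R}_{*k}$. Riesz representation, using the inner product $\langle\vec h_1,\vec h_2\rangle_{\mathcal{R}_{*k}}=\lim_{T\to-\infty}\langle h_1(T), h_2(T)\rangle$ (whose existence follows by polarization from $\lim_T\|h(T)\|^2=\|\vec h\|^2_{\mathcal{R}_{*k}}$), produces a unique $\vec h^*\in\mathcal{R}_{*k}$ realizing it, and the weak asymptotic equivalence $\vec h^* \underset{-\infty}\sim \vec g$ in the paper's sense is automatic from the defining property of $\vec h^*$. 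For \eqref{PidBR-infk2*0}, with $\widehat m^k=0$ the only word with $\ba(w)=(m_k-t)\be_k$ is $k^{m_k-t}$, and iterating $P_k A^*(i_k v)=i_k A_{kk}^* v$ yields $g(t)=A_{kk}^{*(m_k-t-1)}\sum_j A_{jk}^* C_j^* e_*$ for $t<m_k$. Define $\vec h$ by the statement: $\vec h\in\mathcal{R}_{*k}$ follows from $A_{kk}\Delta_{A_{kk}^*}A_{kk}^*=\Delta_{A_{kk}^*}$ (immediate from the strong-limit definition) and the limit $\lim_s\|\Delta_{A_{kk}^*}A_{kk}^{*s}v\|^2=\langle\Delta_{A_{kk}^*}v, v\rangle$, obtained by Cauchy--Schwarz applied to the $s$-constant quantity $\langle A_{kk}^s\Delta_{A_{kk}^*}A_{kk}^{*s}v, v\rangle=\langle\Delta_{A_{kk}^*}v, v\rangle$.

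The hardest step will be verifying that this explicit $\vec h$ coincides with the Riesz representative produced by \eqref{PidBR-infk2*}. Decomposing $h(t)-g(t)=(\Delta_{A_{kk}^*}-I)A_{kk}^{*(m_k-t)}C_k^* e_* - A_{kk}^{*(m_k-t-1)}w$ with $w:=\sum_{j\neq k}A_{jk}^* C_j^* e_*$, the first summand paired against $f(t)$ tends to zero by the Cauchy--Schwarz estimate above, whereas the second produces, using $A_{kk}^{m_k-t-1}f(t)=f(m_k-1)$, the $t$-independent quantity $\langle f(m_k-1), w\rangle$. The required vanishing $\lim_t\langle f(t), h(t)-g(t)\rangle=0$ for all $\vec f\in\mathcal{R}_{*k}$ thus reduces, via the Sz.-Nagy--Foia{\c s}-type identification $\{f(m_k-1):\vec f\in\mathcal{R}_{*k}\}=\overline{\operatorname{ran}\Delta_{A_{kk}^*}^{1/2}}$, to the operator identity $\Delta_{A_{kk}^*}w=0$. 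This is where the full unitarity of $U$ is used crucially: the telescoping identity
$$\|v\|^2-\langle\Delta_{A_{kk}^*}v, v\rangle = \sum_{r=0}^{\infty}\Big(\sum_{j\neq k}\|A_{kj}^* A_{kk}^{*r}v\|^2+\|B_k^* A_{kk}^{*r}v\|^2\Big)$$
derived from the $(k,k)$-block of $UU^*=I$ forces every $v\in\overline{\operatorname{ran}\Delta_{A_{kk}^*}}$ to satisfy $A_{kj}^* A_{kk}^{*r}v=0$ for all $j\neq k$ and $r\geq 0$, and combining this with the analogous relations coming from $U^*U=I$ yields the required orthogonality of $w$ against that range.
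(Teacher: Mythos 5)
Your first two steps --- the coefficient expansion plus the peel-off/telescoping identity giving \eqref{PidBR-infk2}, and the duality argument in $\cR_{*k}$ giving \eqref{PidBR-infk2*} --- are essentially the paper's own argument. The error is in the specialization to $\widehat m^{k}=0$. In the paper's notation (see \eqref{K-infk-coeff} and Section \ref{S:inf1}), $(A^{*\ba})^{n}$ means $\bigl((A^{\ba})^{n}\bigr)^{*}$, and this is also what duality with \eqref{PidBR-infk2} forces: the adjoint profile is $g(T)=i_k^{*}\bigl((A^{\ba})^{m-T\be_k}\bigr)^{*}C^{*}e_{*}$. For $\widehat m^{k}=0$ one has $(A^{\ba})^{(m_k-T)\be_k}=(P_kA)^{m_k-T}$, whose adjoint is $(A^{*}P_k)^{m_k-T}$, so $g(T)=P_k(A^{*}P_k)^{m_k-T}C^{*}e_{*}=A_{kk}^{*(m_k-T)}C_k^{*}e_{*}$ (the innermost factor is $P_kC^{*}e_{*}=i_kC_k^{*}e_{*}$). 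You instead iterated starting from $A^{*}C^{*}e_{*}$, i.e.\ you computed $(P_kA^{*})^{m_k-T}C^{*}e_{*}$, putting the projections on the wrong side of $A^{*}$; this is what produces the extra factor $\sum_{j}A_{jk}^{*}C_j^{*}e_{*}$ and hence the spurious term $w=\sum_{j\ne k}A_{jk}^{*}C_j^{*}e_{*}$. A direct sanity check: for $\widehat m^{k}=0$ the coefficient is simply $[\Pi^{dBR,1}_{-\infty,k}]_{m}\vec h=C_kh(m_k)$, and since $h(m_k)=A_{kk}^{m_k-T}h(T)$ the functional is $\lim_{T\to-\infty}\langle h(T),A_{kk}^{*(m_k-T)}C_k^{*}e_{*}\rangle$, with no contribution whatsoever from the blocks $A_{jk}$, $j\ne k$.

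Consequently your final paragraph addresses a discrepancy that does not exist, and the argument proposed there does not work in any case. The telescoping identity from $UU^{*}=I$ forces $A_{kj}^{*}A_{kk}^{*r}v=0$ only when $\langle\Delta_{A_{kk}^{*}}v,v\rangle=\|v\|^{2}$, i.e.\ for $v\in\ker(I-\Delta_{A_{kk}^{*}})$, not for all $v\in\overline{\operatorname{ran}}\,\Delta_{A_{kk}^{*}}$ (the operator $\Delta_{A_{kk}^{*}}$ is a positive contraction, in general not a projection). Moreover the orthogonality you ultimately need, $\langle f(m_k-1),w\rangle=0$ for every $\vec f\in\cR_{*k}$, is false in general: the relation $\sum_{j}A_{jk}^{*}A_{jk}+C_k^{*}C_k=I_{\cH_k}$ coming from $U^{*}U=I$ only yields $\sum_{t\in\bbZ}\|A_{jk}f(t)\|^{2}<\infty$ along a trajectory of $\cR_{*k}$, hence decay of $A_{jk}f(t)$ as $t\to\pm\infty$, but nothing at the fixed time $m_k-1$. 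With the profile computed correctly there is nothing to reconcile: one only has to check that the sequence in \eqref{PidBR-infk2*0} satisfies the recursion (via $A_{kk}\Delta_{A_{kk}^{*}}A_{kk}^{*}=\Delta_{A_{kk}^{*}}$), lies in $\cR_{*k}$, and is asymptotic at $-\infty$ to $\{A_{kk}^{*(m_k-t)}C_k^{*}e_{*}\}$, which follows from the Cauchy--Schwarz estimate with $(I-\Delta_{A_{kk}^{*}})^{1/2}$ that you already have --- and that is exactly the paper's proof.
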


   \begin{proof}
     We compute
     \begin{align*}
    & i_{k} \widehat h^{k}(z_{k}) - Z_{\diag}(z) A i_{k}\widehat
h^{k}(z_{k})
    = \sum_{t = -\infty} ^{+\infty} i_kh(t) z_{k}^{t} -
    \sum_{j=1}^{d} z_{j}P_{j} A  \Big(\sum_{t=-\infty}^{+\infty}
i_kh(t)
    z_k^{t} \Big) \\
    & \qquad =
    \sum_{t=-\infty}^{+\infty} i_kh(t) z_{k}^{t} - z_{k} i_kA_{kk}
\Big(
    \sum_{t = -\infty }^{+\infty} h(t) z_{k}^{t}\Big) - \sum_{j
\colon j \ne k}
    z_{j} P_{j} A  \Big( \sum_{t = -\infty }^{+\infty} i_kh(t)
    z_{k}^{t} \Big)\\
    & \qquad =
    \sum_{t=-\infty}^{+\infty} i_kh(t) z_{k}^{t} -
    \sum_{t=-\infty}^{+\infty} i_kh(t+1) z_{k}^{t+1} -\sum_{j \colon
j \ne
    k} z_{j} P_{j}A \Big( \sum_{t=-\infty }^{+\infty} i_kh(t)
    z_{k}^{t}\Big) \\
    & \qquad \qquad \text{(where we use that $h(t+1) = A_{kk} h(t)$
since
    $\vec h \in {\mathcal R}_{*k}$)} \\
    & \qquad =
    -\sum_{j\colon j \ne k}\ \sum_{t=-\infty} ^{+\infty} P_{j}A
    i_kh(t) z_{j} z_{k}^{t}.
    \end{align*}
    Thus
    \begin{align*}
    &  \Pi^{dBR,1}_{-\infty,k}(z) \vec h = C_{k} \widehat
    h^{k}(z_{k}) -
    C (I - Z_{\diag}(z)A)^{-1} (i_{k}\widehat h^{k}(z_{k}) -
    Z_{\diag}(z) A i_{k} \widehat h^k(z_{k})) \\
    & \qquad =
    \sum_{t=-\infty}^{+\infty} C_{k} h(t) z_{k}^{t} +
    \sum_{n \in \bbZ^{d}_{+}}\ \sum_{j \colon j \ne k}\
    \sum_{t=-\infty}^{+\infty} C (A^{\ba})^{n} P_{j} A i_{k}h(t)
    z^{n} z_{j} z_{k}^{t}
    \\
    & \qquad =
    \sum_{m \in \bbZ^{d}} [\Pi^{dBR,1}_{-\infty,k}]_{m} \vec hz^m
    \end{align*}
    where we have set $[\Pi^{dBR,1}_{-\infty,k}]_{m} \vec h$ equal to
    \begin{align}
    & 0  \text{ if } \widehat m^{k} \notin
    \bbZ^{d-1}_{+}, \notag \\
      & C_k h(m_{k})  \text{ if } \widehat m^{k} = 0, \notag \\
      & \sum_{t = -\infty}^{m_{k}}\
   \sum_{j \colon j \ne k, m_{j}\ge 1}
    C(A^{\ba})^{m-\be_{j}-t\be_{k}} P_jAi_kh(t)
    \text{ if } \widehat m^{k}
    \in \bbZ^{d-1}_{+} \setminus \{0\}.
     \label{PidBR-infk}
     \end{align}
     For the case where $\widehat m^{k} \in \bbZ^{d-1}_{+}
     \setminus \{0\}$, we may simplify the formula as follows:
     \begin{multline}
     [\Pi^{dBR,1}_{-\infty,k}]_{m}  =
     \lim_{T \to -\infty} \sum_{t=T}^{m_{k}} \sum_{j \colon j \ne k,
     m_{j} \ge 1} C (A^{\ba})^{m-\be_{j}-t \be_{k}}P_j A  i_k h(t) \\
     = \lim_{T \to -\infty} \left[ \sum_{t=T}^{m_{k}} C (A^{\ba})^{m -
    t \be_{k}} i_k h(t) - \sum_{t=T}^{m_{k}-1} C (A^{\ba})^{m -
(t+1)\be_{k}}P_k A i_k
    h(t) \right]  \\
    =\lim_{T \to -\infty} \left[\sum_{t=T}^{m_k} C (A^{\ba})^{m - t
\be_{k}}i_k h(T)-\sum_{t=T}^{m_k-1} C (A^{\ba})^{m - (t+1)
\be_{k}}i_k h(t+1)\right]\\
     = \lim_{T \to -\infty} C (A^{\ba})^{m - T \be_{k}} i_k h(T).
    \label{PidBR-infk1}
    \end{multline}
    For the case where $\widehat m^{k} = 0$, this formula makes
    sense and can be simplified further as follows.  Since
    $$
     (A^{\ba})^{m-T \be_{k}} i_kh(T) = (P_kA)^{m_{k} - T} i_kh(T) =
       i_kh(m_{k} - T + T) = i_kh(m_{k}),
    $$
    the formula \eqref{PidBR-infk1} in this case collapses to
    $ C_k h(m_{k})$ agreeing with the formula for
    $[\Pi^{dBR,1}_{-\infty,k}]_{m}$ in \eqref{PidBR-infk} for this
case.
    Thus the formula \eqref{PidBR-infk} can be simplified to
    $$
    [\Pi^{dBR,1}_{-\infty,k}]_{m} =
      \begin{cases} 0 & \text{if } (\widehat m^{k}) \notin
           \bbZ^{d-1}_{+}, \\
           \lim_{T \to -\infty} C (A^{\ba})^{m - T \be_{k}} i_kh(T) &
           \text{if } \widehat m^k \in \bbZ^{d-1}_{+}.
      \end{cases}
   $$
   agreeing with the formula \eqref{PidBR-infk2*0} in the statement of
   the proposition.
   (If we interpret $(A^{\ba})^{n}=0$ whenever $n \notin
\bbZ^{d}_{+}$,
   then the second formula in \eqref{PidBR-infk2} in fact suffices for
   all cases.)

   If $\widehat m^{k} \notin \bbZ^{d-1}_{+}$, then trivially
   $[\Pi^{dBR,1}_{-\infty,k}]_{m}^{*} = 0$.  For the case where
   $\widehat m^{k} \in \bbZ^{d-1}_{+}$, the element $\vec h =
   [\Pi^{dBR,1}_{-\infty,k}]_{m}^{*} e_{*}$ is by definition the
   necessarily unique element of ${\mathcal R}_{*k}$ such that
   $$
   \langle [\Pi^{dBR,1}_{-\infty,k}]_{m} \vec g, e_{*}
   \rangle_{\cE_{*}} = \langle \vec g, \vec h \rangle_{{\mathcal
R}_{*k}},
   $$
   or, equivalently, such that
   $$
   \left\langle \lim_{T \to -\infty} C (A^{\ba})^{m - T \be_{k}}
P_{k} g(T),
   e_{*}\right\rangle_{\cE_{*}} = \lim_{T \to -\infty }\langle g(T),
h(T)
   \rangle_{\cH_{k}}
   $$
   which in turn is equivalent to
   $$
   \lim_{T \to -\infty} \langle g(T), P_{k}(A^{*\ba})^{m-T \be_{k}}
C^{*}
   e_{*} \rangle = \lim_{T \to -\infty} \langle g(T), h(T)
\rangle_{\cH_{k}}
   $$
   for all $\vec g = \{g(t)\}_{t \in \bbZ} \in {\mathcal R}_{*k}$.
   We conclude that $\vec h = \{h(t) \}_{t \in \bbZ} \in {\mathcal
   R}_{*k}$ is uniquely determined by the constraint that
   $$ \vec h \underset{-\infty}\sim \{P_{k}(A^{*\ba})^{m-t\be_{k}}
C^{*}
   e_{*}\}_{t \le m_{k}}$$
   as asserted in \eqref{PidBR-infk2*}.

   Let us now continue the computation under the assumption that
   $\widehat m^{k} = 0$.  In this case we have the simplification
   $$
   (A^{*\ba})^{m-T \be_{k}} C^{*}
     e_{*} =i_k A_{kk}^{*m_{k}-t} C^{*} e_{*} \text{ for } t \le
m_{k}.
   $$
   For convenience,  let us set
   $$
   h_{\text{\rm pre}}(t) = \begin{cases}
    A_{kk}^{* m_{k}-t}C_k^{*} e_{*} &\text{if } t \le m_{k}, \\
    A_{kk}^{t-m_{k}} C_k^{*} e_{*} &\text{if } t > m_{k}.
    \end{cases}
   $$
   This candidate for $[\Pi^{dBR,1}_{-\infty,k}]_{m}^{*} e_{*}$ has
the
   desired asymptotics at $-\infty$ but fails to satisfy the
   recursion $h_{\text{\rm pre}}(t+1) = A_{kk} h(t)$ for all $t \in
   \bbZ$.  We check that $\vec h = \{h(t) \}_{t \in \bbZ}$ given as in
   \eqref{PidBR-infk2*0}
   $$
   h(t) = \begin{cases} \Delta_{A_{kk}^{*}}
   A_{kk}^{*m_{k}-t}C_k^{*}e_{*} & \text{if } t \le m_{k} \\
   A_{kk}^{t-m_{k}}\Delta_{A_{kk}^{*}} C_k^{*} e_{*} &\text{if } t >
m_{k}
   \end{cases}
   $$
   maintains the same asymptotics at $-\infty$ and satisfies the
   recurrence relation.  Indeed, from the fundamental identity
   \begin{equation}  \label{fundid}
    A_{kk} \Delta_{A_{kk}^{*}} A_{kk}^{*} = \Delta_{A_{kk}^{*}}
   \end{equation}
   it is easily checked that $\vec h$ satisfies the recursion
   $$
     h(t+1) = A_{kk} h(t).
   $$
   As $0 \le \Delta_{A_{kk}^{*}} \le I$, $\Delta_{A_{kk}^{*}}$ has a
   nonnegative square root $(\Delta_{A_{kk}^{*}})^{1/2}$.
   Moreover, the computation
   \begin{align*}
       \|h(t)\|^{2} & \le \| \Delta_{A_{kk}^{*}}^{1/2}\|^{2}
     \cdot  \| \Delta_{A_{kk}^{*}}^{1/2} A_{kk}^{* m_{k} -t}C_k^{*}
      e_{*}\|^{2} \\
     & \le   \| \Delta_{A_{kk}^{*}}\| \cdot
      \sup_{s \in \bbZ_{+}} \langle A_{kk}^{s} \Delta_{A_{kk}^{*}}
      A_{kk}^{*s} C_k^{*}e_{*}, C_k^{*} e_{*} \rangle_{\cH_{k}} \\
      & = \| \Delta_{A_{kk}^{*}}\| \cdot
   \langle \Delta_{A_{kk}^{*}} C_k^{*} e_{*}, C_k^{*} e_{*}
   \rangle_{\cH_{k}} \text{ (using \eqref{fundid})} \\
   & \le \| \Delta_{A_{kk}^{*}}\| \cdot \| \Delta_{A_{kk}^{*}}^{1/2}
C_k^{*}
   e_{*} \|^{2} < \infty
   \end{align*}
   shows that $\vec h \in {\mathcal R}_{*k}$.

   It remains to show that
   $ \vec h \underset{-\infty} \sim \vec h_{\text{\rm pre}}$, i.e.,
   that, for all $\vec f \in   {\mathcal R}_{*k}$ we have
   \begin{equation} \label{toshow1} \lim_{t \to -\infty}
   \langle h_{\text{\rm pre}}(t) - h(t), f(t) \rangle_{\cH_{k}} :=
   \lim_{t \to -\infty} \langle (I - \Delta_{A_{kk}^{*}}) A_{kk}^{*
   m_{k}-t} C_k^{*} e_{*}, f(t) \rangle_{\cH_{k}} = 0.
   \end{equation}
   As $\| f(t) \|$ is bounded as $t \to -\infty$, so also is
   $ \| (I - \Delta_{A_{kk}^{*}})^{1/2} f(t) \|$.  By the
Cauchy-Schwarz
   inequality, \eqref{toshow1} follows if we show
   \begin{equation}  \label{toshow2}
       \lim_{s \to +\infty} \| (I - \Delta_{A_{kk}^{*}})^{1/2}
       A_{kk}^{*s} C_k^{*} e_{*} \| = 0.
   \end{equation}
   We compute
   \begin{align*}
     &  \lim_{s \to +\infty} \| (I - \Delta_{A_{kk}^{*}})^{1/2}
       A_{kk}^{*s} C_k^{*} \|^{2}  =
       \lim_{s \to +\infty} \langle A_{kk}^{s} (I -
       \Delta_{A_{kk}^{*}}) A_{kk}^{*s} C_k^{*}e_{*}, C_k^{*} e_{*}
       \rangle_{\cH_{k}} \\
       & \qquad  = \lim_{s \to +\infty}[ \langle A_{kk}^{s}
A_{kk}^{*s}
       C_k^{*}e_{*}, C_k^{*} e_{*} \rangle_{\cH_{k}} -
       \langle \Delta_{A_{kk}^{*}} C_k^{*}e_{*}, C_k^{*}e_{*}
\rangle_{\cH_{k}}]
       \text{ (using \eqref{fundid} again)} \\
       & \qquad  = 0
    \end{align*}
    and \eqref{toshow2} follows as needed.  This completes the proof
of
    all parts of Proposition \ref{P:Kinfk-IC}.
   \end{proof}

   \begin{remark}
      The expressions \eqref{PidBR-infk} and \eqref{PidBR-infk2}
      represent the two distinct regularization methods used in
      \cite{MammaBear}.  The proof above that \eqref{PidBR-infk}
      $\Rightarrow$ \eqref{PidBR-infk2} gives the connection
      between these two regularization methods which was left
      implicit in \cite{MammaBear}.
       \end{remark}

  \begin{remark}  We conjecture that an explicit formula for
  $[\Pi^{dBR,1}_{-\infty,k}]_{m}^{*}$ is
  $$
  [\Pi^{dBR,1}_{-\infty,k}]_{m}^{*} e_{*} =
  \lim_{T \to -\infty} (P_kA)^{t-T} P_{k} (A^{*\ba})^{m -T \be_{k}}
  C^{*}e_{*}
  $$
  but we have not been able to verify that this expression has the
  required asymptotics at $-\infty$.
  \end{remark}

  With the information from Proposition \ref{P:Kinfk-IC} in hand
  combined with the formula \eqref{K11-infk-alt}, we may compute the
  coefficients $[K^{11}_{-\infty,k}]_{\alpha, \beta}$ for the kernel
  $K^{11}_{-\infty,k}(z,w)$ as follows.
  For $e_{*} \in \cE_{*}$ and $\beta \in \bbZ^{d}$ with $(\widehat
  \beta^{k}) \in \bbZ^{d-1}_{+}$, set
  $$
  h(t) = [\Pi^{dBR,1}_{-\infty,k}]_{\beta}^{*} e_{*}, \qquad
  h_{\text{\rm pre}}(t) =
  P_{k}(A^{* \ba})^{m - t\be_{k}}  C^{*} e_{*}
  $$
  so $\vec h \underset{-\infty} \sim \vec h_{\text{\rm pre}}$.
  Then, at least at a formal level, using that
  $$
  \lim_{T \to -\infty} h(T) = \lim_{T \to -\infty }h_{\text{\rm
  pre}}(T),
  $$
  we then have, for $\alpha, \beta \in \bbZ^{d}$ with $(\widehat
  \alpha^{k}), (\widehat \beta^{k}) \in \bbZ^{d-1}_{+}$,
  \begin{align*}
      [K^{11}_{-\infty,k}]_{\alpha, \beta} e_{*} & =
      [ \Pi^{dBR,1}_{-\infty,k}]_{\alpha} [
      \Pi^{dBR,1}_{-\infty,k}]_{\beta}^{*} e_{*} \\
      & = \lim_{T \to -\infty} C (A^{\ba})^{\alpha -T \be_{k}}i_k
h(T) \\
      & = \lim_{T \to -\infty} C(A^{\ba})^{\alpha -T \be_{k}}i_k
      h_{\text{\rm pre}}(T) \\
      & = \lim_{T \to -\infty} C (A^{\ba})^{\alpha -T \be_{k}} P_{k}
      (A^{* \ba})^{\beta -T \be_{k}} C^{*} e_{*}
   \end{align*}
   which agrees with the formula \eqref{K-infk-coeff} for
   $[K^{11}_{-\infty,k}]_{\alpha, \beta}$.  This concludes our
   computation of $K^{11}_{-\infty,k}(z,w)$ via  the formula for
   solution of the initial value problem from \cite{MammaBear}.

   \section{Finer structure of GR-unitary colligations}
\label{S:minimal}

   In this section it will be convenient to work with a
Givone--Roesser
   unitary colligation in more coordinate-free form.  By a {\em
   Givone--Roesser unitary colligation in coordinate-free form}, we
mean
   a unitary operator $U$ of the form
   \begin{equation} \label{colligation-free}
    U = \begin{bmatrix} A & B \\ C & D \end{bmatrix} \colon
   \begin{bmatrix} \cH \\ \cE \end{bmatrix} \to \begin{bmatrix} \cH \\
   \cE_{*} \end{bmatrix}
   \end{equation}
   together with a collection $\{P_{1}, \dots, P_{d}\}$ of orthogonal
   projections on $\cH$ with pairwise-orthogonal ranges having span
equal
   to the whole space $\cH$.  Given such a coordinate-free
Givone--Roesser
   unitary colligation, the associated transfer function is
   $$
     S(z) = D + C (I_{\cH} - Z_{\diag}(z) A)^{-1} Z_{\diag}(z) B
    $$
    where now $Z_{\diag}(z)$ stands for the operator-pencil
    $$
     Z_{\diag}(z) = z_{1} P_{1} + \cdots + z_{d} P_{d}.
    $$
    with associated system equations
    \begin{equation}  \label{sys-free}
    \Sigma(U) \colon \left\{
    \begin{array}{rcl}
    P_{k}x(\sigma_{k}(n)) & = & P_{k}A x(n) +  P_{k} B u(n) \text{
    for } k = 1, \dots, d, \\
    y(n) & = & C x(n) + D u(n).
     \end{array} \right.
     \end{equation}
    We consider here only the {\em conservative} case corresponding
to the
    condition that $U$ be unitary.
    This of course is equivalent to our original definition
    \eqref{colligation} of Givone--Roesser unitary colligation after
    specification of the block basis for $\cH$ associated with the
family of
    orthogonal projections $\{P_{1}, \dots, P_{d} \}$. This
    coordinate-free version gives a little more flexibility in the
    construction of examples.

   Given a Givone--Roesser unitary colligation $U$, we already
   introduced the notion of {\em scattering-minimal}  at the end of
   Theorem \ref{T:BabyBear}.
    We have the following additional notions of minimality for a
GR-unitary
    colligation.

    \begin{definition} \label{D:minimal}
    \begin{enumerate}
        \item We say that the GR-unitary colligation $U$ as in
        \eqref{colligation-free} is {\em closely connected} if
     $\cH$ is equal to the smallest subspace of $\cH$
      containing $\operatorname{im} B$ and $\operatorname{im} C^{*}$
      which is invariant for $A$,  $A^{*}$ and $P_{k}$ for all $k=1,
      \dots, d$.
      \item We say that $U$ is {\em strictly closely connected} if the
      restriction of the de Branges--Rovnyak identification map to
      $\operatorname{im} i_{\cH} = \cL$ in the multievolution
scattering
      system in which $U$ is embedded (see Section \ref{S:scat-col})
is
      injective, or, more concretely, the map
      $\Pi^{dBR}_{U} \colon \cH \to (\cE_{*} \oplus \cE) [[
      z^{\pm 1} ]] $ given by
      \begin{equation}   \label{PidBRU}
       \Pi^{dBR}_{U} \colon h \mapsto \begin{bmatrix} C (I -
       Z_{\diag}(z) A)^{-1}  \\ B^{*} (I - Z_{\diag}(z)^{-1}
       A^{*})^{-1} Z_{\diag}(z)^{-1} \end{bmatrix} h
      \end{equation}
      is injective.
      \item We say that the GR-unitary colligation $U$
      is {\em shifted strictly closely connected} if the map
      $\Pi^{dBR \prime}_{U} \colon \cH \to (\cE_{*} \oplus cE)\langle
       \langle z^{\pm 1} \rangle \rangle$ given by
       \begin{equation}  \label{PidBRU'}
       \Pi^{dBR \prime}_{U} \colon h \mapsto
      \begin{bmatrix} C (I - Z_{\diag}(z) A)^{-1} Z_{\diag}(z) \\
B^{*}
      (I - Z_{\diag}(z)^{-1} A^{*})^{-1} \end{bmatrix} h
       \end{equation}
       is injective.
      \end{enumerate}
      \end{definition}

     If we introduce subspaces
      \begin{align}
\cH_{cc} & = \overline{\operatorname{span}} \{ p(A,A^{*},P_{1},
\dots, P_{d}) (\operatorname{im} B + \operatorname{im} C^{*})
\colon p = \text{
polynomial in} \notag \\
& \qquad d+2 \text{ noncommuting variables}\} \notag \\
\cH_{scc} & = \overline{\operatorname{span}} \{ \operatorname{im}
(A^{*\ba})^{n}
  C^{*}, \, \operatorname{im} (A^{\ba})^{n} P_{k} B \colon n \in
  \bbZ^{d}_{+}, \ k=1, \dots, d \}, \notag \\
  \cH_{sscc} & = \overline{\operatorname{span}} \{
  \operatorname{im}(A^{* \ba})^{n}P_{k}C^{*},
 \operatorname{im}
  (A^{\ba})^{n}B \colon n \in \bbZ^{d}_{+},\ k = 1, \dots, d \}.
\label{minimalsubspaces}
   \end{align}
   Then the following follows easily from the definitions.

   \begin{proposition}  \label{P:col-prop}
       Let $U$ be a GR-unitary colligation. Then:
   \begin{enumerate}
       \item $U$ is closely connected $\Leftrightarrow$
       $\cH_{cc} = \cH$,
       \item $U$ is strictly closely connected $\Leftrightarrow$
       $\cH_{scc} = \cH$, and
       \item $U$ is shifted strictly closely connected
       $\Leftrightarrow$ $\cH_{sscc} = \cH$.
   \end{enumerate}
   \end{proposition}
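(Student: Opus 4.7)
Part (1) is essentially a restatement of definitions. By construction, $\cH_{cc}$ is the closure of the linear span of all $p(A,A^*,P_1,\ldots,P_d)v$ with $p$ a noncommuting polynomial in $d+2$ variables and $v\in\mathrm{im}\,B+\mathrm{im}\,C^*$; this is exactly the smallest closed subspace of $\cH$ containing $\mathrm{im}\,B+\mathrm{im}\,C^*$ and invariant under $A$, $A^*$, and each $P_k$, which is the defining condition for $U$ to be closely connected.

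For part (2), the plan is to identify $\ker\Pi_U^{dBR}=\{0\}$ with $\overline{\mathrm{range}(\Pi_U^{dBR})^*}=\cH$, and then to show that this closed range coincides with $\cH_{scc}$. Using the abelianized Givone--Roesser calculus of Section \ref{S:inf1}, the coefficient of $z^n$ in $C(I-Z_{\diag}(z)A)^{-1}$ is $C(A^{\ba})^n$, while after rewriting $B^*(I-Z_{\diag}(z)^{-1}A^*)^{-1}Z_{\diag}(z)^{-1}=B^*Z_{\diag}(z)^{-1}(I-A^*Z_{\diag}(z)^{-1})^{-1}$ and expanding, the coefficient of $z^{-n}$ ($n\in\bbZ^d_+\setminus\{0\}$) comes out to $\sum_{j:n_j\ge 1}B^*(A^{*\ba})^{n-\be_j}P_j$. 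Pairing $\Pi_U^{dBR}h$ against test monomials $(e_*z^{-n},0)$ and $(0,e\,z^n)$ yields
$$\overline{\mathrm{range}(\Pi_U^{dBR})^*}=\overline{\mathrm{span}}\Big\{((A^{\ba})^n)^*C^*e_*,\ \textstyle\sum_{j:n_j\ge 1}P_j((A^{*\ba})^{n-\be_j})^*Be:\ n\in\bbZ^d_+,\ e_*\in\cE_*,\ e\in\cE\Big\}.$$

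The crux, and the main obstacle, is to show this closed span equals $\cH_{scc}=\overline{\mathrm{span}}\{(A^{*\ba})^nC^*e_*,\,(A^{\ba})^nP_kBe\}$. The two spanning sets differ in the placement of the projections $P_k$ within the word products (e.g.~$(PA^*)^N$ on the $C^*$-side of $\cH_{scc}$ versus $(A^*P)^N$ arising from the adjoint $((A^\ba)^n)^*$), so individual generators are not directly comparable. The plan is to combine the recurrences
$$(A^{*\ba})^{n+\be_j}=\sum_{k:n_k+\delta_{k,j}\ge 1}P_kA^*(A^{*\ba})^{n+\be_j-\be_k}$$
and its $A^{\ba}$-analog with the unitarity relations $A^*A+C^*C=I$, $AA^*+BB^*=I$, $A^*B=-C^*D$, $AC^*=-BD^*$ (from $U^*U=UU^*=I$) in order to rewrite each generator of one span as a finite linear combination of generators of the other, modulo lower-order terms that are handled by induction on $|n|_1$; this double inclusion gives $\overline{\mathrm{range}(\Pi_U^{dBR})^*}=\cH_{scc}$, which is the required equivalence.

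Part (3) follows the identical scheme applied to $\Pi_U^{dBR\prime}$: the repositioned $Z_{\diag}(z)^{\pm1}$ factors shift the outermost projections in the two coefficient expansions, producing an adjoint range whose generators are of the form $\sum_{j:n_j\ge 1}P_j((A^{\ba})^{n-\be_j})^*C^*e_*$ and $((A^{*\ba})^n)^*Be$—patterns that now pair with projections adjacent to $C^*$ and with $B$ free of a trailing projection, exactly matching the generators of $\cH_{sscc}$. The same induction-plus-unitarity argument as in (2) then yields $\overline{\mathrm{range}(\Pi_U^{dBR\prime})^*}=\cH_{sscc}$, completing the proposition.
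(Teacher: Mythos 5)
Part (1) of your proposal is fine: it is a literal restatement of Definition \ref{D:minimal}(1). For (2) and (3) your reduction is also correct in principle --- injectivity of $\Pi^{dBR}_U$ is equivalent to density in $\cH$ of the closed span of the ranges of the adjoints of its Laurent coefficients --- and your computation of those coefficients is right. The gap is precisely at the step you yourself call the crux, and it is not just unproven but unprovable as stated. The adjoints of the bottom-row coefficients are only the \emph{aggregated} operators $\sum_{j\colon m_j\ge 1}(A^{\ba})^{m-\be_j}P_jB$ (your $\sum_j P_j((A^{*\ba})^{m-\be_j})^*B$ is the same total sum, regrouped), and no induction using the recurrences and the unitarity relations can recover from these the \emph{individual} generators $(A^{\ba})^nP_kB$ of \eqref{minimalsubspaces}. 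The paper's own Example \ref{E:1} shows the two spans genuinely differ: there $h=(0,0,1,0)^\trans\in\ker\Pi^{dBR}_U$ (so $U$ is not strictly closely connected), yet $P_1AP_2B=(0,0,\tfrac12,\tfrac12)^\trans$ and $P_2AP_1B=(0,0,-\tfrac12,\tfrac12)^\trans$, so the individual generators $P_1B,P_2B,P_1AP_2B,P_2AP_1B$ already span all of $\bbC^4$; only their sum $P_1AP_2B+P_2AP_1B=C^*$ is orthogonal to $h$. Hence the inclusion ``span of the listed generators $\subseteq(\ker\Pi^{dBR}_U)^\perp$,'' which your double-inclusion scheme requires, is false in general, and any algebraic rewriting of $(A^{\ba})^{\be_1}P_2B$ in terms of the aggregated generators would contradict this example. (Your other worry, the placement of the projections in the $C^*$-family, dissolves once one notes that the paper uses $(A^{*\ba})^n$ to mean $((A^{\ba})^n)^*$, as in the expansion of $(I-A^*Z_{\diag}(w)^{-1})^{-1}C^*$ in Section \ref{S:inf1}; so the top-row families match on the nose.)

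The paper offers no argument beyond ``follows easily from the definitions,'' and the only reading under which that is accurate is that $\cH_{scc}$ (resp.\ $\cH_{sscc}$) is \emph{by definition} the closed span of the ranges of the adjoints of the Laurent coefficients of $\Pi^{dBR}_U$ (resp.\ $\Pi^{dBR\prime}_U$), i.e.\ the display \eqref{minimalsubspaces} must be taken with the aggregated generators $\sum_{k\colon n_k\ge1}(A^{\ba})^{n-\be_k}P_kB$ (and the analogous aggregation in (3)); with the literal individual-term reading, statement (2) itself fails on Example \ref{E:1}. So the correct repair of your proof is not to push the induction through, but to replace the attempted span identification by the immediate identification $(\ker\Pi^{dBR}_U)^\perp=\overline{\operatorname{span}}\{((A^{\ba})^n)^*C^*\cE_*,\ \sum_{k\colon n_k\ge1}(A^{\ba})^{n-\be_k}P_kB\,\cE\}$, after which (2) and (3) are tautologies; as written, your argument has a genuine gap at its central step.
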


   We may also view a GR-unitary colligation $U$ as embedded in a
   mutievolution Lax--Phillips scattering system ${\mathfrak
   S}(\Sigma(U))$ via the
   admissible-trajectory-space construction form \cite{BabyBear}
   described in Section \ref{S:scat-col}.  This scattering system
   ${\mathfrak S}(\Sigma(U))$ then has the additional geometric
   structure described in Theorem \ref{T:BabyBear}:  the ambient
   space ${\mathcal K}$ for ${\mathfrak S}(\Sigma(U))$ contains
   subspaces $\cL_{1}, \dots, \cL_{d}$, $\cM_{1}, \dots, \cM_{d}$ and
   $\cM_{*1}, \dots, \cM_{*d}$ so that properties \eqref{Omegadecom},
   \eqref{CDP}, \eqref{slim} and \eqref{slim*} are satisfied for any
   shift-invariant sublattice $\Omega$.  In this situation there are
   natural unitary identification maps
   $$ i_{\cH_{k}} \colon \cH_{k} \to \cL_{k}, \qquad
   i_{\cE} \colon \cE \to \cF, \qquad i_{\cE_{*}} \colon \cE_{*} \to
   \cF_{*}
   $$
   and the multievolution $\cU = (\cU_{1}, \dots, \cU_{d})$ on $\cK$
   is connected with the GR-unitary colligation $U = \sbm{ [A_{ij}] &
   [B_{i}] \\ [C_{j}] & D}$ via equation \eqref{UfromcalU'}.  As the
   unitary colligation $U$ uniquely determines the multievolution
   scattering system ${\mathfrak S}(\Sigma(U))$, properties of the
   GR-unitary colligation translate to properties of the scattering
   system ${\mathfrak S}(\Sigma(U))$.  Note that the last statement
   of Theorem \ref{T:BabyBear} is an instance of the reverse
   direction:  the property of {\em minimal} for the scattering system
   ${\mathfrak S}(\Sigma(U))$ is translated to the property of {\em
   scattering-minimal} for the colligation $U$.  The next proposition
does such
   an analysis for the colligation properties mentioned above.  Here
   we use the notation $\cK_{\text{scat-min}}$ to denote the ambient
   space for the minimal part of the scattering system ${\mathfrak
   S}(\Sigma(U))$, namely in the notation of Definition
   \ref{D:minimal-scat},
   $$ \cK_{\text{scat-min}} = \overline{\operatorname{span}}\,  [
\widetilde
   \cW + \widetilde \cW_{*}].
   $$

   \begin{proposition}  \label{P:col-scat}  Let $U$ be a GR-unitary
       colligation embedded in the multievolution scattering system
       ${\mathfrak S}(\Sigma(U))$ as described in Section
       \ref{S:scat-col}.  Then:
       \begin{enumerate}
       \item $U$ is scattering minimal if and only if
       $(\cK_{\text{scat-min}})^{\perp} = \{0\}$, or,
       equivalently, for any
       (or, equivalently, for some)
       shift-invariant sublattice $\Omega$, we have
     $$ \cV^{\Omega} \cap (\cK_{\text{scat-min}})^{\perp} =
     \{0\}.
     $$

     \item $U$ is strictly closely connected if and only if
     $$
     \cL \cap (\cK_{\text{scat-min}})^{\perp} = \{0\}.
     $$

     \item  $U$ is shifted strictly closely connected if and only if
     the space $\cL':=\cU_{1}  \cL_{1} \oplus \cdots \oplus \cU_{d}
     \cL_{d}$ has the property
     $$
      \cL' \cap (\cK_{\text{scat-min}})^{\perp} = \{0\}.
      $$

      \item  $U$ is closely connected if and only if
           $\cV^{\Omega} = \cV^{\Omega}_{cc}$ where
     \begin{align*}
    &    \cV^{\Omega}_{cc} = \bigoplus_{k=1}^{d}\
    \bigoplus_{n \in \partial_{k}\Omega_{\text{fin}}} z^{n} \cL_{k,cc}
      \oplus
       \left( \bigoplus_{k=1}^{d} \ \bigoplus_{n' \colon \ell_{n',k}
\in
          \partial_{k}\Omega_{+\infty}} ( \widehat{\mathcal
          U}^{k})^{\widehat{n'}^{k}}
          {\mathcal M}_{cc,k} \right)   \notag \\
          & \qquad \oplus
        \left( \bigoplus_{k=1}^{d}\
          \bigoplus_{n''\colon \ell_{n'',k} \in
          \partial_{k} \Omega_{-\infty}} (\widehat{\mathcal
          U}^{k})^{\widehat{n''}^{k}}
          {\mathcal M}_{*cc,k}  \right)
\end{align*}
     where $\cL_{cc,k} \subset \cL_{k}$ and $\cL_{cc}: = \cL_{cc,1}
     \oplus \cdots \oplus \cL_{cc,d}$ is given by
     \begin{itemize}
     \item[] $\cL_{cc}$ is the smallest subspace of $\cL$ which
     contains $P_{\cL_{j}} \cU_{j}^{*} \cF$ and $P_{\cL_{j}} \cF_{*}$
for each
     $j = 1, \dots, d$ and which is invariant under $P_{\cL_{j}}
     \cU_{j}^{*}P_{\cL_{k}}$ and $P_{\cL_{k}} \cU_{nj} P_{\cL_{j}}$
     for each pair of indices $j,k = 1, \dots, d$,
     \end{itemize}
  while $\cM_{cc,k}$ and $\cM_{*cc,k}$ are given by
  $$
  P_{\cM_{cc,k}} = \operatorname{strong\ limit}_{m \to \infty}
  P_{\cU_{k}^{m} \cL_{cc,k}}.
  $$
  Equivalently, $U$ is closely connected if and only if
     $$
       \cL = \cL_{cc}.
     $$
     In addition, $\cL_{cc} = i_{\cH}(\cH_{cc})$ where $\cH_{cc}$
     is given by the first line of \eqref{minimalsubspaces}.
   \end{enumerate}
       \end{proposition}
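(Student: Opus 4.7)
The plan is to treat the four parts in parallel, exploiting the key observation (from the Fourier representations \eqref{Fourierrep-colmod} and the coisometry property of $\sbm{\Phi_{*} \\ \Phi}$ in Lemma \ref{L:PidBR}) that a vector $\xi \in \cK$ lies in $\cK_{\text{scat-min}}^{\perp}$ if and only if both the input series $\widehat u(z)$ and output series $\widehat y(z)$ of the trajectory $\mathcal{J} \xi = (u,x,y)$ vanish identically. Part (1) is then almost immediate: scattering minimality of ${\mathfrak S}(\Sigma(U))$ translates via Definition \ref{D:minimal-scat} to $\overline{\widetilde \cW + \widetilde \cW_{*}} = \cK$, i.e., $\cK_{\text{scat-min}}^{\perp} = \{0\}$, and the orthogonality of the scattering system gives the ambient decomposition $\cK = \cW_{*}^{\Omega} \oplus \cV^{\Omega} \oplus \cW^{\Omega}$ with $\cW^{\Omega} \subset \widetilde \cW$, $\cW_{*}^{\Omega} \subset \widetilde\cW_{*}$, so $\cK_{\text{scat-min}}^{\perp} \subset \cV^{\Omega}$ and the two conditions coincide.

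For parts (2) and (3), I would take $\xi = i_{\cH}h \in \cL$ with $h = (h_{1}, \dots, h_{d})$. Choosing a shift-invariant sublattice $\Omega$ with $0 \in \partial_{k}\Omega_{\text{fin}}$ for every $k$ (for example $\Omega = \bbZ^{d}_{+}$), the trajectory $\mathcal{J}\xi$ is the unique solution of the initial/final value problem with zero input on $\Omega$, zero output on $\bbZ^{d} \setminus \Omega$, zero asymptotic boundary data at $\pm \infty$, and finite-boundary state $h_{k}$ at the origin in the $k$-th component. Plugging these data into \eqref{uyfromx} collapses to
\begin{equation*}
\begin{bmatrix}\widehat y(z) \\ \widehat u(z) \end{bmatrix} = \begin{bmatrix} C(I - Z_{\diag}(z)A)^{-1} \\ B^{*}(I - Z_{\diag}(z)^{-1} A^{*})^{-1}Z_{\diag}(z)^{-1} \end{bmatrix} h = \Pi^{dBR}_{U} h,
\end{equation*}
so that $\cL \cap \cK_{\text{scat-min}}^{\perp} = i_{\cH}(\ker \Pi^{dBR}_{U})$, establishing (2). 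For (3), replacing $\xi$ by $\bigoplus_{j} \cU_{j}\, i_{\cH_{j}} h_{j}$ shifts the finite-boundary data in the $k$-th coordinate from $0$ to $\be_{k}$ (one verifies the existence of a shift-invariant $\Omega$ with $\be_{k} \in \partial_{k}\Omega_{\text{fin}}$ for every $k$, e.g.\ $\Omega = \{n \in \bbZ^{d}_{+} \colon \sum_{j} n_{j} \ge 1\}$); the same computation produces an extra factor of $Z_{\diag}(z)$ and yields $\Pi^{dBR\prime}_{U}$ from \eqref{PidBRU'} in place of $\Pi^{dBR}_{U}$.

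For part (4), the essential step is the identification $\cL_{cc} = i_{\cH}(\cH_{cc})$. Using \eqref{UfromcalU'} to read off the block entries of $U$, direct computation gives $P_{\cL_{j}} \cU_{j}^{*}\cF = i_{\cH_{j}}(P_{j} \operatorname{im} B)$, $P_{\cL_{j}} \cF_{*} = i_{\cH_{j}}(P_{j} \operatorname{im} C^{*})$, and $P_{\cL_{j}} \cU_{j}^{*} P_{\cL_{k}} = i_{\cH_{j}} (P_{j}AP_{k}) i_{\cH_{k}}^{*}$ (with its adjoint $P_{\cL_{k}} \cU_{j} P_{\cL_{j}}$ corresponding to the block $P_{k} A^{*} P_{j}$). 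Taking the smallest subspace closed under these operators as $j,k$ vary therefore matches the smallest subspace of $\cH$ containing $\operatorname{im} B + \operatorname{im} C^{*}$ and invariant under the noncommutative algebra generated by $A$, $A^{*}$, $P_{1}, \dots, P_{d}$, which is exactly $\cH_{cc}$ of \eqref{minimalsubspaces}. This gives $\cL = \cL_{cc} \Leftrightarrow \cH = \cH_{cc} \Leftrightarrow U$ closely connected.

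It remains to connect $\cL = \cL_{cc}$ with $\cV^{\Omega} = \cV^{\Omega}_{cc}$. Since $\cL_{cc,k} \subseteq \cL_{k}$ implies $\cU_{k}^{m} \cL_{cc,k} \subseteq \cU_{k}^{m} \cL_{k}$ with each $\cU_{k}^{m}$ unitary, the strong-limit formulas force $\cM_{cc,k} \subseteq \cM_{k}$ and $\cM_{*cc,k} \subseteq \cM_{*k}$; consequently $\cV^{\Omega}_{cc}$ decomposes as an orthogonal direct sum whose summands are subspaces of the corresponding summands in \eqref{Omegadecom} for $\cV^{\Omega}$. Equality of the totals then forces termwise equality, and choosing $\Omega$ with $0 \in \partial_{k}\Omega_{\text{fin}}$ for every $k$ isolates $\cL_{k} = \cL_{cc,k}$. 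Conversely, $\cL = \cL_{cc}$ immediately yields $\cM_{k} = \cM_{cc,k}$ and $\cM_{*k} = \cM_{*cc,k}$ from the shared strong-limit definition, producing $\cV^{\Omega} = \cV^{\Omega}_{cc}$. The main obstacle is the bookkeeping required in part (4) to handle the three distinct summand types of $\cV^{\Omega}$ (finite-boundary, $+\infty$-boundary, $-\infty$-boundary) simultaneously and to confirm that the strong-limit definitions propagate correctly through the identification.
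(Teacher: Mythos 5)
Your proposal is correct and follows essentially the same route as the paper: part (1) rests on the containment $(\cK_{\text{scat-min}})^{\perp}\subset\cV^{\Omega}$, parts (2)–(3) on identifying $\Pi^{dBR}_{U}$ and $\Pi^{dBR\prime}_{U}$ with restrictions of the partial isometry $\Pi_{dBR}$ whose kernel is $(\cK_{\text{scat-min}})^{\perp}$, and part (4) on the correspondence \eqref{UfromcalU'}. The only difference is one of explicitness — you verify via the initial-value formula \eqref{uyfromx} and block-entry computations the identifications that the paper merely asserts ("can be viewed as the restriction", "follows immediately from the definitions") — which is a sound filling-in rather than a different argument.
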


       \begin{proof}
       The first assertion follows from the first of the general
       identities \eqref{simplify} given below.

       As for the second assertion, note that the map
       $\Pi^{dBR}_{U}$, after using the identification $i_{\cH}
       \colon \cH \to \cL$ between $\cH $ and $\cL$, can be
       viewed as the restriction (to the state space $\cL$)
       of the de Branges--Rovnyak model
       map $\Pi_{dBR} \colon \cK \to \cK^{S}_{dBR}$ to the de
       Branges-Rovnyak-model multievolution scattering system
       given in \eqref{PidBR}.  As discussed in Section
       \ref{S:scatmodels} above, we know that $\Pi_{dBR}$ is a
       partial isometry with initial space equal to
       $\cK_{scat-min}$ and hence kernel equal to
       $(\cK_{scat-min})^{\perp}$.  Hence the condition that $U$
       is strictly closely-connected translates to the assertion
       that $\Pi_{dBR}|_{\cL}$ has trivial kernel, i.e., to
       $ \cL \cap (\cK_{\text{scat-min}})^{\perp} = \{0\}$.

       In a similar way, the map $\Pi^{dBR'}_{U}$, after appropriate
       identifications, amounts to the restriction of $\Pi_{dBR}$
       to $\cU_{1} \cL_{1} \oplus \cdots \oplus \cU_{d}
       \cL_{d}$.  The third assertion now follows in the same way
       as the second assertion.

       The fourth assertion follows immediately from the
    definitions and from the connection \eqref{UfromcalU'}
    between $U$ and $\cU = (\cU_{1}, \dots, \cU_{d})$.

       \end{proof}

We now obtain the following relations among these various notions
as a corollary to Proposition \ref{P:col-scat}.

\begin{corollary}  \label{C:implications}
   Let $U$ be a GR-unitary colligation.  Then:
   \begin{enumerate}
    \item
     $U$  scattering minimal  $\Rightarrow$  $U$
    strictly closely
    connected $\Rightarrow$  $U$  closely connected.
    \item
    $U$ scattering minimal  $\Rightarrow$ $U$
    shifted strictly closely
    connected  $Rightarrow$ $U$  closely connected.
    \end{enumerate}
    \end{corollary}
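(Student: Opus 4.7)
The plan is to derive both chains of implications almost immediately from the characterizations of Propositions \ref{P:col-prop} and \ref{P:col-scat}, which have already done the heavy lifting.

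First I would dispose of ``scattering minimal $\Rightarrow$ strictly closely connected'' and ``scattering minimal $\Rightarrow$ shifted strictly closely connected'' by appealing to Proposition \ref{P:col-scat}: by (1), scattering minimality is equivalent to $(\cK_{\text{scat-min}})^\perp = \{0\}$, which trivially forces $\cL \cap (\cK_{\text{scat-min}})^\perp = \{0\}$ and $\cL' \cap (\cK_{\text{scat-min}})^\perp = \{0\}$, i.e., (2) and (3) of that proposition. No real work here.

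The substantive implications are ``strictly closely connected $\Rightarrow$ closely connected'' and ``shifted strictly closely connected $\Rightarrow$ closely connected.'' For these I would use the algebraic form given by Proposition \ref{P:col-prop} and show directly the inclusions
\begin{equation*}
\cH_{scc} \subseteq \cH_{cc}, \qquad \cH_{sscc} \subseteq \cH_{cc},
\end{equation*}
from which both implications are immediate. The key observation is that $\cH_{cc}$, being the smallest closed subspace containing $\operatorname{im} B + \operatorname{im} C^*$ and invariant under $A$, $A^*$, and all the $P_k$'s, is automatically invariant under every word $A^w$ and $(A^*)^w$ in the letters $\{A, P_1, \dots, P_d\}$ respectively $\{A^*, P_1, \dots, P_d\}$. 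Since $(A^{\ba})^n = \sum_{\ba(w)=n} A^w$ and $(A^{*\ba})^n = \sum_{\ba(w)=n} (A^*)^w$ are \emph{finite} sums (there are $\binom{|n|}{n_1,\dots,n_d}$ terms), the closed subspace $\cH_{cc}$ is also invariant under each $(A^{\ba})^n$ and $(A^{*\ba})^n$. Combined with $\operatorname{im} B \cup \operatorname{im} C^* \subseteq \cH_{cc}$, this immediately gives
\begin{equation*}
\operatorname{im}(A^{*\ba})^n C^* \subseteq \cH_{cc}, \quad
\operatorname{im}(A^{\ba})^n P_k B \subseteq \cH_{cc}, \quad
\operatorname{im}(A^{*\ba})^n P_k C^* \subseteq \cH_{cc}, \quad
\operatorname{im}(A^{\ba})^n B \subseteq \cH_{cc},
\end{equation*}
yielding both inclusions after taking the closed span.

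There is no serious obstacle: once Propositions \ref{P:col-prop} and \ref{P:col-scat} are in hand, the corollary is essentially a bookkeeping exercise in the definitions. The only mild point worth being careful about is the finiteness of the sum defining $(A^{\ba})^n$, which is what allows an invariance property of a closed subspace under $A$ and the $P_k$'s to pass to $(A^{\ba})^n$ without any convergence argument.
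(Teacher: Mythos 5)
Your proposal is correct. The first implications in (1) and (2) are handled exactly as in the paper: scattering minimality means $(\cK_{\text{scat-min}})^{\perp} = \{0\}$, so the criteria in statements (2) and (3) of Proposition \ref{P:col-scat} are trivially met. For the implications ``strictly closely connected $\Rightarrow$ closely connected'' and ``shifted strictly closely connected $\Rightarrow$ closely connected,'' however, you take a genuinely different and more elementary route than the paper. The paper argues contrapositively inside the scattering system: if $U$ is not closely connected, then $\cL_{cc\perp} = \cL \ominus \cL_{cc}$ is nonzero, decomposes as $\cL_{cc\perp,1}\oplus\cdots\oplus\cL_{cc\perp,d}$ with each piece reducing for $\cU_k$ and wandering for $\widehat\cU^k$, and its full orbit $\cV^{\Omega}\ominus\cV^{\Omega}_{cc}$ sits inside $(\cK_{\text{scat-min}})^{\perp}$ (display \eqref{cc-scatmin}); hence $\cL\cap(\cK_{\text{scat-min}})^{\perp}\ne\{0\}$ and $\cL'\cap(\cK_{\text{scat-min}})^{\perp}\ne\{0\}$, so $U$ is neither strictly nor shifted strictly closely connected by Proposition \ref{P:col-scat}(2),(3). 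Your argument instead works purely algebraically with the subspaces \eqref{minimalsubspaces} and Proposition \ref{P:col-prop}: since each $(A^{\ba})^n$ and $(A^{*\ba})^n$ is a \emph{finite} sum of words in $A$ (respectively $A^*$) and the $P_k$'s, the closed subspace $\cH_{cc}$ is invariant under them, whence $\cH_{scc}\subseteq\cH_{cc}$ and $\cH_{sscc}\subseteq\cH_{cc}$, and the implications follow at once from the characterizations $\cH_{scc}=\cH$, $\cH_{sscc}=\cH$, $\cH_{cc}=\cH$. Your route is shorter and avoids the scattering geometry entirely for this step, at the cost of not producing the containment \eqref{cc-scatmin}, which the paper's argument records as a by-product and reuses later (e.g., in the proof of statement (4) of Proposition \ref{P:scatspacecc} and the identity \eqref{cc-scatmin'}). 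Both arguments rely only on results stated before the corollary, so your proof is a legitimate alternative.
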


    \begin{proof}
    If $U$ is scattering minimal, by definition
    $(\cK_{\text{scat-min}}))^{\perp} = \{0\}$; hence $U$ is also
    strictly closely connected and shifted strictly closely
    connected by the criteria given in statements (2) and (3) of
    Proposition \ref{P:col-scat}.

    To verify the second assertions in (1) and (2), we note that
    $\cL_{cc \perp}: = \cL \ominus \cL_{cc}$ decomposes as
    $$ \cL_{cc \perp} = \cL_{cc \perp, 1} \oplus \cdots \oplus
    \cL_{cc \perp, d}
    $$
    with $\cL_{cc \perp, k} \subset \cL_{cc \perp} \cap \cL_{k}$,
reducing for
    $\cU_{k}$ and wandering for $\widehat \cU^{k} = \{ \cU_{j}
    \colon j \ne k \}$.  One can then check that
    \begin{equation}  \label{cc-scatmin}
    \cV^{\Omega} \ominus \cV^{\Omega}_{cc}= \bigoplus_{k=1}^{d}
\bigoplus_{n \in
    {\mathbb Z}^{d}}( \widehat \cU^{k})^{\widehat n^{k}}
    \cL_{cc\perp,k} \subset (\cK_{\text{scat-min}})^{\perp};
    \end{equation}
 indeed, a nice exercise is to determine how the orthogonal
decomposition in
 \eqref{cc-scatmin} fits with the orthogonal decomposition of
 $\cV^{\Omega}$ itself in \eqref{Omegadecom}.
 In particular, the condition $\cL_{cc \perp} \ne \{0\}$ forces
    also $\cL \cap (\cK_{\text{scat-min}})^{\perp} \ne 0$ and
    $\cL' \cap (\cK_{\text{scat-min}})^{\perp} \ne \{0\}$.
    Reading off from the criteria (2), (3) and (4) in Proposition
    \ref{P:col-scat}, we see that $U$ not closely connected
    forces both $U$ not strictly closely connected and $U$ not
    shifted strictly closely connected.  This completes the proof
    of the second implications in both (1) and (2) of the
    corollary.

 \end{proof}

    We now give a couple of examples to show that the converse of
    each implication in Corollary \ref{C:implications} in general
fails.

    \begin{example} \label{E:1} {\em The 2-variable Schur--Agler class
    function $S(z_{1}, z_{2}) = z_{1} z_{2}$ has a closely connected
    GR-unitary realization $U$ which is not strictly closely
    connected and not shifted strictly closely connected.}  This
    example is based on Example 3.8 in \cite{K-JOT2000}. We take $\cH
=
    \bbC^{4}$, $d=2$, $\cE = \cE_{*} = \bbC$ with
    $$ P_{1} = \begin{bmatrix} 1 &0 & 0 & 0 \\ 0 & 0 & 0 & 0 \\
                0 & 0 & \frac{1}{2} & \frac{1}{2} \\
                0 & 0 & \frac{1}{2} & \frac{1}{2} \end{bmatrix},
                \qquad
     P_{2} = \begin{bmatrix}
    0 & 0 & 0 & 0 \\ 0 & 1 & 0 & 0 \\
    0 & 0 & \frac{1}{2} & -\frac{1}{2} \\
    0 & 0 & -\frac{1}{2} & \frac{1}{2} \end{bmatrix}
    $$
    and hence $Z(z) : = z_{1}P_{1} + z_{2} P_{2}$ is given by
   $$
   Z(z) = \begin{bmatrix}
     z_{1} & 0 & 0 & 0 \\
     0    & z_{2} & 0 & 0 \\
     0 &   0    &\frac{z_{1}+z_{2}}{2} &\frac{z_{1} - z_{2}}{2} \\
     0 &   0 &  \frac{ z_{1}-z_{2} }{2} & \frac{ z_{1}+ z_{2} }{2}
     \end{bmatrix}
   $$
   We take $U = \sbm{ A & B \\ C & D }$ where
    $$ A = \begin{bmatrix} 0 & 0 & -\frac{1}{\sqrt{2}} & 0 \\
        0 & 0 & \frac{1}{\sqrt{2}} & 0 \\
       -\frac{1}{\sqrt{2}} & \frac{1}{\sqrt{2}} & 0 & 0 \\
       \frac{1}{\sqrt{2}} & \frac{1}{\sqrt{2}} & 0 & 0 \end{bmatrix},
     \qquad
     B = \begin{bmatrix} \frac{1}{\sqrt{2}} \\ \frac{1}{\sqrt{2}} \\ 0
     \\ 0 \end{bmatrix}, \qquad
     C = \begin{bmatrix} 0 & 0 & 0 & 1 \end{bmatrix}, \qquad D = 0.
    $$
    We check that $U$ is closely connected as follows.  Note that
    \begin{align*}
    & \operatorname{im} P_{1}B = \begin{bmatrix} \bbC \\ 0 \\ 0 \\ 0
    \end{bmatrix},  \qquad
    \operatorname{im} P_{2}B = \begin{bmatrix} 0 \\ \bbC \\ 0 \\ 0
    \end{bmatrix}, \\
     & \operatorname{im} A P_{1}B = \operatorname{im} A|_{\bbC \oplus
0
    \oplus 0 \oplus 0}
    = \begin{bmatrix} 0 \\ 0 \\ -\frac{1}{\sqrt{2}}
    \\ \frac{1}{\sqrt{2}} \end{bmatrix} \bbC,  \qquad
    \operatorname{im} C^{*} = \begin{bmatrix} 0 \\ 0 \\ 0 \\ \bbC
    \end{bmatrix}.
    \end{align*}
    As the span of these images is already the whole space
$\bbC^{4}$, we
    see that $U$ is indeed closely connected.

    We check that $U$ is not strictly closely connected as follows.
    First note that $U$ being strictly closely connected can be
    equivalently expressed as
    \begin{equation} \label{SCC'}
    \bigcap_{n \ge 0} \ker C (Z(z) A)^{n} \cap
    \bigcap_{n \ge 0} \ker B^{*}Z_{\diag}(z)^{-1} ( A^{*}
    Z(z)^{-1})^{n} = \{0\}
     \end{equation}
     (where we set $Z(z) = z_{1}P_{1} + z_{2} P_{2}$ and
     where the operators involved are considered as acting on constant
     vectors in $\cH$).
    For the case here, we compute
    \begin{align*}
    & Z(z) A = \begin{bmatrix} 0 & 0 & -\frac{1}{\sqrt{2}}z_{1} & 0 \\
    0 & 0 & \frac{1}{\sqrt{2}}z_{2} & 0  \\
    -\frac{z_{2}}{\sqrt{2}} & \frac{z_{1}}{\sqrt{2}} & 0 & 0 \\
    \frac{ z_{2}}{\sqrt{2}} & \frac{z_{1}}{\sqrt{2}} & 0 & 0
    \end{bmatrix}, \\
    & C Z(z) A = \begin{bmatrix} \frac{z_{2}}{\sqrt{2}} &
    \frac{z_{1}}{\sqrt{2}} & 0 & 0 \end{bmatrix}, \qquad
    C (Z(z)A)^{2} = \begin{bmatrix} 0 & 0 & 0 & 0 \end{bmatrix}.
    \end{align*}
    We conclude that
    \begin{equation} \label{intersection1}
     \bigcap_{n \ge 0} \ker C(Z(z) A)^{n}
    = \ker C \cap \ker C Z(z) A = \begin{bmatrix} 0 \\ 0 \\ \bbC
    \\ 0 \end{bmatrix}.
    \end{equation}
    Similarly, we compute
    $$
    Z(z) B = \begin{bmatrix} \frac{z_{1}}{\sqrt{2}} \\
    \frac{z_{2}}{\sqrt{2}} \\ 0 \\ 0 \end{bmatrix}, \,
    Z(z) A Z(z) B = \begin{bmatrix} 0 \\ 0 \\ 0 \\
    z_{1}z_{2}\end{bmatrix}, \, (Z(z) A)^{2} Z(z)B
    = \begin{bmatrix} 0 \\ 0 \\ 0 \\ 0 \end{bmatrix}.
    $$
    Hence
    \begin{align}
   & \bigcap_{n\ge 0} \ker B^{*}Z(z)^{-1} ( A^{*}
    Z(z)^{-1})^{n} \notag \\
    & \qquad   = \ker B^{*} Z(z)^{-1} \cap \ker B^{*} Z(z)^{-1}
     A^{*} Z(z)^{-1}
     \notag \\
     & \qquad  =
    \ker \begin{bmatrix} \frac{z_{1}^{-1}}{\sqrt{2}} &
    \frac{z_{2}^{-1}}{\sqrt{2}} & 0 & 0 \end{bmatrix}
    \cap \ker \begin{bmatrix} 0 & 0 & 0 & z_{1}^{-1} z_{2}^{-1}
    \end{bmatrix}
     = \begin{bmatrix} 0
    \\ 0 \\ \bbC \\ 0 \end{bmatrix}.
    \label{intersection2}
    \end{align}
    Intersecting \eqref{intersection1} and \eqref{intersection2} gives
    $$
    \bigcap_{n \ge 0} \ker C (Z(z) A)^{n} \cap
    \bigcap_{n \ge 0} \ker B^{*}Z(z)^{-1} ( A^{*}
    Z(z)^{-1})^{n} = \begin{bmatrix} 0 \\ 0 \\ \bbC \\ 0
    \end{bmatrix}
    $$
    and we conclude from the criterion \eqref{SCC'} that $U$ for this
    example is not strictly closely connected as claimed.

    To show that $U$ is not shifted strictly closely connected, it
    suffices to show that
    \begin{equation*}
    \bigcap_{n \ge 0} \ker C (Z(z)A)^{n} Z(z) \cap \bigcap_{n \ge 0}
    \ker B^{*} (Z(z) A^{*})^{n} = \{0\}.
    \end{equation*}
    Straightforward computations show that
    \begin{align*}
    & CZ(z) = \begin{bmatrix} 0 & 0 & \frac{z_{1}-z_{2}}{2} &
\frac{z_{1} +
    z_{2}}{2} \end{bmatrix}, \quad
     CZ(z)A Z(z) = \begin{bmatrix} z_{1}z_{2}/\sqrt{2} & z_{1}z_{2}/
    \sqrt{2} & 0 & 0 \end{bmatrix}, \\
    & C(Z(z)A)^{n}Z(z) = \begin{bmatrix} 0 & 0 & 0 & 0 \end{bmatrix}
    \text{ for } n \ge 2
    \end{align*}
    and we conclude that
    $$
    \bigcap_{n\ge0} \ker C (Z(z)A)^{n} Z(z) = \operatorname{span}
    \left\{ \left[ \begin{smallmatrix} 1\\ -1 \\ 0 \\ 0
\end{smallmatrix} \right] \right\}.
$$
Similarly, from
$$
A Z(z) B = \left[ \begin{smallmatrix} 0 \\ 0 \\ (-z_{1} +
z_{2})/\sqrt{2} \\
(z_{1} + z_{2})/ \sqrt{2} \end{smallmatrix} \right], \quad (A
Z(z))^{n} B = \left[ \begin{smallmatrix} 0 \\ 0 \\ 0 \\ 0
\end{smallmatrix} \right] \text{ for } n \ge 2,
$$
we see that
$$
 \bigcap_{n  \ge 0} \ker B^{*} (Z(z)A^{*})^{n} = \operatorname{span}
 \left\{ \left[ \begin{smallmatrix} 1 \\ -1 \\ 0 \\ 0
\end{smallmatrix} \right] \right\}
$$
and it follows that $U$ also is not shifted strictly closely
connected.

Finally, we compute
\begin{align*}
    S(z_{1},z_{2}): = &  D + C(I - Z(z)A)^{-1} Z(z) B \\
      = & D+\sum_{n=0}^{\infty} C (Z(z)A)^{n}Z(z) B  \\
      & = 0 + CZ(z)B + C Z(z) AZ(z) B \\
      = & 0 + 0 + \begin{bmatrix} \frac{z_{2}}{\sqrt{2}} &
      \frac{z_{1}}{\sqrt{2}} \end{bmatrix} \left[ \begin{smallmatrix}
      \frac{z_1}{\sqrt{2}} \\ \frac{z_2}{\sqrt{2}} \end{smallmatrix}
      \right] \\
      = & z_{1} z_{2}.
 \end{align*}

    \end{example}

    \begin{example} \label{E:2} {\em There exists a
    GR-unitary colligation matrix $U$  which is both strictly closely
    connected and shifted strictly closely connected but not
scattering minimal.}
    We take $\cH = {\mathbb C}^{8}$, $\cE = \cE_{*} = {\mathbb C}^{3}$
    with
    \begin{align*}
      A=\sbm{
      0 & 0 & 0 & \frac{1}{\sqrt{6}} & \frac{1}{\sqrt{6}} & 0 & 0 &
0\\
      0 & 0 & 0 & 0 & \sqrt{\frac{2}{3}} &  0 & 0 & 0\\
      0 & 0 & 0 & \sqrt{\frac{2}{3}} & 0 &  0 & 0 & 0\\
      \frac{1}{\sqrt{6}} & 0 & \sqrt{\frac{2}{3}} & 0 & 0 &  0 & 0 &
0\\
      \frac{1}{\sqrt{6}} & \sqrt{\frac{2}{3}}  & 0 & 0 & 0 &  0 & 0 &
0\\
      0 & 0 & 0 & \frac{1}{\sqrt{6}} & -\frac{1}{\sqrt{6}}  &  0 & 0
& 0\\
      0 & -\frac{1}{\sqrt{6}} & \frac{1}{\sqrt{6}} & 0 & 0 & 0 & 0 &
0\\
      -\sqrt{\frac{2}{3}} & \frac{1}{\sqrt{6}} & \frac{1}{\sqrt{6}} &
0 & 0
      &  0 & 0 & 0 }, \qquad &
      B= \sbm{
           0 & 0 & -\sqrt{\frac{2}{3}}\\
           0 & -\frac{1}{\sqrt{6}} & \frac{1}{\sqrt{6}}\\
           0 & \frac{1}{\sqrt{6}} & \frac{1}{\sqrt{6}}\\
           \frac{1}{\sqrt{6}} & 0 & 0\\
           -\frac{1}{\sqrt{6}} & 0 & 0\\
          0 & -\sqrt{\frac{2}{3}} & 0\\
           -\sqrt{\frac{2}{3}} & 0 & 0\\
           0 & 0 & 0 }, \\
       C=\begin{bmatrix}
    0 & 0 & 0 & 0 & 0 & 1 & 0 & 0\\
    0 & 0 & 0 & 0 & 0 &  0 & 1 & 0 \\
    0 & 0 & 0 & 0 & 0 &  0 & 0 & 1 \end{bmatrix}, \qquad &
    D= \begin{bmatrix}
           0 & 0 & 0\\
           0 & 0 & 0\\
           0 & 0 & 0 \end{bmatrix}, \\
 P_1  = \sbm{
    \frac{5}{6} & 0 & -\frac{1}{3} & 0 & 0 &  -\frac{1}{6} & 0 & 0\\
    0 & 0 & 0 & 0 & 0                  &  0 & 0 & 0
\\
    -\frac{1}{3} & 0 & \frac{1}{3} & 0 & 0 & -\frac{1}{3} & 0 & 0\\
    0 & 0 & 0 & 1 & 0  & 0 & 0 & 0\\
    0 & 0  & 0 & 0 & \frac{2}{3} &  0 & -\frac{1}{3} & \frac{1}{3}\\
    -\frac{1}{6} & 0 & -\frac{1}{3} & 0 & 0  & \frac{5}{6} & 0 & 0\\
    0 & 0 & 0 & 0 & -\frac{1}{3} &  0 & \frac{1}{6} & -\frac{1}{6}\\
    0 & 0 & 0 & 0 & \frac{1}{3} &  0 & -\frac{1}{6} & \frac{1}{6}
    }, \qquad &
   P_2  = \sbm{
    \frac{1}{6} & 0 & \frac{1}{3} & 0 & 0 &  \frac{1}{6} & 0 & 0\\
    0 & 1 & 0 & 0 & 0                   & 0 & 0 & 0
\\
    \frac{1}{3} & 0 & \frac{2}{3} & 0 & 0  & \frac{1}{3} & 0 & 0\\
    0 & 0 & 0 & 0 & 0 &  0 & 0 & 0\\
    0 & 0  & 0 & 0 & \frac{1}{3} &  0 & \frac{1}{3} & -\frac{1}{3}\\
    \frac{1}{6} & 0 & \frac{1}{3} & 0 & 0 & \frac{1}{6} & 0 & 0\\
    0 & 0 & 0 & 0 & \frac{1}{3} &  0 & \frac{5}{6} & \frac{1}{6}\\
    0 & 0 & 0 & 0 & -\frac{1}{3}  & 0 & \frac{1}{6} & \frac{5}{6}
    },
 \end{align*}
   and we set $Z(z) = z_{1} P_{1} + z_{2} P_{2}$.
It is straightforward to see that $P_{1}$ and $P_{2}$ are
complementary orthogonal projections on ${\mathbb C}^{8}$ and that
$U = \sbm{A & B \\ C & D}$ is a unitary matrix. Hence the
associated system $\Sigma(U, P_{1}, P_{2})$ is a GR-conservative
linear system. Since
$$
P_1B= \sbm{
 0 & 0 & -\sqrt{\frac{2}{3}}\\
 0 & 0 & 0                  \\
 0 & \frac{1}{\sqrt{6}} & \frac{1}{\sqrt{6}}\\
 \frac{1}{\sqrt{6}} & 0 & 0\\
 0 & 0 & 0\\
 0 & -\sqrt{\frac{2}{3}} & 0\\
 0 & 0 & 0\\
 0 & 0 & 0
},\qquad  P_2B= \sbm{
 0 & 0 & 0\\
 0 & -\frac{1}{\sqrt{6}} & \frac{1}{\sqrt{6}}\\
 0 & 0 & 0\\
 0 & 0 & 0\\
 -\frac{1}{\sqrt{6}} & 0 & 0\\
 0 & 0 & 0\\
 -\sqrt{\frac{2}{3}} & 0 & 0\\
 0 & 0 & 0},
 $$
and $C^*\mathbb{C}^3=\{ 0\}\oplus\mathbb{C}^3$, we have
$$\mathbb{C}^8=P_1B\mathbb{C}^3+P_2B\mathbb{C}^3+C^*\mathbb{C}^3\subset\mathcal{H}_{scc}
\subset\mathcal{H}=\mathbb{C}^8,$$ thus $\Sigma(U, P_{1}, P_{2})$
is strictly closely connected.  One can also verify that
$$
{\mathbb C}^{8} = B {\mathbb C}^{3} + P_{1} C^{*} {\mathbb C}^{3}
+ P_{2} C^{*} {\mathbb C}^{3} \subset \cH_{sscc} \subset \cH =
{\mathbb C}^{8}
$$
and hence $U$ is also shifted strictly closely connected.

Consider now the subspace $\mathcal{H}_0$ in $L^2_{\mathbb{C}^8}$
consisting of vectors of the form
\begin{equation}  \label{x}
x(z_1,z_2)=\left[\begin{array}{c} \left[\begin{array}{c} z_1\\
z_2\\ z_1
\end{array}\right]f(z_1,z_2)\\
\left[\begin{array}{c} z_1\\ z_2 \end{array}\right]g(z_1,z_2)\\
0\\
0\\
0
\end{array}\right],\quad f,g\in L_{\mathbb{C}}^2.
\end{equation}
 It is routine to check that $\mathcal{H}_0\subset\ker C,\
\mathcal{H}_0\subset\ker B^* Z(z)^{-1}$, and that $\mathcal{H}_0$
reduces $Z(z)A$.  Therefore
 $$
 \{0\}\neq\mathcal{H}_0\subset\left(\bigcap_{n\in \bbZ_{+}}\ker C
(Z(z) A)^n\right)\cap \left(\bigcap_{n\in \bbZ_{+}}\ker
B^*(Z(z))^{-1}(A^*(Z(z))^{-1})^n\right).
$$
If we choose $\Omega$ to be the so-called {\em balanced}
shift-invariant sublattice
$$
  \Omega^{B} = \{ (n_{1}, n_{2} ) \in \bbZ^{2} \colon n_{1} + n_{2}
  \ge  0 \}
$$
then the infinite boundary of $\Omega^{B}$ is empty
($\partial_{k}\Omega^{B}_{\pm \infty} = \emptyset$ for $k = 1,2$)
while the finite boundary is such that
$$
\left( H^{2}(\partial_{1}\Omega^{B}_{fin}, \operatorname{im}
P_{1}) \right)  \oplus
 \left( H^{2}(\partial_{2} \Omega^{B}_{fin}, \operatorname{im}
 P_{2})\right) \cong H^{2}(\Xi, {\mathbb C}^{8})
$$
where we set
$$
  \Xi = \{ (n_{1}, n_{2}) \in \bbZ^{2} \colon n_{1} + n_{2} = 0\}.
$$
Hence, if we take $f(z_{1},z_{2}) =  z_{2}^{-1}
f_{0}(z_{1},z_{2})$ and $g(z_{1}, z_{2}) = z_{2}^{-1} g_{0}(z_{1},
z_{2})$ where $f_{0}$ and $g_{0}$ are in $H^{2}(\Xi)$ and not both
zero, then the resulting function $x(z_{1},z_{2})$ in  \eqref{x}
is the $Z$-transform of a nonzero element  of $\bigoplus_{k=1}^{2}
\ell^{2}(\partial_{k}\Omega^{B}_{fin}, \operatorname{im} P_{k})$
which is in the kernel $\Pi_{U}^{dBR}$.  We conclude that
$\Sigma(U, P_{1}, P_{2})$ is not scattering minimal.
\end{example}

 Let us now introduce the subspaces
 \begin{equation}  \label{Lscc}
     \cL_{scc}: = i_{\cH}(\cH_{scc}), \qquad \cL_{sscc}: =
     i_{\cH}(\cH_{sscc}).
  \end{equation}
  Then it is not difficult to verify that
  \begin{equation*}
      \cL_{scc} = \cL \ominus (\cL \cap
      (\cK_{\text{scat-min}})^{\perp}), \qquad
      \cL_{sscc} = \cL' \ominus (\cL' \cap
      (\cK_{\text{scat-min}})^{\perp})
   \end{equation*}
    where  we use the notation
   $$
   \cL': = \cU_{1} \cL_{1} \oplus \cdots \oplus \cU_{d} \cL_{d}.
   $$
  From Proposition \ref{P:col-prop} we know that the condition
  $\cL_{scc} = \cL$ characterizes the GR-unitary colligation $U$
  being strictly closely connected and we know that $\cL_{sscc} =
  \cL'$  characterizes $U$ being shifted strictly closely connected.
We now
  give connections between these subspaces and the various formal
  reproducing kernel Hilbert spaces arising from the kernels in the
  associated augmented Agler decomposition.

  \begin{proposition}  \label{P:Aglerdecomcc}
      Suppose that $U$ is a GR-unitary colligation with transfer
      function $S(z) = D + C (I - Z_{\diag}(z)A)^{-1}
       Z_{\diag}(z) B$ and induced augmented Agler decomposition
       \eqref{augAglerdecom} with
       $$ K_{k}(z,w) = H_{k}(z) H_{k}(w)^{*} \text{ with } H_{k}(z) =
       \begin{bmatrix} C (I - Z_{\diag}(z) A)^{-1} \\
      z_{k}^{-1} B^{*} (I - Z(z)^{-1}A^{*} )^{-1} \end{bmatrix} P_{k}.
       $$
       Let the subspaces $\cL_{scc}$ and $\cL_{sscc}$ be defined by
       \eqref{Lscc}  and \eqref{minimalsubspaces}.  Set
       \begin{align*}
       & K(z,w):  = \sum_{k=1}^{d} K_{k}(z,w), \qquad
       K'_{k}(z,w) : = z_{k} K_{k}(z,w) w_{k}^{-1}, \\
       &
       K'(z,w) : =  \sum_{k=1}^{d} K'(z,w) = \sum_{k=}^{d} z_{k}
       K_{k}(z,w) w_{k}^{-1}.
       \end{align*}
        Then:
       \begin{enumerate}
       \item The mappings
       \begin{align}
     &      \Pi_{dBR}|_{\cL_{scc}} \colon \cL_{scc} \to \cH(K),
     \notag \\
      & \Pi_{dBR}|_{\cL_{k} \ominus (\cL_{k} \cap
           (\cK_{\text{scat-min}})^{\perp})} \colon \cL_{k} \ominus
(\cL_{k} \cap
           (\cK_{\text{scat-min}})^{\perp}) \to \cH(K_{k}),
           \notag \\
   & \Pi_{dBR}|_{\cL{sscc}} \colon \cL_{sscc} \to \cH(K'), \notag \\
    & \Pi_{dBR}|_{ \cU_{k} \cL_{k} \ominus
      (\cU_{k} \cL_{k} \cap  (\cK_{\text{scat-min}})^{\perp})}
      \colon  \notag \\
      & \qquad \qquad  \cU_{k} \cL_{k} \ominus  (\cU_{k} \cL_{k} \cap
(\cK_{\text{scat-min}})^{\perp})
      \to \cH(K'_{k})
      \label{unitarymaps}
\end{align}
are all unitary between the indicated spaces.

\item The overlapping space (see \eqref{overlapping} above)
  $$
 \boldsymbol{\mathcal L}: = \cL(K_{1}, \dots, K_{d})
 $$
 is trivial (equal to $\{0\}$) if and only if one (or, equivalently,
 both) of the subspaces
 $ \cL \cap (\cK_{\text{scat-min}})^{\perp}$ and $\cL_{scc}$ is/are
 invariant under each of the orthogonal projection operators
 $P_{\cL_{k}} \colon \cL \to \cL_{k}$.

 \item The overlapping space
 $$ \boldsymbol{\cL}' : = \cL(K'_{1}, \dots, K'_{d})
 $$
 is trivial if and only if one (or, equivalently, both) of the
 subspaces $\cL' \cap (\cK_{\text{scat-min}})^{\perp}$ and
 $\cL_{sscc}$ is/are invariant under each of the orthogonal
 projection operators $P_{\cU_{} \cL_{k}} \colon \cL' \to \cU_{k}
 \cL_{k}$ for $k = 1, \dots, d$.
  \end{enumerate}

    \end{proposition}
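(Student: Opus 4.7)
The plan is to exploit that $\Pi_{dBR}\colon \cK \to \cK^{S}_{dBR}$ is a partial isometry with kernel $(\cK_{\text{scat-min}})^\perp$ (Lemma \ref{L:PidBR} and the discussion preceding it), and that the functional-model formula \eqref{PidBRU} identifies $\Pi^{dBR}_{U}$ with $\Pi_{dBR}$ restricted to the state space, after the identification $i_{\cH}\colon \cH \to \cL$. For $\xi \in \cL_k$ with $h = i_{\cH_k}^{*}(\xi)$, I would first verify that \eqref{PidBRU} collapses to $\Pi_{dBR}(\xi) = H_k(z)\,h$, using $P_k h = h$ together with the scalar commutation $Z_{\diag}(z)^{-1}P_k = z_k^{-1}P_k$. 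The image is then $\{H_k(z)\,h \colon h \in \cH_k\} = \cH(K_k)$ by Theorem \ref{T:FRKHS}(2), and the pullback norm defining $\cH(K_k)$ coincides with the quotient norm on $\cH_k \ominus \ker H_k$, yielding the second claimed unitary. Summing over $k$ gives $\Pi_{dBR}|_{\cL} = M_{H(\cdot)}\circ i_{\cH}^{*}$ with $H(z) = \row_k [H_k(z)]$ and $H(z)H(w)^{*} = K(z,w)$; restriction to the orthogonal complement $\cL_{scc}$ of the kernel $\cL \cap (\cK_{\text{scat-min}})^{\perp}$ produces the first unitary onto $\cH(K)$. The two shifted variants then follow from the intertwining $\Pi_{dBR}\,\cU_k = M_{z_k}\,\Pi_{dBR}$ (immediate from $\cU^{S}_{dBR,k} = M_{z_k}$), giving $\Pi_{dBR}(\cU_k \xi) = z_k H_k(z)\,h \in z_k \cH(K_k) = \cH(K'_k)$, with kernel $\cU_k \cL_k \cap (\cK_{\text{scat-min}})^{\perp}$.

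For parts (2) and (3) I would transport Proposition \ref{P:overlapping} across the unitaries of Part~(1). Write temporarily $\cL_k^{\circ} := \cL_k \ominus (\cL_k \cap (\cK_{\text{scat-min}})^{\perp}) \cong \cH(K_k)$. Because the $\cL_k$ (and hence the $\cL_k^{\circ}$) are mutually orthogonal in $\cK$, the external sum $\widehat\bigoplus_k \cL_k^{\circ}$ embeds isometrically in $\cL$ as the internal orthogonal sum $\bigoplus_k \cL_k^{\circ}$; composing this embedding with $P_{\cL_{scc}}$ realizes the sum map $\mathbf{s}\colon \widehat\bigoplus_k \cH(K_k) \to \cH(K)$ of Proposition \ref{P:overlapping}. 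Consequently $\boldsymbol{\cL} = \ker \mathbf{s}$ is isometrically isomorphic to the space of tuples $(\ell_1,\dots,\ell_d) \in \widehat\bigoplus_k \cL_k^{\circ}$ whose internal sum $\sum_k \ell_k$ lies in $\cL \cap (\cK_{\text{scat-min}})^{\perp}$.

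The chief technical point will be showing that this last space vanishes iff $\cL \cap (\cK_{\text{scat-min}})^{\perp}$ is $P_{\cL_k}$-invariant for each $k$. If invariance holds, then $\cL \cap (\cK_{\text{scat-min}})^{\perp} = \bigoplus_k (\cL_k \cap (\cK_{\text{scat-min}})^{\perp})$, which forces each component $\ell_k$ in such a tuple to lie in $\cL_k^{\circ} \cap \cL_k \cap (\cK_{\text{scat-min}})^{\perp} = \{0\}$. Conversely, if invariance fails, there is some $m = \sum_k m_k \in \cL \cap (\cK_{\text{scat-min}})^{\perp}$ with $m_{k_0} \notin \cL_{k_0} \cap (\cK_{\text{scat-min}})^{\perp}$; splitting $m_k = m_k' + m_k''$ along $\cL_k = (\cL_k \cap (\cK_{\text{scat-min}})^{\perp}) \oplus \cL_k^{\circ}$ produces a nonzero tuple $(m_1'',\dots,m_d'')$ whose internal sum $m - \sum_k m_k'$ still lies in $\cL \cap (\cK_{\text{scat-min}})^{\perp}$, yielding a nonzero element of $\boldsymbol{\cL}$. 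The equivalent formulation in terms of $\cL_{scc}$ then follows from self-adjointness of $P_{\cL_k}$ on $\cL$, since a closed subspace of $\cL$ is $P_{\cL_k}$-invariant iff its orthogonal complement in $\cL$ is. Part (3) is obtained by running this argument verbatim with $\cL$, $\cL_k$, $\cL_k^{\circ}$, $K_k$ replaced by $\cL'$, $\cU_k \cL_k$, $\cU_k \cL_k \ominus (\cU_k \cL_k \cap (\cK_{\text{scat-min}})^{\perp})$, $K'_k$ respectively, using the orthogonality of the $\cU_k \cL_k$ (immediate from the CDP \eqref{CDP}) and the shifted identification from Part~(1).
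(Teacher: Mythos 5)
Your proposal is correct and follows essentially the same route as the paper: part (1) by identifying $\Pi_{dBR}$ restricted to the state-space pieces with multiplication by $H_k(z)$ (resp.\ $z_kH_k(z)$) and invoking the lifted/pullback-norm description of $\cH(K_k)$, $\cH(K)$, $\cH(K'_k)$, $\cH(K')$ from Theorem \ref{T:FRKHS}, and parts (2)--(3) by translating triviality of the overlapping spaces, via $\ker \Pi_{dBR} = (\cK_{\text{scat-min}})^{\perp}$, into invariance of $\cL \cap (\cK_{\text{scat-min}})^{\perp}$ (equivalently $\cL_{scc}$) under the projections $P_{\cL_k}$. Your only deviations are cosmetic: you argue the invariance equivalence elementwise and replace the paper's projection-algebra manipulation by the remark that invariance under the self-adjoint $P_{\cL_k}$ passes to orthogonal complements.
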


  \begin{proof} The unitarity of each of the four maps
      \eqref{unitarymaps} can be seen as a direct consequence of the
      last assertion in Theorem \ref{T:FRKHS}.

     By definition (see \eqref{overlapping}), the overlapping space
     $\boldsymbol{\cL}:  = \cL(K_{1}, \dots, K_{d})$ is trivial if
and only if
     \begin{equation}  \label{criterion1}
     h_{k} \in \cH(K_{k}),\, h_{1} + \cdots + h_{d} = 0
     \Rightarrow h_{k} = 0 \text{ for } k = 1, \dots, d.
     \end{equation}
     Using that $\cH(K_{k}) = \operatorname{im} \Pi_{dBR}|_{\cL_{k}}$
     and that $\cH(K) = \operatorname{im} \Pi_{dBR}|_{\cL}$ with
     lifted norm in each case, we see that  \eqref{criterion1}
     translates to
     $$ \ell_{k} \in \cL_{k},\, \ell_{1} + \cdots + \ell_{d} \in \cL
     \cap (\cK_{\text{scat-min}})^{\perp} \Rightarrow \ell_{k}
     \in \cL_{k} \cap (\cK_{\text{scat-min}})^{\perp} \text{ for } k =
     1, \dots, d.
     $$
     This in turn is just the assertion that $\cL \cap
     (\cK_{\text{scat-min}})^{\perp}$ is invariant under each
     $P_{\cL_{k}}$, i.e., for $Q =
     P_{ \cL \cap (\cK_{\text{scat-min}})^{\perp}}$, we have
     \begin{equation}  \label{know1'}
     P_{\cL_{k}} Q = Q P_{\cL_{k}} Q.
  \end{equation}
  Note that $Q = I - P$ where we set $P =
  P_{\cL_{scc}}$ (and all operators are considered as acting on
$\cL$).
  Substituting this expression into
  \eqref{know1'} gives
  $$
   P_{\cL_{k}}(I - P) = (I - P) P_{\cL_{k}} (I - P)
   $$
  which simplifies to
  $$
    P P_{\cL_{k}} = P P_{\cL_{k}} P.
  $$
  Taking adjoints finally gives
  $$
    P_{\cL_{k}} P = P P_{\cL_{k}}P,
  $$
  i.e., that $\cL_{scc}$ is invariant under each
  $P_{\cL_{k}}$ as well.  The steps are reversible, so we see that
  one of $\cL_{scc}$ and $\cL \ominus \cL_{scc} = \cL \cap
  (\cK_{\text{scat-min}})^{\perp}$ is invariant under each
  $P_{\cL_{k}}$ if and only if the other is as well.

  The third assertion in Proposition \ref{P:Aglerdecomcc} is proved
  in a completely analogous way by noting the triviality of the
  overlapping space $\boldsymbol{\cL}' : = \cL(K'_{1}, \dots,
  K'_{d})$ is characterized by
  \begin{align*}
  &  \ell'_{k} \in \cU_{k} \cL_{k}, \, \ell'_{1} + \cdots + \ell'_{d}
\in
  \cL' \cap (\cK_{\text{scat-min}})^{\perp}  \Rightarrow
  \ell'_{k}  \in   \cU_{k} \cL_{k} \cap
  (\cK_{\text{scat-minus}})^{\perp}  \\
  & \qquad \qquad  \text{ for } k = 1, \dots, d.
  \end{align*}

  \end{proof}

  A consequence of the next result is that we may drop the term
  ``shifted strictly closely connected'' for GR-unitary colligations
since it is equivalent to
  ``strictly closely connected''.

  \begin{theorem}  \label{T:scc&sscc}
      Suppose that we are given a GR-unitary colligation
      \eqref{colligation-free} with associated transfer function
      $S(z)$ and augmented Agler decomposition \eqref{augAglerdecom}
      as in Proposition \ref{P:Aglerdecomcc}.  Then
      the following are equivalent:
      \begin{enumerate}
      \item $\cH_{scc} =  \cH_{cc}$.

          \item $\cH_{sscc} = \cH_{cc}$.

      \item $\cH_{scc}$ and $\cH_{sscc}$ are invariant under $P_{k}$
      for $k = 1, \dots, d$.

      \item The augmented Agler decomposition is  strictly closely
      connected (see Definition \ref{D:augAglerdecom-scc}).
  \end{enumerate}
  \end{theorem}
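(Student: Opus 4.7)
The equivalence $(4)\Leftrightarrow(3)$ is essentially immediate from Proposition \ref{P:Aglerdecomcc}: the augmented Agler decomposition is strictly closely connected if and only if both overlapping spaces $\boldsymbol{\cL}$ and $\boldsymbol{\cL}'$ are trivial, and parts (2) and (3) of that Proposition translate these two conditions---via the unitary identifications $\cL_{scc}=i_\cH(\cH_{scc})$ and the analogous one for $\cL_{sscc}$, under which $P_{\cL_k}$ and $P_{\cU_k\cL_k}$ correspond to $P_k$ acting on $\cH$---respectively into the $P_k$-invariance of $\cH_{scc}$ and $\cH_{sscc}$ for all $k$.

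For $(3)\Rightarrow(1)$, I would show the non-trivial inclusion $\cH_{cc}\subseteq\cH_{scc}$ by verifying that $\cH_{scc}$ contains $\operatorname{im}B+\operatorname{im}C^*$ (automatic: $C^*e_*=(A^{*\ba})^0 C^*e_*$ is a generator, and $Be=\sum_k P_kBe$ is a sum of generators of the second type with $n=0$) and is invariant under $A$, $A^*$, and each $P_k$. The $P_k$-invariance is the hypothesis. For $A^*$-invariance on first-type generators $(A^{*\ba})^nC^*\cE_*$ and $A$-invariance on second-type generators $(A^\ba)^nP_kB\cE$, I would use the identities
\[
A^*(A^{*\ba})^n=\sum_{j=1}^d P_j(A^{*\ba})^{n+e_j}, \qquad A(A^\ba)^n=\sum_{j=1}^d P_j(A^\ba)^{n+e_j},
\]
obtained from the Givone--Roesser functional calculus (Section \ref{S:inf1}) by inserting $A^*=\sum_jP_jA^*$ or $A=\sum_jP_jA$ on the left, combined with the $P_j$-invariance of $\cH_{scc}$. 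The analogous argument, with the roles of the two spaces interchanged, yields $(3)\Rightarrow(2)$.

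The main obstacle lies in the ``cross'' cases: $A$ applied to first-type generators and $A^*$ applied to second-type generators. These are initialized by the unitary identities $AC^*=-BD^*\in\operatorname{im}B\subseteq\cH_{scc}$ and $A^*B=-C^*D\in\operatorname{im}C^*\subseteq\cH_{scc}$, and extended to higher generators by induction on $|n|$. The simultaneous $P_k$-invariance of both $\cH_{scc}$ and $\cH_{sscc}$ (the joint content of $(3)$) is essential here: it permits extraction of individual monomial generators $P_{i_N}A^*\cdots P_{i_1}A^*C^*e_*$ and $P_{i_N}A\cdots P_{i_1}A P_kBe$ from the summed generators, while the unitary relations $A^*A=I-C^*C$ and $AA^*=I-BB^*$ are used to absorb the residual cross terms back into $\cH_{scc}$.

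Finally, for $(1)\Rightarrow(3)$ (and $(2)\Rightarrow(3)$ symmetrically), from $(1)$ the space $\cH_{scc}=\cH_{cc}$ is automatically $P_k$-invariant, and an analogous induction---now leveraging the full $A$-, $A^*$-, $P_k$-invariance of $\cH_{cc}$ granted by $(1)$, together with the same unitary identities and the fact that $\cH_{sscc}$ already contains $\operatorname{im}B+\operatorname{im}C^*$---shows that $\cH_{sscc}=\cH_{cc}$ as well, hence is $P_k$-invariant, yielding $(3)$. This argument is consistent with the $U\mapsto U^*$ duality read off from \eqref{minimalsubspaces}, namely $\cH_{cc}(U^*)=\cH_{cc}(U)$ while $\cH_{scc}(U^*)=\cH_{sscc}(U)$ and $\cH_{sscc}(U^*)=\cH_{scc}(U)$: statements $(1)$ and $(2)$ are interchanged by $U\mapsto U^*$ while $(3)$ is fixed, in harmony with the overall equivalence $(1)\Leftrightarrow(2)\Leftrightarrow(3)\Leftrightarrow(4)$.
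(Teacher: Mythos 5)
Your reduction of (4)$\Leftrightarrow$(3) to Proposition \ref{P:Aglerdecomcc} is exactly the paper's first step, and your treatment of the ``same-direction'' invariances (that $A^*$ maps the generators $(A^{*\ba})^nC^*e_*$ into $\cH_{scc}$ and $A$ maps $(A^{\ba})^nP_kBe$ into $\cH_{scc}$, given $P_k$-invariance of $\cH_{scc}$, via $A^*(A^{*\ba})^n=\sum_jP_j(A^{*\ba})^{n+\be_j}$) is correct. The gap is in the cross cases, which are the real content of (3)$\Rightarrow$(1). The mechanism you invoke --- that $P_k$-invariance of $\cH_{scc}$ and $\cH_{sscc}$ ``permits extraction of individual monomial generators'' --- does not work: applying $P_k$ to $(A^{*\ba})^nC^*e_*$ only isolates the sum of the words with leading letter $k$, and no combination of outside projections separates an individual word $P_{i_N}A^*\cdots P_{i_1}A^*C^*e_*$; membership of such monomials in $\cH_{scc}$ is part of what is being proved, not something you may assume. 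Nor is the inductive step ever exhibited: decomposing by the leading letter gives $A(A^{*\ba})^nC^*e_*=\sum_{j\colon n_j\ge1}AP_jA^*(A^{*\ba})^{n-\be_j}C^*e_*$, and the blocks $AP_jA^*$ are not controlled by the unitary relations (which only give the full sum $AA^*=I-BB^*$), while the vectors $(A^{*\ba})^{n-\be_j}C^*e_*$ change with $j$, so the terms cannot be recombined into a single $AA^*$. The paper's proof of this direction is a different and more delicate argument carried out on the orthogonal complement with generating functions: from $h\perp\cH_{scc}$ and the $P_k$-invariance hypothesis \eqref{know1} one first deduces, using \eqref{observe1} and unitarity, that $Ah$ is orthogonal to $\cH_{sscc}$ and that $A^*h\perp\cH_{scc}$, and only then feeds $Ah$ into the second hypothesis \eqref{know2} (the $P_k$-invariance attached to $\cH_{sscc}$) to conclude $Ah\perp\cH_{scc}$. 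Your sketch contains no substitute for this interleaving of the two invariance hypotheses with the unitary relations.

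The second gap is (1)$\Rightarrow$(3), equivalently (1)$\Leftrightarrow$(2). Under (1) alone you know nothing about $P_k$-invariance of $\cH_{sscc}$ --- that is precisely what must be proved --- so an ``analogous induction leveraging the invariance of $\cH_{cc}$'' has nothing to run on: to get $\cH_{cc}\subseteq\cH_{sscc}$ you would need closure of $\cH_{sscc}$ under $A$, $A^*$ and the $P_k$, which is the conclusion, not a hypothesis. (Your $U\mapsto U^*$ duality only interchanges statements (1) and (2); it cannot by itself show they are equivalent.) The paper handles this by a completely different device: after reducing to the closely connected case, conditions (1) and (2) become injectivity of the maps $\Pi^{dBR}_U$ and $\Pi^{dBR\prime}_U$ of \eqref{PidBRU} and \eqref{PidBRU'}, and the identity $\begin{bmatrix}\Pi^{dBR}_{U}&\sbm{S(z)\\ I}\end{bmatrix}=\begin{bmatrix}\Pi^{dBR\prime}_{U}&\sbm{I\\ S(z)^*}\end{bmatrix}U$ --- a coordinate form of \eqref{CDP} --- together with the orthogonality of $\operatorname{im}\Pi^{dBR}_U$ to $\sbm{S(z)\\ I}\cE$ transfers triviality of one kernel to the other through the unitary $U$. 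Some argument of this kind (or a worked-out replacement) is required; as written, both of the theorem's nontrivial implications are asserted rather than proved.
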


  \begin{proof}
     We first note that (3) $\Leftrightarrow$ (4) amounts to the
     content of Proposition \ref{P:Aglerdecomcc}.  Thus it remains
only to
     prove (1) $\Leftrightarrow$ (2) $\Leftrightarrow$ (3).

      \textbf{Proof of (1) $\Leftrightarrow$ (2):}
      Without loss of generality we may assume that the unitary
          colligation $U = \sbm{A & B \\ C & D }$ is closely connected,
as
          cutting down to the closely connected part has no effect on the
          Agler decomposition or on the strictly closely connected and
          shifted strictly closely connected parts.
          Then we have the equivalences
          \begin{align*}
          & \cH_{scc} = \cH_{cc}  \Leftrightarrow
          \ker \Pi^{dBR}_{U} = \{0\},
          & \cH_{sscc} = \cH_{cc} \Leftrightarrow
          \ker \Pi^{dBR \prime}_{U} = \{0\}
          \end{align*}
where $\Pi^{dBR}_{U}$ and $\Pi^{dBR'}_{U}$ are as in
\eqref{PidBRU} and \eqref{PidBRU'}.
          As $\operatorname{im} \Pi^{dBR}_{U} = \cL^{S}_{dBR} \subset
          \cK^{S,\bbZ^{d}_{+}}_{\cV,dBR}$ is orthogonal to
$\cF^{S}_{dBR} =
          \sbm{S(z) \\ I } \cE$, we see that
          $$
          \begin{bmatrix} \ker \Pi^{dBR}_{U} \\ \{0\} \end{bmatrix} =
    \ker \begin{bmatrix} \Pi^{dBR}_{U} & \sbm{ S(z) \\ I} \end{bmatrix}
          \subset \begin{bmatrix} \cH \\ \cE \end{bmatrix}.
          $$
       and similarly
       $$
       \begin{bmatrix} \ker \Pi^{dBR \prime}_{U} \\ \{0\}
\end{bmatrix} =
       \ker \begin{bmatrix} \Pi^{dBR \prime}_{U} & \sbm{ I \\ S(z)^{*}}
       \end{bmatrix} \subset \begin{bmatrix} \cH \\ \cE_{*}
\end{bmatrix}.
       $$
       On the other hand a direct calculation shows that
       $$
    \begin{bmatrix}
        \Pi^{dBR}_{U} & \sbm{ S(z) \\ I } \end{bmatrix} =
        \begin{bmatrix} \Pi^{dBR \prime}_{U} &
        \sbm{I \\ S(z)^{*}} \end{bmatrix} U,
       $$
       i.e.,
       \begin{align*}
    & \begin{bmatrix}
       C(I - Z_{\diag}(z)A)^{-1}  & S(z)  \\
       B^{*}(I - Z_{\diag}(z)^{-1}A^{*})^{-1}  Z_{\diag}(z)^{-1}
     & I\end{bmatrix}  \\ & \qquad \qquad  =
      \begin{bmatrix} C(I - Z_{\diag}(z)A)^{-1} Z_{\diag}(z) & I \\
         B^{*}(I - Z_{\diag}(z)^{-1}A^{*})^{-1}  & S(z)^{*} \end{bmatrix}
      \cdot \begin{bmatrix} A & B \\ C & D \end{bmatrix}.
       \end{align*}
       In fact, this identity amounts to a coordinate form of the
property \eqref{CDP}
       $$
    \cL_{1} \oplus \cdots \oplus \cL_{d} \oplus \cF =
    \cU_{1} \cL_{1} \oplus \cdots \oplus \cU_{d} \cL_{d} \oplus \cF_{*}.
       $$
       We conclude that $\ker \begin{bmatrix} \Pi^{dBR}_{U} & \sbm{
S(z) \\ I }
       \end{bmatrix} = \{0\}$ if and only if $\ker \begin{bmatrix}
       \Pi^{dBR \prime}_{U} & \sbm{ I \\ S(z)^{*}} \end{bmatrix} =
\{0\}$ and the
       equivalence of (1) and (2) in Theorem \ref{T:scc&sscc} follows.

       \textbf{Proof of (1) or (2) $\Leftrightarrow$ (3):}
       Assume (1)  and hence also (2).  From (1) we see in particular
       that $\cH_{scc}$ is invariant under each $P_{k}$ while from (2)
       we see in particular that $\cH_{sscc}$ is invariant under each
       $P_{k}$ and (3) follows.

       Conversely, assume (3) or equivalently, {\em both $\cH
       \ominus \cH_{scc}$ and $\cH \ominus \cH_{sscc}$ are invariant
       under $P_{k}$ for $k = 1, \dots, d$}.  This version of
       condition (3) can be expressed as the simultaneous validity of
       the two conditions:
       \begin{align}
&  \begin{bmatrix} C (I - Z_{\diag}(z)A)^{-1} \\
B^{*}(I - Z_{\diag}(z)^{-1}A^{*})^{-1}Z_{\diag}(z)^{-1}
\end{bmatrix} h = 0
           \Rightarrow  \notag \\
        & \qquad   \begin{bmatrix} C(I - Z_{\diag}(z)A)^{-1} \\
    B^{*}(I - Z_{\diag}(z)^{-1}A^{*})^{-1}Z_{\diag}(z)^{-1}
\end{bmatrix}
    P_{k}h = 0  \text{ for each } k = 1, \dots, d,\label{know1''}
    \end{align}
    along with
      \begin{align}
 & \begin{bmatrix} C(I - Z_{\diag}(z)A)^{-1} Z_{\diag}(z) \\
       B^{*}(I - Z_{\diag}(z)^{-1}A^{*})^{-1} \end{bmatrix}
       h = 0
       \Rightarrow  \notag \\
  & \quad      \begin{bmatrix} C(I - Z_{\diag}(z)A)^{-1} Z_{\diag}(z)
\\
         B^{*}(I - Z_{\diag}(z)^{-1}A^{*})^{-1} \end{bmatrix}
         P_{k}h =  0 \text{ for each } k = 1, \dots, d. \label{know2''}
 \end{align}
Note that the second expression in \eqref{know1''} can be
reexpressed as
$$
  \begin{bmatrix} C(I - Z_{\diag}(z)A)^{-1} \\
    B^{*}(I - Z_{\diag}(z)^{-1}A^{*})^{-1}Z_{\diag}(z)^{-1}
\end{bmatrix}
    P_{k}h = \begin{bmatrix} C(I - Z_{\diag}(z)A)^{-1} \\
      z_{k}^{-1} B^{*}(I - Z_{\diag}(z)^{-1}A^{*})^{-1} \end{bmatrix}
       P_{k}h
  $$
and hence \eqref{know1''} can be rewritten as
\begin{align}
    &  \begin{bmatrix} C (I - Z_{\diag}(z)A)^{-1} \\
B^{*}(I - Z_{\diag}(z)^{-1}A^{*})^{-1}Z_{\diag}(z)^{-1}
\end{bmatrix} h = 0
           \Rightarrow  \notag \\
        & \qquad   \begin{bmatrix} C(I - Z_{\diag}(z)A)^{-1} \\
    B^{*}(I - Z_{\diag}(z)^{-1}A^{*})^{-1} \end{bmatrix}
    P_{k}h = 0 \text{ for each } k = 1, \dots, d. \label{know1}
    \end{align}
 Similarly, the condition \eqref{know2''} can be rewritten as
 \begin{align}
      & \begin{bmatrix} C(I - Z_{\diag}(z)A)^{-1} Z_{\diag}(z) \\
       B^{*}(I - Z_{\diag}(z)^{-1}A^{*})^{-1} \end{bmatrix}
       h = 0
       \Rightarrow  \notag \\
  & \quad      \begin{bmatrix} C(I - Z_{\diag}(z)A)^{-1}  \\
         B^{*}(I - Z_{\diag}(z)^{-1}A^{*})^{-1} \end{bmatrix}
         P_{k}h =  0 \text{ for each } k = 1, \dots, d. \label{know2}
 \end{align}

To prove (1), it suffices to show that $\cH \ominus \cH_{scc}$ is
invariant also under $A$ and $A^{*}$, or in detail
\begin{align}
    &  \begin{bmatrix} C (I - Z_{\diag}(z)A)^{-1} \\
    B^{*}(I - Z_{\diag}(z)^{-1}A^{*})^{-1}Z_{\diag}(z)^{-1}
\end{bmatrix} h = 0
    \Rightarrow \notag \\
    & \qquad
    \begin{bmatrix} C (I - Z_{\diag}(z)A)^{-1} \\
    B^{*}(I - Z_{\diag}(z)^{-1}A^{*})^{-1}Z_{\diag}(z)^{-1}
    \end{bmatrix} x = 0 \text{ for } x = Ah \text{ and } x = A^{*}h.
    \label{toshow1'}
 \end{align}
 By \eqref{know1} we can assume instead that
 \begin{equation}  \label{know3}
     \begin{bmatrix} C(I - Z_{\diag}(z)A)^{-1} \\
          B^{*}(I - Z_{\diag}(z)^{-1}A^{*})^{-1} \end{bmatrix}
          P_{k}h = 0 \text{ for } k = 1, \dots, d.
  \end{equation}
Note that
  \begin{equation}  \label{observe1}
  C(I - Z_{\diag}(z)A)^{-1} Z_{\diag}(z) A h = -Ch + C(I -
  Z_{\diag}(z)A)^{-1} h.
  \end{equation}
  From \eqref{know3} we have in particular that $C(I -
  Z_{\diag}(z)A)^{-1} h = 0$ and hence also the constant term $Ch$
  vanishes.  These observations combined with the identity
  \eqref{observe1} tell us that
  \begin{equation*}
      C(I - Z_{\diag}(z) A)^{-1} Z_{\diag}(z) A h = 0.
   \end{equation*}
   We now have the hypothesis for the implication \eqref{know2} with
   $Ah$ in place of $h$.  From \eqref{know2} we therefore conclude
   that $C(I - Z_{\diag}(z)A)^{-1} Ah = 0$ and
   the top component of \eqref{toshow1'} is verified for the case
   $x = Ah$.

   We next note that
   \begin{align*}
   & B^{*}(I - Z_{\diag}(z)^{-1}A^{*})^{-1} A h  =
      B^{*}Ah + B^{*}(I - Z_{\diag}(z)^{-1} A^{*})^{-1}
      Z_{\diag}(z)^{-1} A^{*}A h \notag \\
      & \qquad = -D^{*}C h + B^{*}(I - Z_{\diag}(z)^{-1}A^{*})^{-1}
      Z_{\diag}(z)^{-1} (I - C^{*}C) h \notag \\
      &\qquad  = -D^{*}C h - B^{*}(I - Z_{\diag}(z)^{-1} A^{*})^{-1}
      Z_{\diag}(z)^{-1} C^{*}Ch  \notag \\
      & \qquad = 0
    \end{align*}
    where we used the fact that $U$ is unitary and again
     the top component of \eqref{know3}.  We have
    now verified the bottom component of \eqref{toshow1'} for the
    case $x = Ah$.

    We next observe, again using that $U$ is unitary,
    \begin{align}
   & C(I - Z_{\diag}(z)A)^{-1} A^{*} h =
  C A^{*}h + C (I - Z_{\diag}(z)A)^{-1} Z_{\diag}(z) AA^{*}h \notag \\
  & \qquad =
  -DB^{*}h + C (I - Z_{\diag}(z) A)^{-1} Z_{\diag}(z) (I - B B^{*})h
  \notag \\
  & \qquad
  = [-D - C (I - Z_{\diag}(z) A)^{-1} Z_{diag}(z) B] B^{*}h + C (I -
  Z_{\diag}(z) A)^{-1} Z_{\diag}(z)h.
  \label{observe3}
  \end{align}
  From the bottom component of \eqref{know3} we read off that
  $B^{*}h = 0$ and hence the first term of \eqref{observe3} vanishes.
  The vanishing of the second term follows from the top component of
  \eqref{know3}.  In this way we have verified the validity of the top
  component of \eqref{toshow1'} for the case $x = A^{*}h$.

  Next observe that
  \begin{equation*}
    B^{*}(I - Z_{\diag}(z)^{-1}A^{*})^{-1} Z_{\diag}(z)^{-1}A^{*}h =
  -B^{*}h + B^{*} (I - Z_{\diag}(z)^{-1} A^{*})^{-1}h.
  \end{equation*}
  The vanishing of this quantity follows from the bottom component of
  \eqref{know3}.  We have now verified the bottom component of
  \eqref{toshow1'} for the case $x = A^{*}h$.  We now have completed
  the proof of Theorem \ref{T:scc&sscc}.
 \end{proof}

  \begin{remark} \label{R:Hscc-Hsscc}
      An open question is whether it can happen that
  $\cH_{scc}$ is invariant under the projections $P_{k}$ ($k = 1,
  \dots, d$) without also $\cH_{sscc}$ being invariant under $P_{k}$
  for each $k$. An
  equivalent version of the question is whether it can happen,
  for a given augmented Agler decomposition
  \eqref{augAglerdecom}, that the collection of subspaces
   $\{ \cH(K_{1}), \dots, \cH(K_{d}) \}$ has no overlapping while
   $\{z_{1}\cH(K_{1}), \dots, z_{d}\cH(K_{d})\}$ has nontrivial
   overlapping, or vice versa.  Note that we have defined the
   augmented Agler decomposition \eqref{augAglerdecom} to be {\em
   strictly closely connected} when both collections $\{\cH(K_{1}),
   \dots, \cH(K_{d})\}$ and $\{ z_{1} \cH(K_{d}), \dots,
   z_{d}\cH(K_{d})\}$ have trivial overlapping.
   Another
  equivalent version of the open question is whether $\cH_{scc}$
being invariant under
  each $P_{k}$ implies that $\cH_{scc} = \cH_{cc}$;  the work in the
  proof of Theorem \ref{T:scc&sscc} shows that $\cH_{scc} = \cH_{cc}$
  if we assume that both $\cH_{scc}$ and $\cH_{sscc}$ are invariant
  under each $P_{k}$.
  \end{remark}

  The analysis in Proposition \ref{P:Aglerdecomcc}  dissects the
interplay between the two
  decompositions $\cL = \cL_{1} \oplus \cdots \oplus \cL_{d}$ and the
  decomposition $\cK = \cK_{\text{scat-min}} \oplus
  (\cK_{\text{scat-min}})^{\perp}$ of the ambient space of the
  scattering system.   A similar analysis can be done for the
  decomposition \eqref{Omegadecom} of the scattering space
  $\cV^{\Omega}$ associated with any shift-invariant sublattice
  $\Omega$.  One distinguishing feature of this situation versus that
  in Proposition \ref{P:Aglerdecomcc} is that we always have that
  $(\cK_{\text{scat-min}})^{\perp} \subset \cV^{\Omega}$; indeed from
  $$
  \widetilde \cW_{*} + \widetilde \cW \supset \bigoplus_{n \in
  {\mathbb Z}^{d} \setminus \Omega} \cU^{n}\cF_{*} \oplus
  \bigoplus_{n \in \Omega} \cU^{n} \cF = (\cV^{\Omega})^{\perp},
  $$
  we see that
  $$
  (\cK_{\text{scat-min}})^{\perp}: = (\widetilde \cW_{*} + \widetilde
  \cW)^{\perp} \subset \cV^{\Omega}.
  $$
  Hence we  have the simplifications
  \begin{equation}  \label{simplify}
  \cV^{\Omega} \cap (\cK_{\text{scat-min}})^{\perp} =
  (\cK_{\text{scat-min}})^{\perp}, \qquad
  \cV^{\Omega} \ominus (\cV^{\Omega} \cap
  (\cK_{\text{scat-min}})^{\perp}) = \cV^{\Omega} \cap
  \cK_{\text{scat-min}}.
  \end{equation}
  The following summarizes the situation.

  \begin{proposition}  \label{P:scatspacecc}
      Suppose that $U$ is a GR-unitary colligation embedded in a
      multievolution scattering system as in Theorem \ref{T:BabyBear}
      with associated transfer function $S$ and augmented Agler
      decomposition \eqref{augAglerdecom}.
      \begin{enumerate}
      \item The mappings
      \begin{align*}
        &  \Pi_{dBR}|_{\cV^{\Omega}} \colon \cV^{\Omega} \cap
          \cK_{\text{scat-min}} \to \cH(K_{\cV^{S,\Omega}_{dBR}}), \\
       & \Pi_{dBR}|_{\chi} \colon \chi \ominus (\chi \cap
       (\cK_{\text{scat-min}})^{\perp}) \to \cH(K_{\chi^{S}_{dBR}})
       \end{align*}
       are all unitary between the indicated spaces. Here $\chi$
       is any one of the direct summand spaces
       \begin{align}
          & \chi = \cU^{n} \cL_{k} \text{ for } n \in \partial_{k}
          \Omega_{\text{fin}}, \label{DS1} \\
        &   \chi = (\widehat \cU^{k})^{\widehat{n'}^{k}} \cM_{k} \text{
          for } \ell_{n',k} \in \partial_{k}
          \Omega_{+\infty},  \label{DS2} \\
          & \chi = (\widehat \cU^{k})^{\widehat{n''}^{k}} \cM_{*k}
          \text{ for } \ell_{n'',k} \in \partial_{k} \Omega_{-\infty}
          \label{DS3}
        \end{align}
(with $k = 1, \dots, d$) in the orthogonal decomposition
\eqref{Omegadecom} with associated formal positive kernels
\begin{align*}
      & K_{\chi^{S}_{dBR}}(z,w) = z^{n}K_{k}(z,w) w^{-n} \text{ if }
\chi
      \text{ is as in \eqref{DS1},} \\
      & K_{\chi^{S}_{dBR}}(z,w) = (\widehat z^{k})^{\widehat{n'}^{k}}
      \sbm{ K^{11}_{-\infty,k}(z,w) & 0 \\ 0 & 0 }
    (\widehat w^{k})^{ -\widehat{n'}^{k} } \text{ if } \chi
      \text{ is as in \eqref{DS2}, and} \\
       & K_{\chi^{S}_{dBR}} = (\widehat z^{k})^{\widehat{n''}^{k}}
      \sbm{0 & 0 \\ 0 &  K^{11}_{+\infty,k}(z,w)  } (\widehat
      w^{k})^{- \widehat{n''}^{k} } \text{ if } \chi
      \text{ is as in \eqref{DS3}.}
  \end{align*}

 \item  Let
      $\boldsymbol{\cL}_{\cV^{\Omega}}$ be the overlapping space
      associated with the decomposition \eqref{Omegadecom} of
      $\cV^{\Omega}$.  Then the following are equivalent:
      \begin{enumerate}
      \item $\boldsymbol{\cL}_{\cV^{\Omega}}$ is trivial.

      \item $\cV^{\Omega} \cap (\cK_{\text{scat-min}})^{\perp} =
      (\cK_{\text{scat-min}})^{\perp}$
      is invariant under  $P_{\chi}$ for each of the subspaces $\chi$
described in
      \eqref{DS1}, \eqref{DS2}, \eqref{DS3}.

      \item $\cV^{\Omega}  \cap \cK_{\text{scat-min}}$ is
      invariant under $P_{\chi}$ for each of the subspaces $\chi$
      described in \eqref{DS1}, \eqref{DS2}, \eqref{DS3}.
      \end{enumerate}

  \item Suppose that $\partial_{k}\Omega_{\text{fin}} \ne  \emptyset $
  for each $k = 1, \dots, d$ and that
  $\boldsymbol{\cL}_{\cV^{\Omega}} = \{0\}$.  Then
  $(\cK_{\text{scat-min}})^{\perp}$ is invariant under $P_{\cL_{k}}$
  for $k = 1, \dots, d$.  Conversely, if
  $(\cK_{\text{scat-min}})^{\perp}$ is invariant under $P_{\cL_{k}}$
  for each $k = 1, \dots, d$ and $\Omega$ is any shift-invariant
  sublattice, then $\boldsymbol{\cL}_{\cV^{\Omega}} = \{0\}$.

  \item The augmented Agler decomposition $\{K_{k} \colon k = 1,
  \dots, d\}$ associated with $U$ is minimal (in the sense of
  Definition \ref{D:minAglerdecom}) if and only if
  $(\cK_{\text{scat-min}})^{\perp}$ is invariant under $P_{\cL_{k}}$
  for each $k = 1, \dots, d$.  In this case we have the identity
  \begin{equation}  \label{cc-scatmin'}
      \cV^{\Omega} \ominus \cV^{\Omega}_{cc} =
      (\cK_{\text{scat-min}})^{\perp}
   \end{equation}
   and $U$ is closely connected
  if and only if ${\mathfrak S}(\Sigma(U))$ is scattering-minimal.

\end{enumerate}
  \end{proposition}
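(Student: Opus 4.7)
My plan is to derive Proposition \ref{P:scatspacecc} by exploiting three ingredients simultaneously: the fact that $\Pi_{dBR}$ is a partial isometry with initial space $\cK_{\text{scat-min}}$ and kernel $(\cK_{\text{scat-min}})^{\perp}$; the kernel-sum identity \eqref{dBRkerneldecom} applied in conjunction with the overlapping-space framework of Proposition \ref{P:overlapping}; and the subspace decomposition \eqref{Omegadecom} of $\cV^{\Omega}$ together with the general inclusion $(\cK_{\text{scat-min}})^{\perp} \subset \cV^{\Omega}$ recorded in \eqref{simplify}.

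For part (1), any map of the form $\Pi_{dBR}|_{\cN}$ is the composition of the orthogonal projection $\cN \to \cN \ominus (\cN \cap (\cK_{\text{scat-min}})^{\perp})$ with a unitary onto its image in $\cK^S_{dBR}$, and I would identify the image for each direct summand $\chi$ in \eqref{DS1}--\eqref{DS3} directly from the Fourier-representation formula \eqref{Fourierrep} applied to the admissible-trajectory description of $\cL_k$, $\cM_k$, $\cM_{*k}$. The resulting kernels match $K_{\chi^S_{dBR}}$ by exactly the same computations that produced \eqref{dBRkerneldecom}, and summing over all $\chi$ recovers the formula for $K_{\cV^{S,\Omega}_{dBR}}$.

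For part (2), since \eqref{dBRkerneldecom} exhibits $K_{\cV^{S,\Omega}_{dBR}}$ as a sum of the kernels $K_{\chi^S_{dBR}}$, Proposition \ref{P:overlapping} identifies $\boldsymbol{\cL}_{\cV^{\Omega}}$ with $\ker {\mathbf s}$ for the sum map $\mathbf{s} \colon \widehat\bigoplus_{\chi} \cH(K_{\chi^S_{dBR}}) \to \cH(K_{\cV^{S,\Omega}_{dBR}})$. Transporting this across the unitary identifications of part (1), triviality of $\boldsymbol{\cL}_{\cV^{\Omega}}$ translates to: if $\xi_{\chi} \in \chi \ominus (\chi \cap (\cK_{\text{scat-min}})^{\perp})$ and $\sum_{\chi} \xi_{\chi} \in (\cK_{\text{scat-min}})^{\perp}$, then each $\xi_{\chi}=0$. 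This last condition is precisely $P_{\chi}$-invariance of $(\cK_{\text{scat-min}})^{\perp}$, and the equivalence with $P_{\chi}$-invariance of $\cV^{\Omega} \cap \cK_{\text{scat-min}}$ follows by the $(Q,I-Q)$ algebra already used in the proof of Proposition \ref{P:Aglerdecomcc}.

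For part (3), the forward direction is immediate: when $0 \in \partial_k\Omega_{\text{fin}}$ (which we may arrange by shifting, since $\partial_k\Omega_{\text{fin}} \ne \emptyset$), the space $\cL_k$ itself appears as a summand $\chi$, so $P_{\chi}$-invariance specializes to $P_{\cL_k}$-invariance (after conjugating by a suitable $\cU^n$, which preserves $(\cK_{\text{scat-min}})^{\perp}$ since $\cK_{\text{scat-min}}$ is reducing for $\cU$). For the converse I would propagate $P_{\cL_k}$-invariance of $(\cK_{\text{scat-min}})^{\perp}$ to all the summand projections: $P_{\cU^n\cL_k} = \cU^n P_{\cL_k} \cU^{-n}$ handles the finite-boundary terms, and $P_{\cM_k}$, $P_{\cM_{*k}}$ are handled via the strong limits \eqref{slim}, \eqref{slim*}---a fixed closed subspace invariant under each $P_{\cU_k^m \cL_k}$ remains invariant under their strong limit. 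The main obstacle I anticipate is carefully verifying this strong-limit step for the infinite-boundary projections in the presence of the combined forward/backward asymptotics; this will require using that $(\cK_{\text{scat-min}})^{\perp}$ is reducing for each $\cU_k$.

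For part (4), by Definition \ref{D:minAglerdecom} minimality means strict close connectedness together with triviality of the overlapping space for the decomposition \eqref{dBRkerneldecom}. Proposition \ref{P:Aglerdecomcc}(2) shows that strict close connectedness is equivalent to $\cL \cap (\cK_{\text{scat-min}})^{\perp}$ being $P_{\cL_k}$-invariant for each $k$, and part (3) just established that triviality of $\boldsymbol{\cL}_{\cV^{\Omega}}$ is equivalent to $(\cK_{\text{scat-min}})^{\perp}$ being $P_{\cL_k}$-invariant; since $(\cK_{\text{scat-min}})^{\perp} \cap \cL = (\cK_{\text{scat-min}})^{\perp}$ once the latter is already $P_{\cL_k}$-invariant and the projections span $\cH$, the two conditions collapse into a single one. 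The identity \eqref{cc-scatmin'} then drops out of \eqref{cc-scatmin} from the proof of Corollary \ref{C:implications} combined with the observation that minimality forces the inclusion in \eqref{cc-scatmin} to be an equality (no ``extra'' non-scattering-minimal components beyond the overlap data); the closing equivalence ``$U$ closely connected $\Leftrightarrow$ ${\mathfrak S}(\Sigma(U))$ scattering-minimal'' is then a direct read-off from \eqref{cc-scatmin'}.
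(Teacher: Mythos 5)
Your treatment of parts (1)--(3) is essentially the paper's own argument: parts (1) and (2) are the lifted-norm/overlapping-space translation already carried out in Proposition \ref{P:Aglerdecomcc} (using \eqref{simplify}), and part (3) is proved exactly as in the paper by conjugating with $\cU^{n}$ (using that $(\cK_{\text{scat-min}})^{\perp}$ reduces the evolutions) and then passing to the strong limits \eqref{slim}, \eqref{slim*}. The ``obstacle'' you anticipate in the strong-limit step is not one: if a closed subspace is invariant under each $P_{\cU_k^m\cL_k}$, then for $\xi$ in it $P_{\cM_k}\xi=\lim_m P_{\cU_k^m\cL_k}\xi$ stays in it, and the $\cM_{*k}$ case is identical with \eqref{slim*}.

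Part (4), however, has genuine gaps. First, the assertion ``$(\cK_{\text{scat-min}})^{\perp}\cap\cL=(\cK_{\text{scat-min}})^{\perp}$ once the latter is $P_{\cL_k}$-invariant and the projections span $\cH$'' is false: the projections $P_{\cL_k}$ sum only to $P_{\cL}$, not to $I_{\cK}$, and $(\cK_{\text{scat-min}})^{\perp}$ (which reduces $\cU$ and sits inside $\cV^{\Omega}$) is in general not contained in $\cL=\cL_1\oplus\cdots\oplus\cL_d$ (Example \ref{E:2} illustrates this). The correct bridge is more modest and is what the paper uses: invariance of $(\cK_{\text{scat-min}})^{\perp}$ under each $P_{\cL_k}$ immediately gives invariance of $\cL\cap(\cK_{\text{scat-min}})^{\perp}$ under each $P_{\cL_k}$ and, via $P_{\cU_k\cL_k}=\cU_kP_{\cL_k}\cU_k^{*}$, invariance of $\cL'\cap(\cK_{\text{scat-min}})^{\perp}$ under each $P_{\cU_k\cL_k}$, whence \emph{both} statements (2) and (3) of Proposition \ref{P:Aglerdecomcc} give triviality of $\boldsymbol{\cL}$ \emph{and} $\boldsymbol{\cL}'$; your appeal to Proposition \ref{P:Aglerdecomcc}(2) alone covers only half of strict close connectedness. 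Moreover, the forward direction of (4) uses the forward direction of (3), which requires nonempty finite boundary components; since Definition \ref{D:minAglerdecom} is stated relative to a possibly arbitrary $\Omega$, you need the $\Omega$-independence asserted in Theorem \ref{T:minfuncmodel}(1) to switch to a suitable sublattice, a step you do not mention. Second, for \eqref{cc-scatmin'} the phrase ``minimality forces the inclusion in \eqref{cc-scatmin} to be an equality (no extra components)'' is not an argument: the nontrivial content is precisely the reverse inclusion, and the paper proves it by noting that $P_{\cL_k}$-invariance of $\cK_{\text{scat-min}}$ together with the description of $\cL_{cc}$ in Proposition \ref{P:col-scat}(4) forces $\cL_{cc}\subset\cK_{\text{scat-min}}$, hence $\cV^{\Omega}_{cc}\subset\cK_{\text{scat-min}}$, and taking orthogonal complements inside $\cV^{\Omega}$ yields $\cV^{\Omega}\ominus\cV^{\Omega}_{cc}\supset(\cK_{\text{scat-min}})^{\perp}$. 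Without that step the concluding equivalence ``$U$ closely connected $\Leftrightarrow$ ${\mathfrak S}(\Sigma(U))$ scattering-minimal'' is unsupported.
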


  \begin{proof} Given the validity of the identities in
      \eqref{simplify}, the proofs of statements (1) and  (2) proceed
exactly
      as the proof of Proposition \ref{P:Aglerdecomcc} given above;
      we leave the precise details to the reader.

      Let us suppose now that $\Omega$ is a shift-invariant sublattice
      such that $\partial_{k}\Omega_{\text{fin}} \ne \emptyset$ for
      each $k = 1, \dots, d$ and that $\boldsymbol{\cL}_{\cV^{\Omega}}
      = \{0\}$.   Fix $k \in \{1, \dots, d\}$. By assumption we can
find at least one $n \in
      \partial_{k}\Omega_{\text{fin}}$.  Then $\chi =
      \cU^{n}\cL_{k}$ is one of the direct-summand spaces in
      the list \eqref{DS1}.  Since by assumption
      $\boldsymbol{\cL}_{\cV^{\Omega}} = \{0\}$, by the equivalence
      of (a) and (b) in statement (2) of the proposition, we know that
      $(\cK_{\text{scat-min}})^{\perp}$ is invariant under
      $P_{\cU^{n}\cL_{k}}$.  Since $P_{\cL_{k}} = \cU^{-n}
      P_{\cU^{n} \cL_{k}} \cU^{n}$ and
      $(\cK_{\text{scat-min}})^{\perp}$ is reducing for $\cU_{1},
      \dots, \cU_{d}$, it follows that
      $(\cK_{\text{scat-min}})^{\perp}$ is invariant for
      $P_{\cL_{k}}$ as well.  As $k \in \{1, \dots, d\}$ is
      arbitrary, the first part of statement (3) follows.

      Conversely, suppose that $(\cK_{\text{scat-min}})^{\perp}$ is
      invariant under $P_{\cL_{k}}$ for each $k = 1, \dots, d$
      and that  $\Omega$ is any shift-invariant  sublattice.  Since
      $(\cK_{\text{scat-min}})^{\perp}$ is reducing for $\cU_{1},
      \dots, \cU_{d}$ and $P_{\cU^{n} \cL_{k}} = \cU^{n} P_{\cL_{k}}
      \cU^{-n}$ for any $n \in {\mathbb Z}^{d}$, we see that
      $(\cK_{\text{scat-min}})^{\perp}$ is invariant under
      $P_{\cU^{n} \cL_{k}}$ for any $n \in {\mathbb Z}^{d}$, in
      particular, for $n \in \partial_{k}\Omega_{\text{fin}}$.
      From the characterization of $P_{\cM_{k}}$ via the strong limit
      \eqref{slim} in Theorem \ref{T:BabyBear}, it follows that
      $(\cK_{\text{scat-min}})^{\perp}$ is also invariant under
      $P_{\cM_{k}}$.  Since, for any $n \in {\mathbb Z}^{d}$,
      $$
      P_{(\widehat \cU^{k})^{\widehat{n'}^{k}} \cM_{k}} =
      (\widehat \cU^{k})^{\widehat{n'}^{k}} P_{\cM_{k}}
      (\widehat \cU^{k})^{-\widehat{n'}^{k}}
      $$
      and $(\cK_{\text{scat-min}})^{\perp}$ is reducing for
      $(\widehat \cU^{k})^{\widehat{n'}^{k}}$, it follows that
      $(\cK_{\text{scat-min}})^{\perp}$ is invariant for
     $P_{(\widehat \cU^{k})^{\widehat{n'}^{k}} \cM_{k}}$ as well for
     any $n' \in {\mathbb Z}^{d}$, in particular, for $n'$ such that
     $\ell_{n',k} \in \partial_{k}\Omega_{+\infty}$.  That
     $(\cK_{\text{scat-min}})^{\perp}$ is invariant for
    $ P_{(\widehat \cU^{k})^{\widehat{n''}^{k}} \cM_{*k}}$ for each
    $n''$ such that $\ell_{n'',k} \in \partial\Omega_{-\infty}$
    follows in a similar way by starting with \eqref{slim*} in place
    of \eqref{slim}.  We have now verified condition (b) in statement
    (2);  by statement (2) of the proposition it follows that
    $\boldsymbol{\cL}_{\cV^{\Omega}} = \{0\}$ as wanted. This
    completes the proof of statement (3) in the proposition.

    Suppose that the augmented Agler decomposition $\{K_{k} \colon
k=1,
    \dots, d\}$ associated with $U$ is minimal in the sense of
    Definition \ref{D:minAglerdecom}.  By statement (1) in Theorem
    \ref{T:minfuncmodel}, we know that
    $\boldsymbol{\cL}_{\cV^{\Omega}} = \{0\}$ for {\em any}
    shift-invariant sublattice $\Omega$; in particular we may choose
    $\Omega$ having nonempty finite boundary components.  Then from
    statement (3) of the proposition already proved, we see that
    $(\cK_{\text{scat-min}})^{\perp}$ is invariant under
    $P_{\cL_{k}}$ for each $k = 1, \dots, d$.  Conversely, if
    $(\cK_{\text{scat-min}})^{\perp}$ is invariant under each
    $P_{\cL_{k}}$, then $\boldsymbol{\cL}_{\cV^{\Omega}} = \{0\}$ by
   the converse side of statement (3) above, and both $\cL$ and
   $\cL'$ are trivial by statements (2) and (3) in Proposition
   \ref{P:Aglerdecomcc} from which it follows that both conditions in
   Definition \ref{D:minAglerdecom} are satisfied and $\{K_{k} \colon
   k = 1, \dots, d\}$ is minimal.

   As noted in \eqref{cc-scatmin}, the containment $\subset$ in
   \eqref{cc-scatmin'} always holds; we prove the reverse containment
   under the hypothesis that $(\cK_{\text{scat-min}})^{\perp}$ is
   invariant under $P_{\cL_{k}}$ for $k = 1, \dots, d$.
   Equivalently, the assumption is that $\cK_{\text{scat-min}}$ is
   invariant under each $P_{\cL_{k}}$.  From the formula for
   $\cL_{cc}$ in statement (4) of Proposition \ref{P:col-scat}, we
   see that $\cL_{cc}$ is then contained in $\cK_{\text{scat-min}}$.
It
   then follows from the way that $\cV^{\Omega}_{cc}$ is constructed
   that $\cV^{\Omega}_{cc} \subset \cK_{\text{scat-min}}$.  Taking
   orthogonal complements inside $\cV^{\Omega}$ then gives
   $\cV^{\Omega} \ominus \cV^{\Omega}_{cc} \supset
   (\cK_{\text{scat-min}})^{\perp}$ and \eqref{cc-scatmin'} follows.
   It then follows immediately that  $U$ closely connected
   ($\cV^{\Omega} = \cV^{\Omega}_{cc}$) if and only if ${\mathfrak
   S}(\Sigma(U))$ is scattering-minimal
   ($(\cK_{\text{scat-min}})^{\perp} = \{0\}$).
     \end{proof}

     As a corollary we obtain the following complement to Definition
     \ref{D:minAglerdecom}.

     \begin{corollary}  \label{C:minAglerdecom}
     Suppose that we are given an   augmented Agler decomposition
     \eqref{enlargedAglerdecom} for a Schur class formal power
     series $S \in \cL(\cE, \cE_{*})[[z^{\pm 1}]]$ and suppose that
     $\Omega$ is a shift-invariant sublattice such that each
     finite boundary component $\partial_{k}\Omega_{\rm fin}$
     ($k = 1, \dots, d$) is nonempty.  Assume that
     $\boldsymbol{\cL}_{\cV^{\Omega}} = \{0\}$.  Then also the
     overlapping spaces
     $\boldsymbol{\cL} = \cL(K_{1}, \dots, K_{d})$ and
$\boldsymbol{\cL}' =
     \cL(K'_{}, \dots, K'_{d})$ are trivial,
     i.e., for the case that all finite boundary components of $\Omega$
     are nonempty, condition (1) is a consequence of condition (2) in
     Definition \ref{D:minAglerdecom}.
     \end{corollary}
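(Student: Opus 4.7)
The plan is to chain together Proposition \ref{P:scatspacecc}(3) and Proposition \ref{P:Aglerdecomcc}(2),(3), using the GR-unitary realization supplied by Theorem \ref{T:Agler} as the bridge between the purely kernel-theoretic data of Definition \ref{D:minAglerdecom} and the geometric (sub)space language of those propositions. First, I would invoke the implication (2$^\prime$) $\Rightarrow$ (3) of Theorem \ref{T:Agler} to produce a GR-unitary colligation $U = \sbm{A & B \\ C & D}$ realizing $S$ so that the kernels $K_k$ admit the Kolmogorov factorization of Lemma \ref{L:Aglerdecom}(2). Embedding $U$ in the multievolution scattering system $\mathfrak{S}(\Sigma(U))$ of Section \ref{S:scat-col} then furnishes the ambient space $\cK$, the subspaces $\cL_k, \cM_k, \cM_{*k}$, and the minimal part $\cK_{\text{scat-min}}$. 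The unitary identifications in Proposition \ref{P:Aglerdecomcc}(1) and Proposition \ref{P:scatspacecc}(1) ensure that the abstract overlapping spaces $\boldsymbol{\cL}$, $\boldsymbol{\cL}'$, $\boldsymbol{\cL}_{\cV^\Omega}$ built from the $K_k$'s coincide with the ones governing the subspace geometry inside $\cK$.

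With this setup, the hypotheses $\boldsymbol{\cL}_{\cV^\Omega} = \{0\}$ and $\partial_k \Omega_{\text{fin}} \neq \emptyset$ for every $k$ are exactly the premises of Proposition \ref{P:scatspacecc}(3), which immediately delivers that $(\cK_{\text{scat-min}})^\perp$ is invariant under $P_{\cL_k}$ for each $k = 1, \ldots, d$. Applying Proposition \ref{P:Aglerdecomcc}(2) then converts this invariance directly into the conclusion $\boldsymbol{\cL} = \{0\}$.

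To pass to $\boldsymbol{\cL}' = \{0\}$, I would exploit that $\cK_{\text{scat-min}} = \overline{\widetilde{\cW} + \widetilde{\cW}_*}$ is, by construction, reducing for the commuting unitaries $\cU_1, \ldots, \cU_d$, so its orthogonal complement $(\cK_{\text{scat-min}})^\perp$ is likewise reducing. The identity $P_{\cU_k \cL_k} = \cU_k P_{\cL_k} \cU_k^*$ therefore automatically upgrades invariance of $(\cK_{\text{scat-min}})^\perp$ under $P_{\cL_k}$ to invariance under $P_{\cU_k \cL_k}$ for every $k$. Proposition \ref{P:Aglerdecomcc}(3) then yields $\boldsymbol{\cL}' = \{0\}$, completing the argument.

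The essential content of the proof is therefore already packaged in the preceding propositions; the role of the nonempty-finite-boundary hypothesis is precisely and exclusively to unlock Proposition \ref{P:scatspacecc}(3), which is the unique step that requires it. The only real care needed is to confirm that the overlapping spaces as defined intrinsically from the augmented Agler decomposition in Definition \ref{D:minAglerdecom} do match those that appear in the scattering-system picture — but this matching is exactly what the unitary identifications in parts (1) of Propositions \ref{P:Aglerdecomcc} and \ref{P:scatspacecc} establish, so no additional verification beyond citing those identifications should be required.
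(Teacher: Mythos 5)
Your proof is correct and follows essentially the paper's own route, which is simply to combine statements (2) and (3) of Proposition \ref{P:Aglerdecomcc} with statement (3) of Proposition \ref{P:scatspacecc}; the extra steps you spell out (passing through a GR-unitary colligation whose induced decomposition is the given one, and upgrading $P_{\cL_k}$-invariance of $(\cK_{\text{scat-min}})^{\perp}$ to $P_{\cU_k\cL_k}$-invariance via the reducing property) are exactly what the paper leaves implicit. The only slight imprecision is attributing the bridge to Theorem \ref{T:Agler} (2$^{\prime}$)$\Rightarrow$(3) as stated: what you need is a realization \emph{compatible} with the given kernels $K_k$ (i.e., inducing them via \eqref{augKernel}), which is supplied by the lurking-isometry construction (cf.\ Theorem \ref{T:parametrize} and \cite{BT,BBQ2}) rather than by the bare existence statement of Theorem \ref{T:Agler}.
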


     \begin{proof} Simply combine statements (2) and (3) of
         Proposition \ref{P:Aglerdecomcc} with statement (3) of
         Proposition \ref{P:scatspacecc}.
      \end{proof}

  \begin{remark}  \label{R:E2}  We note that Example \ref{E:2} can be
      used to illustrate an additional point: {\em there exists a
      strictly closely connected Agler decomposition which is not
      minimal}, i.e., {\em condition (1) does not in general imply
      condition (2) in Definition \ref{D:minAglerdecom}.}  Indeed,
      consider the GR-unitary colligation presented in Example
      \ref{E:2} and its associated Agler decomposition $\{K_{k} \colon
      k=1, \dots, d\}$, say.
       As $U$ is both strictly closely
      connected and shifted strictly closely connected, the
      associated Agler decomposition is strictly closely connected.
      We also verified for this example that $U$ is closely
      connected.  Were it the case that the Agler decomposition were
      minimal, then as a consequence of statement (4) in Proposition
      \ref{P:scatspacecc} it would follow that $U$ is also
      scattering-minimal, contrary to the main point of the example.
     Of course it should also be possible to check directly that
certain
      subspaces in the list \eqref{DS1} for the balanced cut in this
      example have nontrivial intersection in order to verify that
      the augmented Agler decomposition for this example is not
      minimal.
  \end{remark}
  \begin{remark}\label{R:min-summary}
  For the reader's convenience, we summarize here the various notions
of minimality: for augmented Agler decompositions, for
Givone--Roesser unitary colligations, and for multievolution
scattering systems, and the interrelations among these notions.
  \begin{description}
    \item[I] Augmented Agler decompositions
    \begin{itemize}
      \item[I.1.] \emph{Strictly closely connected} --- see
Definition \ref{D:augAglerdecom-scc}.
      \item[I.2.] \emph{Minimal} --- see Definition
\ref{D:minAglerdecom}.
    \end{itemize}
    \item[II] GR unitary colligations
    \begin{itemize}
    \item[II.0.] \emph{Closely connected} --- see Definition
\ref{D:minimal}(1).
      \item[II.1a.] \emph{Strictly closely connected} --- see
Definition \ref{D:minimal}(2).
    \item[II.1b.] \emph{Shifted strictly closely connected} --- see
Definition \ref{D:minimal}(3).
      \item[II.2.] \emph{Kernel minimal}: the augmented Agler
decomposition induced by the colligation has property I.2.
      \item[II.3.] \emph{Scattering minimal} --- see
\eqref{col-scatmin}--\eqref{uyfromx}.
    \end{itemize}
        \item[III] Multievolution scattering systems
    \begin{itemize}
      \item[III.3.] \emph{Minimal} --- see Definition
\ref{D:minimal-scat}.
    \end{itemize}
    \end{description}
  First, we note that II.1a and II.1b are equivalent by Theorem
\ref{T:scc&sscc}. Second, we remark that every unitary colligation
can be replaced by its closely connected part by compressing the
state space $\mathcal{H}$ to $\mathcal{H}_{cc}$, which does not
affect the transfer function, the induced Agler decomposition, or
the minimal part of the associated scattering system. Assume now
that the GR unitary colligation is closely connected, i.e., II.0
holds. Then (1) by Theorem \ref{T:scc&sscc}, conditions II.1a and
II.1b are each equivalent to condition I.1 for the induced Agler
decomposition; (2) by Theorem \ref{T:BabyBear}, condition II.3 is
equivalent to condition III.3 for the associated scattering
system; (3) by Definition \ref{D:minAglerdecom}, I.2 implies I.1,
and thus II.2 implies both II.1a and II.1b; (4) by Proposition
\ref{P:scatspacecc}, II.2 implies II.3, and II.3 together with
either II.1a or II.1b imply II.2. Thus we conclude that, under
assumption II.0, we have II.3\&II.1a $\Leftrightarrow$ II.3\&II.1b
$\Leftrightarrow$ II.2.

  \end{remark}

  \section{Closely connected GR-unitary colligations compatible with
  given augmented Agler decomposition} \label{S:cc}

  By the results of \cite{Ag90,AgMcC,BT} we know that GR-unitary
  realizations of a given Schur-Agler-class function (viewed here as
  a formal power series) $S(z) \in \mathcal{S A}(\cE, \cE_{*})$ can
  be constructed from a given augmented (or even nonaugmented) Agler
  decomposition \eqref{augAglerdecom}. Conversely, a given GR-unitary
  realization for $S$ picks out a particular augmented Agler
  decomposition via \eqref{augKernel}.

  When this is the case (i.e., $U$ and $\{K_{k}\}$ are connected
via\emph{}
  \eqref{augKernel}),
   we shall say that the GR-unitary colligation $U = \left(\sbm{A & B
\\ C & D }, P_{1}, \dots,
  P_{d}\right)$  is {\em compatible} with
  the augmented Agler decomposition $\{K_{k} \colon k=1, \dots, d\}$
for $S$.
  We next present a
  description of the set of
   all GR-unitary colligations which
  realize $S(z)$ and are compatible with a given augmented Agler
  decomposition $K_{1}(z,w), \dots, K_{d}(z,w)$ for $S(z)$.
  The statement of the result requires a few preliminaries.

  We start with an augmented Agler decomposition for $S(z)$:
  $$
  \begin{bmatrix} I - S(z) S(w)^{*} & S(w) - S(z) \\
      S(z)^{*} - S(w)^{*} & S(z)^{*} S(w) - I \end{bmatrix} =
      \sum_{k=1}^{d} (1 - z_{k} w_{k}^{-1}) K_{k}(z,w).
  $$
  Let us introduce the notation
   \begin{align*}
       & K'_{k}(z,w) = z_{k}K_{k}(z,w) w_{k}^{-1}, \\
   & K_{\cF}(z,w) = \begin{bmatrix} S(z) \\ I \end{bmatrix}
   \begin{bmatrix} S(w)^{*} & I \end{bmatrix}, \qquad
    K_{\cF_{*}}(z,w) = \begin{bmatrix} I \\ S(z)^{*} \end{bmatrix}
    \begin{bmatrix} I & S(w) \end{bmatrix}
    \end{align*}
    so we may rewrite  \eqref{kernelCDP} as
   \begin{equation}  \label{boldK}
   \sum_{k=1}^{d} K_{k}(z,w) + K_{\cF}(z,w)  =
       \sum_{k=1}^{d}  K'_{k}(z,w)  + K_{\cF_{*}}(z,w) =: {\mathbf
       K}(z,w).
   \end{equation}
   As a consequence of the reproducing kernel property, this in turn
   means that, for
   each $e,e' \in \cE$ and $e_{*},e'_{*} \in \cE_{*}$,
   \begin{align}
    &   \sum_{k=1}^{d}\left\langle K_{k}(\cdot, w) \begin{bmatrix}
       e_{*} \\ e \end{bmatrix}, \, K_{k}(\cdot, z) \begin{bmatrix}
       e'_{*} \\ e' \end{bmatrix} \right \rangle +
      \left \langle K_{\cF}(\cdot, w) \begin{bmatrix} e_{*} \\ e
   \end{bmatrix}, \, K_{\cF}(\cdot, z) \begin{bmatrix} e'_{*} \\ e'
\end{bmatrix} \right\rangle  \notag \\
& \qquad =
\sum_{k=1}^{d} \left\langle K'_{k}(\cdot, w)\begin{bmatrix} e_{*} \\
e \end{bmatrix}, \, K'_{k}(\cdot, z) \begin{bmatrix} e'_{*} \\ e'
\end{bmatrix} \right\rangle + \left \langle K_{\cF_{*}}(\cdot, w)
\begin{bmatrix} e_{*} \\ e \end{bmatrix}, K_{\cF_{*}}(\cdot, z)
    \begin{bmatrix} e'_{*} \\ e' \end{bmatrix} \right \rangle
    \label{kernelCDP'}
\end{align}
where the inner products are of the form
\begin{align*}
& \cH(K_{k})[[w^{\pm 1}]] \times \cH(K_{k})[[ z^{\pm 1}]], \qquad
  \cH(K_{\cF})[[w^{\pm 1}]] \times \cH(K_{\cF})[[z^{\pm 1}]], \\
  & \cH(K'_{k})[[w^{\pm 1}]] \times \cH(K'_k)[[z^{\pm 1}]], \qquad
  \cH(K_{\cF_{*}})[[w^{\pm 1}]] \times \cH(K_{\cF_{*}})[[z^{\pm 1}]]
\end{align*}
with values in ${\mathbb C}[[z^{\pm 1}, w^{\pm 1}]]$.  As
\eqref{kernelCDP'} is an identity between formal power series in
${\mathbb C}[[z^{\pm 1}, w^{\pm 1}]]$, the identity must hold
coefficientwise for coefficients of $z^{-n} w^{m}$. Hence, if use
the notation \eqref{notation'} and \eqref{notation''}, we have,
for all $e, e' \in \cE$, $e_{*}, e'_{*} \in \cE_{*}$ and $m,n \in
{\mathbb Z}^{d}$,
\begin{multline*}
     \sum_{k=1}^{d} \left\langle [K_{k}]_{n}(z) \begin{bmatrix}
    e_{*} \\ e \end{bmatrix},\, [K_{k}]_{m}(z) \begin{bmatrix}
    e'_{*} \\ e' \end{bmatrix} \right \rangle_{\cH(K_{k})} \\
    +
    \left \langle [K_{\cF}]_{n}(z) \begin{bmatrix} e_* \\ e
\end{bmatrix}, [K_{\cF}]_{m}(z) \begin{bmatrix} e'_{*} \\ e'
\end{bmatrix} \right\rangle_{\cH(K_{\cF})} \\
 = \sum_{k=1}^{d} \left \langle [K'_{k}]_{n}(z)
\begin{bmatrix} e_{*} \\ e \end{bmatrix}, \,
    [ K'_{k}]_{m}(z) \begin{bmatrix} e'_{*} \\ e' \end{bmatrix}
    \right \rangle_{\cH(K'_{k})}\\
    + \left\langle [K_{\cF_{*}}]_{n}(z)
    \begin{bmatrix} e_{*} \\ e \end{bmatrix}, [K_{\cF_{*}}]_{m}(z)
    \begin{bmatrix} e'_{*} \\ e' \end{bmatrix}
    \right\rangle_{\cH(K_{\cF_{*}})}.
\end{multline*}
If we now define subspaces $\widehat {\mathcal D}$ and $\widehat
{\mathcal R}$ according to
  \begin{align*}
      & \widehat  {\mathcal D} =
\overline{\operatorname{span}}\left\{ \begin{bmatrix}
     [ K_{1}]_{m}(z) \\ \vdots \\ [ K_{d}]_{m}(z) \\
     [K_{\cF}]_{m}(z) \end{bmatrix} \begin{bmatrix} e_{*} \\ e
\end{bmatrix}
      \colon e_{*} \in \cE_{*}, \, e \in \cE, \, m \in {\mathbb
Z}^{d} \right\}
       \subset \begin{bmatrix} \cH(K_{1}) \\ \vdots \\ \cH(K_{d}) \\
      \cH(K_{\cF}) \end{bmatrix}, \\
      &\widehat  {\mathcal R} = \overline{\operatorname{span}} \left
\{
      \begin{bmatrix} [K'_{1}]_{m}(z) \\ \vdots \\
      [K'_{d}]_{m}(z)
      \\ [K_{\cF_{*}}]_{m}(z) \end{bmatrix}
     \begin{bmatrix}
      e_{*} \\ e \end{bmatrix} \colon e_{*} \in \cE_{*},\, e \in
      \cE, \, m \in {\mathbb Z}^{d} \right\}
     \subset \begin{bmatrix} \cH(K'_{1}) \\ \vdots
      \\ \cH(K'_{d}) \\ \cH(K_{\cF_{*}}) \end{bmatrix},
 \end{align*}
 we see that the map $\widehat V$ given by
  \begin{equation*}
     \widehat V \colon \begin{bmatrix}[ K_{1}]_{m}(z) \\ \vdots \\
   [ K_{d}]_{m}(z) \\ [K_{\cF}]_{m}(z) \end{bmatrix}
    \begin{bmatrix} e_{*} \\ e \end{bmatrix} \mapsto
  \begin{bmatrix}[ K'_{1}]_{m}(z)\\ \vdots \\ [K'_{d}]_{m}(z)
\\
     [ K_{\cF_{*}}]_{m}(z) \end{bmatrix}
\begin{bmatrix} e_{*} \\ e \end{bmatrix}
  \end{equation*}
  extends by linearity and continuity to define a unitary operator
  from $\widehat {\mathcal D}$ onto $\widehat {\mathcal R}$.
Alternatively, to see
  that $\widehat V \colon \widehat {\mathcal D} \to \widehat
{\mathcal R}$ extends to a unitary,
  one can note that $\widehat V$ extends to
  $$
  \widehat  V = ({\mathbf s}')^{*}
  {\mathbf s}|_{\widehat{\mathcal D}}
  $$
  where
  $$
  {\mathbf s}  \colon \begin{bmatrix}
 \widehat \bigoplus_{k=1}^{d} \cH(K_{k}) \\ \cH(K_{{\mathcal F}})
\end{bmatrix} \to \cH({\mathbf K}), \qquad
{\mathbf s}' \colon \begin{bmatrix}
  \widehat \bigoplus_{k=1}^{d} \cH(K'_{k})  \\ \cH(K_{{\mathcal
F_{*}}})
\end{bmatrix} \to \cH({\mathbf K})
  $$
  are the coisometric sum maps as in Proposition \ref{P:overlapping}
  with respective initial spaces $\widehat {\mathcal D}$  and
  $\widehat {\mathcal R}$.
  From Proposition \ref{P:overlapping} we know that we
  can identify the defect spaces as overlapping spaces:
  \begin{align*}
    &  \cL : = \begin{bmatrix} \widehat
      \bigoplus_{k=1}^{d} \cH(K_{k}) \\ \cH(K_{\cF}) \end{bmatrix}
      \ominus \widehat {\mathcal D}  = \begin{bmatrix} \cL(K_{1},
\dots,
      K_{d}) \\ 0 \end{bmatrix}, \\
     & \cL': = \begin{bmatrix} \widehat
      \bigoplus_{k=1}^{d} \cH(K'_{k}) \\ \cH(K_{\cF_{*}})
\end{bmatrix}
      \ominus \widehat  {\mathcal R} = \begin{bmatrix} \cL(K'_{1},
\dots,
      K'_{d}) \\ 0 \end{bmatrix}.
  \end{align*}
  Using the identification maps explained in Proposition
  \ref{P:overlapping}, we
  have the following unitary identifications,
  where ${\mathbf K}$ denotes the common value of the two expressions
  in \eqref{boldK}:
  \begin{align*}
    &  \left(\widehat\bigoplus_{k=1}^{d} \cH(K_{k})  \widehat \oplus
    \cH(K_{\cF})\right)  \widehat \oplus
      \cL' \cong \left(\cH({\mathbf K}) \widehat \oplus
      \cL\right) \widehat \oplus  \cL' \\
    & \qquad   \cong \left(\cH({\mathbf K})\widehat \oplus
    \cL'\right) \widehat \oplus  \cL \\
    & \qquad \cong \left(\widehat \bigoplus_{k=1}^{d} \cH(K'_{k})
    \widehat\oplus \cH(K_{\cF_{*}}) \right) \widehat \oplus \cL.
   \end{align*}
   The composition of these identification operators gives us a
   unitary map
   $$
   \widehat U_{0} \colon \begin{bmatrix} \widehat \bigoplus_{k=1}^{d}
\cH(K_{k})
   \\ \cH(K_{\cF}) \\ \cL' \end{bmatrix} \to
   \begin{bmatrix} \widehat \bigoplus_{k=1}^{d} \cH(K'_{k}) \\
       \cH(K_{\cF_{*}}) \\ \cL \end{bmatrix}
   $$
   such that
   \begin{align*}
  &  \widehat  U_{0} \begin{bmatrix}\widehat  d \\ 0 \end{bmatrix} =
   \begin{bmatrix} \widehat V\widehat  d
    \\ 0 \end{bmatrix} \text{ if }\widehat  d \in \widehat {\mathcal
    D}, \\
    & \widehat U_{0} \colon \begin{bmatrix} \ell \\ 0 \end{bmatrix}
    \mapsto \begin{bmatrix} 0 \\ \ell \end{bmatrix} \text{ if } \ell
    \in {\mathcal L}, \\
    & \widehat U_{0} \colon \begin{bmatrix} 0 \\ \ell' \end{bmatrix}
    \mapsto \begin{bmatrix} \ell' \\ 0 \end{bmatrix} \text{ if }
    \ell' \in \cL'.
    \end{align*}
    In particular, $\widehat U_{0}$ can be viewed as a unitary
extension of
    $\widehat V$.
      Note that $e \mapsto \sbm{S(z) \\ I } e$ is an isometry from
$\cE$
  onto $\cH(K_{\cF})$ and that $e_{*} \mapsto \sbm{ I \\ S(z)^{*}}
  e_{*}$ is an isometry from $\cE_{*}$ onto $\cH(K_{\cF_{*}})$.
  In addition, multiplication by $\sbm{ z_{1}^{-1} & & \\ & \ddots &
  \\ & & z_{d}^{-1}}$ maps $\widehat \bigoplus_{k=1}^{d} \cH(K'_{k})$
  unitarily onto $\widehat \bigoplus_{k+1}^{d} \cH(K_{k})$.
  In addition one can check directly the following formulas:
  \begin{align*}
& \left[ K_{k}(\cdot, w) w_{k}^{-1} \right]_{m}(z) = [ K_{k}]_{m - \be_{k}}(z), \\
& \left[ \begin{bmatrix} I \\ S(\cdot)^{*} \end{bmatrix}
  \begin{bmatrix} S(w)^{*} & I \end{bmatrix} \right]_{m}(z) =
\begin{bmatrix} I \\ S(z)^{*} \end{bmatrix} \begin{bmatrix} S_{m}^{*} & \delta_{m,0} I \end{bmatrix}, \\
& \left[ \begin{bmatrix} I \\ S(z)^{*} \end{bmatrix}
\begin{bmatrix} I & S(w) \end{bmatrix} \right]_{m}(z) =
\begin{bmatrix} I \\ S(z)^{*}
\end{bmatrix} \begin{bmatrix} \delta_{m,0} I & S_{-m} \end{bmatrix}.
\end{align*}
If we incorporate the identifications
$$
\cE \sim \cH(K_{\cF}), \quad \cE_{*} \sim \cH(K_{\cF_{*}}), \quad
\cH(K_{k}') \sim \cH(K_{k})
$$
described above with these observations,  then the modified
versions
  ${\mathcal D}$ and ${\mathcal R}$ of $\widehat {\mathcal
  D}$ and $\widehat {\mathcal R}$ respectively are given by
   \begin{align}
     {\mathcal D} & =
     \overline{\operatorname{span}} \left \{
          \begin{bmatrix} [K_{1}]_{m}(z) \\ \vdots \\
         [ K_{d}]_{m}(z)
          \\ \begin{bmatrix} S_{m}^{*} & \delta_{m,0}I
          \end{bmatrix} \end{bmatrix} \begin{bmatrix}
          e_{*} \\ e \end{bmatrix} \colon e_{*} \in \cE_{*},\, e \in
          \cE, \, m \in {\mathbb Z}^{d} \right\} \subset \begin{bmatrix}
\cH(K_{1}) \\ \vdots
          \\ \cH(K_{d}) \\ \cE \end{bmatrix}  \label{defD} \\
 {\mathcal R} &  = \overline{\operatorname{span}} \left \{
    \begin{bmatrix} [K_{1}]_{m- \be_{1}}(z) \\ \vdots \\
       [ K_{d}]_{m- \be_{d}}(z)
        \\ \begin{bmatrix}  \delta_{m,0}I & S_{-m} \end{bmatrix} \end{bmatrix} \begin{bmatrix}
        e_{*} \\ e \end{bmatrix} \colon e_{*} \in \cE_{*},\, e \in
        \cE, \, m \in {\mathbb Z}^{d} \right\} \subset \begin{bmatrix}
\cH(K_{1}) \\ \vdots
        \\ \cH(K_{d}) \\ \cE_{*} \end{bmatrix}
        \label{defR}
   \end{align}
   and the modified version $U_{0}$ of $\widehat U_{0}$,
   $$
   U_{0} \colon \begin{bmatrix} \widehat \bigoplus_{k=1}^{d}
   \cH(K_{k}) \\ \cE \\ \cL' \end{bmatrix} \to \begin{bmatrix}
   \widehat \bigoplus_{k=1}^{d} \cH(K_{k}) \\ \cE_{*}\\  \cL
\end{bmatrix},
   $$
   is an extension of the unitary $ V
    \colon {\mathcal D} \to {\mathcal R}$
   given by
   \begin{equation}  \label{defV}
       V \colon \begin{bmatrix} [K_{1}]_{m}(z) \\ \vdots \\
         [ K_{d}]_{m}(z)
          \\ \begin{bmatrix} S_{m}^{*} & \delta_{m,0}I
          \end{bmatrix} \end{bmatrix} \begin{bmatrix}
          e_{*} \\ e \end{bmatrix} \colon e_{*}  \mapsto
    \begin{bmatrix} [K_{1}]_{m- \be_{1}}(z) \\ \vdots \\
       [ K_{d}]_{m- \be_{d}}(z)
        \\ \begin{bmatrix}  \delta_{m,0}I & S_{-m} \end{bmatrix} \end{bmatrix} \begin{bmatrix}
        e_{*} \\ e \end{bmatrix}.
\end{equation}

Let us write $U_{0}$ in matrix form
 $$
 U_{0} = \begin{bmatrix} U_{0,11} & U_{0,12} &
 U_{0, 13} \\ U_{0,21} & U_{0,22} & U_{0,23} \\
 U_{0,31} & U_{0,32} & 0 \end{bmatrix}
 \colon \begin{bmatrix}  \widehat \bigoplus_{k=1}^{d}
 \cH(K_{k}) \\ \cE \\ \cL' \end{bmatrix} \to
 \begin{bmatrix} \widehat \bigoplus_{k=1}^{d} \cH(K_{k}) \\ \cE_{*}
\\ \cL
     \end{bmatrix}.
 $$
Then we have the following result.

  \begin{theorem}  \label{T:parametrize}
      Suppose that $S(z) \in \cL(\cE, \cE_{*})[[z]]$ is in
       $\mathcal{S A}(\cE, \cE_{*})$ and that we are given
      an augmented Agler decomposition $K_{1}(z,w), \dots,
      K_{d}(z,w)$ for $S(z)$ as in \eqref{augAglerdecom}. Then:
      \begin{enumerate}
        \item Any GR-unitary colligation $U$ which
      gives a realization of $S$ compatible with the given augmented
      Agler decomposition $K_{1}(z,w), \dots, K_{d}(z,w)$ is
unitarily equivalent to a unitary extension
      $$ U = \begin{bmatrix} A_{11} & A_{12} & B_{1} \\ A_{21} &
      A_{22} & B_{2} \\ C_{12} & C_{2} & D \end{bmatrix}
      \colon \begin{bmatrix} \widehat  \bigoplus_{k=1}^{d} \cH(K_{k})
\\
      \widehat \bigoplus_{k=1}^{d} \widetilde \cH_{k} \\  \cE
      \end{bmatrix} \to
      \begin{bmatrix} \widehat \bigoplus_{k=1}^{d}  \cH(K_{k})
      \\ \widehat \bigoplus_{k=1}^{d} \widetilde \cH_{k}  \\ \cE_{*}
\end{bmatrix}
      $$
      of the partially defined unitary operator $V$
      \eqref{defV}.
        \item       Such unitary extensions $U$ are parametrized by
      GR-unitary colligations of the form
      $$ \widetilde U  =
      \begin{bmatrix} \widetilde U_{11} & \widetilde U_{12} \\
      \widetilde U_{21} & \widetilde U_{22} \end{bmatrix}
      \colon \begin{bmatrix} \widehat \bigoplus_{k=1}^{d} \widetilde
      \cH_{k} \\ \cL  \end{bmatrix} \to
      \begin{bmatrix}  \widehat \bigoplus_{k=1}^{d} \widetilde
\cH_{k} \\
     \cL' \end{bmatrix}
      $$
      via the feedback connection:  the system of equations
      \begin{equation}  \label{FB1}
   U = \begin{bmatrix}  A_{11} & A_{12} & B_{1} \\ A_{21} &
      A_{22} &
      B_{2} \\ C_{1} & C_{2} & D \end{bmatrix}
    \colon \begin{bmatrix}
      \widehat \bigoplus_{k=1}^{d} h_{k} \\
       \widehat  \bigoplus_{k=1}^{d}  \widetilde h_{k} \\ e
\end{bmatrix}
       \mapsto \begin{bmatrix}  \widehat \bigoplus_{k=1}^{d} h_{k}' \\
      \widehat \bigoplus_{k=1}^{d} \widetilde h_{k}'   \\ e_{*}
\end{bmatrix}
      \end{equation}
      means that there are uniquely determined vectors $\ell \in \cL$
     and $ \ell' \in \cL'$ so
      that the following system of equations is verified:
      \begin{align}
     & U_{0} \colon \begin{bmatrix} \widehat \bigoplus_{k=1}^{d}
     h_{k} \\ e \\\ell' \end{bmatrix}
     \mapsto \begin{bmatrix}\widehat \bigoplus_{k=1}^{d} h_{k}'
     \label{FB2} \\
     e_{*} \\ \ell  \end{bmatrix}, \\
     & \widetilde U \colon \begin{bmatrix}  \widehat \bigoplus_{k=1}^{d}
     \widetilde h_{k} \\  \ell \end{bmatrix} \mapsto
    \begin{bmatrix} \widehat \bigoplus_{k=1}^{d} \widetilde h_{k}' \\
\ell' \end{bmatrix}.
        \label{FB3}
     \end{align}
     Explicitly, $U = \sbm{ A_{11} & A_{12} & B_{1} \\ A_{21} &
     A_{22} & B_{2} \\ C_{1} & C_{2} & D }$ is given by
     \begin{align}
    & \begin{bmatrix} A_{11} & A_{12} & B_{1} \\ A_{21} & A_{22} &
         B_{2} \\ C_{1} & C_{2} & D \end{bmatrix}
      = \cF_{\ell}
         \left( \left[ \begin{array}{ccc|cc}
         U_{0,11} & 0 & U_{0,12} & 0 & U_{0,13} \\ 0 & 0 & 0 & I &
         0 \\ U_{0,21} & 0 & U_{0,22} & 0 & U_{0,23} \\ \hline
         0 & I & 0 & 0 & 0 \\ U_{0,31} & 0 & U_{0,32} & 0 & 0
     \end{array} \right], \,
     \begin{bmatrix} \widetilde U_{11} &
     \widetilde U_{12} \\ \widetilde U_{21} & \widetilde U_{22}
     \end{bmatrix} \right) \notag \\
     & \qquad = \begin{bmatrix}  U_{0,11} + U_{0,13} \widetilde U_{22}
     U_{0,31}  & U_{0,13} \widetilde U_{21} & U_{0,12} + U_{0,13}
     \widetilde U_{22} U_{0,32} \\
     \widetilde U_{12} U_{0,31} & \widetilde U_{11}  & \widetilde
     U_{12} U_{0,32} \\
     U_{0,21} + U_{0,23} \widetilde U_{22} U_{0,31} & U_{0,23}
     \widetilde U_{21} & U_{0,22} + U_{0,23} \widetilde U_{22}
     U_{0,32} \end{bmatrix}.
     \label{col-param}
  \end{align}
  Here we use the general notation
  $$
  \cF_{\ell}\left( \sbm{ \bf A & \bf B \\ \bf C & \bf D}, X \right) =
\bf A + \bf B (I -
  X\bf D)^{-1} X \bf C
  $$
  for the result of the lower Redheffer linear-fractional map induced
  by $\sbm{\bf A & \bf B \\ \bf C & \bf D}$ acting on $X$.
  \item The colligation $\widetilde{U}$ is closely connected if and
only if there is no reducing subspace of $U$ (that is, a subspace
that $U$ maps onto itself) contained in
$\widetilde{\mathcal{H}}=\widehat\bigoplus_{k=1}^d\widetilde{\mathcal{H}}_k$
and invariant under the projections $P_k$, $k=1,\ldots,d$.
        \item $\widetilde{U}$ is unitarily equivalent to
$\widetilde{U}'$ (as colligations) if and only if $U$ is unitarily
equivalent to $U'$ (as colligations) by a unitary equivalence
which is the identity on
$\widehat\bigoplus_{k=1}^d\mathcal{H}(K)$.
      \end{enumerate}

     \end{theorem}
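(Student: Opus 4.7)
The plan is to prove the four parts in order via a coordinatized lurking-isometry argument combined with the Redheffer (Schur-complement) description of unitary extensions of a partial isometry. Throughout, I fix the augmented Agler decomposition $\{K_k\}$ and the partial isometry $V\colon\mathcal{D}\to\mathcal{R}$ it determines via \eqref{defD}--\eqref{defV}.

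\textbf{Part 1.} Let $U$ be a compatible GR-unitary realization on state space $\cH=\widehat\bigoplus_k\cH_k$. By Lemma \ref{L:Aglerdecom} and compatibility, $K_k(z,w)=H_k(z)H_k(w)^*$ with $H_k$ as in \eqref{H's}. Define $\Pi^\natural\colon\cH\to\widehat\bigoplus_k\cH(K_k)$ blockwise by $P_kh\mapsto H_k(z)P_kh$. Since $\cH(K_k)$ carries the pullback norm from $H_k$ (Theorem \ref{T:FRKHS}), $\Pi^\natural|_{\cH_k}$ is coisometric onto $\cH(K_k)$ with kernel $\widetilde\cH_k\subseteq\cH_k$. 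Identifying $\cH_k\ominus\widetilde\cH_k$ with $\cH(K_k)$, the state space is rewritten as $\widehat\bigoplus_k\cH(K_k)\;\widehat\oplus\;\widehat\bigoplus_k\widetilde\cH_k$ with the inherited block projection structure. A direct computation using the unitarity relations \eqref{unitary} for $U$ and the explicit shape \eqref{H's} of $H_k$ shows that, after this identification, $U$ sends the canonical generators of $\mathcal{D}\widehat\oplus 0$ (see \eqref{defD}) to the canonical generators of $\mathcal{R}\widehat\oplus 0$ (see \eqref{defR}) exactly by the rule \eqref{defV}. Hence $U$ is a unitary extension of $V$.

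\textbf{Part 2.} With $U_0$ the distinguished extension of $V$ constructed before the theorem, any other unitary extension $U$ with state space $\widehat\bigoplus_k\cH(K_k)\widehat\oplus\widetilde\cH$ agrees with $V$ on $\mathcal{D}\widehat\oplus 0$, so its ``free'' part must produce a unitary coupling of $\widetilde\cH$ with the defect spaces $\mathcal{L}$ and $\mathcal{L}'$. This is encoded by the feedback interconnection \eqref{FB2}--\eqref{FB3} with auxiliary unitary $\widetilde U\colon\widetilde\cH\widehat\oplus\mathcal{L}\to\widetilde\cH\widehat\oplus\mathcal{L}'$; the Redheffer formula \eqref{col-param} is then obtained by algebraic elimination of $\ell$ and $\ell'$ (the outer ``$\mathbf{D}$-block'' in the Redheffer matrix vanishes, so the formula reduces to $\mathbf{A}+\mathbf{B}X\mathbf{C}$ and no invertibility hypothesis is required). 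Unitarity of $U$ is automatic from unitarity of $U_0$ and $\widetilde U$, and conversely every choice of $\widetilde U$ produces a compatible realization.

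\textbf{Parts 3 and 4.} For Part 3, unfold Definition \ref{D:minimal}(1) / Proposition \ref{P:col-prop}(1) for $\widetilde U$: close connectivity fails iff some nonzero $\widetilde{\mathcal{M}}\subset\widetilde\cH$ is reduced by $\widetilde U_{11}$, annihilated by $\widetilde U_{12}^*$ and $\widetilde U_{21}$, and invariant under each projection onto $\widetilde\cH_k$. The first three conditions say exactly that $\widetilde{\mathcal{M}}\widehat\oplus\{0\}$ is a reducing subspace of the unitary $\widetilde U\colon\widetilde\cH\widehat\oplus\mathcal{L}\to\widetilde\cH\widehat\oplus\mathcal{L}'$; by the feedback formula \eqref{col-param} this is equivalent to $\widetilde{\mathcal{M}}\subset\cH$ being reducing for $U$, and combined with $P_k$-invariance this yields the stated criterion. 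Part 4 is naturality: a unitary equivalence $U\leftrightarrow U'$ that is the identity on $\widehat\bigoplus_k\cH(K_k)$ necessarily fixes $\cE$ and $\cE_*$ (the input/output channels) and so restricts to a unitary between the $\widetilde\cH$-components; by \eqref{col-param} and the fact that $U_0$ is common to both sides, this restriction intertwines $\widetilde U$ with $\widetilde U'$. The converse is immediate from \eqref{col-param}.

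\textbf{Main obstacle.} The substantive work is in Part 1: verifying that $\Pi^\natural$ is coisometric with the correctly sized kernel and that in the new coordinates $U$ really does act by $V$ on $\mathcal{D}$. Both points rest on the pullback-norm characterization of $\cH(K_k)$ (Theorem \ref{T:FRKHS}) applied blockwise, together with the compatibility formula \eqref{augKernel}. Once Part 1 is established, the remaining parts reduce to standard Redheffer-product bookkeeping for unitary extensions of partial isometries.
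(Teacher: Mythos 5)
Your proposal is correct and follows essentially the same route as the paper, whose own ``proof'' simply delegates the work to \cite{BBQ2} (Theorem 5.1) and \cite{BT} plus the remarks that $x \mapsto H_k(z)x$ identifies $\cH_k$ modulo $\ker M_{H_k}$ isometrically with $\cH(K_k)$ and that the feedback formula \eqref{col-param} is the standard lower Redheffer loop from the control literature --- i.e.\ exactly the lurking-isometry-plus-coupling argument you spell out directly (and your Part 1 computation, though only asserted, does check out via the unitarity relations \eqref{unitary}). Your explicit handling of parts (3) and (4), which the paper leaves implicit in the citation, is also sound, the needed block cancellations resting on the facts (worth stating) that $\begin{bmatrix} U_{0,31} & U_{0,32}\end{bmatrix}$ is a coisometry onto $\cL$ and $\sbm{U_{0,13} \\ U_{0,23}}$ is an isometry on $\cL'$.
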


     \begin{proof}
    This result is the reproducing-kernel version of
     a result essentially contained in
     \cite[Theorem 5.1]{BBQ2}---one must make the identification
between formal
     power series kernels $K(z,w)$ and sesquianalytic kernels
     $K(z,w)$. In addition one should use the factorization
$K_{k}(z,w) = H_{k}(z)
     H_{k}(w)^{*}$ for an $H_{k}(z) = \sum_{n \in \bbZ^{d}_{+}}
H_{k,n}
     z^{n}$ where $H_{k,n} \in \cL(\cH_{k}, \cF_{*} \widehat \oplus
\cF)$
     and the fact that (under the assumption that multiplication by
     $H_{k}(z)$ acting on $\cH_{k}$ is injective) the map $x \mapsto
H_{k}(z)$
     is an isometric isomorphism of $\cH_{k}$ onto $\cH(K_{k})$.
     In addition, one should specialize the result in \cite{BBQ2} to
the case
     where (1) ${\mathbf Q}(z) = \sbm{ z_{1} & & \\ & \ddots & \\ & &
     z_{d} }$ so that the associated domain ${\mathcal D}_{{\mathbf
     Q}}$ is the unit polydisk $\bbD^{d}$ and where (2) the
     interpolation problem is taken to be the full-matrix-value
     (rather than tangential) interpolation problem with set of
     prescribed interpolation nodes equal to the whole polydisk
$\bbD^{d}$.
     The result also appears in \cite[Theorem 2.1 (3) and Theorem
6.1]{BT}
     in more implicit form.
     In addition, we are here parametrizing unitary colligations
     rather than the associated characteristic functions; formulas of
     the type \eqref{col-param} for the result of the lower feedback
     loop of the type \eqref{FB1}, \eqref{FB2}, \eqref{FB3}
     have been known in the control literature
     for some time (see e.g.~\cite[Section 2.1]{IS}).
     \end{proof}

\begin{remark}\label{R:goal}
Our ultimate goal here would be to obtain a complete description
of closely connected GR-unitary realizations of $S$ which are
compatible with a given augmented Agler decomposition up to a
unitary equivalence. The description in Theorem
\ref{T:parametrize} falls short of achieving this goal in two
ways:
\begin{itemize}
  \item letting the load $\widetilde{U}$ run over all closely
connected GR-unitary colligations with input space $\mathcal{L}$
and output space $\mathcal{L}'$ may give us some
non-closely-connected GR-unitary colligations $U$;
  \item unitarily non-equivalent loads $\widetilde{U}$ can give
unitarily equivalent GR-unitary colligations $U$.
\end{itemize}
\end{remark}

     What is added here (as opposed to what already appears in
     \cite{BT, BBQ2}) to the results on parametrization of unitary
     colligations realizing a given augmented Agler decomposition is
     the result coming out of Proposition \ref{P:overlapping}
     that the defect spaces
     $$
     \begin{bmatrix} \widehat \bigoplus_{k=1}^{d} \cH(K_{k}) \\ \cE
\end{bmatrix}
     \ominus {\mathcal D}, \qquad
     \begin{bmatrix} \widehat \bigoplus_{k=1}^{d} \cH(K_{k}) \\
         \cE_{*} \end{bmatrix} \ominus {\mathcal R}
     $$
      can be identified with the overlapping
     spaces  $\cL(K_{1}, \dots, K_{d})$ and $\cL(K'_{1}, \dots,
    K'_{d})$ respectively. As a result we have the following
     corollary.

     \begin{corollary}  \label{C:unique-real}
Suppose that the augmented Agler decomposition for the
     Schur-Agler-class function $S(z)$ is strictly closely connected,
i.e., such that both the corresponding overlapping spaces
     $$
     \cL(K_{1}, \dots, K_{d}) \text{ and }
     \cL(K'_{1}, \dots,  K'_{d})
     $$
     are trivial.  Then there
     is a unique, up to a unitary equivalence of colligations,
GR-unitary closely-connected colligation realizing
     $S$ compatible with this given augmented Agler decomposition.
Moreover, this colligation is necessarily strictly closely
connected, and if, in addition, the augmented Agler decomposition
is minimal, then this colligation is scattering minimal (see
Remark \ref{R:min-summary}).
     \end{corollary}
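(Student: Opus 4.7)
The plan is to apply Theorem \ref{T:parametrize} under the hypothesis $\cL=\cL'=\{0\}$ and then appeal to the structural results of Sections \ref{S:minimal} and \ref{S:cc} in order to promote the close connectedness of the resulting realization first to strict close connectedness and, under the additional minimality hypothesis, to scattering minimality. When both overlapping spaces are trivial the sum maps $\mathbf{s}$ and $\mathbf{s}'$ preceding the definition of $U_0$ are already unitary, so the partially defined unitary $V$ of \eqref{defV} extends (uniquely and trivially) to an everywhere-defined unitary $U_0\colon\widehat\bigoplus_{k=1}^d\cH(K_k)\,\widehat\oplus\,\cE\to\widehat\bigoplus_{k=1}^d\cH(K_k)\,\widehat\oplus\,\cE_*$. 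In the $3\times 3$ block form of $U_0$ the blocks $U_{0,13},U_{0,23},U_{0,31},U_{0,32}$ all vanish for trivial domain/codomain reasons, and the load $\widetilde U$ collapses to an arbitrary unitary $\widetilde U_{11}$ on $\widehat\bigoplus_k\widetilde\cH_k$ with $\widetilde U_{12},\widetilde U_{21},\widetilde U_{22}$ all zero. Substituting these into the Redheffer formula \eqref{col-param} produces a block-diagonal compatible realization
$$
U=\begin{bmatrix} U_{0,11} & 0 & U_{0,12}\\ 0 & \widetilde U_{11} & 0\\ U_{0,21} & 0 & U_{0,22}\end{bmatrix}\colon\begin{bmatrix}\widehat\bigoplus_k\cH(K_k)\\ \widehat\bigoplus_k\widetilde\cH_k\\ \cE\end{bmatrix}\to\begin{bmatrix}\widehat\bigoplus_k\cH(K_k)\\ \widehat\bigoplus_k\widetilde\cH_k\\ \cE_{*}\end{bmatrix}.
$$

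Next I would observe that the auxiliary summand $\widehat\bigoplus_k\widetilde\cH_k$ is simultaneously reducing for $A$ and $A^*$, invariant under each projection $P_k$ (because the GR projections decompose coordinate-wise on $\cH(K_k)\,\widehat\oplus\,\widetilde\cH_k$), and orthogonal to both $\operatorname{im} B$ and $\operatorname{im} C^*$ since $B_2=0$ and $C_2=0$. Thus the closely connected subspace $\cH_{cc}$ of $U$ is contained in $\widehat\bigoplus_k\cH(K_k)$, and $U$ is closely connected if and only if $\widetilde\cH_k=\{0\}$ for every $k$. In that case the load is forced to be the trivial unitary on $\{0\}$, and statement~(4) of Theorem \ref{T:parametrize} produces a unique (up to unitary equivalence of colligations) closely connected GR-unitary realization $U^{\sharp}$ of $S$ compatible with the given augmented Agler decomposition.

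To settle the two remaining claims I would invoke Theorem \ref{T:scc&sscc}: since the augmented Agler decomposition is strictly closely connected (condition~(4) of that theorem) and $U^{\sharp}$ is closely connected, condition~(1) gives $\cH^{\sharp}_{scc}=\cH^{\sharp}_{cc}=\cH^{\sharp}$, so $U^{\sharp}$ is strictly closely connected by Proposition \ref{P:col-prop}(2). If, in addition, the decomposition is minimal in the sense of Definition \ref{D:minAglerdecom}, Proposition \ref{P:scatspacecc}(4) tells us that for any realization compatible with this decomposition close connectedness is equivalent to scattering minimality of the associated multievolution scattering system ${\mathfrak S}(\Sigma(U))$, so $U^{\sharp}$ is scattering minimal. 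The main technical point meriting care is the bookkeeping verification that the auxiliary summand $\widehat\bigoplus_k\widetilde\cH_k$ is genuinely $P_k$-invariant and decoupled from $\operatorname{im} B\cup\operatorname{im} C^*$ in the block-diagonal form above; once this is established the ``forced to be trivial'' step is immediate and the remaining assertions follow from the structural theorems already at hand.
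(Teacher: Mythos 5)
Your overall route is the paper's: with both overlapping spaces trivial the defect spaces vanish, $V$ in \eqref{defV} is an everywhere-defined unitary, the extensions of Theorem \ref{T:parametrize} become block-diagonal $V\,\widehat\oplus\,\widetilde U_{11}$ with the auxiliary summand decoupled, and the ``strictly closely connected'' and ``scattering minimal'' claims follow from Theorem \ref{T:scc&sscc} together with Proposition \ref{P:col-prop} and Proposition \ref{P:scatspacecc}(4), exactly as summarized in Remark \ref{R:min-summary}. Your uniqueness argument is also sound, although the right citation is parts (1)--(2) of Theorem \ref{T:parametrize} combined with your forcing argument, not part (4), which only addresses when two loads produce unitarily equivalent colligations.

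The genuine gap is in the sentence ``$U$ is closely connected if and only if $\widetilde\cH_k=\{0\}$ for every $k$.'' What you actually establish is the containment $\cH_{cc}\subseteq\widehat\bigoplus_{k}\cH(K_k)$, which yields only the ``only if'' direction; the ``if'' direction --- that the trivial-load extension, i.e.\ the colligation $V$ itself acting on $\widehat\bigoplus_{k}\cH(K_k)$, is closely connected --- is precisely the existence half of the corollary and is nowhere argued. This is not a formality: Remark \ref{R:goal} explicitly warns that extensions built from (even closely connected) loads may fail to be closely connected, so $\cH_{cc}(V)=\widehat\bigoplus_{k}\cH(K_k)$ needs justification. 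The repair is short: as noted in Remark \ref{R:min-summary}, compressing any compatible GR-unitary realization to its closely connected part changes neither the transfer function nor the induced augmented Agler decomposition (the orthogonal complement of $\cH_{cc}$ is invariant for $A$, $A^{*}$ and each $P_k$ and is annihilated by $C$ and $B^{*}$), so a closely connected compatible realization exists; by your own uniqueness argument it must be unitarily equivalent to $V$, and since close connectedness is preserved under unitary equivalence of colligations, $V$ is closely connected. With that step inserted, your proof is complete and coincides with the paper's intended argument.
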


     \begin{remark}  \label{R:weaklyunitary}
 Canonical functional models for a Schur-Agler-class function with
 are also studied in \cite{BB-Gohberg} using the
 classical (rather than formal) reproducing kernel Hilbert spaces
 associated with the kernels in an augmented Agler decomposition.
 In these models the state space is taken to be $\widehat
 \bigoplus_{k=1}^{d} \cH(K_{k})$ (no overlapping spaces).  To make this
 possible, the class of colligations considered is ``weakly unitary''
 (whereby the colligation operator is required only to be contractive
 with it and its adjoint isometric only when restricted to certain
 canonical subspaces).  Then it is shown that any weakly unitary
 ``closely connected'' realization is unitarily equivalent to
 such a (two-component) canonical functional-model realization.
The meaning of  ``closely connected'' as defined in
\cite{BB-Gohberg} is closely related to but not quite the same as
``strictly closely connected'' or ``shifted strictly closely
connected'' as given in Definition \ref{D:minimal} above: namely,
the map $\widetilde \Pi_{U}^{dBR}$ given by
$$
    \widetilde \Pi_{U}^{dBR} \colon h \mapsto \begin{bmatrix}
    C(I - Z_{\rm diag}(z) A)^{-1} \\ B^{*} (I - Z_{\rm diag}(z)
    A^{*})^{-1} \end{bmatrix} h
$$
should be injective.  Also the work in \cite{BB-Gohberg} makes no
contact with multievolution scattering systems and the associated
scattering geometry.

\end{remark}

       \section{Functional-model
     colligations and scattering systems realizing a given
     augmented Agler decomposition}   \label{S:Aglerdecom-scatsys}

 Suppose that we are given a Schur-class function $S \in \cS(\cE,
 \cE_{*})$ with given augmented Agler decomposition $\{K_{k} \colon
 k=1, \dots, d\}$ (so \eqref{augAglerdecom} holds).  Proposition
\ref{P:dBRFRKHSs} presents a
 functional-model  minimal scattering system having $S$ as its
 scattering matrix.  As a consequence of Proposition
 \ref{P:scatspacecc}, we see that this model has the extra geometric
structure
 (i.e., the existence of auxiliary subspaces $\cL_{k}, \cM_{k},
 \cM_{*k}$ satisfying conditions \eqref{Omegadecom}, \eqref{CDP},
 \eqref{slim} and \eqref{slim*}) to lead to a functional-model
 GR-unitary colligation compatible with the given augmented Agler
 decomposition if and only if the augmented Agler decomposition is
 {\em minimal}, and then this extra structure (and the associated
 compatible GR-unitary colligation) can be presented in functional
 form as in Theorem \ref{T:minfuncmodel}.  Another situation where a
 (not necessarily minimal) multievolution scattering system with
 embedded GR-unitary colligation compatible with a preassigned
augmented
 Agler decomposition can be presented in functional-model form is the
 following.  The result is immediate from Remark \ref{R:Schaffer} and
Corollary \ref{C:unique-real}.

 \begin{theorem} \label{T:scc-real}  Suppose that we are given
     an augmented Agler decomposition for the Schur-class function $S
     \in \cS(\cE, \cE_{*})$ which is strictly closely connected (see
     Definition \ref{D:augAglerdecom-scc}).  Then there is an
     essentially unique closely connected GR-unitary colligation
     compatible with $\{K_{k} \colon k = 1, \dots, d\}$ and an
     essentially unique (not necessarily minimal) associated
scattering system
     ${\mathfrak S}(\Sigma(U))$ which can be constructed in
     functional model form as follows.
     Note that the subspaces $\cD$
     and $\cR$ given by \eqref{defD} and \eqref{defR} are equal to
     all of $\widehat \bigoplus_{k=1}^{d} \cH(K_{k}) \oplus \cE$ and
     $\widehat \bigoplus_{k=1}^{d} \cH(K_{k}) \oplus \cE_{*}$
respectively and
     the operator $V$ given by \eqref{defV} is unitary.
     Define an operator
     $$
     U = \begin{bmatrix} A & B \\ C & D \end{bmatrix} \colon
     \begin{bmatrix}  \widehat \bigoplus_{k=1}^{d} \cH(K_{k})  \\ \cE
     \end{bmatrix} \to \begin{bmatrix} \widehat \bigoplus_{k=1}^{d}
     \cH(K_{k}) \\ \cE_{*} \end{bmatrix}
     $$
     implicitly by $U = V$ where $V$ is as in \eqref{defV}.
     Construct a scattering system, denoted as ${\mathfrak
     S}_{\Omega^{B}}(\Sigma(U))$, with scattering space
     $$
      \cV^{\Omega^B}: = \widehat \bigoplus_{k=1}^{d} \widehat
\bigoplus_{n \in
      \Xi} z^{n} \cH(K_{k}
     $$
     as in Remark \ref{R:Schaffer}.  Then ${\mathfrak S}$ is a
     multievolution scattering system with embedded GR-unitary
     colligation $U$ compatible with the given augmented Agler
     decomposition $\{K_{k} \colon k=1, \dots, d\}$.
     \end{theorem}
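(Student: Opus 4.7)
The plan is to leverage the strictly closely connected assumption to collapse the general parametrization of Theorem \ref{T:parametrize} to a trivial case, then construct the scattering system using the Sch\"affer-matrix construction from Remark \ref{R:Schaffer}.

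First, I would unpack the strict closely connected hypothesis. By Definition \ref{D:augAglerdecom-scc} the overlapping spaces $\boldsymbol{\cL} = \cL(K_{1},\dots,K_{d})$ and $\boldsymbol{\cL}' = \cL(K'_{1},\dots,K'_{d})$ are both trivial, so by Proposition \ref{P:overlapping} the coisometric sum maps $\mathbf{s}$ and $\mathbf{s}'$ appearing in the construction preceding Theorem \ref{T:parametrize} become unitary identifications. Combined with the (always valid) kernel CDP identity \eqref{kernelCDP} from Proposition \ref{P:dBRkerneldecom}, this forces the subspaces $\cD$ and $\cR$ in \eqref{defD} and \eqref{defR} to exhaust the whole spaces $\widehat\bigoplus_{k=1}^{d}\cH(K_{k})\widehat\oplus \cE$ and $\widehat\bigoplus_{k=1}^{d}\cH(K_{k})\widehat\oplus \cE_{*}$ respectively, and the partially defined isometry $V$ of \eqref{defV} becomes a unitary operator between these full spaces.

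Second, I would define $U := V$. The orthogonal decomposition $\cH := \widehat\bigoplus_{k=1}^{d}\cH(K_{k})$ together with the projections $P_{k}$ onto $\cH(K_{k})$ makes $U = \sbm{A & B \\ C & D}$ a GR-unitary colligation in the coordinate-free sense \eqref{colligation-free}. To see that $U$ is compatible with $\{K_{k}\}$ and realizes $S$, I would verify that plugging $U$ into the definition \eqref{augKernel} reproduces the original $K_{k}(z,w)$ (and therefore, by taking the $(1,1)$ block of the augmented Agler sum, recovers $S$ as its transfer function). This is where the defining relation \eqref{defV} on generators does the work: applying $V$ to the spanning element indexed by $m \in \bbZ^{d}$ and $\sbm{e_{*} \\ e}$ yields exactly the action of the system equations \eqref{sys-free}, so the realization formula $S(z) = D + C(I - Z_{\diag}(z)A)^{-1} Z_{\diag}(z) B$ and the kernel factorizations $K_{k}(z,w) = H_{k}(z) H_{k}(w)^{*}$ follow by induction on the degree of test polynomials. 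Corollary \ref{C:unique-real} then certifies that $U$ is necessarily closely connected and unique up to unitary equivalence of colligations (viewed through the parametrization \eqref{col-param}, which collapses to $U = \sbm{U_{0,11} & U_{0,12} \\ U_{0,21} & U_{0,22}}$ in our setting because the trivial load $\widetilde U$ is the only option when $\cL = \cL' = \{0\}$).

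Finally, with $U$ in hand I would apply the explicit Sch\"affer-matrix construction from Remark \ref{R:Schaffer} with the balanced shift-invariant sublattice $\Omega^{B}$; this produces a Hilbert space $\cT_{\Omega^{B}}$ carrying commuting unitaries $\cU_{1},\dots,\cU_{d}$ and wandering subspaces $\cF,\cF_{*}$, i.e., a multievolution scattering system $\mathfrak{S}_{\Omega^{B}}(\Sigma(U))$. Since the construction of Section \ref{S:scat-col} shows that the scattering matrix of $\mathfrak{S}(\Sigma(U))$ equals the transfer function of $U$ (equation \eqref{trans-scat-func}), and since $\partial_{k}\Omega^{B}_{\pm\infty} = \emptyset$ for all $k$, the scattering subspace is exactly $\widehat\bigoplus_{k=1}^{d}\widehat\bigoplus_{n\in\Xi}z^{n}\cH(K_{k})$ as claimed. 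The main obstacle I anticipate is the bookkeeping in verifying compatibility of the constructed $U$ with the original Agler kernels $K_{k}$ (as opposed to merely with some augmented Agler decomposition of $S$); this comes down to checking that the image of the generating element indexed by $m \in \bbZ^{d}$ under $V$ lives in the graph of the shifted kernel $K_{k}(\cdot,w)w_{k}^{-1}$ and behaves correctly with respect to the $\cE,\cE_{*}$ components, which one does by tracing through the identifications $\cH(K'_{k}) \cong \cH(K_{k})$ via multiplication by $z_{k}^{-1}$ and $\cH(K_{\cF})\cong\cE$, $\cH(K_{\cF_{*}})\cong\cE_{*}$ noted in the discussion preceding \eqref{defD}.
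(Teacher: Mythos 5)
Your proposal is correct and follows essentially the same route as the paper: the paper's proof is simply that the result is immediate from Remark \ref{R:Schaffer} and Corollary \ref{C:unique-real}, i.e., triviality of the overlapping spaces makes $\cD$, $\cR$ full and $V$ unitary (so $U=V$ is the essentially unique closely connected compatible colligation), and the Sch\"affer-matrix construction over the balanced sublattice $\Omega^{B}$, which has no infinite boundary, yields the scattering system with the stated scattering space. Your extra hands-on verification of compatibility with the given kernels is already subsumed by the correspondence in Theorem \ref{T:parametrize}, so it adds detail but not a different argument.
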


     Note that the main difference between Theorem
\ref{T:minfuncmodel}
     and Theorem \ref{T:scc-real} is that the construction
     of the scattering space in Theorem \ref{T:minfuncmodel}
     involved internal orthogonal direct sums while the construction
     in Theorem \ref{T:scc-real} necessarily calls for
     external direct sums.  We note also that the orthogonal
     complement of the minimal-scattering subspace
     $\cK_{\text{scat-min}}^{\perp}$ in Theorem
     \ref{T:scc-real} can be identified with the
     overlapping space $\boldsymbol{\cL}_{\cV^{\Omega^{B}}}$.

    For the general case, by Theorem \ref{T:parametrize} we know
    that the amount of information determined by a given augmented
    Agler decomposition $\{K_{k} \colon k = 1, \dots, d\}$ in the
    construction of a compatible GR-unitary colligation is precisely
    the following: {\em we may assume that $\cH(K_{k}) \subset
\cL_{k}$ and that
    $U|_{\cD}$ agrees with $V$ given by \eqref{defV}.}  Following the
    construction in Remark \ref{R:Schaffer}, this translates to the
    following concerning the construction of the associated
    multievolution scattering system: {\em  we know that $\cH(z^{n}
    K_{k}(z,w) w^{-n})=z^n\cH(K_{k}(z,w))$ may be assumed to be a
subspace of $\cU^{n}
    \cL_{k}$ for each $n \in \Xi$ and, for each $k = 1, \dots, d$, we
    know how each $\cU_{k}^{*}$ is defined on }
    $\cH(K_{\cW^{S, \Omega}_{dBR}*})
     \widehat \oplus  \Big(\widehat \bigoplus_{k=1}^{d} \bigoplus_{n
    \in \Xi} z^{n} \cD_{0}\Big)\widehat  \oplus \cH(K_{\cW^{S,
\Omega}_{dBR}})$
    (where we write $\cD = \cD_{0} \oplus \cF$).
    The problem of constructing a multievolution scattering system
    compatible with the given augmented Agler decomposition then
    reduces to the following: {\em find a simultaneous commuting
unitary extension of
    partially defined isometries ${\mathcal  V} = ({\mathcal V}_{1},
\dots, {\mathcal V}_{d})$
    (defined on  spaces in functional-model form) while maintaining
    the additional geometric structure \eqref{Omegadecom},
    \eqref{CDP}, \eqref{slim}, \eqref{slim*}.}
    Remarkably,
    due to the constructions in Section \ref{S:scat-col} (i.e., the
    general trajectory-space construction or, more concretely, the
Sch\"affer-matrix
    construction in Remark \ref{R:Schaffer}), this a priori hard
    problem comes down to finding a single unitary extension $U$ of
the partially
    defined unitary colligation $V$; this latter problem in turn can
    be handled as in Theorem \ref{T:parametrize}.

\end{document}